\newtheorem{theorem}{Theorem}[section]
\newtheorem{corollary}[theorem]{Corollary}
\newtheorem{proposition}[theorem]{Proposition}
\newtheorem{lemma}[theorem]{Lemma}
\newtheorem{definition}[theorem]{Definition}
\newtheorem{example}[theorem]{Example}
\newtheorem{remark}[theorem]{Remark}
\newcommand{\N}{\mathbb{N}}
\newcommand{\R}{\mathbb{R}}
\renewcommand{\P}{\mathbb{P}}
\title{Non-decreasing martingale couplings}
\author{Benjamin Jourdain\thanks{CERMICS, Ecole des Ponts, INRIA, Marne-la-Vallée, France. E-mail: benjamin.jourdain@enpc.fr - This research benefited from the support of the \textquotedblleft Chaire Risques Financiers\textquotedblright , Fondation du Risque.}
\and 
Kexin Shao\thanks{
INRIA Paris,  2 rue Simone Iff, CS 42112, 75589 Paris Cedex 12, France, Universit\'e Paris-Dauphine, Ecole des Ponts ParisTech. E-mail: kexin.shao@inria.fr. This project has received funding from the European Union’s Horizon 2020 research and innovation
programme under the Marie Sk\l{}odowska-Curie grant agreement No 945322.
}}
\date{\today}
\begin{document}

\maketitle
\begin{abstract}
 For many examples of couples $(\mu,\nu)$ of probability measures on the real line in the convex order, we observe numerically that the Hobson and Neuberger martingale coupling, which maximizes for $\rho=1$ the integral of $|y-x|^\rho$ with respect to any martingale coupling between $\mu$ and $\nu$, is still a maximizer for $\rho\in(0,2)$ and a minimizer for $\rho>2$. We investigate the theoretical validity of this numerical observation and give rather restrictive sufficient conditions for the property to hold. We also exhibit couples $(\mu,\nu)$ such that it does not hold. The support of the Hobson and Neuberger coupling is known to satisfy some monotonicity property which we call non-decreasing. We check that the non-decreasing property is preserved for maximizers when $\rho\in(0,1]$. 
 In general, there exist distinct non-decreasing martingale couplings, and we find some decomposition of $\nu$ which is in one-to-one correspondence with martingale couplings non-decreasing in a generalized sense.
\end{abstract}
\section{Introduction}

In this paper,  for $\mu,\nu$ in the set ${\cal P}_1(\R)$ of probability measures on the real line with a finite first order moment, we are interested in martingale couplings between $\mu$ and $\nu$ that attain $\overline{\mathcal M}_\rho(\mu,\nu)$ and $\underline{\mathcal M}_\rho(\mu,\nu)$ defined  for $\rho>0$ by 
 \[
   \overline{\mathcal M}^\rho_\rho(\mu,\nu)=\sup_{\pi\in\Pi_{\mathrm{M}}(\mu,\nu)}\int_{\R\times\R}\vert x-y\vert^\rho\,\pi(dx,dy)\mbox{ and }\underline{\mathcal M}^\rho_\rho(\mu,\nu)=\inf_{\pi\in\Pi_{\mathrm{M}}(\mu,\nu)}\int_{\R\times\R}\vert x-y\vert^\rho\,\pi(dx,dy).\]
 Here $\Pi_M(\mu,\nu)$ denotes the subset of $$\Pi(\mu,\nu)=\left\{\pi\mbox{ probability measure on $\R\times\R$} \mid \int_{y\in\R} \pi(dx,dy) = \mu(dx), \int_{x\in\R} \pi(dx,dy) = \nu(dy) \right\}$$ consisting in martingale couplings :
 $$\Pi_M(\mu,\nu)=\left\{\pi(dx,dy)=\mu(dx)\pi_x(dy)\in\Pi(\mu,\nu)\mid\mu(dx)\text{-a.e.},\ \int_{y\in\R}y\,\pi_x(dy)=x\right\}.$$
By Strassen's theorem \cite{St65},
\[
\Pi_{ M}(\mu,\nu)\neq\emptyset\quad \iff \quad \mu\leq_{cx}\nu,
\]
where $\mu\leq_{cx}\nu$ means that $\mu$ is smaller than $\nu$ in the convex order :
$$\forall \varphi:\R\to\R\mbox{ convex},\;\int_{\R}\varphi(x)\,\mu(dx)\le\int_{\R}\varphi(y)\,\nu(dy).$$
When $\mu,\nu\in{\cal P}_1(\R)$ are such that $\mu\le_{cx}\nu$, then $\underline{\mathcal M}_\rho(\mu,\nu)\le \overline{\mathcal M}_\rho(\mu,\nu)<+\infty$ for $\rho\in (0,1]$. For $\rho>1$, the finiteness remains true when $\mu,\nu$ belong to the set ${\cal P}_\rho(\R)$ of probability measures on the real line with a finite moment of order $\rho$. For $\rho\ge 1$, a martingale Wasserstein inequality is investigated in \cite{JoMa22}: it is proved that there exists a finite constant $K_\rho$ such that for $\mu,\nu\in{\cal P}_\rho(\R)\mbox{ with }\mu\le_{cx}\nu$,
$$\underline{\mathcal M}^\rho_\rho(\mu,\nu)\le K_\rho\left(\inf_{\pi\in\Pi(\mu,\nu)}\int_{\R\times\R}\vert x-y\vert^\rho\,\pi(dx,dy)\right)^{\frac 1\rho}\min_{z\in\R}\left(\int_\R\vert z-y\vert^\rho\nu(dy)\right)^{\frac{\rho-1}\rho}.$$

The couplings attaining $\overline{\mathcal M}_1(\mu,\nu)$ and $\underline{\mathcal M}_1(\mu,\nu)$ were first investigated in the literature. Motivated by the robust pricing and hedging of forward start straddle options, Hobson and Neuberger state in Theorem 8.2 \cite{HobsonNeuberger} that for the cost function $|x-y|$, there exists a maximizing martingale coupling with the form $$\pi^{\rm HN}=\int_0^1\left(\frac{r(u)-q(u)}{r(u)-p(u)}\delta_{(q(u),p(u))}+\frac{q(u)-p(u)}{r(u)-p(u)}\delta_{(q(u),r(u))}\right)du$$ with $p,q,r$ non-decreasing on $(0,1)$ and such that $p\le q\le r$ (by convention the integrand is equal to $\delta_{(q(u),q(u))}$ when $p(u)=q(u)=r(u)$).  This provides an example of a martingale coupling non-decreasing in the sense of Definition \ref{def:NdCoupling} that we give below. The study of such non-decreasing martingale couplings is one of main contributions of the present paper. The necessary optimality criterion given by Beiglb\"ock and Juillet in Lemma 1.11 \cite{BeJu16} ensures that any maximizing martingale coupling is non-decreasing. When $\mu$ does not weight points, we have the equality $$\pi^{\rm HN}=\int_\R\left(\frac{g(x)-x}{g(x)-f(x)}\delta_{(x,f(x))}+\frac{x-f(x)}{g(x)-f(x)}\delta_{(x,g(x))}\right)\mu(dx)$$ for non-decreasing functions $f$ and $g$ such that $\forall x\in\R$, $f(x)\le x\le g(x)$ and, according to Theorem 7.3 \cite{BeJu16}, uniqueness of maximizing martingale couplings holds. Under the dispersion assumption that there exists a finite interval $I$ such that $(\nu-\mu)^+(I)=0$ and $(\mu-\nu)^+(I)=(\mu-\nu)^+(\R)$, Hobson and Klimmek state in Theorem \cite{HoKl} that for the cost function $|x-y|$, there is a minimizing martingale coupling of the form
$$\pi^{\rm HK}=\int_\R\delta_{(z,z)}\mu\wedge\nu(dz)+(1-\mu\wedge\nu(\R))\int_0^1\left(\frac{r(u)-q(u)}{r(u)-p(u)}\delta_{(q(u),p(u))}+\frac{q(u)-p(u)}{r(u)-p(u)}\delta_{(q(u),r(u))}\right)du$$ for a non-decreasing function $q$ and non-increasing functions $p,r$ such that $p\le q\le r$. When $\mu\neq \nu$ the integral over $u$ provides an example of a coupling in $\Pi_M\left(\frac{(\mu-\nu)^+}{1-\mu\wedge\nu(\R)},\frac{(\nu-\mu)^+}{1-\mu\wedge\nu(\R)}\right)$ non-increasing in the sense of Definition \ref{def:NdCoupling} below. Without the dispersion assumption, according to Theorem 7.4 \cite{BeJu16}, there is a unique minimizing coupling when $\mu$ does not weight points and this coupling writes $$\int_\R\delta_{(z,z)}\mu\wedge\nu(dz)+\int_\R\left(\frac{g(x)-x}{g(x)-f(x)}\delta_{(x,f(x))}+\frac{x-f(x)}{g(x)-f(x)}\delta_{(x,g(x))}\right)(\mu-\nu)^+(dx)$$ with $f$ and $g$ such that $\forall x\in\R$, $f(x)\le x\le g(x)$.

For many choices of $\mu,\nu\in{\cal P}_{\rho\vee 1}(\R)$ such that $\mu\le_{cx}\nu$, when solving the linear programming problem for the cost function $|x-y|^\rho$ obtained by approximating $\mu$ and $\nu$ by finitely supported probability measures still in the convex order, it turns out that the coupling $\pi^{\rm HN}$ maximizing when $\rho=1$ still maximizes when $\rho\in(0,2)$ and minimizes when $\rho>2$: see the numerical results in Section \ref{sec:numres}. It is not surprising that $2$ appears as a threshold for the power $\rho$ since the martingale property ensures that when $\mu,\nu\in{\cal P}_2(\R)$,
$$\forall \pi\in\Pi_M(\mu,\nu),\;\int_{\R^2}|x-y|^2\pi(dx,dy)=\int_\R y^2\nu(dy)-\int_\R x^2\mu(dx)=\underline{\mathcal M}^2_2(\mu,\nu)=\overline{\mathcal M}^2_2(\mu,\nu).$$
With the complete lattice structure of $\{\eta\in{\cal P}_1(\R):\int_{\R}z\eta(dz)=\int_\R y^2\nu(dy)-\int_\R x^2\mu(dx)\}$ for the convex order stated in \cite{kertzrosler}, we deduce in Proposition \ref{prop:infsupconv} below that $\left\{{\rm sq}\# \pi:\pi\in\Pi_{M}(\mu,\nu)\right\}$ where ${\rm sq}\# \pi$ denotes the image of the measure $\pi$ by the square cost function ${\rm sq}(x,y)=(y-x)^2$ admits an infimum and a supremum for this order. 
When there exists $\underline{\pi} \in\Pi_{M}(\mu,\nu)$ such that ${\rm sq}\# \underline \pi$ is equal to the infimum, then $\int_{\R^2}|x-y|^\rho\underline \pi(dx,dy)$ is equal to $\overline{\mathcal M}^\rho_\rho(\mu,\nu)$ when $\rho\in (0,2)$ and to $\underline{\mathcal M}^\rho_\rho(\mu,\nu)$ when $\rho>2$. In particular, when $\nu(dy)=\int_{x\in\R}\frac{1}{2}\left(\delta_{x-a}(dy)+\delta_{x+a}(dy)\right)\mu(dx)$ for some $a\in\R$, the infimum is equal to $\delta_{a^2}$ and is attained with $\underline \pi(dx,dy)=\frac{1}{2}\left(\delta_{x-a}(dy)+\delta_{x+a}(dy)\right)\mu(dx)$. When there exists $\overline{\pi} \in\Pi_{M}(\mu,\nu)$ such that ${\rm sq}\# \pi$ is equal to the supremum, then $\int_{\R^2}|x-y|^\rho\overline \pi(dx,dy)$ is equal to $\underline{\mathcal M}^\rho_\rho(\mu,\nu)$ when $\rho\in (0,2)$ and to $
\overline{\mathcal M}^\rho_\rho(\mu,\nu)$ when $\rho>2$.  When $\mu$ does not weight points, the uniqueness of the maximizing and minimizing couplings for $\rho=1$ stated in Theorems 7.3 and 7.4 \cite{BeJu16} implies that $\pi^{\rm HN}$ (resp. $\pi^{\rm HK}$) is the only possible value for $\underline\pi$ (resp. $\overline\pi$). Unfortunately, even if we are able to exhibit in Proposition \ref{prop:M^HK leq_cx} a simple situation where the infimum and supremum are attained, we also provide examples where they are not attained (see Propositions \ref{prop:diffcouplcr} and \ref{prop:M*HN} where some of the above consequences of attainment do not hold).
That is why we directly study couplings that attain $\overline{\mathcal M}^\rho_\rho(\mu,\nu)$ and $\underline{\mathcal M}^\rho_\rho(\mu,\nu)$. It turns out that, the non-decreasing property of maximizing couplings as well as their uniqueness when $\mu$ does not weight points can be extended from the specific value $\rho=1$ to the case $\rho\in (0,1]$. But the optimizers may be distinct for distinct values of $\rho$. This motivates our investigation of non-decreasing martingale couplings and we find the decomposition of $\nu$ into $(\int_{x\in\R}\mathds{1}_{\{y<x\}}\pi(dx,dy),\int_{x\in\R}\mathds{1}_{\{y=x\}}\pi(dx,dy),\int_{x\in\R}\mathds{1}_{\{y>x\}}\pi(dx,dy))$ for $\pi\in\Pi_M(\mu,\nu)$ is in one to one correspondence with martingale couplings in $\Pi_M(\mu,\nu)$ non-decreasing in a generalized sense. We also show that the existence of a non-increasing martingale coupling is equivalent to a restrictive nested supports condition between $\mu$ and $\nu$, under which there is a unique non-increasing and a unique non-decreasing martingale couplings. Even under the nested supports condition, the image ${\rm sq}\# \pi^{\rm HN}$ of the unique non-decreasing martingale coupling $\pi^{\rm HN}$ is not necessarily the infimum of $\left\{{\rm sq}\# \pi:\pi\in\Pi_{\mathrm{M}}(\mu,\nu)\right\}$ for the convex order. But under a more and more restrictive supports condition as $\rho$ grows, this coupling $\pi^{\rm HN}$ attains $\overline{\mathcal M}^\rho_\rho(\mu,\nu)$ when $\rho\in (1,2)$ and $\underline{\mathcal M}^\rho_\rho(\mu,\nu)$ when $\rho>2$.

Our results are stated in Section \ref{sec:monotonic} while their proofs are given in Section \ref{sec:proofs}.

The proofs rely on several properties of the quantile function, which we list below. We denote the cumulative distribution function of $\eta\in\mathcal P(\R)$ by $F_\eta: \R\ni x\mapsto \eta(-\infty,x]\in[0,1]$, and its quantile function by $F_\eta^{-1}: (0,1)\ni u\mapsto \inf\left\{x\in\R: u \leq F_\eta(x)\right\}\in \R$. The following properties hold : 
\begin{itemize}
    \item[(a)] $F_\eta$ is right-continuous with left-hand limits, $ F_\eta^{-1}$ is left-continuous with right-hand limits;
    \item[(b)] For all $u\in(0,1)$ and $x\in\R$,
        \begin{equation}\label{eq:qt flip}
          F_\eta^{-1}(u) \leq x  \quad \iff \quad u \leq F_\eta(x) ,
        \end{equation}
        denoting $F_\eta(y-)$ the left-hand limit of $F_\eta$ at $y\in\R$,
        \begin{equation}\label{eq:qt ineq flip}
              F_\eta(x-)< u \leq F_\eta(x) \implies  x = F_\eta^{-1}(u) \quad\mbox{and}\quad F_\eta(F_\eta^{-1}(u)-)\leq u \leq F_\eta(F_\eta^{-1}(u));
        \end{equation}
    \item[(c)] For $\eta(dx)$-almost every $x\in\R$,
        \begin{equation}\label{eq:F-1F=x}
            F_\eta(x)>0,\,  F_\eta(x-)<1, \quad\mbox{and}\quad F_\eta^{-1}(F_\eta(x)) = x;
        \end{equation}
    \item[(d)] By the inverse transform sampling, for $f:\R\to\R$ measurable and bounded,
        \begin{equation}\label{eq:ITS}
            \int_\R f(x) \,\eta(dx) = \int_0^1 f(F_\eta^{-1}(v)) \,dv
        \end{equation}
      \end{itemize}
      We also denote by $u_\eta:\R\ni x\mapsto\int_\R|y-x|\eta(dy)\in\R_+$ the potential function of $\eta\in{\cal P}_1(\R)$. For $\mu,\nu\in{\cal P}_1(\R)$, we have
      $$\mu\le_{cx}\nu\Longleftrightarrow\forall x\in\R,\;u_\mu(x)\le u_\nu(x).$$
We say that $\mu$ is smaller than $\nu$ in the stochastic order and denote $\mu\le_{st}\nu$ if there exists $\pi\in\Pi(\mu,\nu)$ such that $\pi(\{(x,y)\in\R^2:x\le y\})=1$. We have $\mu\le_{st}\nu\Longleftrightarrow F_\mu\ge F_\nu\Longleftrightarrow F_\mu^{-1}\le F_\nu^{-1}$.

      \section{Main results}\label{sec:monotonic}
 \subsection{Numerical experiments}\label{sec:numres}
For $\mu,\nu$ such that $\mu\le_{cx}\nu$, we consider $(X_i)_{1\le i\le I}$ (resp. $(Y_j)_{1\le j\le J}$) independent and identically distributed according to $\mu$ (resp. $\nu$). The empirical measures \[
\hat \mu_I = \frac1I\sum_{i=1}^I \delta_{X_i - \bar X_I}\mbox{ with }\bar X_I=\frac 1I\sum_{i=1}^IX_i \quad\mbox{and}\quad 
\hat \nu_J = \frac1J\sum_{j=1}^J  \delta_{Y_j - \bar Y_J}\mbox{ with }\bar Y_J=\frac 1J\sum_{j=1}^JY_j,
\]
are both centred and respectively approximate the respective images $\hat\mu$ and $\hat\nu$ of $\mu$ and $\nu$ by $x\mapsto x-\int_\R y\nu(dy)$.
Applying \cite[Algorithm 1]{AlCoJo17a}, we compute $\hat \mu_I \vee_{cx} \hat \nu_J$. The computation of $\overline{\mathcal{M}}_\rho(\hat \mu_I,\hat \mu_I \vee_{cx} \hat \nu_J)$ (resp. $\underline{\mathcal{M}}_\rho(\hat \mu_I,\hat \mu_I \vee_{cx} \hat \nu_J)$) is a linear programming problem that we solve using the CVXPY package \cite{diamond2016cvxpy} in python for $I=J=100$. We call $\pi^{\rm HN}$ the obtained maximizer for the cost $|y-x|$. When $\rho\in (0,1)\cup(1,2)$ (resp. $\rho>2$), we compare the optimal value $\overline{\mathcal{M}}_\rho(\hat \mu_I,\hat \mu_I \vee_{cx} \hat \nu_J)$ (resp. $\underline{\mathcal{M}}_\rho(\hat \mu_I,\hat \mu_I \vee_{cx} \hat \nu_J)$) and corresponding maximizer $\overline \pi$ (resp. minimizer $\underline \pi$) computed by the solver with $\mathcal{I}_\rho^{\,\pi^{\rm HN}} :=\int \vert x-y\vert^\rho \pi^{\rm HN}(dx,dy)$ and $\pi^{\rm HN}$ respectively. It turns out that the values always coincide while the couplings $\overline\pi$ and $\underline\pi$ are distinct from $\pi^{\rm HN}$ when $\mu$ and $\nu$ are continuous distributions. 


\begin{center}
\begin{tabular}{||c | @{\hspace{0.4cm}} c @{\hspace{0.4cm}}| @{\hspace{0.4cm}} c @{\hspace{0.4cm}} | c || c | @{\hspace{0.4cm}} c @{\hspace{0.4cm}} | @{\hspace{0.4cm}} c @{\hspace{0.4cm}} | c ||}
    \hline
     \multicolumn{8}{|c|}{Normal distributions : $\mu= \mathcal {N}(0, 0.24)$ and  $\nu=\mathcal {N}(0, 0.28)$} \\
     \hline\rule{0pt}{3ex}
     $\rho$ &    $\overline{\mathcal{M}}_\rho$ &  $\mathcal{I}_\rho^{\,\pi^{\rm HN}}$ & $\lVert \overline \pi - \pi^{HN}\rVert$
     &$\rho$ &    $\underline{\mathcal{M}}_\rho$  & $\mathcal{I}_\rho^{\,\pi^{\rm HN}}$ & $\lVert \underline \pi - \pi^{HN}\rVert$ \\ [0.5ex]
     \hline\rule{0pt}{2.5ex}
        0.3 & 0.586396 &      0.586391 &     0.022987 &    2.1 & 0.025338 &      0.025338 &     0.027715 \\
        0.7 & 0.289132 &      0.289131 &     0.012609 &    2.3 & 0.017985 &      0.017987 &     0.034845 \\
        1.0 & 0.170704 &      0.170704 &     0.0      &    2.5 & 0.012783 &      0.012785 &     0.037657 \\
        1.4 & 0.084928 &      0.084927 &     0.021816 &    3.0 & 0.005473 &      0.005477 &     0.044496 \\
        1.9 & 0.035740 &      0.035740 &     0.026054 &    5.0 & 0.000199 &      0.000202 &     0.055856 \\
     \hline
\end{tabular}
\end{center}
\begin{center}
\begin{tabular}{||c | @{\hspace{0.4cm}} c @{\hspace{0.4cm}}| @{\hspace{0.4cm}} c @{\hspace{0.4cm}} | c || c | @{\hspace{0.4cm}} c @{\hspace{0.4cm}} | @{\hspace{0.4cm}} c @{\hspace{0.4cm}} | c ||}
    \hline
     \multicolumn{8}{|c|}{Log-normal distributions : $\mu=\exp\#\mathcal{N}(0, 0.24)$ and $\nu=\exp\#\mathcal{N}(-0.0104, 0.28)$} \\
     \hline\rule{0pt}{3ex}
     $\rho$ &    $\overline{\mathcal{M}}_\rho$ &  $\mathcal{I}_\rho^{\,\pi^{\rm HN}}$ & $\lVert \overline \pi - \pi^{HN}\rVert$
     &$\rho$ &    $\underline{\mathcal{M}}_\rho$  & $\mathcal{I}_\rho^{\,\pi^{\rm HN}}$ & $\lVert \underline \pi - \pi^{HN}\rVert$ \\ [0.5ex]
     \hline\rule{0pt}{2.5ex}
        0.3 & 0.607721 &      0.607720 &     0.017452 &    2.1 & 0.036152 &      0.036153 &     0.030068 \\
        0.7 & 0.316957 &      0.316957 &     0.008263 &    2.3 & 0.026842 &      0.026843 &     0.033392 \\
        1.0 & 0.196361 &      0.196361 &     0.0      &    2.5 & 0.019985 &      0.019988 &     0.034966 \\
        1.4 & 0.104947 &      0.104946 &     0.020343 &    3.0 & 0.009673 &      0.009678 &     0.041519 \\
        1.9 & 0.048834 &      0.048833 &     0.027026 &    5.0 & 0.000614 &      0.000619 &     0.062155 \\
     \hline
\end{tabular}
\end{center}
\begin{center}
\begin{tabular}{||c | @{\hspace{0.4cm}} c @{\hspace{0.4cm}}| @{\hspace{0.4cm}} c @{\hspace{0.4cm}} | c || c | @{\hspace{0.4cm}} c @{\hspace{0.4cm}} | @{\hspace{0.4cm}} c @{\hspace{0.4cm}} | c ||}
    \hline
     \multicolumn{8}{|c|}{{Exponential distributions : $\mu={\cal L}(Y-1)$ $\nu={\cal L}(X-2)$ with $X\sim{\cal E}(1)$ and $Y\sim{\cal E}(0.5)$}} \\
     \hline
      $\rho$ &    $\overline{\mathcal{M}}_\rho$ &   $\mathcal{I}_\rho^{\,\pi^{\rm HN}}$ & $\lVert \overline \pi - \pi^{HN}\rVert$
     &$\rho$ &    $\underline{\mathcal{M}}_\rho$  & $\mathcal{I}_\rho^{\,\pi^{\rm HN}}$ & $\lVert \underline \pi - \pi^{HN}\rVert$ \\ [0.5ex]
     \hline\rule{0pt}{2.5ex}
        0.3 & 0.768373 &      0.768293 &     0.039204 &    2.1 & 0.196164 &      0.196179 &     0.050821 \\
        0.7 & 0.550068 &      0.550059 &     0.023933 &    2.3 & 0.171925 &      0.171980 &     0.060709 \\
        1.0 & 0.433394 &      0.433394 &     0.0      &    2.5 & 0.151178 &      0.151284 &     0.062783 \\
        1.4 & 0.320305 &      0.320281 &     0.032152 &    3.0 & 0.111067 &      0.111337 &     0.067680 \\
        1.9 & 0.224602 &      0.224591 &     0.043663 &    5.0 & 0.037085 &      0.038208 &     0.078178 \\
     \hline
\end{tabular}
\end{center}
\begin{center}
\begin{tabular}{||c | @{\hspace{0.4cm}} c @{\hspace{0.4cm}} | @{\hspace{0.4cm}} c @{\hspace{0.4cm}} | c || c | @{\hspace{0.25cm}} c @{\hspace{0.25cm}} | @{\hspace{0.25cm}} c @{\hspace{0.25cm}} | c ||}
    \hline
     \multicolumn{8}{|c|}{Binomial distributions : $\mu={\cal L}(X-5)$, $\nu={\cal L}(Y-20)$ with  $X\sim {\cal B}(10, 0.5)$, $Y\sim {\cal B}(40, 0.5)$} \\
     \hline\rule{0pt}{3ex}
     $\rho$ &    $\overline{\mathcal{M}}_\rho$ &   $\mathcal{I}_\rho^{\,\pi^{\rm HN}}$ & $\lVert \overline \pi - \pi^{HN}\rVert$
     &$\rho$ &    $\underline{\mathcal{M}}_\rho$  & $\mathcal{I}_\rho^{\,\pi^{\rm HN}}$ & $\lVert \underline \pi - \pi^{HN}\rVert$ \\ [0.5ex]
     \hline\rule{0pt}{2.5ex}
        0.3 & 1.366952 &      1.366793 & 5.6e-03 &    2.1 &  10.025562 &     10.025562 & 3.9e-09 \\
        0.7 & 2.090477 &      2.090467 & 2.9e-03 &    2.3 &  12.680593 &     12.680593 & 4.0e-09 \\
        1.0 & 2.893664 &      2.893664 & 0.0     &    2.5 &  16.085421 &     16.085421 & 3.3e-09 \\
        1.4 & 4.504064 &      4.504064 & 4.2e-09 &    3.0 &  29.533935 &     29.533935 & 4.5e-09 \\
        1.9 & 7.949088 &      7.949088 & 3.2e-09 &    5.0 & 407.035561 &    407.347830 & 2.7e-03 \\
     \hline
\end{tabular}
\end{center}
\begin{center}
\begin{tabular}{||c | @{\hspace{0.4cm}} c @{\hspace{0.4cm}}| @{\hspace{0.4cm}} c @{\hspace{0.4cm}} | c || c | @{\hspace{0.3cm}} c @{\hspace{0.3cm}} | @{\hspace{0.3cm}} c @{\hspace{0.3cm}} | c ||}
    \hline
     \multicolumn{8}{|c|}{Poisson distributions : $\mu={\cal L}(X-1)$, $\nu={\cal L}(Y-4)$  with $X\sim{\cal P}(1)$, $Y\sim{\cal P}(4)$} \\
     \hline\rule{0pt}{3ex}
     $\rho$ &    $\overline{\mathcal{M}}_\rho$ &   $\mathcal{I}_\rho^{\,\pi^{\rm HN}}$ & $\lVert \overline \pi - \pi^{HN}\rVert$
     &$\rho$ &    $\underline{\mathcal{M}}_\rho$  & $\mathcal{I}_\rho^{\,\pi^{\rm HN}}$ & $\lVert \underline \pi - \pi^{HN}\rVert$ \\ [0.5ex]
     \hline\rule{0pt}{2.5ex}
        0.3 & 1.116329 &      1.116329 & 1.2e-09 &    2.1 &  2.776364 &      2.776364 & 7.0e-09 \\
        0.7 & 1.317531 &      1.317531 & 2.0e-09 &    2.3 &  3.148135 &      3.148135 & 1.4e-09 \\
        1.0 & 1.513291 &      1.513291 & 0.0     &    2.5 &  3.584910 &      3.584910 & 4.8e-09 \\
        1.4 & 1.854353 &      1.854353 & 1.1e-09 &    3.0 &  5.047828 &      5.047828 & 1.1e-09 \\
        1.9 & 2.459404 &      2.459404 & 1.2e-08 &    5.0 & 24.477011 &     24.477011 & 1.2e-09 \\
     \hline
\end{tabular}
\end{center} 
\subsection{Infimum and supremum of $\left\{{\rm sq}\# \pi:\pi\in\Pi_{M}(\mu,\nu)\right\}$for the convex order}
\begin{proposition}\label{prop:infsupconv}
   Let $\mu,\nu\in{\cal P}_2(\R)$ be such that $\mu\le_{cx}\nu$. Then the set $\left\{{\rm sq}\# \pi:\pi\in\Pi_{M}(\mu,\nu)\right\}$ admits an infimum and a supremum for the convex order. 
\end{proposition}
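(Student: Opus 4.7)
The plan is to embed the set $\{\mathrm{sq}\#\pi:\pi\in\Pi_M(\mu,\nu)\}$ into a complete lattice for the convex order and invoke the Kertz--Rösler theorem recalled in the introduction. The crucial observation is that the martingale identity forces all these image measures to share a common first moment, so they all lie in a single layer of constant-mean measures.

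First I would fix an arbitrary $\pi\in\Pi_M(\mu,\nu)$ and compute the mean of $\mathrm{sq}\#\pi$. Expanding $(y-x)^2=y^2-2xy+x^2$ and applying Fubini (licit because $\mu,\nu\in\mathcal{P}_2(\R)$ ensures integrability), one gets
\[
\int_\R z\,(\mathrm{sq}\#\pi)(dz)=\int_{\R^2}(y-x)^2\pi(dx,dy)=\int_\R y^2\,\nu(dy)-2\int_{\R^2}xy\,\pi(dx,dy)+\int_\R x^2\,\mu(dx).
\]
By the martingale condition $\int y\,\pi_x(dy)=x$ for $\mu$-a.e.\ $x$, the cross term reduces to $\int x^2\mu(dx)$, so the mean of $\mathrm{sq}\#\pi$ is the constant $m:=\int y^2\,\nu(dy)-\int x^2\,\mu(dx)$, independent of $\pi$. (This is precisely the identity $\int(y-x)^2\pi=\int y^2\nu-\int x^2\mu$ already recalled in the introduction.) Note $m\ge 0$ since $\mu\le_{cx}\nu$, which is consistent with $\mathrm{sq}\#\pi$ being supported on $\R_+$.

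Second, I would invoke the Kertz--Rösler result \cite{kertzrosler} cited in the excerpt, asserting that $\mathcal{L}_m:=\{\eta\in\mathcal{P}_1(\R):\int z\,\eta(dz)=m\}$ is a complete lattice for $\le_{cx}$. By step one, every $\mathrm{sq}\#\pi$ belongs to $\mathcal{L}_m$, hence the subset $\{\mathrm{sq}\#\pi:\pi\in\Pi_M(\mu,\nu)\}\subseteq\mathcal{L}_m$ admits an infimum and a supremum for the convex order, which is exactly the claim.

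There is essentially no serious obstacle: Kertz--Rösler does all the heavy lifting, and the only new content is the observation that the map $\pi\mapsto\mathrm{sq}\#\pi$ lands in one fixed-mean slice. It is worth emphasizing what the proposition does \emph{not} say: the infimum and supremum a priori live in $\mathcal{L}_m$ but need not themselves be of the form $\mathrm{sq}\#\pi$ for some martingale coupling $\pi$. The subsequent Propositions~\ref{prop:diffcouplcr} and~\ref{prop:M*HN} announced in the introduction indicate that attainment can indeed fail, so no attempt should be made here to exhibit optimizers.
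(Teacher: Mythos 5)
Your reduction to the fixed-mean slice is exactly the paper's first step: expanding $(y-x)^2$ and using the martingale property shows that every ${\rm sq}\#\pi$ has the same mean $m=\int_\R y^2\nu(dy)-\int_\R x^2\mu(dx)$, and the infimum then does come for free from the Kertz--R\"osler structure together with the fact that $\delta_m$ is the minimal element of $\{\eta\in{\cal P}_1(\R):\int_\R z\,\eta(dz)=m\}$, so that any nonempty subset is bounded below.

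The supremum half, however, has a genuine gap. The fixed-mean slice cannot be a complete lattice in the unconditional sense you invoke: it has no greatest element (for instance $\frac12(\delta_{m-n}+\delta_{m+n})$ is an increasing chain for $\le_{cx}$ with no upper bound in ${\cal P}_1(\R)$), so its completeness is only conditional --- a subset admits a supremum only if it is bounded from above for the convex order. To conclude that $\left\{{\rm sq}\#\pi:\pi\in\Pi_M(\mu,\nu)\right\}$ has a supremum you must therefore prove that this particular set is bounded above, and that is precisely the content of Lemma \ref{lem:bornesups}, where the real work of the paper's proof lies: from $2(x^2+y^2)\ge (y-x)^2$ one bounds the tails $\pi\left(\{(x,y):(y-x)^2>z\}\right)$ uniformly in $\pi$ by $f(z)=\left(\mu(\{x:4x^2>z\})+\nu(\{y:4y^2>z\})\right)\wedge 1$, then constructs from $f$ an explicit probability measure $\overline\eta$ with mean $m$ and checks via the integrated-survival-function criterion that ${\rm sq}\#\pi\le_{cx}\overline\eta$ for every $\pi\in\Pi_M(\mu,\nu)$. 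Your claim that Kertz--R\"osler does all the heavy lifting is exactly where the argument breaks down; as written, the proposal does not establish the existence of the supremum.
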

The conclusion follows from the next lemma and the complete lattice structure of the set $\{\eta\in{\cal P}_1(\R):\int_{\R}z\eta(dz)=\int_\R y^2\nu(dy)-\int_\R x^2\mu(dx)\}$ with minimal element $\delta_{\int_\R y^2\nu(dy)-\int_\R x^2\mu(dx)}$ stated in \cite{kertzrosler}.
\begin{lemma}\label{lem:bornesups}
   Let $\mu,\nu\in{\cal P}_2(\R)$ be such that $\mu\le_{cx}\nu$. Then the set $\left\{{\rm sq}\# \pi:\pi\in\Pi_{M}(\mu,\nu)\right\}$ is bounded from above in the convex order.
 \end{lemma}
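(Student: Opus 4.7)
The plan is to construct a dominating measure $\eta^*$ directly from the family by defining its call function as the pointwise supremum
\[
C^*(k) := \sup_{\pi \in \Pi_M(\mu,\nu)} \int_{\R^2} ((y-x)^2 - k)^+\,\pi(dx,dy), \qquad k \in \R,
\]
and then checking that this $C^*$ is itself the call function of some probability measure. The structural properties are straightforward: $C^*$ is convex and non-increasing (each integrand is convex and non-increasing in $k$), and for $k \le 0$ the positive part disappears, so by the martingale identity $\int (y-x)^2\,\pi(dx,dy) = \int y^2\,\nu(dy) - \int x^2\,\mu(dx) =: m$ we get $C^*(k) = m - k$ on $(-\infty, 0]$; in particular $C^*(0) = m$.

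The key step is to prove $C^*(k) \to 0$ as $k \to \infty$, i.e.\ uniform integrability of $(y-x)^2$ under the family $\{\pi \in \Pi_M(\mu,\nu)\}$. Writing $((y-x)^2 - k)^+ \le 2(x^2+y^2)\mathds{1}_{(y-x)^2 > k}$ and noting that $(y-x)^2 > k$ forces $|x| > \sqrt{k}/2$ or $|y| > \sqrt{k}/2$, the resulting four terms split into two \emph{pure} ones such as $\int x^2 \mathds{1}_{|x|>\sqrt{k}/2}\,\mu(dx)$, which vanish thanks to $\mu,\nu \in \mathcal P_2(\R)$, and two \emph{cross} ones such as $\int y^2 \mathds{1}_{|x|>\sqrt{k}/2}\,\pi(dx,dy)$, which I would bound uniformly in $\pi$ by a truncation: for any $M>0$ and $R>0$,
\[
\int y^2\mathds{1}_{|x|>R}\,\pi(dx,dy) \le \int y^2\mathds{1}_{|y|>M}\,\nu(dy) + M^2\,\mu(\{|x|>R\}),
\]
so sending $R \to \infty$ first and then $M \to \infty$ handles this term uniformly in $\pi$. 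This uniform integrability is the main obstacle, since the elementary bound $(y-x)^2 \le 2(x^2+y^2)$ does not by itself decouple the two marginals.

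Once $C^*$ is convex, non-increasing, equal to $m-k$ on $(-\infty,0]$ and tending to $0$ at $+\infty$, standard results on convex functions show it is the call function of a unique probability measure $\eta^*$, supported on $[0,\infty)$, with mean $m$ (its tail function is $k \mapsto -(C^*)'(k^+)$, which takes values in $[0,1]$, equals $1$ for $k<0$, and vanishes at $+\infty$). Since by construction $\int(z-k)^+\,{\rm sq}\#\pi(dz) = \int((y-x)^2-k)^+\,\pi(dx,dy) \le C^*(k) = \int(z-k)^+\,\eta^*(dz)$ for all $k \in \R$ and all $\pi \in \Pi_M(\mu,\nu)$, and both ${\rm sq}\#\pi$ and $\eta^*$ share the common mean $m$, we conclude ${\rm sq}\#\pi \le_{cx} \eta^*$ for every $\pi \in \Pi_M(\mu,\nu)$, which is the desired bound.
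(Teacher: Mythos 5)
Your proof is correct, and it takes a genuinely different route from the paper's. The paper first bounds the tail probabilities uniformly, $\pi(\{(x,y):(y-x)^2>z\})\le f(z):=\left(\mu(\{4x^2>z\})+\nu(\{4y^2>z\})\right)\wedge 1$, then hand-crafts a dominating law $\overline\eta$ whose survival function equals $f$ on a tail $(\overline z,+\infty)$ and is constant on $[0,\overline z]$, with $\overline z$ and the constant tuned so that $\overline\eta$ has mean $m=\int y^2\nu(dy)-\int x^2\mu(dx)$; the comparison of integrated survival functions then uses the concavity coming from the flat part. You instead take the pointwise supremum $C^*$ of the call functions and recognize it as the call function of a probability measure: this is in effect a direct construction of the convex-order supremum of $\{{\rm sq}\#\pi\}$ in the Kertz--R\"osler lattice, i.e.\ you prove slightly more than boundedness (you exhibit the least upper bound, which the paper only obtains abstractly in Proposition \ref{prop:infsupconv} after this lemma). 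The one step that needs care in your version is $C^*(k)\to 0$, and your truncation handles the cross terms correctly and uniformly in $\pi$; note however that the cross terms can be avoided entirely by writing $\int((y-x)^2-k)^+\,\pi(dx,dy)=\int_k^{+\infty}\pi(\{(y-x)^2>w\})\,dw\le\int_k^{+\infty}f(w)\,dw\le 4\int x^2\mathds{1}_{\{4x^2>k\}}\mu(dx)+4\int y^2\mathds{1}_{\{4y^2>k\}}\nu(dy)$, which is essentially the estimate the paper uses. Minor point: in the double limit for the cross term you should fix $M$, send $R\to\infty$, and only then send $M\to\infty$ (which is what your displayed inequality actually delivers); as stated the phrasing is slightly loose but the argument is sound.
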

\subsection{Definition and optimality of non-decreasing and non-increasing martingale couplings}\label{sec:defoptimonot}
\begin{definition}\label{def:NdCoupling}
Let $\mu,\nu\in\mathcal P_1(\R)$ be such that $\mu\leq_{cx}\nu$. A martingale coupling $\pi(dx,dy)\in\Pi_M(\mu, \nu)$ is called \textbf{non-decreasing} if there exists a Borel set $\Gamma \subseteq \R\times\R$ such that $\pi(\Gamma)=1$ and
\begin{itemize}
    \item[(a)] if $(x_-, y_-), (x_+, y_+)\in \Gamma$ with $y_-\leq x_-$, $y_+\leq x_+$, and $x_-< x_+$, then $y_-\leq y_+$
    \item[(b)] if $(x_-, z_-), (x_+, z_+) \in \Gamma$ with $x_-\leq z_-$, $x_+\leq z_+$, and $x_-<x_+$, then $z_-\leq z_+$.
\end{itemize}
Respectively, $\pi(dx,dy)$ is called \textbf{non-increasing} if there exists a Borel set $\Gamma \subseteq \R\times\R$ such that $\pi(\Gamma)=1$ and
\begin{itemize}
    \item[(c)] if $(x_-, y_+), (x_+, y_-)\in \Gamma$ with $y_+\leq x_-$, $y_-\leq x_+$, and $x_-< x_+$, then $y_-\leq y_+$
    \item[(d)] if $(x_-, z_+), (x_+, z_-) \in \Gamma$ with $x_-\leq z_+$, $x_+\leq z_-$, and $x_-<x_+$, then $z_-\leq z_+$.
\end{itemize}
\begin{figure}[H]
    \subfloat[\centering non-decreasing (left)] {{
        \begin{tikzpicture}
        \node at (0,0.2) (y-) {$y_-$};
        \node at (1,0.2) (y+) {$y_+$};
        \node at (1.5,-2.2) (x-) {$x_-$};
        \node at (2.5,-2.2) (x+) {$x_+$};
        \draw (1,0) -- (2.5,-2);
        \draw (0,0) -- (1.5,-2);
        \end{tikzpicture}}}
    \quad
    \subfloat[\centering non-decreasing (right)]{{
        \begin{tikzpicture}
        \node at (0,-2.2) (x-) {$x_-$};
        \node at (1,-2.2) (x+) {$x_+$};
        \node at (1.5,0.2) (z-) {$z_-$};
        \node at (2.5,0.2) (z+) {$z_+$};
        \draw (1,-2) -- (2.5,0);
        \draw (0,-2) -- (1.5,0);
        \end{tikzpicture}
    }}
    \quad
    \subfloat[\centering non-increasing (left)] {{
        \begin{tikzpicture}
        \node at (0,0.2) (y-) {$y_-$};
        \node at (1,0.2) (y+) {$y_+$};
        \node at (1.5,-2.2) (x-) {$x_-$};
        \node at (2.5,-2.2) (x+) {$x_+$};
        \draw (1,0) -- (1.5,-2);
        \draw (0,0) -- (2.5,-2);
        \end{tikzpicture}}}
    \quad
    \subfloat[\centering non-increasing (right)]{{
        \begin{tikzpicture}
        \node at (0,-2.2) (x-) {$x_-$};
        \node at (1,-2.2) (x+) {$x_+$};
        \node at (1.5,0.2) (z-) {$z_-$};
        \node at (2.5,0.2) (z+) {$z_+$};
        \draw (1,-2) -- (1.5,0);
        \draw (0,-2) -- (2.5,0);
        \end{tikzpicture}
    }}
    \caption{Non-decreasing / non-increasing coupling }%
\end{figure}
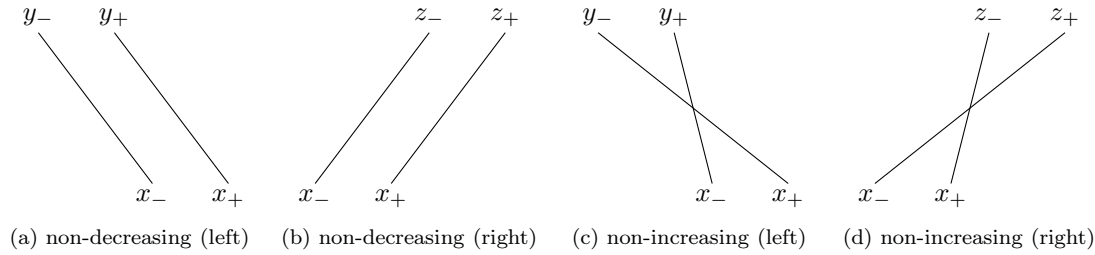
\end{definition}
A different notion of monotone martingale couplings is introduced by Beiglb\"ock and Juillet in \cite{BeJu16} where they prove that there exists a unique martingale coupling $\pi$ left-monotone (resp. right-monotone) in the sense that $\pi(\Gamma)=1$ for some Borel subset $\Gamma$ of $\R^2$ such that $(x_-,y_-)$, $(x_-,z_-)$ and $(x_+,w)\in\Gamma$ with $x_-<x_+$ and $y_-<z_-$ implies that $w\notin(y_-,z_-)$ (resp. $(x_+,y_+)$, $(x_+,z_+)$ and $(x_-,w)\in\Gamma$ with $x_-<x_+$ and $y_+<z_+$ implies that $w\notin(y_+,z_+)$). According to \cite{HeTo13}, the left-monotone (resp. right-monotone) martingale coupling minimizes (resp. maximizes) over $\Pi_M(\mu,\nu)$ the integral of smooth cost functions $c(x,y)$ satisfying the martingale Spence-Mirrlees condition $\partial^3_{xyy}c\le 0$.

In the beginning of the proof of Theorem 7.3 \cite{BeJu16}, combining the necessary optimality condition that they give in Lemma 1.11 together with some specific properties of the cost function $\R^2\ni(x,y)\mapsto |x-y|$ stated in Lemma 7.5, Beiglb\"ock and Juillet show that any coupling maximizing $\int_{\R^2}|x-y|\pi(dx,dy)$ over $\pi\in\Pi_M(\mu,\nu)$ is non-decreasing. Replacing Lemma 7.5 \cite{BeJu16} by Lemma \ref{lemma: 5 point cost} below, we generalize this result to costs $\varphi(|x-y|)$ where $\R_+\ni z\mapsto \varphi(z)\in\R$ is increasing and concave. To check that, on the other hand, any coupling minimizing $\int_{\R^2}\varphi(|x-y|)\pi(dx,dy)$
for such a function is non-increasing, we need $\mu$ and $\nu$ to satisfy the following nested supports condition. \begin{definition}
   We say that $\mu,\nu\in{\cal P}_1(\R)$ satisfy the nested supports condition if there exist $-\infty<a\leq b<+\infty$ such that $\mu\left([a,b]\right)=1$ and $\nu\left((a,b)\right) = 0$.
 \end{definition}
 The nested supports condition is slightly weaker than the Reinforced Dispersion Assumption 7.1 considered by Hobson and Klimmek \cite{HoKl} and which amounts to the existence of some interval $I$ such that $\mu(I)=1$ and $\nu(I)=0$. Indeed, under the nested supports condition, it may happen that $\mu(\{a\})\nu(\{a\})>0$ or $\mu(\{b\})\nu(\{b\})>0$.
 \begin{proposition}\label{prop:variational lemma to nd coupling}
Let $\mu,\nu\in{\cal P}_1(\R)$ be such that $\mu\le_{cx}\nu$ and  $\varphi: \R_+\to\R$ be some increasing concave function. \begin{description}
\item[$(i)$]If $\tilde \pi\in\Pi_M(\mu,\nu)$  maximizes $\int \varphi (\vert x-y\vert) \pi(dx,dy)$ over $\pi\in\Pi_M(\mu,\nu)$, then $\tilde \pi$ is non-decreasing and, when $\varphi$ is continuous, there exists such a maximizing coupling.
  \item[$(ii)$]There exists $\tilde \pi\in\Pi_M(\mu,\nu)$ which minimizes $\int \varphi (\vert x-y\vert) \pi(dx,dy)$ over $\pi\in\Pi_M(\mu,\nu)$ and when $\mu$ and $\nu$ satisfy the nested supports condition, then any such coupling $\tilde M$ is non-increasing.
\end{description}
\end{proposition}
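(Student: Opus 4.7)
The plan is to reduce both parts to the combination of the monotonicity principle of Beiglb\"ock--Juillet (Lemma 1.11 of \cite{BeJu16}) and the strict five-point cost inequality of Lemma \ref{lemma: 5 point cost}. The overall scheme mimics the proof of Theorem 7.3 of \cite{BeJu16}, in which their cost-specific Lemma 7.5 is replaced by Lemma \ref{lemma: 5 point cost} in order to cover the whole class of increasing concave $\varphi$. For the existence statements, I would use that $\Pi_M(\mu,\nu)$ is non-empty by Strassen, tight and weakly closed, and that for $\varphi$ continuous the concavity bound $0\le\varphi(|x-y|)-\varphi(0)\le\varphi'(0+)|x-y|$ makes $\pi\mapsto\int\varphi(|x-y|)\,d\pi$ upper semicontinuous for the topology generated by continuous test functions of linear growth; the supremum is then attained. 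The same compactness combined with lower semicontinuity (immediate when $\varphi$ is continuous on $(0,+\infty)$, which it always is by concavity, and extended by approximation at $0$) gives existence of a minimizer in (ii).

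For the non-decreasing property of any maximizer $\tilde\pi$ in (i), I would argue by contradiction. Lemma 1.11 of \cite{BeJu16} supplies a Borel set $\Gamma$ of full $\tilde\pi$-measure along which no finite rearrangement preserving the marginals and the martingale barycenters strictly increases the cost. If (a) failed on a set of positive mass, I would extract from $\Gamma$ two downward transitions $(x_-,y_-)$ and $(x_+,y_+)$ with $y_\pm\le x_\pm$, $x_-<x_+$ and $y_->y_+$, together with the upward transitions $(x_\pm,z_\pm)$ with $z_\pm\ge x_\pm$ forced in the fibres of $\tilde\pi$ over $x_\pm$ by the martingale property, plus a fifth auxiliary point to balance the barycenters after the swap. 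Lemma \ref{lemma: 5 point cost} would then certify a strictly positive net change in $\int\varphi(|x-y|)\,d\pi$ under the increasing-concave shape of $\varphi$, contradicting maximality. Condition (b) is deduced from (a) by applying it to the reflected couple $(-\mathrm{id})\#\mu, (-\mathrm{id})\#\nu$, which swaps the roles of the sides $\{y\le x\}$ and $\{y\ge x\}$.

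Part (ii) follows the same scheme with the direction of the five-point inequality reversed. The role of the nested supports condition is to guarantee that the auxiliary configurations used in the swap can be produced with positive $\tilde\pi$-mass: since any martingale coupling is forced to transport $\mu$-mass inside $(a,b)$ to $\{y\le a\}\cup\{y\ge b\}$, both a downward transition $(x_+,y)$ with $y\le a$ and an upward transition $(x_-,z)$ with $z\ge b$ are available in $\Gamma$, so that the five-point rearrangement is not blocked by diagonal mass of the form $\delta_{(z,z)}$ that cannot be redistributed. I anticipate that the main technical difficulty is the careful verification that these auxiliary transitions carry strictly positive mass, in particular at the endpoints $a$ and $b$ where $\mu$ and $\nu$ may share atoms under the nested supports assumption (unlike under the stronger Reinforced Dispersion Assumption of \cite{HoKl}); the construction of the competitor must be done quantitatively enough that the strict inequality supplied by Lemma \ref{lemma: 5 point cost} survives the possibly small mass available for the swap.
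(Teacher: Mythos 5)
Your overall strategy coincides with the paper's: Lemma 1.11 of \cite{BeJu16} (in the reinforced form of Lemma \ref{lem:varbj}, which also provides the two-sided property \eqref{eq:l2c} of the fibres via Lemma \ref{lemma: x,z in gamma}) combined with Lemma \ref{lemma: 5 point cost} in place of Lemma 7.5 of \cite{BeJu16}, plus compactness of $\Pi_M(\mu,\nu)$ and semicontinuity for existence. Your reduction of condition $(b)$ to condition $(a)$ by the reflection $x\mapsto -x$ is a legitimate shortcut the paper does not use (it instead invokes the increasing part of $f$ on $[y,m]$ directly), and it is compatible with the cost $\varphi(|x-y|)$ and with the martingale structure. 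Two small technical remarks on existence: the tangent bound must be taken at an interior point (the paper uses $\lambda\in[\varphi'_r(1),\varphi'_l(1)]$ in Remark \ref{rk:integvarphi}), since $\varphi'(0+)$ may be $+\infty$ for increasing concave $\varphi$; and for the minimizer lower semicontinuity of the cost suffices without any approximation at $0$.

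There is, however, a genuine misreading of the role of the nested supports condition in part $(ii)$, and as described your argument for that part would have a gap. The availability of the auxiliary transitions (a point of $\Gamma$ strictly above $x$ in the fibre over $x$ whenever there is one strictly below, and vice versa) is guaranteed by \eqref{eq:l2c} for \emph{any} optimizer, with or without nested supports, and no quantitative positive-mass verification is needed: the monotonicity principle is applied to finitely supported measures concentrated on $\Gamma$, so the concern about "small mass available for the swap" and about shared atoms at $a$ and $b$ is moot. What the nested supports condition actually provides is the \emph{ordering} of the five points. For a minimizer one must rule out configurations $(x_-,y_+),(x_+,y_-)\in\Gamma$ with $y_+\le x_-$, $y_-\le x_+$, $x_-<x_+$ and $y_+<y_-$; without nested supports it can happen that $x_-<y_-\le x_+$, in which case $x_-\notin[y_-,z]$ and the monotonicity of $f$ from Lemma \ref{lemma: 5 point cost} on $[y_-,z]$ cannot be used to compare $f(x_-)$ and $f(x_+)$ (and symmetrically for condition $(d)$). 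The hypothesis $\mu([a,b])=1$, $\nu((a,b))=0$ forces $y_+<y_-\le a\le x_-<x_+\le b\le z$, which places both $x_-$ and $x_+$ inside the interval of monotonicity; this is exactly the point made in the remark following the paper's proof. If you carry out "the same scheme with the direction reversed" without isolating this ordering issue, the contradiction step fails in the excluded configurations, so the verification you should be doing is of the point ordering, not of the mass of the swap.
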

\begin{remark}\label{rk:integvarphi}
  Let $\lambda\in[\varphi'_r(1),\varphi'_l(1)]$ where $\varphi'_r(1)$ and $\varphi'_l(1)$ are the respective right-hand and left-hand derivatives of the increasing concave function $\varphi$ at point $1$. We have $\forall z\in\R_+,\;\varphi(0)\le\varphi(z)\le \varphi(1)+\lambda(z-1)$ so that for $\pi\in\Pi_M(\mu,\nu)$,
  \begin{align*}
   \varphi(0)&\le\int_{\R^2}\varphi (\vert x-y\vert) \pi(dx,dy)\le \varphi(1)+\lambda\int_{\R^2}\vert x-y\vert \pi(dx,dy)\\&\le \varphi(1)+\lambda\left(\int_\R|x|\mu(dx)+\int_\R|y|\nu(dy)\right),
  \end{align*}
    where the right-hand side is finite for $\mu,\nu\in{\cal P}_1(\R)$. The concavity of the function $\varphi:\R_+\to\R$ implies that this function is continuous on $(0,+\infty)$. It may have a positive jump $\lim_{x\to 0+}\varphi(x)-\varphi(0)$ at the origin and then $\R^2\ni(x,y)\mapsto \varphi (\vert x-y\vert)$ is lower semi-continuous but not upper-semicontinuous.
\end{remark}

\begin{remark}
For $\rho\in(0,1]$, since $\R_+\ni z\mapsto z^\rho$ is concave and increasing, a martingale coupling that maximizes $\int \vert x-y\vert^\rho\,\pi(dx,dy)$ is non-decreasing.
\end{remark}
It turns out that the nested supports condition is also necessary for the existence of a non-increasing coupling in $\Pi_M(\mu,\nu)$ and that, under this condition there is exists a unique non-increasing coupling and also a unique non-decreasing coupling in $\Pi_M(\mu,\nu)$.
\begin{proposition}\label{prop:deccoupl}
  Let $\mu,\nu\in{\cal P}_1(\R)$ be such that $\mu\leq_{cx}\nu$. The following assertions are equivalent \begin{description}
  \item[$(i)$] there exists a non-increasing martingale coupling in $\Pi_M(\mu,\nu)$,
  \item[$(ii)$]$\mu$ and $\nu$ satisfy the nested supports condition : $\exists-\infty<a\leq b<+\infty$ s.t. $\mu\left([a,b]\right)=1$ and $\nu\left((a,b)\right) = 0$,
   \item[$(iii)$] there exists a unique non-increasing martingale coupling $\pi^\downarrow$ in $\Pi_M(\mu,\nu)$ and $\pi^\downarrow(\{(a,a)\})=\mu(\{a\})\wedge \nu(\{a\})$, $\pi^\downarrow(\{(b,b)\})=\mu(\{b\})\wedge \nu(\{b\})$. 
   \end{description}
   Moreover, under the nested supports condition, there exists a unique non-decreasing martingale coupling $\pi^\uparrow\in \Pi_M(\mu,\nu)$ and, when $u_\nu(a)>u_\mu(a)$ (resp. $u_\nu(b)>u_\mu(b)$), then $\pi^\uparrow(\{(a,a)\})=(\mu(\{a\})-p_-(a)-p_+(a))^+$ (resp. $\pi^\uparrow(\{(b,b)\})=(\mu(\{b\})-p_-(b)-p_+(b))^+$) where $p_-$ and $p_+$ are defined in Proposition \ref{prop:nu0} below.
\end{proposition}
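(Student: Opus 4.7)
My plan is to establish the chain of implications $(iii)\Rightarrow(i)\Rightarrow(ii)\Rightarrow(iii)$ and treat existence and uniqueness of the non-decreasing coupling separately; the direction $(iii)\Rightarrow(i)$ is by definition.

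For $(i)\Rightarrow(ii)$, I start with a non-increasing $\pi\in\Pi_M(\mu,\nu)$ supported on some Borel set $\Gamma$ satisfying (c)--(d). The key observation is that these conditions propagate a global anti-monotone structure: if $(x_0,y_0)\in\Gamma$ with $y_0<x_0$, then for every $x>x_0$ in $\supp\mu$, any lower-branch value of $\pi_x$ lies in $(-\infty,y_0]$; the analogous statement for the upper branch follows from (d). I first show $\supp\mu$ is bounded: if $\sup\supp\mu=+\infty$, iterating (d) along $x_n\in\supp\mu$ with $x_n\to+\infty$ forces upper-branch values $z_n>x_n$ to satisfy $z_n\leq z_0$ for a fixed $z_0$, which is impossible; and if instead $\pi_{x_n}=\delta_{x_n}$ for large $n$, then two distinct diagonal points in $\Gamma$ violate (c). Setting $a:=\inf\supp\mu$ and $b:=\sup\supp\mu$, finite, I then show $\nu((a,b))=0$: any point $(x_0,c)\in\Gamma$ with $c\in(a,b)$ would, through (c) or (d) according to the sign of $c-x_0$, force an accumulation of $\supp\mu$ outside $[a,b]$, contradicting the choice of $a$ and $b$.

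For $(ii)\Rightarrow(iii)$, I invoke Proposition~\ref{prop:variational lemma to nd coupling}(ii) with $\varphi(z)=z$: under the nested supports assumption, the minimum of $\int|x-y|\,d\pi$ is attained by a non-increasing coupling $\pi^\downarrow$. Since $\nu$ is supported on $(-\infty,a]\cup[b,+\infty)$, for $\mu$-a.e.\ $x\in(a,b)$ the conditional $\pi^\downarrow_x$ is supported by at most two points --- one in $(-\infty,a]$, one in $[b,+\infty)$ --- with mean $x$. The anti-monotone property forces the two maps $f(x)\leq a$ and $g(x)\geq b$ to be non-increasing, which determines them uniquely from the quantile functions of the restrictions of $\nu$ to $(-\infty,a]$ and $[b,+\infty)$ via the standard identities (a)--(d) listed in the introduction. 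The values of $\pi^\downarrow$ at the corner atoms $(a,a)$ and $(b,b)$ then follow by matching $\nu$ at $\{a\}$ and $\{b\}$ against the mass delivered there by $f$ and $g$; this yields $\mu(\{a\})\wedge\nu(\{a\})$ and $\mu(\{b\})\wedge\nu(\{b\})$, and simultaneously establishes uniqueness of $\pi^\downarrow$.

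For the non-decreasing coupling, Proposition~\ref{prop:variational lemma to nd coupling}(i) applied to $\varphi(z)=z$ produces a non-decreasing martingale coupling in $\Pi_M(\mu,\nu)$; under nested supports a parallel rigidity argument with non-decreasing maps $f,g$ gives uniqueness of $\pi^\uparrow$. The atoms at $(a,a)$ and $(b,b)$ correspond to the residual $\mu$-mass at the endpoints not absorbed by the non-trivial martingale splits described by $p_-,p_+$ in Proposition~\ref{prop:nu0}, which yields the formula $(\mu(\{a\})-p_-(a)-p_+(a))^+$ under the potential-gap condition $u_\nu(a)>u_\mu(a)$, and analogously at $b$. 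The hardest step is $(i)\Rightarrow(ii)$: promoting the pointwise anti-monotone conditions (c)--(d) to the global bounds $a,b\in\R$ and $\nu((a,b))=0$ requires a careful case analysis combined with a measure-theoretic argument valid for $\mu$-a.e.\ $x$, so one must be vigilant about the choice of the representative set $\Gamma$ rather than working only with $\supp\pi$.
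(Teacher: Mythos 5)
Your architecture is the same as the paper's ($(iii)\Rightarrow(i)$ trivial, $(i)\Rightarrow(ii)$ by propagating conditions $(c)$--$(d)$, $(ii)\Rightarrow(iii)$ via the non-increasing minimizer from Proposition \ref{prop:variational lemma to nd coupling}$(ii)$ plus a rigidity argument), but the uniqueness step contains a genuine gap. You base uniqueness on the claim that for $\mu$-a.e.\ $x\in(a,b)$ the conditional $\pi^\downarrow_x$ is supported by at most two points $f(x)\le a$ and $g(x)\ge b$. This is false when $\mu$ has atoms: conditions $(c)$--$(d)$ only compare pairs with \emph{distinct} first coordinates and say nothing about the internal structure of a single fibre, so if $\mu(\{x\})>0$ the lower branch of $\pi_x$ can be spread over a whole quantile block of $\nu$ restricted to $(-\infty,a]$ (e.g.\ $\mu=\frac13(\delta_{-1}+\delta_0+\delta_1)$ with $\nu$ having a diffuse part on $[-3,-2]$). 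The correct rigidity statement, and the one the paper proves, is cumulative: $\int_{y\le x}\mathds{1}_{\{z\le a\}}\pi^\downarrow(dy,dz)$ equals the top quantile block of mass $\psi_{\pi^\downarrow}(x):=\pi^\downarrow((-\infty,x]\times(-\infty,a])$ of $\nu|_{(-\infty,a]}$ (and symmetrically on $[b,+\infty)$); the martingale identity then reads $\int_{(-\infty,x]}y\,\mu(dy)=G_x(\psi_{\pi^\downarrow}(x))$ with $G_x$ strictly decreasing because $a<b$, which pins down $\psi_{\pi^\downarrow}$, and a monotone class argument upgrades this to $\pi^\downarrow=\pi^\star$. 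Your two-function picture recovers this only for atomless $\mu$. The corner-atom values $\mu(\{a\})\wedge\nu(\{a\})$ and $(\mu(\{a\})-p_-(a)-p_+(a))^+$ are likewise asserted rather than derived; the former comes out of an explicit evaluation of $G_a$, the latter from Proposition \ref{prop:nu0}.

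In $(i)\Rightarrow(ii)$ your argument is essentially sound but has a small hole created by the choice $a=\inf\supp\mu$, $b=\sup\supp\mu$: you must also exclude $\nu$-mass in $(a,b)$ coming from the diagonal, i.e.\ from $\pi(\{(x,x)\})$ with $x\in(a,b)$, and your dichotomy \textquotedblleft according to the sign of $c-x_0$\textquotedblright\ silently skips $c=x_0$. This is fixable (a point $(x,x)\in\Gamma$ with $x>\inf\supp\mu$ together with a lower-branch point $(x',y')\in\Gamma$, $y'\le x'<x$, violates $(c)$), but the paper sidesteps it by taking $a=\inf\{x:\exists y\le x,\ (x,y)\in\Gamma\}$ and $b=\sup\{x:\exists z\ge x,\ (x,z)\in\Gamma\}$, for which condition $(c)$ immediately forces \emph{every} lower-branch target, diagonal ones included, into $(-\infty,a]$, giving $\mu([a,b])=1$ and $\nu((a,b))=0$ in one stroke. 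Your bounded-support argument via iterating $(d)$ along $x_n\to+\infty$, including the all-diagonal subcase, is correct.
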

The next proposition generalizes to continuous, increasing and concave functions $\varphi:\R_+\to\R$ the statement in Theorem 7.3 \cite{BeJu16} which deals with the case when $\varphi$ is the identity function . 
\begin{proposition}\label{prop:uniccrois}
Let $\mu,\nu\in\mathcal P(\R)$ be such that $\mu\leq_{cx}\nu$ and that $\mu$ does not weight points. Let $\varphi: \R_+\to\R$ be some continuous increasing concave function. There exists a unique non-decreasing martingale coupling $\tilde \pi$ that maximizes $\int \varphi (\vert x-y\vert) \pi(dx,dy)$. Moreover, there exist two non-decreasing functions $T_1,T_2:\R\to\R$ such that $T_1(x)\le x\le T_2(x)$ and $\tilde \pi$ is concentrated on the graphs of these functions.\end{proposition}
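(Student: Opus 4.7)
The plan is to combine Proposition~\ref{prop:variational lemma to nd coupling}(i) with a structural description of non-decreasing martingale couplings when $\mu$ does not weight points, and then conclude uniqueness by a convex-combination argument.

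First I would invoke Proposition~\ref{prop:variational lemma to nd coupling}(i): since $\varphi$ is continuous, increasing and concave, a maximizer $\tilde\pi \in \Pi_M(\mu,\nu)$ exists and \emph{every} maximizer is non-decreasing. It then remains to prove two things: (1)~any non-decreasing martingale coupling $\pi \in \Pi_M(\mu,\nu)$ with $\mu$ atomless is concentrated on the graphs of two non-decreasing functions $T_1(x) \le x \le T_2(x)$, and (2)~uniqueness of the maximizer.

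For (1), I would disintegrate $\pi(dx,dy)=\mu(dx)\pi_x(dy)$ and fix a Borel set $\Gamma$ as in Definition~\ref{def:NdCoupling}; replacing $\Gamma$ by its closure if needed (the non-strict inequalities in (a)--(b) are preserved under closure), I may assume $\Gamma$ is closed. Setting $\ell(x) := \inf\{y \le x : (x,y)\in\Gamma\}$ and $L(x) := \sup\{y \le x : (x,y)\in\Gamma\}$ whenever this lower fibre is non-empty, condition~(a) forces $L(x_1) \le \ell(x_2)$ for all $x_1 < x_2$. The open intervals $(\ell(x),L(x))$ are therefore pairwise disjoint, so only countably many can be non-empty; since $\mu$ is atomless, $\ell(x) = L(x) =: T_1(x)$ for $\mu$-a.e.\ $x$, and $\pi_x((-\infty,x])$ is concentrated at $T_1(x)$. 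The same argument applied to condition~(b) yields $T_2(x)$ carrying $\pi_x([x,\infty))$. The martingale constraint then forces
\[
\pi_x = \frac{T_2(x)-x}{T_2(x)-T_1(x)}\delta_{T_1(x)} + \frac{x-T_1(x)}{T_2(x)-T_1(x)}\delta_{T_2(x)}
\]
(with the convention $\pi_x=\delta_x$ when $T_1(x)=T_2(x)=x$). Monotonicity of $T_1,T_2$ on their $\mu$-full domain follows directly from (a) and (b), and monotone functions extend canonically to all of $\R$.

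For (2), let $\tilde\pi^1,\tilde\pi^2$ be two maximizers and set $\tilde\pi^3:=\tfrac12(\tilde\pi^1+\tilde\pi^2)\in\Pi_M(\mu,\nu)$. By linearity, $\tilde\pi^3$ is again a maximizer, hence non-decreasing by Proposition~\ref{prop:variational lemma to nd coupling}(i), so step~(1) produces functions $T_1^3,T_2^3$ concentrating $\tilde\pi^3$ on two graphs. As $\tilde\pi^i \le 2\tilde\pi^3$ for $i=1,2$, both $\tilde\pi^1$ and $\tilde\pi^2$ are also concentrated on these graphs; the martingale constraint then determines the two conditional weights uniquely, forcing $\pi^1_x = \pi^2_x = \pi^3_x$ for $\mu$-a.e.\ $x$ and hence $\tilde\pi^1 = \tilde\pi^2$. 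The graph description of the unique maximizer $\tilde\pi$ is then step~(1) applied to $\tilde\pi$ itself. The main obstacle is step~(1), more precisely the careful measure-theoretic handling of the Borel set $\Gamma$ so as to combine the non-decreasing property with atomlessness of $\mu$ and rule out a positive-$\mu$-measure set of $x$'s whose conditional distribution spreads over more than one point on either side of $x$.
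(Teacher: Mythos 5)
Your argument is correct, and it reaches the conclusion by a genuinely different route from the paper. The paper handles existence exactly as you do (via Proposition \ref{prop:variational lemma to nd coupling}), but for uniqueness and the two-graph structure it defers to the proof of Theorem 7.3 of Beiglb\"ock and Juillet, adapting the forbidden configurations there to the weaker monotonicity supplied by Lemma \ref{lemma: 5 point cost} and invoking Lemma \ref{lemma: x,z in gamma}; the binomial structure of the optimizer is thus extracted from the variational/competitor machinery. You instead prove the purely geometric statement that \emph{every} non-decreasing martingale coupling with atomless $\mu$ is concentrated on two monotone graphs, using only Definition \ref{def:NdCoupling}: condition $(a)$ forces $L(x_1)\le \ell(x_2)$ for $x_1<x_2$, so the intervals $(\ell(x),L(x))$ are pairwise disjoint, at most countably many are non-degenerate, and atomlessness of $\mu$ finishes (condition $(b)$ handling the upper fibres symmetrically); combined with the midpoint trick $\tilde\pi^3=\frac12(\tilde\pi^1+\tilde\pi^2)$ and Proposition \ref{prop:variational lemma to nd coupling}$(i)$ applied to $\tilde\pi^3$, this yields uniqueness without revisiting any competitor argument. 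Your route is more self-contained and proves a slightly stronger structural fact (all non-decreasing couplings, not only optimizers, are binomial when $\mu$ is atomless). One small caution: the parenthetical claim that conditions $(a)$--$(b)$ are preserved under passage to the closure of $\Gamma$ is false in general — a sequence of upper points $(x^n,y^n)$ with $y^n>x^n$ can converge to a diagonal point $(x,x)$, which then enters the lower configurations and can create a violation of $(a)$. Fortunately you never need closedness: the definitions of $\ell$ and $L$ and the disjoint-interval count work for an arbitrary Borel $\Gamma$, so that sentence should simply be deleted.
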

Notice that the example in Section 7.3.1 \cite{BeJu16} shows that for general continuous, increasing and concave functions $\varphi$, the uniqueness of maximizers may fail without the continuity assumption on $\mu$. The next proposition illustrates that unique maximizers may be distinct for distinct power functions $\varphi$. Since those maximizers are non-decreasing couplings, this motivates the investigation of the set of non-decreasing couplings in the next subsection.
\begin{proposition}\label{prop:diffcouplcr}
Let $0<\rho'<\rho\leq1$, $y<\frac{y+z}{2}<m<z$, $$\beta\in \left(0,2\rho\left(\frac{m-y}{z-y}(z-m)^{\rho-1} -\frac{z-m}{z-y}(m-y)^{\rho-1}\right) \wedge \frac{2m-y-z}{z-m}\right),\mbox{ and}$$
\[\mu_\varepsilon = \frac{1}{1+\left(1+\beta \varepsilon^{1-\rho}\right)^\frac{1}{\rho}}\delta_{m-\left(1+\beta \varepsilon^{1-\rho}\right)^\frac{1}{\rho}\varepsilon} +\frac{\left(1+\beta \varepsilon^{1-\rho}\right)^\frac{1}{\rho}}{1+\left(1+\beta \varepsilon^{1-\rho}\right)^\frac{1}{\rho}} \delta_{m+\varepsilon},\]
\[\nu = \frac13\delta_{m} + \frac{2(z-m)}{3(z-y)}\delta_{y}+\frac{2(m-y)}{3(z-y)}\delta_{z}.\]
For $\varepsilon< (z-m) \wedge \frac{
m-y}{{(1+\beta)}^\frac{1}{\rho}}\wedge\frac{2(z-m)(m-y)}{3(z-y)}\wedge 1$, $\Pi_M(\mu_\varepsilon,\nu)$  contains distinct martingale couplings and any element of $\Pi_M(\mu_\varepsilon,\nu)$ is non-decreasing. For $\varepsilon$ small enough, the non-decreasing martingale coupling that attains $\overline{\mathcal M}_{\rho'}(\mu_\varepsilon,\nu)$ is different from the one that attains $\overline{\mathcal M}_\rho(\mu_\varepsilon,\nu)$.
\end{proposition}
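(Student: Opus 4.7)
Since $\mu_\varepsilon$ has exactly two atoms $x_1:=m-a_\varepsilon\varepsilon$ and $x_2:=m+\varepsilon$, where $a_\varepsilon:=(1+\beta\varepsilon^{1-\rho})^{1/\rho}$, every $\pi\in\Pi(\mu_\varepsilon,\nu)$ is supported in $\{x_1,x_2\}\times\{y,m,z\}$, hence determined by six nonnegative masses $(m_{ij})_{i\in\{1,2\},\,j\in\{y,m,z\}}$. The two row-sum, three column-sum (one redundant), and two martingale constraints (the second redundant as $\mu_\varepsilon$ and $\nu$ share mean $m$) leave a single degree of freedom, which I parametrize by $t:=m_{1m}$; the remaining five masses are explicit affine functions of $t$. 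Under the hypotheses $\varepsilon\le 1$, $\varepsilon<z-m$, $\varepsilon<(m-y)/(1+\beta)^{1/\rho}$, $\varepsilon<2(z-m)(m-y)/(3(z-y))$, checking positivity of all six masses singles out a nondegenerate compact interval $[t_-(\varepsilon),t_+(\varepsilon)]$ of feasible parameters, which simultaneously gives $\mu_\varepsilon\le_{cx}\nu$ and the existence of distinct couplings $(\pi_t)_{t\in[t_-,t_+]}$. Every such $\pi_t$ is non-decreasing: the points of $\supp(\pi_t)$ with second coordinate $\le$ first lie in $\{(x_1,y),(x_2,y),(x_2,m)\}$ and those with second coordinate $\ge$ first lie in $\{(x_1,m),(x_1,z),(x_2,z)\}$, so conditions (a) and (b) of Definition \ref{def:NdCoupling} reduce to the trivial inequalities $y\le m\le z$.

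Because each $m_{ij}(t)$ is affine in $t$, so is $f_r(t):=\int|x-y|^r\pi_t(dx,dy)$, and its maximum on $[t_-,t_+]$ is attained at the endpoint prescribed by the sign of the constant derivative
\[
f_r'=(m-x_1)^r-(x_2-m)^r+\frac{(z-m)[(x_2-y)^r-(x_1-y)^r]+(m-y)[(z-x_2)^r-(z-x_1)^r]}{z-y}.
\]
The defining property of $a_\varepsilon$ yields the exact identity $(m-x_1)^\rho-(x_2-m)^\rho=(a_\varepsilon\varepsilon)^\rho-\varepsilon^\rho=\beta\varepsilon$. Setting $A:=m-y$, $B:=z-m$, the mean value theorem applied to $u\mapsto u^\rho$ writes the smooth remainder as $\rho(1+a_\varepsilon)\varepsilon(BA^{\rho-1}-AB^{\rho-1})/(A+B)+o(\varepsilon)$, with the $o(\varepsilon)$ being identically $0$ when $\rho=1$. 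When $\rho<1$ we have $a_\varepsilon\to 1$, so the first bound $\beta<2\rho(AB^{\rho-1}-BA^{\rho-1})/(A+B)$ makes $f_\rho'/\varepsilon$ converge to a negative limit; when $\rho=1$, a direct computation gives $f_\rho'=2\varepsilon(B(1+\beta)-A)/(A+B)$, negative by the second bound $\beta<(2m-y-z)/(z-m)=(A-B)/B$. In either case the $\rho$-maximizer is $\pi_{t_-}$.

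For the cost $|x-y|^{\rho'}$, the cusp term $(m-x_1)^{\rho'}-(x_2-m)^{\rho'}=\varepsilon^{\rho'}(a_\varepsilon^{\rho'}-1)$ has leading order $\frac{\rho'}{\rho}\beta\varepsilon^{\rho'+1-\rho}$ when $\rho<1$ (expand $(1+\beta\varepsilon^{1-\rho})^{\rho'/\rho}$), and equals $((1+\beta)^{\rho'}-1)\varepsilon^{\rho'}$ when $\rho=1$; in both cases it is strictly positive, and since $\rho'<\rho\le 1$ forces $\rho'+1-\rho<1$ and $\rho'<1$, its order strictly exceeds the $O(\varepsilon)$ smooth remainder. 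Hence $f_{\rho'}'>0$ for small $\varepsilon$, so the $\rho'$-maximizer is $\pi_{t_+}\ne\pi_{t_-}$, completing the proof.

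The main difficulty is the asymptotic scale-matching: the definition of $a_\varepsilon$ is engineered so that the cusp contribution $\beta\varepsilon$ sits at exactly the same $\varepsilon$-order as the smooth Taylor remainder at the critical exponent $\rho$, which is why the sign of $f_\rho'$ can be controlled by the upper bound on $\beta$; for any strictly smaller $\rho'$, the same cusp moves to a strictly larger order and the sign flips. Handling $\rho<1$ and $\rho=1$ in parallel also requires care because $a_\varepsilon$ tends to $1$ only in the former case, which is reflected in the two-part form of the hypothesis on $\beta$.
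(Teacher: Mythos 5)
Your proposal is correct and follows essentially the same route as the paper's proof: both parametrize the one-dimensional affine family of couplings by the mass placed at $(x_-,m)$, identify the constant derivative of the cost as $f_{\tilde\rho}(x_+)-f_{\tilde\rho}(x_-)$ with $f_{\tilde\rho}(x)=\frac{z-m}{z-y}(x-y)^{\tilde\rho}+\frac{m-y}{z-y}(z-x)^{\tilde\rho}-|x-m|^{\tilde\rho}$, and exploit the identity $(a_\varepsilon\varepsilon)^\rho-\varepsilon^\rho=\beta\varepsilon$ to balance the cusp term against the $O(\varepsilon)$ smooth remainder at exponent $\rho$ (with the separate $\rho=1$ case) while letting the cusp dominate at order $\varepsilon^{1-\rho+\rho'}$ for $\rho'<\rho$. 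The only place where you assert rather than verify is the nondegeneracy of the feasible interval $[t_-(\varepsilon),t_+(\varepsilon)]$ (equivalently, the existence of distinct martingale couplings), which the paper establishes carefully from the strict inequality $u_{\mu_\varepsilon}(m)<u_\nu(m)$ guaranteed by $\varepsilon<\frac{2(z-m)(m-y)}{3(z-y)}$; this is a routine but non-trivial check that should be carried out explicitly.
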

\subsection{Directionally decomposed non-decreasing martingale couplings\label{sec:dirdecmartcoupl}}
The uniqueness of non-decreasing couplings under the nested supports condition which determines the parts of the measure $\nu$ attained in the left direction and in the right direction in any martingale coupling motivates the introduction of the following decomposition.
For $\pi\in\Pi_M(\mu,\nu)$, we denote by 
\begin{align*}
    \nu^\pi_{l}(dy)&=\int_{x\in\R}\mathds{1}_{\{y<x\}}\pi(dx,dy)=\int_{x\in\R}\mathds{1}_{\{y<x\}}\pi_x(dy)\mu(dx),\\\nu^\pi_{0}(dy)&=\int_{x\in\R}\mathds{1}_{\{y=x\}}\pi(dx,dy)=\int_{x\in\R}\mathds{1}_{\{y=x\}}\pi_x(dy)\mu(dx),\\\nu^\pi_{r}(dy)&=\int_{x\in\R}\mathds{1}_{\{y>x\}}\pi(dx,dy)=\int_{x\in\R}\mathds{1}_{\{y>x\}}\pi_x(dy)\mu(dx).
\end{align*}
Note that $\nu^\pi_l+\nu^\pi_0+\nu^\pi_r=\nu$. 
For $(\nu_l,\nu_r)$ a couple of non-negative measures such that $\nu_l+\nu_r\le\nu$, we denote $$\Pi_M(\mu,\nu,\nu_l,\nu_r)=\{\pi\in\Pi_M(\mu,\nu):(\nu^\pi_l,\nu^\pi_0,\nu^\pi_r)=(\nu_l,\nu-\nu_l-\nu_r,\nu_r)\}.$$
Our main result, stated in the next theorem and in assertion $(i)$ of the corollary that follows is that $\Pi_M(\mu,\nu,\nu_l,\nu_r)$ is not empty if and only if there exists a coupling in this set which is non-decreasing away from the support of $\nu-\nu_l-\nu_r$ where it goes straight. 
\begin{theorem}\label{thm:inccoup}
Let $\mu,\nu\in\mathcal P_1(\R)$ be such that $\mu\leq_{cx}\nu$ and $\mu\neq \nu$. Let $(\nu_l,\nu_r)$ a couple of non-negative measures such that $\nu_l+\nu_r=\nu$. We have
$$\Pi_M(\mu,\nu,\nu_l,\nu_r)\neq \emptyset\Longleftrightarrow \exists!\pi^\uparrow\in\Pi_M(\mu,\nu,\nu_l,\nu_r)\mbox{ non-increasing},$$
and then, for each continuous increasing concave function $\varphi:\R_+\to\R$, \begin{equation}
   \forall \pi\in\Pi_M(\mu,\nu,\nu_l,\nu_r)\setminus\{\pi^\uparrow\},\;\int_{\R^2} \varphi(|y-x|)\pi(dx,dy)<\int_{\R^2}\varphi(|y-x|)\pi^{\uparrow}(dx,dy)\label{eq:maxintabs}.
\end{equation}\end{theorem}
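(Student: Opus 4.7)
The $(\Leftarrow)$ direction is immediate, so I concentrate on the forward implication and the strict inequality \eqref{eq:maxintabs}, which I would establish jointly via a variational argument. My plan is to fix once and for all a continuous, strictly concave, increasing function $\varphi_0:\R_+\to\R$ (for example $\varphi_0(z)=\sqrt{z}$) and study the maximization of $\pi \mapsto \int_{\R^2}\varphi_0(|y-x|)\pi(dx,dy)$ over $\Pi_M(\mu,\nu,\nu_l,\nu_r)$. The goal is to exhibit a unique maximizer and to show it is non-decreasing; calling it $\pi^\uparrow$ will then simultaneously yield the existence and uniqueness of the non-decreasing coupling in the constraint set.

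For the existence of a maximizer, I use that every $\pi \in \Pi_M(\mu,\nu,\nu_l,\nu_r)$ satisfies $\pi(\{y=x\})=0$ — the assumption $\nu_l+\nu_r=\nu$ leaves no mass on the diagonal — so $(x,y)\mapsto\varphi_0(|y-x|)$ is $\pi$-a.e.\ continuous, and by Remark \ref{rk:integvarphi} the cost is uniformly bounded over $\Pi_M(\mu,\nu)$. Tightness of $\Pi_M(\mu,\nu)$ then yields a weak-$*$ cluster point $\pi^*$ of a maximizing sequence. The delicate point is that the constraint $\nu_l^\pi=\nu_l$ involves the open set $\{y<x\}$ and need not pass to weak-$*$ limits; however, any mass defecting to the diagonal in the limit attains the minimum value $\varphi_0(0)$ and strictly decreases the objective along the sequence, contradicting optimality. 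Thus $\pi^* \in \Pi_M(\mu,\nu,\nu_l,\nu_r)$.

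To show $\pi^*$ is non-decreasing, I adapt the cyclical-monotonicity argument of Proposition \ref{prop:variational lemma to nd coupling}(i) to the constraint set. Were $\pi^*$ not non-decreasing, its support would contain a violating configuration either in the left region $\{y<x\}$ or in the right region $\{y>x\}$. A multi-point local rearrangement in the spirit of Lemma \ref{lemma: 5 point cost}, confined entirely to the offending region, preserves both marginals and the martingale property and — crucially because no mass crosses the diagonal — also the $(\nu_l,\nu_r)$ decomposition, while strictly increasing the objective by strict concavity of $\varphi_0$. This contradicts the optimality of $\pi^*$. For uniqueness, any non-decreasing coupling in $\Pi_M(\mu,\nu,\nu_l,\nu_r)$ is characterized on its left part by the (unique) comonotonic coupling of its $x$-marginal there with $\nu_l$, and analogously on the right; it is therefore fully determined by the decomposition $\mu=\mu_l^\pi+\mu_r^\pi$ (where $\mu_l^\pi$ denotes the $x$-marginal of $\pi$ restricted to $\{y<x\}$). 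The martingale constraint combined with the comonotonic rigidity pins this decomposition down uniquely, giving $\pi^\uparrow:=\pi^*$.

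For the strict inequality \eqref{eq:maxintabs} with an arbitrary continuous, increasing, concave $\varphi$ and $\pi\ne\pi^\uparrow$ in the constraint set, $\pi$ cannot be non-decreasing (by the uniqueness just established), so the swap argument of the preceding paragraph applied with $\varphi$ in place of $\varphi_0$ produces a strict improvement — the strictness coming from strict concavity of $\varphi$ on relevant sub-intervals, or in the affine case from the direct identity $\int|y-x|\,d\pi = 2\int x\,d\mu_l^\pi + \mathrm{const}$ together with the extremality of $\mu_l^{\pi^\uparrow}$. Iterating or taking a compactness limit drives the value up to $\int\varphi\,d\pi^\uparrow$, establishing \eqref{eq:maxintabs}. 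The main obstacle is engineering the multi-point swap: it must simultaneously preserve both marginals, the martingale constraint, and the $(\nu_l,\nu_r)$ decomposition, which rules out two-point exchanges (martingale-violating) and any diagonal-crossing swap ($\nu_l$-violating), forcing a multi-point construction staying entirely within one region — a more constrained adaptation of the 5-point exchange from \cite{BeJu16}.
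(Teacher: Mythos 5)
Your overall architecture differs from the paper's: the paper constructs $\pi^\uparrow$ explicitly (a scalar equation $G(u,\phi_\uparrow(u))=\int_0^u F_\mu^{-1}(w)\,dw$ determines how much mass each quantile of $\mu$ sends to the left, and the coupling is then written down via quantile maps), whereas you propose to obtain it as the maximizer of a strictly concave cost. The first genuine gap is in your existence step. As you note, $\Pi_M(\mu,\nu,\nu_l,\nu_r)$ is not weakly closed: along a maximizing sequence the only possible leakage is indeed onto the diagonal (since $\{y\le x\}$ and $\{y\ge x\}$ are closed), so a weak limit $\pi^*$ satisfies $\nu^{\pi^*}_l\le\nu_l$, $\nu^{\pi^*}_r\le\nu_r$ and may charge the diagonal. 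But your resolution --- that mass defecting to the diagonal ``strictly decreases the objective along the sequence, contradicting optimality'' --- does not work: $(x,y)\mapsto\varphi_0(|y-x|)$ is continuous and uniformly integrable, so $\int\varphi_0\,d\pi^*$ equals the supremum $S$ over the constraint set, and nothing is contradicted by $\pi^*$ lying outside that set. To reach a contradiction you would need to produce, from a $\pi^*$ carrying diagonal mass, an element of $\Pi_M(\mu,\nu,\nu_l,\nu_r)$ with value strictly greater than $S$; redistributing diagonal mass while preserving the marginals, the martingale property and the exact $(\nu_l,\nu_r)$ split is precisely the delicate surgery the paper performs elsewhere (Step 2 of the proof of Proposition \ref{prop:nu0}) and is not supplied here. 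Since in your scheme the very existence of a non-decreasing coupling rests on this maximizer, the gap is load-bearing.

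Second, the ``multi-point local rearrangement confined entirely to one region'' is asserted, not constructed, and it is the crux both of the non-decreasingness of optimizers and of \eqref{eq:maxintabs}. The paper's Step 4 carries it out by replacing the prohibited set of Lemma 1.11 \cite{BeJu16} with three-point configurations $M_l$, $M_r$ lying entirely in $\{y<x\}$ resp.\ $\{y>x\}$ (Lemma \ref{lemma: x,z in gamma} supplies the needed companion point on the other side of $x_+$ or $x_-$), and by exhibiting explicit competitors $\alpha_p'$ whose restrictions to $\{z<v\}$ and $\{z>v\}$ have the same marginals as those of $\alpha_p$ --- that identity is exactly what keeps the modified coupling inside $\Pi_M(\mu,\nu,\nu_l,\nu_r)$. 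Your closing step ``iterating or taking a compactness limit drives the value up to $\int\varphi\,d\pi^\uparrow$'' is not an argument; the correct conclusion is that any optimizer is non-decreasing, hence equals $\pi^\uparrow$ by uniqueness, so every other admissible coupling is strictly suboptimal. Your uniqueness sketch, by contrast, is essentially the paper's (the non-decreasing structure forces the left-mass function $\phi_\pi$ to solve the strictly monotone equation $G(F_\mu(x),\cdot)=\int_0^{F_\mu(x)}F_\mu^{-1}(w)\,dw$), though the strict monotonicity of $G$ in its second argument is itself a nontrivial quantile comparison (the paper's Step 1.3) that ``comonotonic rigidity'' does not dispense with.
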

\begin{corollary}\label{cor:inccoup}
 Let $\mu,\nu\in\mathcal P_1(\R)$ be such that $\mu\leq_{cx}\nu$ and $\mu\neq \nu$. For $(\nu_l,\nu_r)$ a couple of non-negative measures such that $\nu_l+\nu_r\le\nu$, we have that \begin{align}
      &(i)\;\Pi_M(\mu,\nu,\nu_l,\nu_r)\neq \emptyset\Longleftrightarrow \exists!\pi^\uparrow\in\Pi_M(\mu,\nu,\nu_l,\nu_r) \mbox{ s.t. }\frac{\pi^\uparrow(dx,dy)+(\nu_l+\nu_r-\nu)(dx)\delta_x(dy)}{\nu_l(\R)+\nu_r(\R)}\notag\\&\phantom{(i)\;\Pi_M(\mu,\nu,\nu_l,\nu_r)\neq \emptyset\Longleftrightarrow \exists!\pi^\uparrow\in\Pi_M(\mu,\nu,\nu_l,\nu_r) \mbox{ s.t. }}\mbox{ is non-decreasing}\notag\\&\mbox{ and then }\forall \pi\in\Pi_M(\mu,\nu,\nu_l,\nu_r)\setminus\{\pi^\uparrow\},\;\int_{\R^2} \varphi(|y-x|)\pi(dx,dy)<\int_{\R^2}\varphi(|y-x|)\pi^{\uparrow}(dx,dy)\label{eq:cormaxintabs}\\&\mbox{ for each continuous increasing concave function $\varphi:\R_+\to\R$},\notag\\
                                                                                                                                                                                        &(ii)\;\exists\pi\in\Pi_M(\mu,\nu,\nu_l,\nu_r)\mbox{ non-decreasing }\notag\\&\phantom{(ii)}\Longrightarrow \nu-\nu_l-\nu_r=\mathds{1}_{\{u_\mu=u_\nu\}}\mu+\sum_{\stackrel{x\in\R:}{\mu\wedge\nu(\{x\})\times(u_\nu(x)-u_\mu(x))>0}}p(x)\delta_x\mbox{ for some  }p(x)\in[0,\mu\wedge\nu(\{x\})],\notag\\&(iii)\mbox{ There is at most one non-decreasing coupling $\pi\in\Pi_M(\mu,\nu,\nu_l,\nu_r)$.}\notag
   \end{align}
 \end{corollary}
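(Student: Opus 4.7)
I will prove (i) and (iii) by reducing to Theorem \ref{thm:inccoup} through a bijection that strips away the diagonal part, and prove (ii) by a direct structural analysis. Set $\nu_0:=\nu-\nu_l-\nu_r\ge 0$. In the non-trivial direction of (i), any $\pi\in \Pi_M(\mu,\nu,\nu_l,\nu_r)$ satisfies $\nu_0=\nu^\pi_0$, so $\nu_0(dx)=\pi_x(\{x\})\mu(dx)\le\mu\wedge\nu$; writing $m:=1-\nu_0(\R)$, the assumption $\mu\neq\nu$ excludes $\nu_0=\mu$ and forces $m>0$. Set $\mu'':=(\mu-\nu_0)/m$ and $\nu'':=(\nu_l+\nu_r)/m=(\nu-\nu_0)/m$; testing against convex functions shows $\mu''\leq_{cx}\nu''$, and they are distinct. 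Then
\[\Phi:\pi\mapsto \frac{\pi-\nu_0(dx)\delta_x(dy)}{m}\]
is a bijection from $\Pi_M(\mu,\nu,\nu_l,\nu_r)$ onto $\Pi_M(\mu'',\nu'',\nu_l/m,\nu_r/m)$, with inverse $\pi'\mapsto m\pi'+\nu_0(dx)\delta_x(dy)$.

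For (i), the condition on $\pi^\uparrow$ displayed in the statement is literally that $\Phi(\pi^\uparrow)$ is non-decreasing in the sense of Definition \ref{def:NdCoupling} (non-decreasing is a support property, unaffected by positive rescaling). Applying Theorem \ref{thm:inccoup} to $(\mu'',\nu'',\nu_l/m,\nu_r/m)$, whose hypotheses $\nu_l/m+\nu_r/m=\nu''$ and $\mu''\neq\nu''$ are satisfied, yields both the equivalence and the uniqueness. The identity $\int\varphi(|y-x|)\pi(dx,dy)=m\int\varphi(|y-x|)\Phi(\pi)(dx,dy)+\varphi(0)\nu_0(\R)$ transfers the strict inequality \eqref{eq:maxintabs} of Theorem \ref{thm:inccoup} to \eqref{eq:cormaxintabs}. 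Assertion (iii) then follows: if $\pi_1,\pi_2$ are non-decreasing in $\Pi_M(\mu,\nu,\nu_l,\nu_r)$, restricting their supports to $\{y\neq x\}$ preserves conditions (a), (b) of Definition \ref{def:NdCoupling}, so $\Phi(\pi_1),\Phi(\pi_2)$ are both non-decreasing in $\Pi_M(\mu'',\nu'',\nu_l/m,\nu_r/m)$ and hence equal by Theorem \ref{thm:inccoup}, forcing $\pi_1=\pi_2$.

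For (ii), let $\pi$ be non-decreasing in $\Pi_M(\mu,\nu,\nu_l,\nu_r)$, and let $G:=\{u_\mu=u_\nu\}$, which is closed with $G^c$ open. For each $x_0\in G$ the identity
\[0=u_\nu(x_0)-u_\mu(x_0)=\int_\R\left(\int_\R|y-x_0|\pi_{x'}(dy)-|x'-x_0|\right)\mu(dx')\]
together with Jensen's inequality (the integrand is non-negative) forces $\pi_{x'}$ to be supported on one side of $x_0$ for $\mu$-a.e.\ $x'$. Taking a countable dense subset of $G$, approximating each $x\in G^\circ$ from both sides, and using $\int y\,\pi_x(dy)=x$ at atoms of $\mu$ inside $G$, we deduce $\pi_x=\delta_x$ for $\mu$-a.e.\ $x\in G$, hence $\nu_0|_G=\mu|_G$. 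On $G^c$, the inequality $\nu_0(\{x\})=\pi_x(\{x\})\mu(\{x\})\le\nu(\{x\})$ confines the atoms of $\nu_0$ to common atoms of $\mu$ and $\nu$. The main obstacle is excluding a non-atomic part of $\nu_0$ on $G^c$: if such a part were concentrated in an open component $(a,b)$ of $G^c$, the support of $\pi$ would contain a non-atomic set of diagonal points inside $(a,b)$, which must coexist with off-diagonal arrows transporting $(\mu-\nu_0)|_{(a,b)}$ to $(\nu-\nu_0)|_{[a,b]}$ in both upward and downward directions (by the irreducibility structure of $(\mu,\nu)$ on $(a,b)$); the non-decreasing conditions (a) and (b) of Definition \ref{def:NdCoupling} then produce an incompatibility between these two features, which is the crux of the argument.
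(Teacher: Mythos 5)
Your treatment of $(i)$ and $(iii)$ is exactly the paper's: the map $\Phi$ is the bijection used in the printed proof (note $\nu_l+\nu_r-\nu=-\nu_0$ and $\nu_l(\R)+\nu_r(\R)=m$), the transfer of \eqref{eq:maxintabs} to \eqref{eq:cormaxintabs} via the affine relation between the integrals is the same remark, and the deduction of $(iii)$ from the uniqueness in Theorem \ref{thm:inccoup} after restricting the support set to $\{y\neq x\}$ is also how the paper argues. These parts are fine.

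Part $(ii)$, however, contains a genuine gap. The decisive step --- excluding a diffuse part of $\nu_0$ inside an irreducible component $(a,b)$ of $\{u_\mu<u_\nu\}$ --- is only announced (``the non-decreasing conditions then produce an incompatibility \dots which is the crux of the argument'') and never carried out. The actual mechanism is short but must be stated: if $(x,x)$ belongs to a set $\Gamma$ witnessing that $\pi$ is non-decreasing, then applying condition $(a)$ to any $(x',y')\in\Gamma$ with $y'\le x'$ and condition $(b)$ to any $(x',y')\in\Gamma$ with $y'\ge x'$, always paired with $(x,x)$ itself, forces $\Gamma\subset\{(-\infty,x]\times(-\infty,x]\}\cup\{[x,+\infty)\times[x,+\infty)\}$, i.e.\ no mass crosses $x$. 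If moreover $\mu(\{x\})=0$, the martingale property then gives $u_\nu(x)=\int_{y<x}(x-z)\pi_y(dz)\mu(dy)+\int_{y>x}(z-x)\pi_y(dz)\mu(dy)=u_\mu(x)$, contradicting $x\in(a,b)$. Hence $\nu_0$ restricted to $\{u_\mu<u_\nu\}$ is carried by the (countable) set of atoms of $\mu$, which is what kills the diffuse part; combined with $\mathds{1}_{\{u_\mu=u_\nu\}}\mu\le\nu_0\le\mu\wedge\nu$ this yields the stated decomposition. Your heuristic about ``arrows in both directions coexisting with diagonal points'' points in the right direction, but as written it is not a proof, and without the no-crossing observation it is not clear how conditions $(a)$--$(b)$ alone would produce the contradiction. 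Separately, your re-derivation of $\nu_0|_{\{u_\mu=u_\nu\}}=\mu|_{\{u_\mu=u_\nu\}}$ is unnecessary work: this holds for every martingale coupling by the decomposition into irreducible components (\cite[Theorem A.4]{BeJu16}), which is what the paper cites.
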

 According to the next proposition, some restrictive structure on the supports of $\mu$ and $\nu$ is needed to ensure the existence of a martingale coupling in $\Pi_M(\mu,\nu,\nu_l,\nu_r)$ non-increasing  away from the support of $\nu-\nu_l-\nu_r$ where it goes straight.
\begin{proposition}\label{prop:decompdecrois}
   Let $\mu,\nu\in\mathcal P_1(\R)$ be such that $\mu\leq_{cx}\nu$ and $\mu\neq \nu$. For $(\nu_l,\nu_r)$ a couple of non-negative measures such that $\nu_l+\nu_r\le\nu$, the existence of $\pi^\downarrow\in\Pi_M(\mu,\nu,\nu_l,\nu_r)$ such that $\frac{\pi^\downarrow(dx,dy)+(\nu_l+\nu_r-\nu)(dx)\delta_x(dy)}{\nu_l(\R)+\nu_r(\R)}$ is non-increasing is equivalent to the Dispersion Assumption 2.1 in \cite{HoKl} i.e. the existence of a non-empty finite interval $I$ with ends $a\le b$ such that $(\mu-\nu)^+(I)=(\mu-\nu)^+(\R)$ and $(\nu-\mu)^+(I)=0$ combined with $\nu_l(dy)=\mathds{1}_{\{y\le a\}}(\nu-\mu)^+(dy)$ and $\nu_r(dy)=\mathds{1}_{\{y\ge b\}}(\nu-\mu)^+(dy)$.
 \end{proposition}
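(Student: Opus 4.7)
The plan is to decompose any $\pi \in \Pi_M(\mu,\nu,\nu_l,\nu_r)$ as $\pi = \pi_{\mathrm{diag}} + \pi_{\mathrm{mov}}$, where $\pi_{\mathrm{diag}}$ is the restriction of $\pi$ to $\{y=x\}$ and $\pi_{\mathrm{mov}}$ its restriction to $\{y\ne x\}$. Since the diagonal is canonically parametrized by either coordinate projection, $\pi_{\mathrm{diag}}(dx,dy)=\nu^\pi_0(dx)\delta_x(dy)=(\nu-\nu_l-\nu_r)(dx)\delta_x(dy)$; hence $\pi_{\mathrm{mov}}$ has $x$-marginal $\mu^{\mathrm{mov}}:=\mu-\nu+\nu_l+\nu_r$ and $y$-marginal $\nu_l+\nu_r$, and disintegration shows that $\pi_{\mathrm{mov}}$ is itself a (sub-probability) martingale measure between these marginals. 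The expression $\frac{\pi(dx,dy)+(\nu_l+\nu_r-\nu)(dx)\delta_x(dy)}{\nu_l(\R)+\nu_r(\R)}$ appearing in the statement is precisely $\frac{\pi_{\mathrm{mov}}}{\nu_l(\R)+\nu_r(\R)}$, so the condition of the proposition is that the normalized moving part is a non-increasing martingale coupling between the normalizations of $\mu^{\mathrm{mov}}$ and $\nu_l+\nu_r$.

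By Proposition \ref{prop:deccoupl} applied to these normalized probability measures, such a non-increasing coupling exists if and only if the nested supports condition holds for them: there exist $-\infty<a\le b<+\infty$ with $\mu^{\mathrm{mov}}([a,b])=\mu^{\mathrm{mov}}(\R)$ and $(\nu_l+\nu_r)((a,b))=0$. It thus remains to show that this nested supports condition on $(\mu^{\mathrm{mov}},\nu_l+\nu_r)$ is equivalent to the Dispersion Assumption together with the announced formulas for $\nu_l,\nu_r$. For the direct implication: non-negativity of $\mu^{\mathrm{mov}}$ gives $\nu_l+\nu_r\ge(\nu-\mu)^+$; $\mu^{\mathrm{mov}}$ vanishing outside $[a,b]$ yields $\nu=\mu+\nu_l+\nu_r$ there, hence $(\mu-\nu)^+$ is carried by $[a,b]$ and $\nu_l+\nu_r=(\nu-\mu)^+$ on $\R\setminus[a,b]$; finally $(\nu_l+\nu_r)((a,b))=0$ combined with $\mu^{\mathrm{mov}}\ge 0$ on $(a,b)$ forces $\mu\ge\nu$ there, so $(\nu-\mu)^+$ vanishes on $(a,b)$ and globally $\nu_l+\nu_r=(\nu-\mu)^+$. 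Choosing $I=(a,b)\cup\{c\in\{a,b\}:(\nu-\mu)^+(\{c\})=0\}$ produces a non-empty interval witnessing the Dispersion Assumption. For the decomposition into $\nu_l$ and $\nu_r$: since the $x$-coordinates of $\pi_{\mathrm{mov}}$ lie in $[a,b]$, left-movers ($y<x$) satisfy $y<b$ and right-movers ($y>x$) satisfy $y>a$; combined with $\mathrm{supp}(\nu_l+\nu_r)\subseteq(-\infty,a]\cup[b,+\infty)$, this forces $\mathrm{supp}(\nu_l)\subseteq(-\infty,a]$ and $\mathrm{supp}(\nu_r)\subseteq[b,+\infty)$, which together with $\nu_l+\nu_r=(\nu-\mu)^+$ yields the claimed formulas.

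For the converse direction, the Dispersion Assumption and the stated formulas for $\nu_l,\nu_r$ immediately give $\mu^{\mathrm{mov}}=(\mu-\nu)^+$ (supported in $[a,b]$) and $\nu_l+\nu_r=(\nu-\mu)^+$ (with no mass on $(a,b)$), so Proposition \ref{prop:deccoupl} supplies a non-increasing martingale coupling $\tilde\pi$ between the normalizations of these two measures. Setting $\pi^\downarrow(dx,dy):=(\nu-\nu_l-\nu_r)(dx)\delta_x(dy)+(\nu_l(\R)+\nu_r(\R))\,\tilde\pi(dx,dy)$ yields the required element of $\Pi_M(\mu,\nu,\nu_l,\nu_r)$; the support argument of the previous paragraph, applied to $\tilde\pi$, shows that its left- and right-moving $y$-marginals are exactly $\nu_l$ and $\nu_r$. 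The main delicate point will be the bookkeeping at the endpoints $a$ and $b$: the choice of $I$ (open, half-open or closed at each end) must accommodate any atoms of $(\mu-\nu)^+$ there while still excluding all atoms of $(\nu-\mu)^+$, and the degenerate case $a=b$ (where $(\mu-\nu)^+$ reduces to a single atom) requires separate verification to ensure the formulas for $\nu_l$ and $\nu_r$ remain consistent.
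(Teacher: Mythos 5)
Your reduction to the normalized moving part $\pi_{\mathrm{mov}}/(\nu_l(\R)+\nu_r(\R))$ followed by an appeal to Proposition \ref{prop:deccoupl} is exactly the paper's strategy, and both the translation between the nested supports condition for $(\mu^{\mathrm{mov}},\nu_l+\nu_r)$ and the Dispersion Assumption, and the support argument identifying $\nu_l$ and $\nu_r$, match the paper's proof. The one point you defer, however, is not only about how to choose the interval $I$: your intermediate claim that ``globally $\nu_l+\nu_r=(\nu-\mu)^+$'' is justified on $\R\setminus[a,b]$ and on $(a,b)$, but not at $a$ and $b$ themselves, where the constraints you invoke ($\nu_l+\nu_r\ge(\nu-\mu)^+$ and $\mu^{\mathrm{mov}}\ge 0$) do not force equality: for instance $\mu(\{a\})>\nu(\{a\})>0$ is compatible with $(\nu_l+\nu_r)(\{a\})>0=(\nu-\mu)^+(\{a\})$ and $\mu^{\mathrm{mov}}(\{a\})>0$. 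The missing input is the endpoint clause of assertion $(iii)$ in Proposition \ref{prop:deccoupl}: applied to the normalized moving marginals it gives $\mu^{\mathrm{mov}}(\{a\})\wedge(\nu_l+\nu_r)(\{a\})=0$ (the non-increasing coupling's diagonal mass at $a$ is this minimum, and the moving part has no diagonal mass), and in either alternative one then gets $(\nu_l+\nu_r)(\{a\})=(\nu-\mu)^+(\{a\})$; the same holds at $b$, and it also settles the degenerate case $a=b$ by forcing $(\nu-\mu)^+(\{a\})=0$, so that your interval $I$ is non-empty. This is precisely how the paper closes the argument, phrased there slightly differently: one first uses the atom condition to pick a (possibly open or half-open) interval $I$ with ends $a,b$ carrying all of $\mu^{\mathrm{mov}}$ and none of $\nu_l+\nu_r$, and then observes that the non-negative measure $\nu_l+\nu_r-(\nu-\mu)^+$ is simultaneously carried by $I$ and vanishes on $I$, hence is zero.
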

 In view of statement $(ii)$ in Corollary \ref{cor:inccoup}, to better characterize the couples $(\nu_l,\nu_r)$ such that $\nu_l+\nu_r\le \nu$ and there exists a truly non-decreasing coupling in $\Pi_M(\mu,\nu,\nu_l,\nu_r)$, it is useful to study the points $x\in\R$ such that $\inf_{\pi\in\Pi_M(\mu,\nu)}\pi(\{(x,x)\})>0$.
 \begin{proposition}\label{prop:nu0}
      Let $\mu\le_{cx}\nu$ and $x\in\R$ be such that $u_\nu(x)>u_\mu(x)$. Then there exists a unique couple $(p_-(x),p_+(x))\in (0,F_\nu(x-)]\times(0,1-F_\nu(x)]$ such that
     \begin{align}
      &\int_\R(z-x)^+\nu(dz)=\int_\R(y-x)^+\mu(dy)+\int_{F_\nu(x)}^{F_\nu(x)+p_+(x)}(F_\nu^{-1}(v)-x)dv.\label{eq:dd}\\
                             \mbox{ and }&\int_\R(x-z)^+\nu(dz)=\int_\R(x-y)^+\mu(dy)+\int_{F_\nu(x-)-p_-(x)}^{F_\nu(x-)}(x-F_\nu^{-1}(v))dv\label{eq:gg}
      \end{align} 
      Moreover, $\inf_{\pi\in\Pi_M(\mu,\nu)}\pi(\{(x,x)\})=(\mu(\{x\})-p_-(x)-p_+(x))^+$ where the infimum is attained. When $\mu(\{x\})>p_-(x)+p_+(x)$, then any $\pi\in\Pi_M(\mu,\nu)$ satisfying $\pi(\{(x,x)\})=(\mu(\{x\})-p_-(x)-p_+(x))^+$ is 
        such that $\pi(\{(-\infty,x)\times(x,+\infty)\}\cup\{(x,+\infty)\times(-\infty,x)\})=0$ and $\pi_x$ is equal to
      \begin{equation}
    \eta_x=\left(1-\frac{p_-(x)+p_+(x)}{\mu(\{x\})}\right)\delta_x+\frac{1}{\mu(\{x\})}\int_{F_\nu(x-)-p_-(x)}^{F_\nu(x-)}\delta_{F_\nu^{-1}(v)}dv+\frac{1}{\mu(\{x\})}\int_{F_\nu(x)}^{F_\nu(x)+p_+(x)}\delta_{F_\nu^{-1}(v)}dv.
    \label{eq:pix}     
  \end{equation}
 When $\mu(\{x\})\in(0,p_-(x)+p_+(x)]$, there exists a unique $q(x)\in\left[\left(\mu(\{x\})-p_+(x)\right)^+,p_-(x)\wedge{\mu(\{x\})}\right]$ such that 
$\int_{F_\nu(x-)-q(x)}^{F_\nu(x-)}F_\nu^{-1}(v)dv+\int_{F_\nu(x)}^{F_\nu(x)+\mu(\{x\})-q(x)}F_\nu^{-1}(v)dv=\mu(\{x\})x$ and there exists
  $\pi\in\Pi_M(\mu,\nu)$ such that $\pi_x$ is equal to
  \begin{equation}
    \eta_x=\frac{1}{\mu(\{x\})}\int_{F_\nu(x-)-q(x)}^{F_\nu(x-)}\delta_{F_\nu^{-1}(v)}dv+\frac{1}{\mu(\{x\})}\int_{F_\nu(x)}^{F_\nu(x)+\mu(\{x\})-q(x)}\delta_{F_\nu^{-1}(v)}dv\label{eq:pixx}.\end{equation}
   \end{proposition}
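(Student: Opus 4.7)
The plan is to handle four items in succession: (a) produce $p_-(x)$ and $p_+(x)$ by the intermediate value theorem; (b) establish the lower bound $\pi(\{(x,x)\})\ge(\mu(\{x\})-p_-(x)-p_+(x))^+$ by combining the sub-measure property of $\pi_x$ with a quantile rearrangement; (c) construct a minimizer by extending an explicit $\eta_x$ via Strassen's theorem on a residual convex order; and (d) read off the characterization of $\pi_x$ from the equality cases in (b). For (a), I introduce
\[G_+(t)=\int_{F_\nu(x)}^{F_\nu(x)+t}(F_\nu^{-1}(v)-x)\,dv,\qquad G_-(s)=\int_{F_\nu(x-)-s}^{F_\nu(x-)}(x-F_\nu^{-1}(v))\,dv;\]
by \eqref{eq:qt flip} one has $F_\nu^{-1}(v)>x$ for $v>F_\nu(x)$ and $F_\nu^{-1}(v)<x$ for $v<F_\nu(x-)$, so both $G_\pm$ are continuous and strictly increasing with $G_\pm(0)=0$ and, by \eqref{eq:ITS}, $G_+(1-F_\nu(x))=\int(z-x)^+\nu(dz)$ and $G_-(F_\nu(x-))=\int(x-z)^+\nu(dz)$. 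Since $\mu$ and $\nu$ share their first moment, $u_\nu(x)-u_\mu(x)=2[\int(z-x)^+\nu-\int(y-x)^+\mu]=2[\int(x-z)^+\nu-\int(x-y)^+\mu]>0$, each difference lying in the range of the corresponding $G_\pm$, and the intermediate value theorem produces the unique $p_\pm(x)$.

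For (b), fix $\pi\in\Pi_M(\mu,\nu)$ with disintegration $\pi(dx',dy)=\mu(dx')\pi_{x'}(dy)$ and consider $\sigma_+:=\mu(\{x\})\pi_x|_{(x,\infty)}$. On one hand $\sigma_+\le\nu|_{(x,\infty)}$ since $\pi(\{x\}\times A)\le\pi(\R\times A)=\nu(A)$. On the other hand, conditioning on $x'$ and using $\int y\,\pi_{x'}(dy)=x'$ yields the decomposition
\[\int(z-x)^+\nu-\int(y-x)^+\mu=\iint_{x'\le x,\,y>x}(y-x)\pi(dx',dy)+\iint_{x'>x,\,y<x}(x-y)\pi(dx',dy),\]
a sum of two non-negative terms from which extracting the $x'=x$ contribution of the first term gives $\int(y-x)\sigma_+(dy)\le G_+(p_+(x))$. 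A standard rearrangement argument (any sub-measure $\sigma\le\nu|_{(x,\infty)}$ of mass $t$ has moment $\int(y-x)\sigma\ge G_+(t)$, with equality only for the bottom-quantile slice $\int_{F_\nu(x)}^{F_\nu(x)+t}\delta_{F_\nu^{-1}(v)}\,dv$) together with strict monotonicity of $G_+$ forces $\sigma_+(\R)\le p_+(x)$, and symmetrically $\mu(\{x\})\pi_x((-\infty,x))\le p_-(x)$, proving the lower bound.

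For (c)–(d), when $\mu(\{x\})>p_-(x)+p_+(x)$ I take $\eta_x$ as in \eqref{eq:pix}: its mean is $x$ because the right excess $G_+(p_+(x))$ equals the left excess $G_-(p_-(x))$, both equal to $\tfrac12(u_\nu(x)-u_\mu(x))$ by \eqref{eq:dd}, \eqref{eq:gg}. Setting $\tilde\mu=\mu-\mu(\{x\})\delta_x$ and $\tilde\nu=\nu-\mu(\{x\})\eta_x$, I verify $\tilde\nu\ge 0$ and $\tilde\mu\le_{cx}\tilde\nu$, most efficiently by recognizing $\mu(\{x\})\eta_x$ as the Beiglb\"ock--Juillet shadow $S^\nu(\mu(\{x\})\delta_x)$; Strassen's theorem then furnishes a martingale coupling of $\tilde\mu$ and $\tilde\nu$ which, glued with $\mu(\{x\})\delta_x\otimes\eta_x$, produces the desired $\pi$. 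Characterization of $\pi_x$ at any minimizer follows by tracing the equality cases in (b): equality in the moment inequality forces both cross integrals to vanish (whence $\pi(\{(-\infty,x)\times(x,\infty)\}\cup\{(x,\infty)\times(-\infty,x)\})=0$), and equality in the rearrangement forces $\mu(\{x\})\pi_x|_{(x,\infty)}$ and $\mu(\{x\})\pi_x|_{(-\infty,x)}$ to be exactly the quantile slices in \eqref{eq:pix}. For the remaining case $\mu(\{x\})\le p_-(x)+p_+(x)$, the function $H(q)=\int_{F_\nu(x-)-q}^{F_\nu(x-)}F_\nu^{-1}(v)\,dv+\int_{F_\nu(x)}^{F_\nu(x)+\mu(\{x\})-q}F_\nu^{-1}(v)\,dv$ has derivative $H'(q)=F_\nu^{-1}(F_\nu(x-)-q)-F_\nu^{-1}(F_\nu(x)+\mu(\{x\})-q)<0$ on the interior of $[(\mu(\{x\})-p_+(x))^+,p_-(x)\wedge\mu(\{x\})]$, and evaluating at the endpoints using \eqref{eq:dd}, \eqref{eq:gg} places $\mu(\{x\})x$ in its image, uniquely determining $q(x)$; the same Strassen extension then produces a $\pi$ with $\pi_x$ given by \eqref{eq:pixx}.

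The technically delicate step of this plan is the verification of $\tilde\mu\le_{cx}\tilde\nu$, equivalently the identification of $\mu(\{x\})\eta_x$ with the shadow $S^\nu(\mu(\{x\})\delta_x)$. The naive decomposition $\int\varphi\,d\tilde\nu-\int\varphi\,d\tilde\mu=(\int\varphi\,d\nu-\int\varphi\,d\mu)-\mu(\{x\})(\int\varphi\,d\eta_x-\varphi(x))$ for convex $\varphi$ presents a difference of two non-negative quantities with no preferred sign, so the extremal property of the near-$x$ quantile slices of $\nu$ (which minimize the $\nu$-moment on each side for their mass) has to be exploited to show that the removal performed in forming $\tilde\nu$ is as cheap as possible from the viewpoint of convex potentials.
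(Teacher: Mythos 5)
Your steps (a), (b) and (d) are sound and essentially coincide with the paper's argument: the intermediate value construction of $p_\pm(x)$ is the one used there, your two-sided decomposition of $\int_\R(z-x)^+\nu(dz)-\int_\R(y-x)^+\mu(dy)$ combined with the quantile rearrangement is the content of {\bf Step 1} of the paper's proof (the rearrangement inequality and its equality case are exactly Lemma \ref{lem:compmom}), and $q(x)$ is produced as you describe. The genuine gap is in step (c), which is the heart of the proposition. The identification of $\mu(\{x\})\eta_x$ with the shadow $S^\nu(\mu(\{x\})\delta_x)$ is false in general: the shadow of an atom is a single contiguous quantile slice $\int_s^{s+\mu(\{x\})}\delta_{F_\nu^{-1}(v)}dv$ (with $s$ chosen so that the barycenter is $x$), hence it contains $\left(\mu(\{x\})\wedge\nu(\{x\})\right)\delta_x$; by contrast $\mu(\{x\})\eta_x$ given by \eqref{eq:pix} retains only $\left(\mu(\{x\})-p_-(x)-p_+(x)\right)\delta_x$ at $x$ — a quantity which is always $<\mu(\{x\})$ and $\le\nu(\{x\})$, and strictly below $\mu(\{x\})\wedge\nu(\{x\})$ in general — while already reaching out to the masses $p_-(x)$ and $p_+(x)$ strictly below and above $x$. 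Example \ref{ex:minopidroit} is an explicit instance: there $\mu(\{-1\})\eta_{-1}(\{-1\})=\frac{4-\sqrt 2}{12}<\frac14=\nu(\{-1\})$, so $\mu(\{-1\})\eta_{-1}$ lies strictly above the shadow of $\frac12\delta_{-1}$ in the convex order. Once $\mu(\{x\})\eta_x\neq S^\nu(\mu(\{x\})\delta_x)$, the shadow calculus only yields $\nu-\mu(\{x\})\eta_x\le_{cx}\nu-S^\nu(\mu(\{x\})\delta_x)$ together with $\mu-\mu(\{x\})\delta_x\le_{cx}\nu-S^\nu(\mu(\{x\})\delta_x)$, and these two facts do not order $\mu-\mu(\{x\})\delta_x$ against $\nu-\mu(\{x\})\eta_x$.

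The comparison $\mu-\mu(\{x\})\delta_x\le_{cx}\nu-\mu(\{x\})\eta_x$ that your Strassen step requires is true, but it carries no slack: by \eqref{eq:dd}, \eqref{eq:gg} and \eqref{eq:eqpot} one has $\mu(\{x\})\int_\R|x-w|\,\eta_x(dw)=u_\nu(x)-u_\mu(x)$, so the difference of the potential functions of the two residual measures vanishes at the point $x$, and any direct proof must exploit the exact extremality of the quantile slices entering $\eta_x$. You flag this in your closing paragraph but do not supply the argument, and it is precisely here that the paper takes a completely different, constructive route: in its {\bf Step 2} it never proves the convex-order comparison, but starts from an arbitrary $\tilde\pi\in\Pi_M(\mu,\nu)$ and, under the hypothesis $\inf_{\pi}\pi(\{(x,x)\})>0$, deforms it inside $\Pi_M(\mu,\nu)$ — first suppressing the mass on $(-\infty,x)\times(x,+\infty)$ and $(x,+\infty)\times(-\infty,x)$ by means of Lemma \ref{lem:decompmoy}, then transporting $\mathds{1}_{\{z<x\}}\bar\pi_x$ onto the prescribed quantile slice through an auxiliary martingale coupling $\theta$ — the strict positivity of the diagonal mass along the deformation being what keeps it admissible. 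The same unproved comparison reappears in your treatment of the case $\mu(\{x\})\in(0,p_-(x)+p_+(x)]$, where the paper instead extracts a weak limit $\pi^\infty$ with $\pi^\infty(\{(x,x)\})=0$ and proves $\eta_x\le_{cx}\pi^\infty_x$ before invoking Strassen. As it stands, your proposal establishes the lower bound on $\pi(\{(x,x)\})$ and the characterization of the minimizers, but not the attainment, i.e., not the existence half of the proposition.
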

\begin{remark}
      The case when $\mu=\delta_x$ and $\nu\in{\cal P}_1(\R)$ is such that $\int_{\R}z\nu(dz)=x$ and $\nu(\{x\})\in (0,1)$ provides an example where $p_-(x)=F_\nu(x-)$, $p_+(x)=1-F_\nu(x)$ so that $p_-(x)+p_+(x)=1-\nu(\{x\})<\mu(\{x\})$. The only element $\pi(dy,dz)=\delta_x(dy)\nu(dz)$ of $\Pi(\mu,\nu)$ and $\Pi_M(\mu,\nu)$ satisfies $\pi(\{(x,x)\})=\nu(\{x\})=1-F_\nu(x-)-(1-F_\nu(x))=\mu(\{x\})-p_-(x)-p_+(x)$.
    \end{remark}
    
    \begin{corollary}\label{cor:ttdroit}
      Let $\mu\le_{cx}\nu$ and ${\cal X}_0=\{x\in\R:u_\nu(x)>u_\mu(x)\mbox{ and }\mu(\{x\})>p_-(x)+p_+(x)\}$. Then for each $\pi\in\Pi_M(\mu,\nu)$, one has
      $$\mathds{1}_{\{u_\mu(y)=u_\nu(y)\}}\mu(dy)+\sum_{x\in{\cal X}_0}\left(\mu(\{x\})-p_-(x)-p_+(x)\right)\delta_x(dy)\le \nu^\pi_0(dy)\le \mu\wedge\nu(dy),$$
      where the bound from above is attained but the bound from below may not be attained.
      Moreover, when $\mu\neq \nu$ and $(\nu_l,\nu_r)$ is a couple of non-negative measures such that $\nu-\nu_l-\nu_r=\mathds{1}_{\{u_\mu(y)=u_\nu(y)\}}\mu+\sum_{x\in{\cal X}_0}\left(\mu(\{x\})-p_-(x)-p_+(x)\right)\delta_x$, then $$\Pi_M(\mu,\nu,\nu_l,\nu_r)\neq \emptyset\Longleftrightarrow \exists!\pi^\uparrow\in\Pi_M(\mu,\nu,\nu_l,\nu_r)\mbox{ non-decreasing}.$$  \end{corollary}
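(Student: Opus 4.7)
The upper bound $\nu_0^\pi\le\mu\wedge\nu$ follows from $\nu_0^\pi(B)=\int_\R\mathds{1}_{x\in B}\pi_x(\{x\})\,\mu(dx)\le\mu(B)$ combined with $\nu^\pi_l+\nu^\pi_0+\nu^\pi_r=\nu$. Attainment is obtained by placing $\mu\wedge\nu$ on the diagonal and using Strassen's theorem to couple the remainders $(\mu-\nu)^+$ and $(\nu-\mu)^+$ by an arbitrary martingale coupling; the convex order $(\mu-\nu)^+\le_{cx}(\nu-\mu)^+$ follows (after noting equal masses and means) from $\int\varphi\,d(\mu-\nu)\le 0$ for convex $\varphi$. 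The lower bound on ${\cal X}_0$ is immediate from Proposition~\ref{prop:nu0}: for any $x$ with $u_\nu(x)>u_\mu(x)$, every $\pi\in\Pi_M(\mu,\nu)$ satisfies $\pi(\{(x,x)\})\ge(\mu(\{x\})-p_-(x)-p_+(x))^+$.

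The core step is the lower bound $\nu_0^\pi\ge\mathds{1}_F\mu$ on $F:=\{u_\mu=u_\nu\}$. Equal potentials combined with equal means give, for each $y\in F$,
\[
\int_\R(z-y)^+\nu(dz)=\int_\R(x-y)^+\mu(dx).
\]
Inserting any $\pi\in\Pi_M(\mu,\nu)$ and using Jensen's inequality $\int(z-y)^+\pi_x(dz)\ge(x-y)^+$ (valid since $\pi_x$ has mean $x$) forces equality $\mu$-a.e., which translates into: $\pi_x\subset[y,+\infty)$ if $x>y$, $\pi_x\subset(-\infty,y]$ if $x<y$, and $\pi_x=\delta_y$ if $x=y$. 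Pick a countable dense subset $\{q_n\}$ of the closed set $F$ and intersect the countably many resulting $\mu$-null sets to obtain a single $\mu$-null set $N$ outside which these constraints hold simultaneously at every $q_n$. Any $x\in F\setminus N$ that is a two-sided limit point of $F$ admits approximating $q_n\to x^-$ and $q_n'\to x^+$ in $F$, forcing $\pi_x\subset[x,+\infty)\cap(-\infty,x]=\{x\}$, hence $\pi_x=\delta_x$. The set of one-sided isolated points of $F$ is countable (endpoints of the open intervals composing $F^c$), and at each $\mu$-atom $x$ therein the Jensen argument with $y=x$ already yields $\pi_x=\delta_x$. Since ${\cal X}_0\cap F=\emptyset$, summing both contributions gives the claimed lower bound.

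The non-attainment of the lower bound is exhibited by small explicit examples. For the final equivalence, the hypothesis on $(\nu_l,\nu_r)$ combined with the lower bound forces every $\pi\in\Pi_M(\mu,\nu,\nu_l,\nu_r)$ to satisfy $\nu_0^\pi=\nu-\nu_l-\nu_r$. Corollary~\ref{cor:inccoup}(i) then yields that $\Pi_M(\mu,\nu,\nu_l,\nu_r)\neq\emptyset$ is equivalent to the existence of a unique $\pi^\uparrow$ in this set whose off-diagonal part, normalized, is non-decreasing. To upgrade this to truly non-decreasing, observe that $\pi^\uparrow$ does not cross any $y\in\supp(\nu-\nu_l-\nu_r)$: for $y\in{\cal X}_0$ this is the last clause of Proposition~\ref{prop:nu0}, and for $y\in F$ it comes from the Jensen analysis above. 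Taking a witnessing set of the form $\Gamma_{\mathrm{off}}\cup\{(y,y):y\in\supp(\nu-\nu_l-\nu_r)\}$ and checking conditions (a) and (b) of Definition~\ref{def:NdCoupling} in all mixed cases (using the non-decreasing property of $\Gamma_{\mathrm{off}}$ for off--off pairs and the no-crossing property for off--diagonal and diagonal--off pairs) confirms that $\pi^\uparrow$ is non-decreasing; the converse implication is immediate. Uniqueness follows from Corollary~\ref{cor:inccoup}(iii). The most delicate step is the upgrade on $F$ from pointwise Jensen equality to a $\mu$-a.e.\ statement on a potentially uncountable set, which is handled via the countable-dense argument in $F$ together with a separate atom argument on the countable set of one-sided isolated points.
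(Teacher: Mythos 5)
Your proposal is correct and follows the paper's overall architecture (upper bound plus attainment via Strassen on the remainders $(\mu-\nu)^+$, $(\nu-\mu)^+$; lower bound split between $\{u_\mu=u_\nu\}$ and ${\cal X}_0$; final equivalence via Corollary \ref{cor:inccoup} $(i)$ and $(iii)$ together with an upgrade of the witnessing set from the generalized to the genuine non-decreasing property). The genuine difference is in how you treat the set $F=\{u_\mu=u_\nu\}$. The paper obtains both $\nu^\pi_0\ge \mathds{1}_F\mu$ and the fact that no coupling crosses a point of $F$ by invoking the decomposition into irreducible components (\cite[Theorem A.4]{BeJu16}), and then organizes the verification of conditions $(a)$--$(b)$ of Definition \ref{def:NdCoupling} around the components $(a_n,b_n)$: an off-diagonal pair charged by the coupling lies inside $(a_n,b_n)\times[a_n,b_n]$, while a diagonal point of $F$ sits outside every open component, which settles the mixed cases. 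You instead give a self-contained argument: for $y\in F$ the identity $\int(z-y)^+\nu(dz)=\int(x-y)^+\mu(dx)$ (from $u_\mu(y)=u_\nu(y)$ and equal means, i.e.\ \eqref{eq:eqpot}) combined with Jensen forces, $\mu$-a.e., $\pi_x\subset[y,+\infty)$ for $x>y$, $\pi_x\subset(-\infty,y]$ for $x<y$ and $\pi_x=\delta_x$ for $x=y$; the passage from a null set per $y$ to a single null set is correctly handled by a countable dense subset of the closed set $F$ augmented by its (countably many) one-sidedly isolated points and by the atoms of $\mu$. This buys independence from the external structural theorem at the cost of the measure-theoretic bookkeeping you describe; both routes are valid, and your no-crossing statement at points of $F$ is exactly what is needed for the mixed diagonal/off-diagonal cases of conditions $(a)$--$(b)$, with ${\cal X}_0$ handled, as in the paper, by the last clause of Proposition \ref{prop:nu0}.

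The only real deficiency is the clause ``the bound from below may not be attained'': you assert it is ``exhibited by small explicit examples'' without producing one. This is part of the statement and requires a concrete witness; the paper points to Example \ref{ex:minopidroit}, where the two constraints $\pi_{-1}(\{-1\})=1-\frac{p_-(-1)+p_+(-1)}{\mu(\{-1\})}$ and $\pi_1(\{1\})=1-\frac{p_-(1)+p_+(1)}{\mu(\{1\})}$ each determine the coupling uniquely and are mutually incompatible. You should either cite that example or construct an analogous one.
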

    In the next example, the bound from below is not attained. 
   
    \begin{example}\label{ex:minopidroit}
      Let $\mu=\frac 12\left(\delta_{-1}+\delta_1\right)$ and $$\nu(dy)=\frac 14\left(\delta_{-1}(dy)+\delta_1(dy)\right)+\frac 16\left(\mathds{1}_{[-2,-1]}(y)+\mathds{1}_{[1,2]}(y)\right)dy+\frac 1{12}\mathds{1}_{[-1,1]}(y)dy.$$ Then $u_\mu(-1)=1=u_\mu(1)$, $u_\nu(-1)=\frac 76=u_\nu(1)$, $p_-(-1)=\frac 16=p_+(1)$ and $p_+(-1)=\frac{\sqrt{2}}{12}=p_-(1)$.
By Proposition \ref{prop:nu0}, there exists $\pi\in\Pi_M(\mu,\nu)$ such that $\pi_{-1}(\{-1\})=1-\frac{p_-(-1)+p_+(-1)}{\mu(\{-1\})}=\frac{4-\sqrt{2}}{6}$ and then $\pi_{-1}=\eta_{-1}$ with $\eta_{-1}$ given by \eqref{eq:pix} and $\pi_1=2\nu-\eta_{-1}$.     Therefore the unique coupling $\pi\in\Pi_M(\mu,\nu)$ such that $\pi_{-1}(\{-1\})=1-\frac{p_-(-1)+p_+(-1)}{\mu(\{-1\})}=\frac{4-\sqrt{2}}{6}$ is
      \begin{align*}
         \pi(dx,dy)&=\frac 12\delta_{-1}(dx)\left(\frac 13\mathds{1}_{[-2,-1]}(y)dy+\frac{4-\sqrt{2}}{6}\delta_{-1}(dy)+\frac 16\mathds{1}_{[-1,-1+\sqrt{2}]}(y)dy\right)\\&+\frac 12\delta_{1}(dx)\left(\frac{\sqrt{2}-1}{6}\delta_{-1}(dy)+\frac 16\mathds{1}_{[-1+\sqrt{2},1]}(y)dy+\frac 12\delta_1(dy)+\frac 13\mathds{1}_{[1,2]}(y)dy\right)
      \end{align*}
and satisfies $\pi_1(\{1\})=\frac 12>\frac{4-\sqrt{2}}{6}=1-\frac{p_-(1)+p_+(1)}{\mu(\{1\})}$. In a symmetric way, the only coupling $\pi\in\Pi_M(\mu,\nu)$ such that $\pi_{1}(\{1\})=1-\frac{p_-(1)+p_+(1)}{\mu(\{1\})}$ satisfies $\pi_{-1}(\{-1\})=\frac 12>\frac{4-\sqrt{2}}{6}=1-\frac{p_-(-1)+p_+(-1)}{\mu(\{-1\})}$.
    \end{example}
The next example shows that one cannot restrict the summation over $\{x\in\R:\mu\wedge\nu(\{x\})\times(u_\nu(x)-u_\mu(x))>0\}$ to a summation over $\{x\in\R:\inf_{\pi\in\Pi_M(\mu,\nu)}\pi(\{x,x\})>0\}$ in Assertion $(ii)$ of Corollary \ref{cor:inccoup}.
\begin{example}
  Let for $\alpha\in[0,1]$, $\nu=\frac 1 6\left((2-\alpha)\delta_{-4}+\alpha\delta_{-1}+2\delta_0+\alpha\delta_{1}+(2-\alpha)\delta_{4}\right)$ and $\mu=\frac 13\left(\delta_{-2}+\delta_0+\delta_2\right)$. Then $u_\nu(0)=\frac{8-3\alpha}{3}=u_\mu(0)+\frac{4-3\alpha}{3}$, $p_-(0)=\frac 16=p_+(0)$, $q(0)=\frac 16$  and $$\pi:=\frac{1}{6}\left(\delta_{(-2,-4)}+\delta_{(-2,0)}+(1-\alpha)\delta_{(0,-4)}+\alpha\delta_{(0,-1)}+\alpha\delta_{(0,1)}+(1-\alpha)\delta_{(0,4)}+\delta_{(2,0)}+\delta_{(2,4)}\right)$$ is a non-decreasing martingale coupling such that $\pi_0=\eta_0$ given by \eqref{eq:pixx} and which does not weight $(0,0)$. On the other hand, $$\frac{4-\alpha}{24}\left(\delta_{(-2,-4)}+\delta_{(2,4)}\right)+\frac \alpha6\left(\delta_{(-2,-1)}+\delta_{(2,1)}\right)+\frac {4-3\alpha}{24}\left(\delta_{(-2,0)}+\delta_{(0,-4)}+\delta_{(0,4)}+\delta_{(2,0)}\right)+\frac \alpha4\delta_{(0,0)}$$
  (and any strict convex combination with the former coupling) is a non-decreasing martingale coupling which weights $(0,0)$ when $\alpha>0$.
\end{example}\subsection{Do {$\pi^\uparrow$} and {$\pi^\downarrow$} optimize the cost function $\vert x-y\vert^\rho$ for $\rho>1$ {under the nested supports condition?}}\label{sec:MHN_MHK}

In this section, we suppose that the probability measures $\mu$ and $\nu$ in the convex order satisfy the nested supports condition  so that, by Proposition \ref{prop:deccoupl}, there exist a unique non-decreasing coupling $\pi^\uparrow$ and a unique non-increasing coupling $\pi^\downarrow$ in $\Pi_M(\mu,\nu)$. Then, by Proposition \ref{prop:variational lemma to nd coupling}, $\pi^\uparrow$ maximises (resp. $\pi^\downarrow$ minimizes) $\int_{\R^2}c(x,y)\pi(dx,dy)$ over $\pi\in \Pi_M(\mu,\nu)$ for cost functions $c(x,y)=\varphi(|x-y|)$ with $\varphi:\R_+\to\R$ continuous, increasing and concave and in particular for $c(x,y)=|x-y|^\rho$ with $\rho\in(0,1]$. We investigate whether $\pi^\uparrow$ (resp. $\pi^\downarrow$) still maximises (resp. minimises) for $c(x,y)=|x-y|^\rho$ with $\rho\in (1,2)$ and minimises (resp. maximises) for $c(x,y)=|x-y|^\rho$ with $\rho>2$. It turns out that this is the case under a reinforced support condition with $\mu$ and $\nu$ not weighting some non empty intervals at the left of $a$ and at the right of $b$. The larger $\rho$, the larger these gaps should be chosen. When $\mu$ only weights two points and $\nu$ weights two points to the left of the support of $\mu$ and two points to the right, we exhibit conditions ensuring that ${\rm sq}\# \pi^{\uparrow}$ is equal to the infimum $\inf_{cx}\{{\rm sq}\#\pi:\pi\in \Pi_M(\mu,\nu)\}$ and ${\rm sq}\# \pi^{\downarrow}$ is equal to the supremum $\sup_{cx}\{{\rm sq}\#\pi :\pi\in \Pi_M(\mu,\nu)\}$ for the convex order. Last, we check that, still with such finite supports, optimality may fail when the gaps are too small. 

\subsubsection{Preservation of the optimality}
\begin{proposition}\label{prop:M^HN with alpha_rho}
Let $\mu\leq_{cx}\nu$ be such that there exist  $\underline y < \overline y < \underline x < \overline x <\underline z < \overline z$ with $\mu\left([\underline x, \overline x]\right) =1$ and $\nu\left([\underline y, \overline y] \cup [\underline z, \overline z]\right) =1$. If
\[\underline x - \overline y \geq \alpha_\rho(\overline z-\overline y),\] 
\[\underline z - \overline x \geq \alpha_\rho(\underline z-\underline y),\]
where $\alpha_\rho\in(0,\frac12)$ is the unique solution of $\psi_\rho(\alpha)=0$ with $\psi_\rho: (0,1] \to\R$ defined by 
\begin{equation*}
\psi_\rho(\alpha)= \alpha +  \alpha^{2-\rho}(1-\alpha)^{\rho-1} +1-\rho.
\end{equation*} 
Then, the unique non-decreasing coupling $\pi^{\uparrow}$ (resp. non-increasing coupling $\pi^\downarrow$) in $\Pi_M(\mu,\nu)$ is the unique optimal coupling in  $\Pi_M(\mu,\nu)$ that attains $\overline{\mathcal M}_\rho (\mu,\nu)$ (resp. $\underline{\mathcal M}_\rho (\mu,\nu)$) when $\rho\in(1,2)$ and $\underline{\mathcal M}_\rho (\mu,\nu)$ (resp. $\overline{\mathcal M}_\rho (\mu,\nu)$) when $\rho > 2$.
\end{proposition}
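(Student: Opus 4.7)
The plan is to reduce the extremality of $\pi^\uparrow$ and $\pi^\downarrow$ to a quantitative four-point inequality whose worst case is encoded exactly by the defining equation $\psi_\rho(\alpha_\rho)=0$. First, the structural reduction: since $\overline y<\underline x$ and $\overline x<\underline z$, the supports of $\mu$ and $\nu$ are disjoint, so every $\pi\in\Pi_M(\mu,\nu)$ has $\nu^\pi_0=0$ and forced directional marginals $\nu^\pi_l=\nu|_{[\underline y,\overline y]}$ and $\nu^\pi_r=\nu|_{[\underline z,\overline z]}$. Consequently $\Pi_M(\mu,\nu)=\Pi_M(\mu,\nu,\nu_l,\nu_r)$, and by Proposition \ref{prop:deccoupl} together with Corollary \ref{cor:inccoup}, $\pi^\uparrow$ and $\pi^\downarrow$ are the unique non-decreasing and non-increasing elements of this set respectively. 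The cases $\rho\in(0,1]$ are already covered by Proposition \ref{prop:variational lemma to nd coupling} and $\rho=2$ by the martingale identity, so the remaining work is for $\rho\in(1,2)\cup(2,+\infty)$.

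Second, to establish optimality of $\pi^\uparrow$ for $\rho\in(1,2)$ maximisation, I would argue via a four-point swap. Given $\pi\in\Pi_M(\mu,\nu)\setminus\{\pi^\uparrow\}$, its support must contain a configuration $(x_-,y_a),(x_-,z_a),(x_+,y_b),(x_+,z_b)$ with $x_-<x_+$, $y_\pm\in[\underline y,\overline y]$, $z_\pm\in[\underline z,\overline z]$, and at least one of $y_a\leq y_b$, $z_a\leq z_b$ failing. The mass- and martingale-preserving perturbation that moves $\epsilon$ units of $y$-mass between $x_-$ and $x_+$ to restore monotonicity, compensated by $\delta=\epsilon|y_a-y_b|/|z_b-z_a|$ units of $z$-mass in the appropriate direction, changes the cost by
\[
\Delta I_\rho=\epsilon\bigl[(x_--y_b)^\rho-(x_--y_a)^\rho+(x_+-y_a)^\rho-(x_+-y_b)^\rho\bigr]+\delta\,S_z
\]
with $S_z$ the analogous four-term sum in the $z$-coordinates. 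Strict convexity of $t\mapsto t^\rho$ for $\rho>1$ forces the bracketed $y$-term to be negative and $S_z$ to be positive, so the optimality of $\pi^\uparrow$ reduces to showing that the $z$-contribution strictly dominates the $y$-contribution in the worst admissible configuration.

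Third, by exploring the monotonicity of $\Delta I_\rho$ in the free parameters $y_a,y_b,z_a,z_b,x_-,x_+$ under the support constraints, the worst-case version of this sign inequality reduces to a one-parameter condition $\psi_\rho(\alpha)\geq 0$ in $\alpha=(\underline x-\overline y)/(\overline z-\overline y)$ at the left end (with the symmetric role of $\alpha=(\underline z-\overline x)/(\underline z-\underline y)$ at the right end). The root $\alpha_\rho\in(0,1/2)$ of $\psi_\rho$ is precisely the threshold calibrating the gap hypotheses, so under these hypotheses the worst-case $\Delta I_\rho$ is strictly positive, yielding $I_\rho(\pi)<I_\rho(\pi^\uparrow)$ for every $\pi\neq\pi^\uparrow$. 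The case $\rho>2$ is analogous with the roles of $\pi^\uparrow$ and $\pi^\downarrow$ swapped: because $\psi_\rho$ is decreasing on $(0,1)$ for $\rho>2$, the hypothesis $\alpha\geq\alpha_\rho$ corresponds to $\psi_\rho(\alpha)\leq 0$, which is exactly the condition ensuring optimality of $\pi^\downarrow$ as maximiser and $\pi^\uparrow$ as minimiser. The minimisation statements for $\rho\in(1,2)$ are the mirror arguments applied with the sign of the four-point swap reversed.

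The main obstacle is the extremisation in the third step: reducing the multivariate inequality over all admissible 4-tuples to the scalar condition $\psi_\rho(\alpha)\geq 0$ (or $\leq 0$) and showing that the worst case is attained at a boundary configuration that produces exactly $\psi_\rho$. The convexity arguments giving the signs of the $y$- and $z$-sums are elementary, but the monotone extremisation in each of the free parameters must be carried out systematically. A secondary technical point is that the perturbation introduces new points into the support, so one must verify that the infinitesimal local comparison lifts to the global strict inequality; this follows either by iterating the four-point swaps to deform $\pi$ into $\pi^\uparrow$ or, equivalently, by exhibiting a martingale $c$-cyclically monotone structure on the support of $\pi^\uparrow$.
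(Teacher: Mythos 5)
Your high-level architecture coincides with the paper's: reduce the statement to showing that every optimizer has monotone support (uniqueness of the non-decreasing/non-increasing coupling coming from Proposition \ref{prop:deccoupl} under the nested supports condition with $a=\underline x$, $b=\overline x$), rule out a monotonicity violation by a marginal- and martingale-preserving perturbation of finitely many support points, and observe that the sign of the resulting cost change is calibrated by $\psi_\rho$. However, as written the proposal has a genuine gap precisely at the step you yourself flag as ``the main obstacle'': the extremization reducing the multivariate sign inequality over all admissible configurations $(y_a,y_b,z_a,z_b,x_-,x_+)$ to the scalar condition $\psi_\rho(\alpha)\gtrless 0$ is asserted, not carried out. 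This is the mathematical core of the proposition. In the paper it occupies Lemmas \ref{lemma: f_rho alpha_rho}, \ref{lemma varphi_rho} and \ref{lemma psi_rho}: one rewrites the relevant three-point cost difference as $-(z-m)^\rho\varphi_\rho\left(\frac{m-x}{m-y},\frac{m-y}{z-m}\right)$, exploits the factorization $\frac1\rho\partial_\alpha\varphi_\rho(\alpha,w)=\frac{\alpha^{\rho-2}w^\rho}{1+w}\left(\psi_\rho(\alpha)+(\rho-1)\int_{\alpha w}^{1+\alpha w}\left(1-\left(\frac{\alpha w}{v}\right)^{2-\rho}\right)dv\right)$, and then needs the full sign analysis of $\psi_\rho$ (uniqueness of the root, convexity/concavity according to $\rho\lessgtr 2$, $\alpha_\rho<\frac12$). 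None of this is routine monotone bookkeeping, and the convexity observations you do make (the $y$-bracket negative, $S_z$ positive for all $\rho>1$) cannot by themselves detect the transition at $\rho=2$; the whole content is in the competition between the two terms, which is exactly what is left unproved.

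Two secondary issues. First, your four-point swap requires mass to be available for subtraction at four specific points of the support with a compensating $z$-transfer in the right direction, and the existence of such a configuration whenever $\pi\neq\pi^\uparrow$ is not established (nor is it checked that the compensating $z$-move does not itself create a new violation). The paper instead uses a three-point competitor: given a $y$-side violation $(x_-,y_-),(x_+,y_+)\in\Gamma$ with $y_+<y_-$, property \eqref{eq:l2c} from Lemma \ref{lem:varbj} supplies a single $z>x_+$ with $(x_+,z)\in\Gamma$, and the measure $\delta_{(x_-,y_-)}+\frac{z-y_-}{z-y_+}\delta_{(x_+,y_+)}+\frac{y_--y_+}{z-y_+}\delta_{(x_+,z)}$ admits the competitor obtained by exchanging the $x$-coordinates, since $\frac{z-y_-}{z-y_+}y_++\frac{y_--y_+}{z-y_+}z=y_-$; the $z$-side violation is treated symmetrically. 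Second, the passage from ``a local swap strictly improves the cost'' to ``the optimizer's support contains no bad configuration'' is handled in the paper by the Beiglb\"ock--Juillet variational lemma (Lemma \ref{lem:varbj}); your alternative of iterating swaps to deform $\pi$ into $\pi^\uparrow$, or of proving a sufficiency statement via cyclical monotonicity of the support of $\pi^\uparrow$, is not a rigorous substitute and the latter would in fact be considerably harder than what is needed.
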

\begin{remark}
Let $\rho\in (1,2)\cup(2,+\infty)$ and $\overline{y}<\underline{z}$. Since, by the fifth and seventh assertions in Lemma \ref{lemma psi_rho} below, $\alpha_\rho<\frac 12$, it is possible to find $\underline{x},\overline{x}\in (\overline{y}+\alpha_\rho(\underline{z}-\overline{y}),\underline{z}-\alpha_\rho(\underline{z}-\overline{y}))$ such that $\overline{x}-\underline{x}>0$. Then for any $\underline{y}\in\left[\frac{1}{\alpha_\rho}(\overline{x}-(1-\alpha_\rho)\underline{z}),\overline{y}\right)$ and any $\overline{z}\in\left(\underline{z},\frac{1}{\alpha_\rho}(\underline{x}-(1-\alpha_\rho)\overline{y})\right]$, the conditions on the points $\underline y,\overline y,\underline x,\overline x,\underline z,\overline z$ in Proposition \ref{prop:M^HN with alpha_rho} are satisfied.
\end{remark}\begin{proposition}\label{prop:M^HK leq_cx}
Let $p\in(0,1)$ and $y_- < y_+ < x_-<x_+ < z_- < z_+$ be such that 
\begin{align}
    x_+ -y_- &\geq z_--x_-, \;\;\;\;\;z_+-x_- \geq x_+-y_+ \mbox{ and } \nonumber\\
    (x_--y_-)&\wedge(x_+-y_+)\wedge(z_--x_-) \wedge(z_+-x_+)\geq(x_--y_+)\vee (z_--x_+).\label{ineq: 4vee}
\end{align}
We set $\mu = p\delta_{x_-} + (1-p)\delta_{x_+}$, 
\begin{align*}
  \underline \nu &= p\frac{z_--x_-}{z_--y_-}\delta_{y_-} + p\frac{x_--y_-}{z_--y_-}\delta_{z_-}+ (1-p)\frac{z_+-x_+}{z_+-y_+}\delta_{y_+} +(1-p)\frac{x_+-y_+}{z_+-y_+}\delta_{z_+},\\
\pi^{\uparrow} &= p\left(\frac{z_--x_-}{z_--y_-}\delta_{(x_-,y_-)} + \frac{x_--y_-}{z_--y_-}\delta_{(x_-,z_-)}\right)+ (1-p)\left(\frac{z_+-x_+}{z_+-y_+}\delta_{(x_+,y_+)} +\frac{x_+-y_+}{z_+-y_+}\delta_{(x_+,z_+)}\right),\\\overline\nu &= p\frac{z_+-x_-}{z_+-y_+}\delta_{y_+} +p\frac{x_--y_+}{z_+-y_+}\delta_{z_+}+(1-p)\frac{z_--x_+}{z_--y_-}\delta_{y_-} + (1-p)\frac{x_+-y_-}{z_--y_-}\delta_{z_-} ,\\
\pi^{\downarrow} &= p\left(\frac{z_+-x_-}{z_+-y_+}\delta_{(x_-,y_+)} + \frac{x_--y_+}{z_+-y_+}\delta_{(x_-,z_+)}\right)+ (1-p)\left(\frac{z_--x_+}{z_--y_-}\delta_{(x_+,y_-)} +\frac{x_+-y_-}{z_--y_-}\delta_{(x_+,z_-)}\right).\end{align*}  
One has $\pi^\uparrow\in\Pi_M(\mu,\underline\nu)$, ${\rm sq}\# \pi^{\uparrow}(dx,dy)=\inf_{cx}\{{\rm sq}\# \pi:\pi\in \Pi_M(\mu,\underline\nu)\}$ and $\pi^\downarrow\in\Pi_M(\mu,\overline\nu)$, ${\rm sq}\# \pi^{\downarrow}(dx,dy)=\sup_{cx}\{{\rm sq}\# \pi(dx,dy):\pi\in \Pi_M(\mu,\overline\nu)\}$.
\end{proposition}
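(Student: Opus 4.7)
The plan is to proceed in four steps. First, I verify directly that $\pi^{\uparrow}\in\Pi_M(\mu,\underline\nu)$ and $\pi^{\downarrow}\in\Pi_M(\mu,\overline\nu)$. The marginals can be read off from the explicit formulas, and the martingale property at each $x\in\{x_-,x_+\}$ follows from the identity $\frac{z-x}{z-y}y+\frac{x-y}{z-y}z=x$ applied to the appropriate pair $(y,z)$ of atoms.

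Second, I reduce the claim ${\rm sq}\#\pi^{\uparrow}\le_{cx}{\rm sq}\#\pi$ (for every $\pi\in\Pi_M(\mu,\underline\nu)$) to an integral inequality. All measures of the form ${\rm sq}\#\pi$ share the common mean $\int y^2\,\underline\nu(dy)-\int x^2\,\mu(dx)$, so the convex-order statement is equivalent to $\int\phi((y-x)^2)\,\pi^{\uparrow}(dx,dy)\le\int\phi((y-x)^2)\,\pi(dx,dy)$ for every convex $\phi:\R\to\R$. By the potential-function characterization of the convex order it is enough to test $\phi(s)=(s-t)^+$ for every $t\in\R$.

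Third, I analyse the polytope $\Pi_M(\mu,\underline\nu)$, which is two-dimensional since $\mu$ has two atoms and $\underline\nu$ four. The coupling $\pi^{\uparrow}$ is a vertex characterized by the vanishing of the four \emph{cross} masses $\pi(\{(x_-,y_+)\}),\pi(\{(x_-,z_+)\}),\pi(\{(x_+,y_-)\}),\pi(\{(x_+,z_-)\})$. Solving the marginal and martingale constraints, I express the four \emph{straight} masses of a generic $\pi$ as explicit affine functions of the cross masses, so that $\pi-\pi^{\uparrow}$ becomes an explicit signed measure parametrised by the cross masses. Under hypothesis \eqref{ineq: 4vee}, the eight candidate squared distances fall into three groups,
\begin{align*}
\text{small:}\quad&(x_--y_+)^2,\ (z_--x_+)^2,\\
\text{middle:}\quad&(x_--y_-)^2,\ (z_--x_-)^2,\ (x_+-y_+)^2,\ (z_+-x_+)^2,\\
\text{large:}\quad&(x_+-y_-)^2,\ (z_+-x_-)^2,
\end{align*}
in which every small value is bounded above by every middle value (from \eqref{ineq: 4vee}), while the large values dominate the middle values on the same "side" (via the orderings together with the first two inequalities of the proposition). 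Since ${\rm sq}\#\pi^{\uparrow}$ is supported on the middle group only and every admissible perturbation transfers mass to both the small and the large group while depleting the middle group, the signed measure ${\rm sq}\#(\pi-\pi^{\uparrow})$ has zero total mass, zero mean, and should decompose into elementary mean-preserving spreads — each moving a bit of mass from a middle atom onto a smaller and a larger atom in a barycentric fashion — yielding $\int\phi\,d({\rm sq}\#\pi^{\uparrow})\le\int\phi\,d({\rm sq}\#\pi)$ for every convex $\phi$.

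The supremum claim for $\pi^{\downarrow}\in\Pi_M(\mu,\overline\nu)$ is treated symmetrically: $\pi^{\downarrow}$ is the opposite vertex of the two-dimensional polytope $\Pi_M(\mu,\overline\nu)$, concentrating all mass on the small and large groups, and every admissible perturbation now moves mass inward to the middle group, a mean-preserving contraction that reverses the convex order. The main obstacle is step three: exhibiting the explicit decomposition of the perturbation $\pi-\pi^{\uparrow}$ (respectively $\pi-\pi^{\downarrow}$) into elementary mean-preserving spreads compatible with the three-group ordering. The delicacy is that the three groups need not be strictly ordered pairwise — in particular, the large group may overlap in value with the middle group — so each middle atom must be paired with the correct smaller and larger partners, and it is precisely here that the geometric hypotheses of the proposition enter the argument.
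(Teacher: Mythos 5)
Your overall strategy (parametrize the perturbation $\pi-\pi^{\uparrow}$, sort the eight squared distances into a small, a middle and a large group using \eqref{ineq: 4vee} and the first two hypotheses, and conclude by a mean-preserving-spread argument) is the same as the paper's, and your grouping of the values is correct. But the proof is not complete: the decisive step is exactly the one you flag as ``the main obstacle'' and formulate only as ``should decompose into elementary mean-preserving spreads.'' As you yourself observe, the three groups are not totally ordered — for instance nothing forces $(x_+-y_-)^2\geq(z_+-x_+)^2$ — so for a generic admissible $\pi$ the signed measure ${\rm sq}\#(\pi-\pi^{\uparrow})$ does not obviously split into spreads each moving mass from a middle atom to one strictly smaller and one strictly larger atom. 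Asserting that such a decomposition exists is precisely the content of the proposition, so as written the argument is circular at its core.

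The paper closes this gap with two devices you are missing. First, writing $\pi_{x_-}-\pi^{\uparrow}_{x_-}=s(\hat\tau-\check\tau)$ and $\pi_{x_+}-\pi^{\uparrow}_{x_+}=\frac{ps}{1-p}(\check\tau-\hat\tau)$, where $\hat\tau$ (supported on $\{y_+,z_+\}$) and $\check\tau$ (supported on $\{y_-,z_-\}$) are \emph{probability measures with a common barycenter} $m\in[y_+,z_-]$; this reduces the whole comparison to showing $\underline\gamma_m\le_{cx}\overline\gamma_m$ for a one-parameter family of measures on $\R_+$, since ${\rm sq}\#\pi-{\rm sq}\#\pi^{\uparrow}=2ps(\overline\gamma_m-\underline\gamma_m)$. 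Second, because $\underline\gamma_m$ and $\overline\gamma_m$ depend affinely on $m$, it suffices to check the two endpoints $m=y_+$ and $m=z_-$. At each endpoint only \emph{one} large value survives in $\overline\gamma_m$ and only three middle values survive in $\underline\gamma_m$, and the hypotheses guarantee that this single large value dominates those particular middle values (e.g.\ $z_--x_-\le x_+-y_-$ at $m=y_+$ and $x_+-y_+\le z_+-x_-$ at $m=z_-$); combined with the small-below-middle inequalities from \eqref{ineq: 4vee} and equality of means, the support separation gives the convex order immediately, with no pairing of atoms required. Adding these two reductions would turn your outline into a complete proof; the argument for $\pi^{\downarrow}$ is then the same computation with the roles of $\hat\tau$ and $\check\tau$ exchanged, reversing the sign in \eqref{eq:diffimMMarr}.
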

\begin{remark}
\begin{itemize}
\item If $x_--y_+ = z_--x_+$, then inequality \eqref{ineq: 4vee} is satisfied. So for $y_+<x_-<x_+<z_-$ such that $x_--y_+ = z_--x_+$, we may choose $z_+$ large enough and $y_-$ small enough so that the conditions in Proposition \ref{prop:M^HK leq_cx} are met.
\item We deduce that for any convex function $\varphi:\R_+\to\R$
\begin{align*}
   &\int_{\R^2}\varphi(\vert y-x\vert^2)\pi^\uparrow(dx,dy)=\inf_{\pi\in \Pi_M(\mu,\underline\nu)}\int_{\R^2}\varphi(\vert y-x\vert^2)\pi(dx,dy)\mbox{ and }\\&\int_{\R^2}\varphi(\vert y-x\vert^2)\pi^\downarrow(dx,dy)=\sup_{\pi\in \Pi_M(\mu,\overline\nu)}\int_{\R^2}\varphi(\vert y-x\vert^2)\pi(dx,dy).
\end{align*}
Indeed the function $\varphi(z)=\mathds{1}_{\{z=0\}}\lim_{w\to 0+}\varphi(w)+\mathds{1}_{\{z>0\}}\varphi(z)$ is the uniform limit of the restriction to $\R_+$ of the non-decreasing sequence $(\varphi_n)_{n\ge 1}$ of convex functions on $\R$ defined by $\varphi_n(z)=\mathds{1}_{\{z<\frac{1}{n}\}}(\varphi(\frac{1}{n})+\varphi'_l(\frac{1}{n})(z-\frac{1}{n}))+\mathds{1}_{\{z\ge\frac{1}{n}\}}\varphi(z)$ where $\varphi'_l$ denotes the left-hand derivative of $\varphi$. Moreover, $\varphi(0)-\lim_{w\to 0+}\varphi(w)\ge 0$ and for probability measures $\theta,\vartheta\in{\cal P}_1(\R)$ such that $\theta(\R_+)=1=\vartheta(\R_+)$, $\theta\le_{cx}\vartheta\Rightarrow \theta(\{0\})\le\vartheta(\{0\}) $.

In particular since $\R_+\ni z\mapsto z^{\rho/2}$ is concave for $\rho\in(0,2)$ and convex for $\rho>2$, 
\begin{align*}
   \forall \rho>0,\;(\rho-2)\int_{\R^2}\vert y-x\vert^\rho \pi^\uparrow(dx,dy)&=\inf_{\pi\in \Pi_M(\mu,\underline\nu)}(\rho-2)\int_{\R^2}\vert y-x\vert^\rho \pi(dx,dy)\mbox{ and }\\(\rho-2)\int_{\R^2}\vert y-x\vert^\rho \pi^\downarrow(dx,dy)&=\sup_{\pi\in \Pi_M(\mu,\overline\nu)}(\rho-2)\int_{\R^2}\vert y-x\vert^\rho \pi(dx,dy).
\end{align*}
\end{itemize}
\end{remark}\subsubsection{Non-preservation of the optimality}
For $p\in(0,1)$ and $y_- < y_+ < x_-<x_+ < z_- < z_+$, we set 
\begin{align*}
    \mu &= p\delta_{x_-} + (1-p)\delta_{x_+} , \\
    \overline \nu &= p\frac{z_+-x_-}{z_+-y_-}\delta_{y_-} + p\frac{x_--y_-}{z_+-y_-}\delta_{z_+}+ (1-p)\frac{z_--x_+}{z_--y_+}\delta_{y_+} +(1-p)\frac{x_+-y_+}{z_--y_+}\delta_{z_-}, \\
    \underline \nu &= p\frac{z_--x_-}{z_--y_+}\delta_{y_+} + p\frac{x_--y_+}{z_--y_+}\delta_{z_-}+ (1-p)\frac{z_+-x_+}{z_+-y_-}\delta_{y_-} +(1-p)\frac{x_+-y_-}{z_+-y_-}\delta_{z_+}.
\end{align*} 
The couplings 
\begin{align*}
\overline \pi^\star &= p\left( \frac{z_+-x_-}{z_+-y_-}\delta_{(x_-,y_-)} + \frac{x_--y_-}{z_+-y_-}\delta_{(x_-,z_+)}\right) + (1-p)\left(\frac{z_--x_+}{z_--y_+}\delta_{(x_+,y_+)} +\frac{x_+-y_+}{z_--y_+}\delta_{(x_+,z_-)}\right) , \\
\underline \pi^\star &= p\left( \frac{z_--x_-}{z_--y_+}\delta_{(x_-,y_+)} + \frac{x_--y_+}{z_--y_-}\delta_{(x_-,z_-)}\right) + (1-p)\left(\frac{z_+-x_+}{z_+-y_-}\delta_{(x_+,y_-)} +\frac{x_+-y_-}{z_+-y_-}\delta_{(x_+,z_+)}\right),
\end{align*}
respectively belong to $\Pi_M(\mu,\overline \nu)$ and $\Pi_M(\mu,\underline \nu)$. Since $(x_-,z_+)$ and $(x_+,z_-)$ (resp. $(x_-,y_+)$ and $(x_+,y_-)$) belong to any Borel subset $\Gamma$ of $\R^2$ such that $\overline \pi^\star(\Gamma)=1$ (resp. $\underline \pi^\star(\Gamma)=1$), $\overline \pi^\star$ and $\underline \pi^\star$ are not non-decreasing. Since $(x_-,y_-)$ and $(x_+,y_+)$ (resp. $(x_-,z_-)$ and $(x_+,z_+)$) belong to any Borel subset $\Gamma$ of $\R^2$ such that $\overline \pi^\star(\Gamma)=1$ (resp. $\underline \pi^\star(\Gamma)=1$), $\overline \pi^\star$ and $\underline \pi^\star$ are not non-increasing.

\begin{proposition}\label{prop:M*HN}
Let $\rho\in (1,2)\cup(2,+\infty)$ and $y_- < y_+ < z_- < z_+$ be such that $z_- - y_- > z_+ - z_-$ and $z_--y_+\ge (\rho-1)^{\frac{1}{2-\rho}}(z_+-z_-)$. For $\rho \in (1,2)$ (resp. $\rho >2$), there exists $x_\rho \in (y_+, z_-)$ such that for $x_\rho < x_-<x_+<z_-$, $\overline \pi^\star$ is the unique optimal coupling that attains $\overline{\mathcal M}_\rho (\mu,\overline\nu)$ (resp. $\underline{\mathcal M}_\rho (\mu,\overline\nu)$) and $\underline \pi^\star$ is the unique optimal coupling that attains $\underline{\mathcal M}_\rho (\mu,\underline\nu)$ (resp. $\overline{\mathcal M}_\rho (\mu,\underline\nu)$).
\end{proposition}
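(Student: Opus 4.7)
The plan is to reduce the statement to a finite-dimensional check on two directional derivatives. Since $\mu$ has two atoms and $\overline\nu$ four, the set $\Pi_M(\mu,\overline\nu)$ is a two-dimensional polytope: among the eight masses $\pi(\{(x_i,y_j)\})$, $i\in\{-,+\}$, $j\in\{y_-,y_+,z_-,z_+\}$, the two probability, three independent marginal and one independent martingale constraints leave two degrees of freedom. I parametrize a generic $\pi\in\Pi_M(\mu,\overline\nu)$ by $B:=\pi(\{(x_-,y_+)\})$ and $C:=\pi(\{(x_-,z_-)\})$, both of which vanish at $\overline\pi^\star$. Solving the linear system writes every remaining mass as an affine function of $(B,C)$, for example
\[\pi(\{(x_-,y_-)\})=\tfrac{p(z_+-x_-)-B(z_+-y_+)-C(z_+-z_-)}{z_+-y_-},\qquad \pi(\{(x_+,y_-)\})=\tfrac{B(z_+-y_+)+C(z_+-z_-)}{z_+-y_-}.\]
Near $(B,C)=(0,0)$ only the inequalities $B\ge 0$, $C\ge 0$ are active, so the feasible cone at $\overline\pi^\star$ is $\R_+^2$ and $F(\pi):=\int|x-y|^\rho\,d\pi$ becomes an affine function of $(B,C)$.

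It follows that $\overline\pi^\star$ is the unique maximum (resp.\ minimum) of $F$ on $\Pi_M(\mu,\overline\nu)$ if and only if the directional derivatives $\partial_BF$ and $\partial_CF$ at $(0,0)$ are both strictly negative (resp.\ positive). A direct computation gives
\begin{align*}
\partial_B F &= \tfrac{z_+-y_+}{z_+-y_-}[(x_+-y_-)^\rho-(x_--y_-)^\rho]+(x_--y_+)^\rho-(x_+-y_+)^\rho+\tfrac{y_+-y_-}{z_+-y_-}[(z_+-x_+)^\rho-(z_+-x_-)^\rho],\\
\partial_C F &= \tfrac{z_+-z_-}{z_+-y_-}[(x_+-y_-)^\rho-(x_--y_-)^\rho]+(z_--x_-)^\rho-(z_--x_+)^\rho+\tfrac{z_--y_-}{z_+-y_-}[(z_+-x_+)^\rho-(z_+-x_-)^\rho].
\end{align*}

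The analytical core is a leading-order analysis as $x_-,x_+\to z_-$ with $x_-<x_+$. First-order Taylor expansions make each difference $(x_\pm-y)^\rho$ with $y\le y_+$ and $(z_+-x_\pm)^\rho$ contribute a term of order $(x_+-x_-)$ with explicit leading coefficient, while the exceptional term $(z_--x_-)^\rho-(z_--x_+)^\rho$ in $\partial_CF$ is $o(x_+-x_-)$ because $\rho>1$. Dividing by $(x_+-x_-)$ and passing to the limit yields, up to a positive factor,
\[\partial_C F \;\propto\; (z_+-z_-)(z_--y_-)\bigl[(z_--y_-)^{\rho-2}-(z_+-z_-)^{\rho-2}\bigr],\]
whose sign is controlled by the assumption $z_--y_->z_+-z_-$ together with the sign of $\rho-2$. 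After substituting $(z_+-y_+)=(z_--y_+)+(z_+-z_-)$ and $(z_+-y_-)=(z_--y_-)+(z_+-z_-)$ and applying the mean value theorem to $z\mapsto z^{\rho-1}$, the analogous limit of $\partial_BF$ splits as
\[(z_--y_-)(z_--y_+)\bigl[(z_--y_-)^{\rho-2}-(z_--y_+)^{\rho-2}\bigr]+(z_+-z_-)(y_+-y_-)\bigl[(\rho-1)\xi^{\rho-2}-(z_+-z_-)^{\rho-2}\bigr]\]
with $\xi\in(z_--y_+,z_--y_-)$. The first summand has the desired sign by the same mechanism as $\partial_CF$, and the hypothesis $z_--y_+\ge(\rho-1)^{1/(2-\rho)}(z_+-z_-)$ is precisely the inequality that forces $(\rho-1)(z_--y_+)^{\rho-2}$ onto the correct side of $(z_+-z_-)^{\rho-2}$ in both ranges $\rho\in(1,2)$ and $\rho>2$, so the second summand also has the required sign. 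By continuity, there exists $x_\rho\in(y_+,z_-)$ such that these strict sign conditions persist for all $x_\rho<x_-<x_+<z_-$.

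The assertion for $\underline\pi^\star$ in $\Pi_M(\mu,\underline\nu)$ follows without recomputation. Parametrize $\Pi_M(\mu,\underline\nu)$ near $\underline\pi^\star$ by the \emph{decreases} $u,v\ge 0$ in $\pi(\{(x_-,y_+)\})$ and $\pi(\{(x_-,z_-)\})$; the feasible cone is again $\R_+^2$, and the induced perturbation of the eight-tuple of masses is the exact reverse of the one driven by $(B,C)$ from $\overline\pi^\star$. Hence $\partial_uF=-\partial_BF$ and $\partial_vF=-\partial_CF$ at the origin, and the previous sign analysis yields that $\underline\pi^\star$ is the unique minimizer for $\rho\in(1,2)$ and the unique maximizer for $\rho>2$. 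The main obstacle is the algebraic manipulation isolating the leading-order behavior of $\partial_BF$ and $\partial_CF$ and verifying that the threshold $(\rho-1)^{1/(2-\rho)}$ is sharp enough to control both derivatives simultaneously for the two ranges of $\rho$.
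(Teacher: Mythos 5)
Your proposal is correct and follows essentially the same route as the paper: both reduce the problem to checking the signs of the two coefficients of an affine cost over a two-parameter family (your $(\partial_BF,\partial_CF)$ are exactly the paper's $f(x_-,y_+)-f(x_+,y_+)$ and $f(x_-,z_-)-f(x_+,z_-)$), evaluate the limiting slope at $x=z_-$, and exploit the hypothesis $z_--y_+\ge(\rho-1)^{1/(2-\rho)}(z_+-z_-)$ through a mean-value/tangent-line bound on $z\mapsto z^{\rho-1}$. The only differences are cosmetic — coordinates $(B,C)$ instead of $(s,m)$, a uniform double-limit in place of the single-variable derivative $\partial_xf(z_-,\cdot)$ plus continuity, and the MVT in place of the concavity inequality in the paper's function $g(a,b,c)$ — and your decomposition of the limiting value of $\partial_BF$ checks out.
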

\begin{remark}
   For $\rho\in (1,2)\cup(2,+\infty)$, $(\rho-1)^{\frac{1}{2-\rho}}<1$ so that when $y_- < y_+ < z_- < z_+$, it is enough that $z_--y_+\ge z_+-z_-$ to ensure $z_- - y_- > z_+ - z_-$ and $z_--y_+\ge (\rho-1)^{\frac{1}{2-\rho}}(z_+-z_-)$.\end{remark}\begin{remark}
Let $y_- < y_+ < x_-<x_+ < z_- < z_+$ be such that $z_--y_+\ge z_+-z_-$ and $\beta = \frac{z_--y_+}{z_++z_--y_+-y_-}$. Since $\beta\in(0,\frac12)$, by Lemma \ref{lemma psi_rho}, the equation $\alpha_\rho = \beta$ admits a unique root $\rho_\beta>1$. Since $(1,+\infty)\ni\rho\mapsto \alpha_\rho$ is increasing by the fifth item in Lemma \ref{lemma psi_rho}, for $\rho\in (1,\rho_\beta)$, $\alpha_\rho<\beta$ and setting $x_+ = (1-\alpha_\rho)z_-+\alpha_\rho y_-$, we have
\begin{align*}
x_+-y_+ &= z_--y_++\alpha_\rho(y_--z_-)= z_--y_+-\alpha_\rho(z_++z_--y_+-y_-)+\alpha_\rho(z_+-y_+)  \\
&> z_--y_+ -\beta(z_++z_--y_+-y_-)+\alpha_\rho(z_+-y_+)=\alpha_\rho(z_+-y_+).
\end{align*}
We may choose $x_-<x_+$ close to $x_+$ such that $x_--y_+\geq \alpha_\rho(z_+-y_+)$ and then, by Proposition \ref{prop:M^HN with alpha_rho}, the only martingale coupling in $\Pi_M(\mu,\overline\nu)$ that maximizes (resp. minimizes) $\int \vert x-y\vert^\rho \pi(dx,dy)$ when $\rho\in(1,2\wedge\rho_\beta)$ (resp. $\rho\in(2,\rho_\beta)$) is non-decreasing. 
Hence for $\rho\in(1,2)\cup(2,+\infty)$ such that $\rho<\rho_\beta$, one has $x_\rho \geq (1-\alpha_\rho)z_-+\alpha_\rho y_-$. In particular, since $\lim_{\rho\to 1+}\alpha_\rho=0$ by the sixth item in Lemma \ref{lemma psi_rho}, we have $\lim_{\rho\to 1+}x_\rho=z_-$. 

\end{remark}
\begin{remark}
 By Propositions \ref{prop:variational lemma to nd coupling} and \ref{prop:deccoupl}, the unique non-decreasing coupling in $\Pi_M(\mu,\overline\nu)$ (resp. non-increasing coupling in $\Pi_M(\mu,\underline\nu)$) is the unique optimal coupling that attains $\overline{\mathcal M}_\rho (\mu,\overline\nu)$ (resp. $\underline{\mathcal M}_\rho (\mu,\underline\nu)$) for $\rho\in (0,1]$. When $\overline \pi^\star$ (resp. $\underline \pi^\star$) is the unique optimal coupling that attains $\overline{\mathcal M}_\rho (\mu,\overline\nu)$ (resp. $\underline{\mathcal M}_\rho (\mu,\underline\nu)$) for $\rho\in(1,2)$ and $\underline{\mathcal M}_\rho (\mu,\overline\nu)$ (resp. $\overline{\mathcal M}_\rho (\mu,\underline\nu)$) for $\rho>2$, then  $\inf_{cx}\{{\rm sq}\# \pi:\pi\in \Pi_M(\mu,\overline\nu)\}$ (resp. $\sup_{cx}\{{\rm sq}\# \pi:\pi\in \Pi_M(\mu,\underline\nu)\}$) does not belong to $\{{\rm sq}\# \pi:\pi\in \Pi_M(\mu,\overline\nu)\}$ (resp. $\{{\rm sq}\# \pi:\pi\in \Pi_M(\mu,\underline\nu)\}$).
\end{remark}

 \section{Proofs}
\label{sec:proofs}

To recall the necessary optimality condition stated by Beiglb\"ock and Juillet in Lemma 1.11 \cite{BeJu16}, we first recall their definition of competitors :
\begin{definition}[Competitor, taken from Definition 1.10 \cite{BeJu16}]\label{def:competitor}
Let $\alpha$ be a measure on $\R\times\R$ with finite first moment in the second variable. We say that $\alpha'$, a measure on the same space, is a competitor of $\alpha$ if $\alpha'$ has the same marginals as $\alpha$ and 
\[
\int_{y\in\R}\alpha(dx,dy)\mbox{ a.e.},\;\int y \,\alpha_x(dy) = \int y \,\alpha'_x(dy).
\]

\end{definition}
 Now, we state a version of Lemma 1.11 \cite{BeJu16} slightly reinforced by also taking into account Lemma \ref{lemma: x,z in gamma} just below.\begin{lemma}\label{lem:varbj}
  Let $\mu,\nu\in{\cal P}_1(\R)$ be such that $\mu\le_{cx}\nu$ and $c:\R^2\to\R$ be a measurable cost function such that $\forall (x,y)\in\R^2$, $c(x,y)\ge a(x)+b(y)$ for measurable functions $a$ and $b$ integrable with respect to $\mu$ and $\nu$ respectively. When $\pi\in\Pi_M(\mu,\nu)$ is a minimizing coupling leading to finite cost, there exists a Borel set $\Gamma\subset\R^2$ such that $\pi(\Gamma)=1$,
\begin{equation}
   \forall x\in\R,\;\exists y<x\mbox{ s.t. }(x,y)\in\Gamma\Longleftrightarrow\exists z>x\mbox{ s.t. }(x,z)\in\Gamma,\label{eq:l2c}
\end{equation}
  and if $\alpha$ is a measure finitely supported on $\Gamma$, then $\int c(x,y)\alpha(dx,dy)\le \int c(x,y)\alpha'(dx,dy)$ for each competitor $\alpha'$ of $\alpha$.
 \end{lemma}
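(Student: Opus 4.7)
The plan is to combine Beiglb\"ock and Juillet's Lemma 1.11 with a sharpening of the support set coming from the martingale property (this sharpening is the content of Lemma \ref{lemma: x,z in gamma} referred to in the statement).

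First, invoke \cite[Lemma 1.11]{BeJu16}: under the present integrability assumption on $c$ it yields a Borel set $\Gamma_0\subset\R^2$ with $\pi(\Gamma_0)=1$ such that for every measure $\alpha$ finitely supported on $\Gamma_0$ and every competitor $\alpha'$ of $\alpha$, one has $\int c\,d\alpha\le\int c\,d\alpha'$. Any Borel subset $\Gamma\subseteq\Gamma_0$ of full $\pi$-measure inherits this competitor-optimality (a measure supported on $\Gamma$ is a fortiori supported on $\Gamma_0$), so it remains to shrink $\Gamma_0$ to a Borel subset $\Gamma$ still carrying all the mass of $\pi$ and satisfying \eqref{eq:l2c}.

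Second, fix a measurable disintegration $\pi(dx,dy)=\mu(dx)\pi_x(dy)$. The martingale condition $\int y\,\pi_x(dy)=x$ (valid $\mu$-a.e.) forces the following dichotomy: for $\mu$-a.e.\ $x$, either $\pi_x=\delta_x$, or $\pi_x$ places positive mass on both $(-\infty,x)$ and $(x,+\infty)$, since a probability measure with mean $x$ supported in $(-\infty,x]$ must be $\delta_x$, and symmetrically on the other side. Accordingly, set
\[
\Gamma=\Gamma_0\cap\Bigl(\{(x,y):y=x\}\cup\bigl\{(x,y):\pi_x((-\infty,x))>0\text{ and }\pi_x((x,+\infty))>0\bigr\}\Bigr).
\]
Since $x\mapsto\pi_x((-\infty,x))$ and $x\mapsto\pi_x((x,+\infty))$ are Borel measurable, $\Gamma$ is Borel. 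Combining the dichotomy with $\pi_x(\Gamma_{0,x})=1$ for $\mu$-a.e.\ $x$ yields $\pi(\Gamma)=1$: at $x$ with $\pi_x=\delta_x$ the point $(x,x)$ is retained and carries all the mass $\pi_x(\{x\})=1$, while at $x$ with non-trivial $\pi_x$ the whole section $\Gamma_{0,x}$ survives.

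Third, verify \eqref{eq:l2c}. Suppose $(x,y)\in\Gamma$ with $y<x$. Then $y\ne x$, so by construction of $\Gamma$ both $\pi_x((-\infty,x))$ and $\pi_x((x,+\infty))$ are positive. From $\pi_x(\Gamma_{0,x})=1$ we deduce $\pi_x\bigl(\Gamma_{0,x}\cap(x,+\infty)\bigr)=\pi_x((x,+\infty))>0$, so there exists $z>x$ with $(x,z)\in\Gamma_0$; since the conditions on $\pi_x$ still hold at this $x$, the point $(x,z)$ belongs to $\Gamma$ as well. The reverse implication is symmetric. The delicate step is the treatment of the Dirac case $\pi_x=\delta_x$: the diagonal branch in the definition of $\Gamma$ is precisely what prevents $\Gamma$ from keeping spurious off-diagonal points of $\Gamma_0$ at such $x$, which would otherwise break \eqref{eq:l2c}.
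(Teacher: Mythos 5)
Your overall strategy is exactly the paper's: invoke \cite[Lemma 1.11]{BeJu16} to get a set $\Gamma_0$ with the competitor property, note that this property passes to Borel subsets of full $\pi$-measure, and then prune $\Gamma_0$ using the martingale dichotomy (this pruning is precisely Lemma \ref{lemma: x,z in gamma}). The first two steps and the measurability of your $\Gamma$ are fine, as is the verification that $\pi(\Gamma)=1$.

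There is, however, a genuine gap in the verification of \eqref{eq:l2c}, which is a statement about \emph{every} $x\in\R$, not $\mu$-almost every $x$. Your retention criterion at $x$ is $\pi_x((-\infty,x))>0$ and $\pi_x((x,+\infty))>0$, and your argument that a point $(x,y)\in\Gamma$ with $y<x$ has a companion $(x,z)\in\Gamma$ with $z>x$ goes through the identity $\pi_x\bigl(\Gamma_{0,x}\cap(x,+\infty)\bigr)=\pi_x((x,+\infty))$, which uses $\pi_x(\Gamma_{0,x})=1$. But $\pi_x(\Gamma_{0,x})=1$ holds only for $\mu$-a.e.\ $x$. At an $x$ outside that full-measure set (where the fixed version of the disintegration is essentially arbitrary and the section $\Gamma_{0,x}$ is unconstrained by Lemma 1.11), it can happen that $\pi_x$ charges both half-lines while $\Gamma_{0,x}$ meets $(-\infty,x)$ but not $(x,+\infty)$; your $\Gamma$ then retains the whole off-diagonal section at such an $x$ and \eqref{eq:l2c} fails there. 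The repair is to phrase the criterion in terms of the set itself, i.e.\ keep the off-diagonal part of the section at $x$ only when both $\pi_x\bigl(\Gamma_{0,x}\cap(-\infty,x)\bigr)>0$ and $\pi_x\bigl(\Gamma_{0,x}\cap(x,+\infty)\bigr)>0$, and otherwise retain at most the diagonal point. Then, for every $x$, positivity of $\pi_x\bigl(\Gamma_{0,x}\cap(x,+\infty)\bigr)$ directly forces $\Gamma_{0,x}\cap(x,+\infty)\neq\emptyset$ and symmetrically on the left, while the martingale dichotomy still shows that the discarded set is $\pi$-null. This is exactly the construction of the set $A$ and of $\widetilde\Gamma$ in the proof of Lemma \ref{lemma: x,z in gamma}.
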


\begin{lemma}\label{lemma: x,z in gamma}
  Let $\mu,\nu\in\mathcal P_1(\R)$ be such that $\mu\leq_{cx}\nu$ and $\pi\in\Pi_M(\mu,\nu)$. For any Borel subset $\Gamma$ of $\R^2$ such that $\pi(\Gamma)=1$, we may find a Borel subset $\widetilde \Gamma$ of $\R^2$ such that $\widetilde \Gamma\subset\Gamma$, $\pi(\widetilde \Gamma)=1$ and 
$$\forall x\in\R,\;\exists y<x\mbox{ s.t. }(x,y)\in\widetilde\Gamma\Longleftrightarrow\exists z>x\mbox{ s.t. }(x,z)\in\widetilde\Gamma.$$ 
\end{lemma}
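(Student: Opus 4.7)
The plan is to exploit the martingale disintegration of $\pi$: writing $\pi(dx,dy)=\mu(dx)\pi_x(dy)$ with a Borel version of $(\pi_x)_{x\in\R}$, the martingale property yields that for $\mu$-a.e.\ $x$, either $\pi_x=\delta_x$ or $\pi_x$ charges both $(-\infty,x)$ and $(x,+\infty)$. I will keep $\Gamma$ unchanged above the second type of $x$ and cut it down to the diagonal above the first type, as well as above a $\mu$-null exceptional set where the martingale property or the equality $\pi_x(\Gamma_x)=1$ fails.

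More precisely, I would introduce
\[
B=\{x\in\R:\pi_x(\{x\})<1\},\quad N_1=\{x\in B:\pi_x((-\infty,x))\wedge\pi_x((x,+\infty))=0\},\quad N_2=\{x:\pi_x(\Gamma_x)<1\},
\]
where $\Gamma_x=\{y\in\R:(x,y)\in\Gamma\}$, and set $N=N_1\cup N_2$. The martingale property forces $\mu(N_1)=0$, and $\pi(\Gamma)=1$ forces $\mu(N_2)=0$. The one delicate technical point is Borel measurability of these sets: writing quantities such as $\pi_x([x-1/n,x+1/n])=\int\mathds{1}_{\{|y-x|\le 1/n\}}\pi_x(dy)$ as Borel functions of $x$ via a monotone class argument (starting from rectangles and using that $x\mapsto\pi_x$ is Borel into $\mathcal{P}(\R)$) and passing to monotone limits, one captures $\pi_x(\{x\})$, $\pi_x((-\infty,x))$, $\pi_x((x,+\infty))$ and $\pi_x(\Gamma_x)$ as Borel functions of $x$.

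With $B$, $N_1$, $N_2$ in hand, I would define
\[
\widetilde\Gamma=\{(x,y)\in\Gamma:x\in B\setminus N\}\,\cup\,\{(x,x)\in\Gamma:x\notin B\setminus N\},
\]
which is manifestly a Borel subset of $\Gamma$. The verification splits along the partition $\R=(B\setminus N)\sqcup(B^c\setminus N)\sqcup N$. For $x\in B\setminus N$, $\widetilde\Gamma_x=\Gamma_x$ has $\pi_x$-mass $1$, and since both $\pi_x((-\infty,x))$ and $\pi_x((x,+\infty))$ are strictly positive while $\pi_x(\Gamma_x^c)=0$, $\Gamma_x$ meets both half-lines, so the biconditional at $x$ holds with both sides true. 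For $x\in B^c\setminus N$, $\pi_x=\delta_x$ forces $(x,x)\in\Gamma$, so $\widetilde\Gamma_x=\{x\}$ has $\pi_x$-mass $1$ and the biconditional at $x$ holds with both sides false. For $x\in N$, $\widetilde\Gamma_x\subseteq\{x\}$ so the biconditional is trivial, and such $x$ contribute nothing to $\pi(\widetilde\Gamma)$ since $\mu(N)=0$. Integrating $\pi_x(\widetilde\Gamma_x)$ against $\mu$ then yields $\pi(\widetilde\Gamma)=1$.

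The main obstacle to watch for is purely set-theoretic: the naive approach would be to directly remove from $\Gamma$ any $(x,y)$ with $y<x$ whenever no $(x,z)$ with $z>x$ lies in $\Gamma$, and vice versa, but the $x$-sections involved are projections of Borel sets and hence only analytic in general, which is insufficient to guarantee Borel measurability of $\widetilde\Gamma$. Routing the construction through the disintegration $(\pi_x)_x$ and Borel-measurable integrals bypasses this issue, and once the measurability of $B$, $N_1$, $N_2$ is established, the remainder of the argument is direct bookkeeping.
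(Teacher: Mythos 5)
Your proof is correct and follows essentially the same route as the paper's: both replace the set-theoretic condition on the sections $\Gamma_x$ by the measure-theoretic condition that $\pi_x$ charges $\Gamma_x$ on both sides of $x$, use the martingale property to show the exceptional set is $\mu$-null, and cut the sections down to the diagonal over that set. Your version is merely more explicit about the Borel measurability of the sets involved (which the paper leaves implicit) and splits the paper's single exceptional set $A$ into the pieces $B$, $N_1$, $N_2$, but the argument is the same.
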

    
\begin{proof}[Proof of Lemma \ref{lemma: x,z in gamma}]
For $x\in\R$, let $\Gamma_x = \left\{y\in\R: (x,y)\in\Gamma\right\}$. We set 
\[
A =  \left\{x\in\R: \pi_x\left(\Gamma_x\cap(-\infty,x)\right)\times \pi_x\left(\Gamma_x\cap(x,+\infty)\right) = 0\right\},
\]
and 
\[
\widetilde \Gamma = \Gamma \setminus \bigcup_{x\in A} \{x\}\times \left\{\Gamma_x\cap\{(-\infty,x)\cup(x,+\infty)\} \right\}.
\]
Since $\mu(dx)$ almost everywhere, $\pi_x(\Gamma_x) = 1$ and $\int_{\Gamma_x} y\,\pi_x(dy) = \int_\R y\,\pi_x(dy) = x$, 
\[
\mu(dx)\mbox{ a.e.},\;x\in A\Rightarrow \pi_x\left(\Gamma_x\cap\{(-\infty,x)\cup(x,+\infty)\} \right) = 0.
\]
Therefore\[
\pi \left( \bigcup_{x\in A} \{x\}\times \left\{\Gamma_x\cap\{(-\infty,x)\cup(x,+\infty)\} \right\} \right) = \int_{x\in A} \pi_x\left(\Gamma_x\cap\{(-\infty,x)\cup(x,+\infty)\} \right) \mu(dx) =0,
\]
which implies $\pi(\widetilde \Gamma) = \pi(\Gamma) = 1$. Let $\widetilde \Gamma_x = \left\{y\in\R: (x,y)\in\widetilde \Gamma\right\}$ for $x\in\R$. For $x\in A$, $\widetilde \Gamma_x \subset \{x\}$. For $x\notin A$, $\widetilde \Gamma_x\cap(-\infty,x)=\Gamma_x\cap(-\infty,x)\neq \emptyset$ and $\widetilde \Gamma_x\cap(x,+\infty)=\Gamma_x\cap(x,+\infty) \neq \emptyset$, which concludes the proof.

\end{proof}
\subsection{Proof of Lemma \ref{lem:bornesups}}
When $\nu=\mu$, the conclusion holds since the set under consideration is $\{\delta_0\}$. We now suppose that $\nu\neq \mu$, which implies that $\int_\R y^2\nu(dy)>\int_\R x^2\mu(dx)$.
   Since $2(x^2+y^2)\ge (y-x)^2$, we have that $(y-x)^2> z$ implies $4x^2> z$ or $4y^2> z$ so that, for $\pi\in\Pi(\mu,\nu)$ and $z\ge 0$,
 \begin{align}
  \pi\left(\left\{(x,y):(y-x)^2> z\right\}\right)\le f(z)\mbox{ where }f(z)=
   \left(\mu(\{x:4x^2> z\})+\nu(\{y:4y^2> z\})\right)\wedge 1.\label{eq:minoqueuepi}
 \end{align}
 The non-increasing function $f$ is right-continuous on $[0,+\infty)$ and so is $[0,+\infty)\ni z\mapsto zf(z)+\int_z^{+\infty}f(w)dw$.  We have
 $$zf(z)+\int_z^{+\infty}f(w)dw\le 4\left(\int\mathds{1}_{\{x>\sqrt{z}/2\}}x^2\mu(dx)+\int\mathds{1}_{\{y>\sqrt{z}/2\}}y^2\nu(dy)\right)\underset{z\to+\infty}{\longrightarrow}0.$$
 On the other hand $$\int_0^{+\infty}f(w)dw\ge \int_0^{+\infty}\nu(\{y:4y^2\ge w\})dw
=4\int_\R y^2\nu(dy)>\int_\R y^2\nu(dy)-\int_\R x^2\mu(dx).$$ 
As a consequence $\overline z:=\sup\{z>0:zf(z)+\int_z^{+\infty}f(w)dw\ge \int_{\R}y^2\nu(dy)-\int_{\R}x^2\mu(dx)\}$ belongs to $(0,+\infty)$ and is such that $\overline z\lim_{z\to\overline z-}f(z)+\int_{\overline z}^{+\infty}f(w)dw\ge \int_{\R}y^2\nu(dy)-\int_{\R}x^2\mu(dx)\ge \overline zf(\overline z)+\int_{\overline z}^{+\infty}f(w)dw$. Therefore we may find $p\in[f(\overline z),\lim_{z\to\overline z-}f(z)]\subset[0,1]$ such that $\overline zp+\int_{\overline z}^{+\infty}f(w)dw=\int_{\R}y^2\nu(dy)-\int_{\R}x^2\mu(dx)$. The probability measure $\overline\eta$ with cumulative distribution function  $F_{\overline\eta}(z)=\mathds{1}_{\{0\le z\le\overline{z}\}}(z)(1-p)+\mathds{1}_{\{z>\overline z\}}(1-f(z))$ is such that $\int_{\R}z\overline\eta(dz)=\int_0^{+\infty}(1-F_{\overline\eta}(z))dz=\int_{\R}y^2\nu(dy)-\int_{\R}x^2\mu(dx)$. Let $\pi\in\Pi_M(\mu,\nu)$, $\eta={\rm sq}\# \pi$ and $G(z)=\int_z^{+\infty}(1-F_{\overline \eta}(w))dw-\int_z^{+\infty}(1-F_\eta(w))dw$ for $z\in\R$. By \eqref{eq:minoqueuepi}
, we have $\forall z>\overline z$, $1-F_\eta(z)\le f(z)=1-F_{\overline \eta}(z)$ so that $G(z)\ge 0$ for all $z\in [\overline z,+\infty)$. Since $\int_0^{+\infty}(1-F_\eta(w))dw=\int_{\R}z\eta(dz)=\int_{\R}y^2\nu(dy)-\int_{\R}x^2\mu(dx)$, $G(0)=0$ and since $F_{\overline \eta}(z)=F_\eta(z)=0$ for $z\in (-\infty,0)$, $G(z)=0$ for $z\in (-\infty,0]$. The function $G$ being concave on $[0,\overline z]$ by constancy of $F_{\overline\eta}$ on this interval, we conclude that $G(z)\ge 0$ for all $z\in\R$. By Theorem 3.A.1 (a) \cite{ShakedShanthikumar}, this together with the equality of means implies that $\eta\le_{cx}\overline\eta$. Therefore $\left\{{\rm sq}\# \pi:\pi\in\Pi_{M}(\mu,\nu)\right\}$ is bounded by $\overline\eta$ for the convex order.

\subsection{Proofs of the optimality results in Section \ref{sec:defoptimonot} but Proposition \ref{prop:deccoupl}}
The proof of Proposition \ref{prop:deccoupl}, which relies on the one of Theorem \ref{thm:inccoup}, is postponed to the next section.

\begin{lemma}\label{lemma: 5 point cost}
Let $\varphi:\R_+\to\R$ be concave and increasing. Let $y<m<z$ and $f: \R\to\R$ be the function defined by
\[
f(x) = \frac{z-m}{z-y}\varphi(\vert x-y\vert ) + \frac{m-y}{z-y}\varphi(\vert z-x\vert) -\varphi(\vert x-m\vert).
\]
Then, $f$ is increasing on $[y,m]$ and decreasing on $[m,z]$. 
\end{lemma}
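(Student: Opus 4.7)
The plan is to split $[y,z]$ at $m$ and treat $[y,m]$ and $[m,z]$ symmetrically by an affine reparametrization, so that in each case $f$ reduces to a function to which the standard concavity chord inequality can be directly applied.

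Focus first on $x\in[y,m]$. Setting $a=m-y>0$, $b=z-m>0$ and $t=m-x\in[0,a]$, the absolute values unfold as $|x-y|=a-t$, $|z-x|=b+t$, $|x-m|=t$, so that $f(x)=h(t)$ where
\[
h(t)=\frac{b}{a+b}\,\varphi(a-t)+\frac{a}{a+b}\,\varphi(b+t)-\varphi(t).
\]
Showing $f$ non-decreasing on $[y,m]$ is therefore equivalent to showing $h$ non-increasing on $[0,a]$. For $0\le t_1<t_2\le a$, set $A=\varphi(a-t_1)-\varphi(a-t_2)$, $B=\varphi(b+t_2)-\varphi(b+t_1)$, $C=\varphi(t_2)-\varphi(t_1)$; each is non-negative by the monotonicity of $\varphi$. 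A direct computation gives
\[
h(t_1)-h(t_2)=\frac{b}{a+b}\,A+C-\frac{a}{a+b}\,B.
\]

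The key observation is that $[t_1,t_2]$ and $[b+t_1,b+t_2]$ have equal length $t_2-t_1$, and the second lies entirely to the right of the first. Since $\varphi$ is concave on $\R_+$, chords of equal length positioned further to the right have smaller increments, yielding $B\le C$. Using $\frac{a}{a+b}<1$ and $A\ge 0$,
\[
h(t_1)-h(t_2)\ge \frac{b}{a+b}A+C-\frac{a}{a+b}C=\frac{b}{a+b}\bigl(A+C\bigr)\ge 0,
\]
so $h$ is non-increasing and $f$ is non-decreasing on $[y,m]$. For $x\in[m,z]$, the substitution $s=x-m\in[0,b]$ yields $f(x)=\frac{b}{a+b}\varphi(a+s)+\frac{a}{a+b}\varphi(b-s)-\varphi(s)$; now $[s_1+a,s_2+a]$ lies to the right of $[s_1,s_2]$, and the symmetric chord inequality gives the required $f(x)$ non-increasing in $x$ on $[m,z]$.

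There is no real obstacle here beyond bookkeeping: the argument uses only the two assumed properties of $\varphi$ (monotonicity, controlling the signs of $A$, $B$, $C$, and concavity, controlling the chord inequality $B\le C$) via chords, so no smoothness assumption on $\varphi$ is needed and the possible discontinuity of $\varphi$ at $0$ causes no issue since concavity on $[0,\infty)$ still delivers the chord inequality at the boundary.
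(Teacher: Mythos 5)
Your proof is correct and is essentially the paper's argument in a different parametrization: your chord inequality $B\le C$ is exactly the paper's concavity step (the four-point inequality $\varphi(z-x_+)+\varphi(m-x_-)\ge \varphi(z-x_-)+\varphi(m-x_+)$, which in your notation reads $\varphi(b+t_1)+\varphi(t_2)\ge \varphi(b+t_2)+\varphi(t_1)$), and your residual term $\frac{b}{a+b}(A+C)$ is the paper's first, monotonicity-only term. The one thing to tighten is strictness: the lemma asserts that $f$ is (strictly) increasing resp.\ decreasing, and this strictness is what produces the contradictions where the lemma is applied, so you should note that $A=\varphi(a-t_1)-\varphi(a-t_2)>0$ because $\varphi$ is increasing and $a-t_1>a-t_2$, whence $h(t_1)-h(t_2)\ge \tfrac{b}{a+b}(A+C)>0$ (and likewise $A'>0$ on $[m,z]$), rather than stopping at the non-strict conclusion.
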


\begin{proof}[Proof of Lemma \ref{lemma: 5 point cost}] 
To check the monotonicity on $[y,m]$, we choose $x_-$ and $x_+$ such that $y\leq x_-<x_+\leq m$, 
\begin{align*}
    f(x_+) - f(x_-) &= \frac{z-m}{z-y}\left(\varphi(x_+-y) -\varphi(x_--y) - \varphi(m-x_+)+\varphi(m-x_-)\right)\\
    &\quad +\frac{m-y}{z-y}\left(\varphi(z-x_+) -\varphi(z-x_-) - \varphi(m-x_+)+\varphi(m-x_-)\right).
\end{align*}
Since $x_+-y>x_--y$ and $m-x_->m-x_+$, we have $\varphi(x_+-y) >\varphi(x_--y)$ and $\varphi(m-x_-)>\varphi(m-x_+)$ so that the first term in the right-hand side is positive. On the other hand, we know that
\[m-x_+\leq (z-x_+)\wedge(m-x_-) \leq(z-x_+)\vee(m-x_-)\leq z-x_-.\]
By the concavity of $\varphi$, we have
\begin{align*}
\varphi(z-x_+) \geq \frac{z-m}{z-m+x_+-x_-}\varphi(z-x_-)+\frac{x_+-x_-}{z-m+x_+-x_-}\varphi(m-x_+) ,\\
\varphi(m-x_-) \geq \frac{z-m}{z-m+x_+-x_-}\varphi(m-x_+)+\frac{x_+-x_-}{z-m+x_+-x_-}\varphi(z-x_-),
\end{align*}
which implies $\varphi(z-x_+) +\varphi(m-x_-)\ge \varphi(z-x_-) + \varphi(m-x_+)$. Therefore, $f(x_+)-f(x_-)>0$ and $f$ is increasing on $[y,m]$.

$ $\newline
Similarly, to check the monotonicity on $[m,z]$, we choose $x_-$ and $x_+$ such that $m\leq x_-< x_+\leq z$, 
\begin{align*}
    f(x_+) - f(x_-) &= \frac{z-m}{z-y}\left(\varphi(x_+-y) -\varphi(x_--y) - \varphi(x_+-m)+\varphi(x_--m)\right)\\
    &\quad +\frac{m-y}{z-y}\left(\varphi(z-x_+) -\varphi(z-x_-) - \varphi(x_+-m)+\varphi(x_--m)\right).
\end{align*}
Since $z-x_+<z-x_-$ and $x_--m<x_+-m$, we have $\varphi(z-x_+)<\varphi(z-x_-)$ and $\varphi(x_--m)<\varphi(x_+-m)$ so that the second term in the right-hand side is negative. On the other hand, we know that
\[x_--m\leq (x_+-m)\wedge(x_--y) \leq(x_+-m)\vee(x_--y)\leq x_+-y.\]
By the concavity of $\varphi$, we have
\begin{align*}
\varphi(x_+-m) \geq \frac{m-y}{m-y+x_+-x_-}\varphi(x_--m)+\frac{x_+-x_-}{m-y+x_+-x_-}\varphi(x_+-y) ,\\
\varphi(x_--y) \geq \frac{m-y}{m-y+x_+-x_-}\varphi(x_+-y)+\frac{x_+-x_-}{m-y+x_+-x_-}\varphi(x_--m),
\end{align*}
which implies $\varphi(x_+-y)+\varphi(x_--m)\le \varphi(x_--y) + \varphi(x_+-m)$. Therefore, $f(x_+)-f(x_-)<0$ and $f$ is decreasing on $[m,z]$.


\end{proof}

\begin{proof}[Proof of Proposition \ref{prop:variational lemma to nd coupling}] The cost function $\R^2\ni(x,y)\mapsto\varphi(\vert x-y\vert)$ is lower-semicontinuous and even continuous when $\varphi$ is continuous. Since it satisfies the bounds in Remark \ref{rk:integvarphi}, the existence of optimizers follows from Theorem 1 \cite{BeHePe12}. Let us establish the monotonicity properties. By Lemma \ref{lem:varbj}, if $\tilde \pi\in \Pi_M(\mu,\nu)$ is a maximizing (resp. minimizing) martingale coupling which leads to finite costs, then there exists a Borel set $\Gamma \subseteq \R\times\R$ (resp. $\Gamma \subseteq [a,b]\times(-\infty,a]\cup[b,+\infty)$) with $\tilde \pi(\Gamma) = 1$ such that \begin{equation}
   \forall x\in\R,\;\exists y<x\mbox{ s.t. }(x,y)\in\Gamma\Longleftrightarrow\exists z>x\mbox{ s.t. }(x,z)\in\Gamma,\label{eq:2c}
\end{equation}
  and for a measure $\alpha$ which is finitely supported on $\Gamma$, we have 
\begin{equation}\label{ineq: Variational lemma alpha max}
\int \varphi(\vert x-y\vert) \,\alpha'(dx,dy) \stackrel{(\mbox{resp. }\geq)}{\leq} \int \varphi(\vert x-y\vert) \,\alpha(dx,dy),
\end{equation}
for every competitor $\alpha'$ of $\alpha$. Assume that $(x_-, y_-), (x_+, y_+)\in \Gamma$ (resp. $(x_-, y_+), (x_+, y_-)\in \Gamma$ 
) with $y_-\leq x_-$, $y_+\leq x_+$ (resp. $y_+\leq x_-$, $y_-\leq x_+$) and $x_-<x_+$. Let us assume $y_+<y_-$ and obtain a contradiction against \eqref{ineq: Variational lemma alpha max}. Then $y_+<y_-\le x_-<x_+$ (resp. $y_+<y_-\leq a\leq x_-<x_+$). By \eqref{eq:2c}, there exists $z>x_+$ such that $(x_+,z)\in \Gamma$ (resp. $z>x_-$ such that $(x_-,z)\in \Gamma$ and then $y_+<y_-\leq a\leq x_-<x_+\leq b \leq z$). By Lemma \ref{lemma: 5 point cost} applied with $(y,m,z)=(y_+,y_-,z)$, the function $f$ defined by \[f(x) = \frac{z-y_-}{z-y_+}\varphi(\vert x-y_+\vert ) + \frac{y_--y_+}{z-y_+}\varphi(\vert z-x\vert) -\varphi(\vert x-y_-\vert)\] is decreasing on $[y_-,z]$. We deduce that for 
\begin{align*}
    \beta(dx,dy) &= \delta_{(x_-,y_-)} + \frac{z-y_-}{z-y_+}\delta_{(x_+,y_+)} + \frac{y_--y_+}{z-y_+}\delta_{(x_+,z)} , \\
    \gamma(dx,dy) &= \delta_{(x_+,y_-)} + \frac{z-y_-}{z-y_+}\delta_{(x_-,y_+)} + \frac{y_--y_+}{z-y_+}\delta_{(x_-,z)},
\end{align*}
\[
\int \varphi(\vert x-y\vert) \,\beta(dx,dy) - \int \varphi(\vert x-y\vert) \,\gamma(dx,dy)= f(x_+)-f(x_-)<0.
\]
This contradicts inequality \eqref{ineq: Variational lemma alpha max} since $\gamma$ (resp. $\beta$) is a competitor of $\beta$ (resp. $\gamma$), which is finitely supported on $\Gamma$. Therefore, we deduce that $y_-\leq y_+$.  

On the other hand, assume that $(x_-,z_-),(x_+,z_+)\in\Gamma$ (resp. $(x_-,z_+),(x_+,z_-)\in\Gamma$) with $x_-\leq z_-$, $x_+\leq z_+$ (resp. $x_-\le z_+$, $x_+\le z_-$) and $x_-<x_+$. Let us assume that $z_+<z_-$ and obtain a contradiction against \eqref{ineq: Variational lemma alpha max}. Then $x_-<x_+\le z_+<z_-$ (resp. $x_-<x_+\le b\le z_+<z_-$). By \eqref{eq:2c}, there exists $y<x_-$ such that $(x_-,y)\in \Gamma$ (resp. $y<x_+$ such that $(x_+,y)\in \Gamma$ and then $y\le a\le x_-<x_+\le b\le z_+<z_-$). By Lemma \ref{lemma: 5 point cost} applied with $(y,m,z)=(y,z_+,z_-)$, the function $g$ defined by\[
g(x) = \frac{z_--z_+}{z_--y}\varphi(\vert x-y\vert ) + \frac{z_+-y}{z_--y}\varphi(\vert z_--x\vert) -\varphi(\vert x-z_+\vert)\]
is increasing on $[y,z_+]$. We deduce that for
\begin{align*}
    \beta(dx,dy) &= \delta_{(x_+,z_+)} + \frac{z_--z_+}{z_--y}\delta_{(x_-,y)} + \frac{z_+-y}{z_--y}\delta_{(x_-,z_-)} ,\\
    \gamma(dx,dy) &= \delta_{(x_-,z_+)} + \frac{z_--z_+}{z_--y}\delta_{(x_+,y)} + \frac{z_+-y}{z_--y}\delta_{(x_+,z_-)},
\end{align*}\[
\int \vert x-y\vert \,\beta(dx,dy) - \int \vert x-y\vert \,\gamma(dx,dy)= g(x_-)-g(x_+)<0.
\]
This contradicts inequality \eqref{ineq: Variational lemma alpha max} since $\gamma$ (resp. $\beta$) is a competitor of $\beta$ (resp. $\gamma$), which is finitely supported on $\Gamma$. Therefore, we deduce that $z_-\leq z_+$.
\end{proof}
\begin{remark}
   The support assumption $\mu([a,b])=1=1-\nu((a,b))$ made in Proposition \ref{prop:variational lemma to nd coupling} to ensure that any minimizing martingale coupling for the cost $\varphi(|y-x|)$ is non-increasing is crucial to avoid in the proof the situations $(x_-,y_+),(x_+,y_-)\in\Gamma$ with $y_+\le x_-<y_-\le x_+$ and $(x_-,z_+),(x_+,z_-)\in\Gamma$ with $x_-\le z_+<x_+\le z_-$  where we could not derive a contradiction.
\end{remark}

\begin{proof}[Proof of Proposition \ref{prop:uniccrois}]
The existence follows from Proposition \ref{prop:variational lemma to nd coupling}.   The proof for uniqueness is the same as the one of Theorem 7.3 \cite{BeJu16}. Since, from Lemma \ref{lemma: 5 point cost}, we only know that $f$ is increasing on $[y,m]$ and decreasing on $[m,z]$, the forbidden cases in Equation (23) \cite{BeJu16} should be replaced by
  \begin{equation}\label{eq:23rest}
   y'\le x'<x\le y^+\mbox{ or }y^-\le x<x'\le y'.
  \end{equation}
  Actually, in the case $\varphi(u)=u$ considered in Theorem 7.3 \cite{BeJu16}, $f$ is constant on $(-\infty,y]$ and on $[z,+\infty)$ and the forbidden cases should be $y'\le x'<x\wedge y^+$ or $y^-\vee x<x'\le y'$ instead of the more general condition in Equation (23) \cite{BeJu16}. Nevertheless, in view of Lemma \ref{lemma: x,z in gamma}
 ,  in the second paragraph of the proof of Theorem 7.3 \cite{BeJu16}, we can assume that $b^-<a<b^+$ so that we end up with a situation in the restricted set of forbidden cases \eqref{eq:23rest}.
\end{proof}

\begin{proof}[Proof of Proposition \ref{prop:diffcouplcr}]
We set $x_-=m-\left(1+\beta \varepsilon^{1-\rho}\right)^\frac{1}{\rho}\varepsilon$ and $x_+=m+\varepsilon$. Let us suppose that $\varepsilon< (z-m) \wedge \frac{
m-y}{{(1+\beta)}^\frac{1}{\rho}}\wedge 1$. This ensures that $y<x_-$ and $x_+<z$.  For $x\in (-\infty,x_-] \cup [x_+,+\infty)$, $u_{\mu_\varepsilon}(x) = \vert x-m\vert \leq u_\nu(x)$.
The function $u_{\mu_\varepsilon}$ is affine on $[x_-,x_+]$. 
Since $y<x_-<m<x_+<z$, the function $u_\nu$ is affine on $[x_-,m]$ and on $[m,x_+]$. The inequality $\varepsilon < \frac{2(z-m)(m-y)}{3(z-y)}$ in our assumptions implies that $u_{\mu_\varepsilon}(m)=\frac{2\left(1+\beta \varepsilon^{1-\rho}\right)^\frac{1}{\rho}}{1+\left(1+\beta \varepsilon^{1-\rho}\right)^\frac{1}{\rho}}\varepsilon < \frac{4(z-m)(m-y)}{3(z-y)}=u_\nu(m)$. Then,
\[
\forall x\in\R,\quad u_{\mu_\varepsilon}(x) \leq u_\nu(x) \qquad\mbox{ and }\qquad \mu_\varepsilon\leq_{cx}\nu.
\]Since $y<x_-<m<x_+<z$, the set $\Gamma=\{(x_-,y),(x_-,m),(x_-,z),(x_+,y),(x_+,m),(x_+,z)\}$ is such that $\Gamma\cap \{(x_1,x_2)\in\R^2:x_2\le x_1\}=\{(x_-,y),(x_+,y),(x_+,m)\}$ and $\Gamma\cap \{(x_1,x_2)\in\R^2:x_1\le x_2 \}=\{(x_-,m),(x_-,z),(x_+,z)\}$ so that $\Gamma$ satisfies conditions $(a)(b)$ in Definition \ref{def:NdCoupling}. Since any martingale coupling gives full weight to $\Gamma$, it is non-decreasing.

Then, let $\pi(dx,dy)=\mu_\varepsilon(dx)\,\pi_x(dy)\in\Pi_M(\mu_\varepsilon,\nu)$. Since $u_{\mu_\varepsilon}(m) < u_\nu(m)$ and
\begin{align*}
u_{\mu_\varepsilon}(m) &= \mu_\varepsilon(\{x_-\})(m-x_-)+\mu_\varepsilon(\{x_+\})(x_+-m) \\  u_\nu(m)&=\mu_\varepsilon(\{x_-\})\int \vert m-w\vert \,\pi_{x_-}(dw)+\mu_\varepsilon(\{x_+\})\int \vert m-w\vert \,\pi_{x_+}(dw),
\end{align*}
we have either $m-x_- < \int \vert m-w\vert \,\pi_{x_-}(dw)$ so that $\pi_{x_-}(\{y\})\pi_{x_-}(\{z\})>0$ or $x_+ - m < \int \vert m-w\vert \,\pi_{x_+}(dw)$ so that $\pi_{x_+}(\{y\})\pi_{x_+}(\{z\})>0$. We deduce that when $\pi_{x-}(\{m\})\pi_{x+}(\{m\})>0$, then either $\pi_{x_-}(\{m\})\,\pi_{x_+}(\{y\})\,\pi_{x_+}(\{z\}) >0$ or $\pi_{x_+}(\{m\})\,\pi_{x_-}(\{y\})\,\pi_{x_-}(\{z\}) >0$. If $\pi_{x_+}(\{m\}) = 0$ (resp. $\pi_{x_-}(\{m\}) = 0$), then  $\pi_{x_+}(\{y\})\pi_{x_+}(\{z\}) > 0$ (resp. $\pi_{x_-}(\{y\})\pi_{x_-}(\{z\}) > 0$) by the martingale property, and since $\nu(\{m\})>0$, we have $\pi_{x_-}(\{m\}) > 0$ (resp. $\pi_{x_+}(\{m\}) > 0$). Therefore, we always have 
\[
\pi_{x_-}(\{m\})\,\pi_{x_+}(\{y\})\,\pi_{x_+}(\{z\}) >0  \quad\mbox{ or }\quad \pi_{x_-}(\{y\})\,\pi_{x_-}(\{z\})\,\pi_{x_+}(\{m\}) >0 ,
\]  
and there exists more than one martingale coupling between $\mu_\varepsilon$ and $\nu$ since for $\alpha$ positive and small enough in the first case and $-\alpha$ positive and small enough in the second case, adding $$\alpha \left(\delta_{(x_+,m)}+ \frac{z-m}{z-y}\delta_{(x_-,y)}+ \frac{m-y}{z-y}\delta_{(x_-,z)}-\delta_{(x_-,m)}- \frac{z-m}{z-y}\delta_{(x_+,y)}+ \frac{m-y}{z-y}\delta_{(x_+,z)}\right)$$
to $\pi$ leads to a distinct coupling in $\Pi_M(\mu_\varepsilon,\nu)$. 
To further parametrize the coupling $\pi$, let us apply the martingale property again. For $x\in \{x_-,x_+\}$, we have 
\[
\begin{dcases}
\pi_x(\{y\}) + \pi_x(\{m\}) + \pi_x(\{z\}) = 1  \\
y\,\pi_x(\{y\}) + m\,\pi_x(\{m\}) + z\,\pi_x(\{z\}) = x 
\end{dcases}
\quad\implies\quad
\begin{dcases}
\pi_x(\{y\}) &= \frac{z-x-(z-m)\pi_{x}(\{m\})}{z-y} ,\\
\pi_x(\{z\}) &= \frac{x-y-(m-y)\pi_{x}(\{m\})}{z-y}.
\end{dcases}
\]
Since $\mu_\varepsilon(\{x_+\})\pi_{x_+}(\{m\}) = \nu(\{m\}) - \mu_\varepsilon(\{x_-\})\pi_{x_-}(\{m\})$, the coupling $\pi$ can be parameterized with respect to the variable $\pi_{x_-}(\{m\})$ only as follows:
\allowdisplaybreaks
\begin{align*}
    \pi
    &= 
      \gamma+
    \mu_\varepsilon(\{x_-\})\pi_{x_-}(\{m\})\left(\frac{z-m}{z-y}\delta_{(x_+,y)}+ \frac{m-y}{z-y}\delta_{(x_+,z)}-\delta_{(x_+,m)}\right. \\
    &\phantom{=\gamma+
    \mu_\varepsilon(\{x_-\})\pi_{x_-}(\{m\})\bigg(}\left.
    -\frac{z-m}{z-y}\delta_{(x_-,y)}
    -\frac{m-y}{z-y}\delta_{(x_-,z)}
    +\delta_{(x_-,m)}\right) \mbox{ with }\\
   \gamma&=\mu_\varepsilon(\{x_-\})\left(\frac{z-x_-}{z-y}\delta_{(x_-,y)} + \frac{x_--y}{z-y}\delta_{(x_-,z)}\right)+\mu_\varepsilon(\{x_+\})\left(\frac{z-x_+}{z-y}\delta_{(x_+,y)}+
   \frac{x_+-y}{z-y} \delta_{(x_+,z)}\right)\\
    &\quad +\nu(\{m\})\left(\delta_{(x_+,m)} -\frac{z-m}{z-y}\delta_{(x_+,y)}- \frac{m-y}{z-y}\delta_{(x_+,z)}\right).
\end{align*}
For $\tilde \rho\in (0,1]$, defining the cost function $f_{\tilde \rho}:[y,z]\to\R$ by
\[f_{\tilde \rho}(x) = \frac{z-m}{z-y}(x-y)^{\tilde \rho}+\frac{m-y}{z-y}(z-x)^{\tilde \rho}-\vert x-m\vert^{\tilde \rho},\]
we have 
\[
\begin{split}
\int\vert x-y \vert^{\tilde \rho} \pi(dx,dy) = 
 \int\vert x-y \vert^{\tilde \rho} \gamma(dx,dy)+ \mu_\varepsilon(\{x_-\})\pi_{x_-}(\{m\})&(f_\rho(x_+)-f_\rho(x_-)).
\end{split}
\]
To maximize $\int\vert x-y \vert^{\tilde \rho} \pi(dx,dy)$, we need to maximize $\pi_{x_-}(\{m\})$ when $f_{\tilde \rho}(x_+)>f_{\tilde \rho}(x_-)$ and minimize $\pi_{x_-}(\{m\})$ when $f_{\tilde \rho}(x_+)<f_{\tilde \rho}(x_-)$. Applying the first order Taylor expansion at $x=m$ to the first two terms of the function $f_{\tilde \rho}$ with $\tilde{\rho}\in(0,1]$, we get
\begin{align}
f_{\tilde{\rho}}(x_+) - f_{\tilde{\rho}}(x_-) &= \tilde{\rho}\left(\frac{z-m}{z-y}(m-y)^{\tilde{\rho}-1} -\frac{m-y}{z-y}(z-m)^{\tilde{\rho}-1}\right)(x_+-x_-) + o(x_+-x_-)\notag\\
&\quad+ (m-x_-)^{\tilde{\rho}} -(x_+-m)^{\tilde{\rho}}.\label{eq:tayltilde}
\end{align}
Let us suppose that $\rho\in(0,1)$. We have that $(m-x_-)^\rho-(x_+-m)^\rho=\beta\varepsilon$. When $\varepsilon\to 0+$, $\left(1+\beta \varepsilon^{1-\rho}\right)^\frac{1}{\rho} \to 1$ so that $x_+-x_-\sim 2\varepsilon$ and
\begin{align*}
f_{\rho}(x_+) - f_{\rho}(x_-)
&=\left(2\rho\left(\frac{z-m}{z-y}(m-y)^{\rho-1} -\frac{m-y}{z-y}(z-m)^{\rho-1}\right)+\beta\right)\varepsilon + o(\varepsilon).
\end{align*}
Since $\beta\in \left(0,2\rho\left(\frac{m-y}{z-y}(z-m)^{\rho-1} -\frac{z-m}{z-y}(m-y)^{\rho-1}\right) \right)$ by assumption, for $\varepsilon$ positive small enough, $f_\rho(x_+)-f_\rho(x_-) < 0$.
On the other hand, for $\rho'\in(0,\rho)$, we have 
$\left(1+\beta \varepsilon^{1-\rho}\right)^\frac{\rho'}{\rho} -1 \;\sim\; \frac{\rho'}{\rho} \beta\, \varepsilon^{1-\rho}$ so that
$(m-x_-)^{\rho'} -(x_+-m)^{\rho'} \sim \frac{\rho'}{\rho} \beta\, \varepsilon^{1-\rho+\rho'}$.
Hence, by \eqref{eq:tayltilde} for $\tilde\rho=\rho'$, 
$f_{\rho'}(x_+) - f_{\rho'}(x_-)\sim \frac{\rho'}{\rho}\beta\, \varepsilon^{1-\rho+\rho'} >0$.

 Finally, let us suppose that $\rho = 1$. We have
\[
f_1(x_+)-f_1(x_-) = \frac{2\varepsilon(\beta(z-m)-(2m-y-z))}{z-y}.
\]
Since $y<\frac{y+z}{2}<m<z$ and $\beta < \frac{2m-y-z}{z-m}$ by assumption, we have $f_1(x_+)-f_1(x_-)< 0$. For $\rho'\in (0,1)$, $(m-x_-)^{\rho'} -(x_+-m)^{\rho'} \sim \left((1+\beta)^{\rho'}-1\right)\varepsilon^{\rho'}$ and, since $x_+-x_-=(2+\beta)\varepsilon$, by \eqref{eq:tayltilde} applied with $\tilde\rho=\rho'$, $f_{\rho'}(x_+) - f_{\rho'}(x_-)\sim \left((1+\beta)^{\rho'}-1\right)\varepsilon^{\rho'}>0$.

Therefore, for $0<\rho'<\rho\leq1$ and $\varepsilon$ positive small enough, to maximize $\int\vert x-y \vert^{\rho'} \pi(dx,dy)$, we need to maximize $\pi_{x_-}(\{m\})$ and to maximize $\int\vert x-y \vert^\rho \,\pi(dx,dy)$, we need to minimize $\pi_{x_-}(\{m\})$.\end{proof}
\subsection{Proof of the results in Section \ref{sec:dirdecmartcoupl} and Proposition \ref{prop:deccoupl}}
 \begin{proof}[Proof of Corollary \ref{cor:inccoup}]
$(i)$ As $\mu\neq \nu$, the non vacuity of $\pi\in\Pi_M(\mu,\nu,\nu_l,\nu_r)$ implies that $\nu_l+\nu_r$ is not the zero measure. Since \begin{align*}
   \Pi_M(\mu,\nu,\nu_l,\nu_r)\ni \pi\mapsto &\frac{\pi(dx,dy)+(\nu_l+\nu_r-\nu)(dx)\delta_x(dy)}{\nu_l(\R)+\nu_r(\R)}\\&\in\Pi_M\left(\frac{\mu+\nu_l+\nu_r-\nu}{\nu_l(\R)+\nu_r(\R)},\frac{\nu_l+\nu_r}{\nu_l(\R)+\nu_r(\R)},\frac{\nu_l}{\nu_l(\R)+\nu_r(\R)},\frac{\nu_r}{\nu_l(\R)+\nu_r(\R)}\right)
                                                                                                                                                                                                                                                                                                                                                              \end{align*} is a bijection, the statement follows from Theorem \ref{thm:inccoup} (concerning \eqref{eq:cormaxintabs}, this relies on the fact that the corresponding mapping between the integrals is affine with a positive factor).
                                                                                                                                                                                                                                                                                                                                                              
          $(ii)$ Let $\pi\in\Pi_M(\mu,\nu,\nu_l,\nu_r)$ be non-decreasing and let $\Gamma$ denote a Borel subset of $\R\times\R$ satisfying the properties in Definition \ref{def:NdCoupling}. Since $\pi(\{(x,x)\})\le\mu(\{x\})\wedge\nu(\{x\})$, we remove $\{(x,x):\mu(\{x\})>0\mbox{ and }\nu(\{x\})=0\}\cap\Gamma$ from $\Gamma$, while preserving these properties. Since $\pi(\Gamma)=1$, we have \begin{align*}\nu_0^\pi(\R)=\pi(\{(x,x):x\in\R\})=\pi(\{(x,x):x\in\R\}\cap\Gamma)=\nu_0^\pi(\{x\in\R:(x,x)\in\Gamma\}).\end{align*}Let $x\in\R$ be such that $(x,x)\in\Gamma$. Then, $\Gamma\subset \{(-\infty,x]\times (-\infty,x]\}\cup \{[x,+\infty)\times [x,+\infty)\}$ and either $\mu(\{x\})>0$ 
or
\begin{align*}
  u_\nu(x)&=\int_{\R^2}|z-x|\pi(dy,dz)=\int_{(y,z)\in(-\infty,x)\times\R}(x-z)\pi_y(dz)\mu(dy)+\int_{(y,z)\in(x,+\infty)\times\R}(z-x)\pi_y(dz)\mu(dy)\\
  &=\int_{(-\infty,x)}(x-y)\mu(dy)+\int_{(x,+\infty)}(y-x)\mu(dy)=u_\mu(x).
\end{align*}
   Hence
$$\nu_0^\pi(\R)=\nu^\pi_0\left(\left\{x\in\R:u_\nu(x)=u_\mu(x)\mbox{ or }(u_\nu(x)-u_\mu(x))\times\left(\mu(\{x\})\wedge\nu(\{x\})\right)>0\right\}\right).$$
  The conclusion follows since $\nu^\pi_0\le\mu\wedge\nu$ by definition and  $\nu^\pi_0(dx)\ge\mathds{1}_{\{u_\mu(x)=u_\nu(x)\}}\mu(dx)$ from the decomposition in irreducible components stated in \cite[Theorem A.4]{BeJu16}.   


  $(iii)$ Since for any non-decreasing coupling $\pi\in\Pi_M(\mu,\nu,\nu_l,\nu_r)$, $\frac{\pi(dx,dy)+(\nu_l+\nu_r-\nu)(dx)\delta_x(dy)}{\nu_l(\R)+\nu_r(\R)}$ is non-decreasing, the uniqueness is ensured by $(i)$. 

    \end{proof}
The proof of Theorem \ref{thm:inccoup} relies on the following lemmas.
\begin{lemma}\label{lem:compmom}Let $\eta\in{\cal P}(\R)$ and $\hat \eta$ be a non-negative measure on the real line such that $\hat \eta\le \eta$. For $f:\R\to\R$ non-decreasing such that $\int_\R|f(x)|\eta(dx)<+\infty$, one has $\int_\R |f(x)|\hat \eta(dx)<+\infty$ and 
  $$\int_0^{\hat\eta(\R)}f(F_\eta^{-1}(u))du\le \int_\R f(x)\hat \eta(dx)\le \int^1_{1-\hat\eta(\R)}f(F_\eta^{-1}(u))du,$$
  $$\mbox{ and if $\hat\eta(\R)>0$, }\frac{1}{\hat\eta(\R)}\int_0^{\hat\eta(\R)}\delta_{F_\eta^{-1}(u)}du\le_{st}\frac{\hat\eta}{\hat\eta(\R)}\le_{st}\frac{1}{\hat\eta(\R)}\int^1_{1-\hat\eta(\R)}\delta_{F_\eta^{-1}(u)}du.$$
\end{lemma}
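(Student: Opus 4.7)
The plan is to establish both integral inequalities via a layer cake decomposition that exploits the fact that super-level sets of a non-decreasing function are upper intervals. Since $\hat\eta\le\eta$, the integrability $\int_\R|f|\,d\hat\eta\le\int_\R|f|\,d\eta<+\infty$ is automatic. Setting $m=\hat\eta(\R)$, the two central inequalities are invariant under replacing $f$ by $f+c$, and by the dominated convergence theorem applied to the truncations $((-n)\vee f)\wedge n$, it suffices to treat the case where $f$ is non-decreasing, bounded, and non-negative.

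In this setting, the layer cake formula gives $\int_\R f\,d\hat\eta=\int_0^\infty\hat\eta(\{f>t\})\,dt$. Since $f$ is non-decreasing, each set $\{f>t\}$ is an upper interval, either $(a_t,+\infty)$ or $[a_t,+\infty)$. For any such upper set $A$, writing $\hat\eta(A)\le\eta(A)$ directly and $\hat\eta(A)=m-\hat\eta(A^c)\ge m-\eta(A^c)=m-1+\eta(A)$ (together with $0\le\hat\eta(A)\le m$) yields the two-sided bound
\[\max\!\bigl(0,\,m-1+\eta(A)\bigr)\le\hat\eta(A)\le\min\!\bigl(m,\,\eta(A)\bigr).\]

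The next step is to recognize these extreme values as exactly the masses assigned to $A$ by the push-forwards of Lebesgue measure on $(0,m)$ and on $(1-m,1)$ under $F_\eta^{-1}$. Using property (b) of the quantile function in the form $F_\eta^{-1}(u)>a\iff F_\eta(a)<u$ and $F_\eta^{-1}(u)\ge a\iff F_\eta(a-)<u$, one checks that for any upper interval $A$,
\[\int_0^m\mathds{1}_A(F_\eta^{-1}(u))\,du=\max\!\bigl(0,m-1+\eta(A)\bigr),\qquad\int_{1-m}^1\mathds{1}_A(F_\eta^{-1}(u))\,du=\min\!\bigl(m,\eta(A)\bigr).\]
Applying the layer cake formula to the two quantile integrals $\int_0^m f(F_\eta^{-1}(u))\,du$ and $\int_{1-m}^1 f(F_\eta^{-1}(u))\,du$ then delivers the desired sandwich. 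For the stochastic order statements when $m>0$, it suffices to apply the established integral inequality to the bounded non-decreasing indicators $g=\mathds{1}_{(c,+\infty)}$ and divide through by $m$; the resulting bounds on survival functions are exactly the characterization $F_\mu\ge F_\nu\iff\mu\le_{st}\nu$ recalled in the introduction.

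The only delicate point in the argument is the need to treat both the open and the half-open forms of $\{f>t\}$, the distinction being visible when $\eta$ has atoms at jump points of $f$; this is why both the $F_\eta(a)$ and the $F_\eta(a-)$ versions of property (b) are used. Once this care is taken, everything reduces to a routine layer cake computation and no further obstruction remains.
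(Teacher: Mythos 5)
Your proof is correct, and it takes a genuinely different route from the paper's. The paper works at the level of quantile functions: writing $\tilde\eta=\hat\eta/\hat\eta(\R)$, it derives from $\hat\eta\le\eta$ the pointwise comparisons $F_{\tilde\eta}^{-1}(u)\ge F_\eta^{-1}(\hat\eta(\R)u)$ and $F_{\tilde\eta}^{-1}(1-u)\le F_\eta^{-1}(1-\hat\eta(\R)u)$ via \eqref{eq:qt flip} and \eqref{eq:qt ineq flip}, which is precisely the stochastic-order statement expressed through quantiles, and then integrates $f$ against these using the inverse transform sampling and the monotonicity of $f$. You instead prove the integral inequality directly: after reducing to $f$ bounded, non-negative and non-decreasing (the translation-invariance and truncation step is sound, since all three terms shift by $c\,\hat\eta(\R)$ under $f\mapsto f+c$ and dominated convergence applies with dominating function $|f|$), you apply the layer cake formula and sandwich $\hat\eta(A)$ between $\max(0,\hat\eta(\R)-1+\eta(A))$ and $\min(\hat\eta(\R),\eta(A))$ for upper intervals $A$, identifying these extremes as the masses given to $A$ by the image of Lebesgue measure on $(0,\hat\eta(\R))$ and $(1-\hat\eta(\R),1)$ under $F_\eta^{-1}$; the stochastic-order claims then follow by specializing to indicators. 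Your care with the open versus closed forms of $\{f>t\}$ is appropriate, and the single ambiguous value $u=F_\eta(a-)$ in the half-open case is Lebesgue-negligible so it does not affect the integrals. The paper's argument is shorter because it never leaves the quantile representation and deduces the integral bound from the (stronger-looking) monotone rearrangement inequality in one pass; yours is slightly longer but avoids introducing $F_{\tilde\eta}^{-1}$ altogether and makes the extremality of the two boundary measures among all sub-measures of $\eta$ of mass $\hat\eta(\R)$ completely transparent, which is arguably the conceptual content of the lemma.
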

\begin{lemma}\label{lem:stricstoch}For $\mu,\nu\in{\mathcal P}(\R)$,$$\exists\pi\in\Pi(\mu,\nu)\mbox{ s.t. }\pi\left(\{(x,y)\in\R^2:x>y\}\right)=1\Longleftrightarrow du\mbox{ a.e. on }(0,1),\;F_\mu^{-1}(u)>F_\nu^{-1}(u).$$
\end{lemma}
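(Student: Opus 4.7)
The plan is to establish the equivalence by separate proofs of the two implications: the ``$\Leftarrow$'' direction via the explicit comonotone coupling, and ``$\Rightarrow$'' by passing from the coupling-level inequality to a pointwise CDF estimate and then to a strict quantile inequality almost everywhere.

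For $\Leftarrow$, I would define $\pi:=(F_\mu^{-1},F_\nu^{-1})_\#\,du|_{(0,1)}$, the pushforward of the Lebesgue measure on $(0,1)$ under the joint quantile map. The inverse-transform identity \eqref{eq:ITS} gives $\pi\in\Pi(\mu,\nu)$, and the a.e. hypothesis directly yields
\[\pi(\{(x,y)\in\R^2:x>y\})=\int_0^1\mathds{1}_{\{F_\mu^{-1}(u)>F_\nu^{-1}(u)\}}\,du=1.\]

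For $\Rightarrow$, suppose $\pi\in\Pi(\mu,\nu)$ satisfies $\pi(\{x>y\})=1$. The essential estimate is the sharpened stochastic dominance: for every $t\in\R$,
\[F_\mu(t)=\pi(\{x\le t\})=\pi(\{x\le t,\,y<x\})\le\pi(\{y<t\})=F_\nu(t-),\]
obtained from the implication $y<x\le t\Rightarrow y<t$. Applying \eqref{eq:qt flip} to $\nu$ then yields $F_\nu^{-1}(u)\le F_\mu^{-1}(u)$ for every $u\in(0,1)$. To upgrade this to strict inequality almost everywhere, I would examine equality: if $F_\mu^{-1}(u)=F_\nu^{-1}(u)=z$, then \eqref{eq:qt ineq flip} applied to both measures combined with $F_\mu(z)\le F_\nu(z-)$ closes a chain forcing $u=F_\mu(z)=F_\nu(z-)$.

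The main obstacle is showing that the set of such exceptional $u$'s has Lebesgue measure zero. Atoms of $\mu$ contribute at most countably many exceptional $u$'s (one $u=F_\mu(z)$ per atom, since atom$\mapsto F_\mu(\text{atom})$ is strictly increasing), hence zero Lebesgue measure. The non-atomic case is more subtle and would be handled using the identity $\pi(\{x>t,\,y<t\})=F_\nu(t-)-F_\mu(t)$, which vanishes precisely on the tightness set $T=\{t:F_\mu(t)=F_\nu(t-)\}$. At any $t\in T$, this forces $\pi$ to decompose into blocks separated by $t$ (up to a possible atom of $\nu$ at $t$ carrying the crossing mass). If the non-atomic exceptional set had positive measure, I would pick two points $z_1<z_2$ in its image, obtain $\mu((z_1,z_2])=\nu((z_1,z_2])$ and---using the block decompositions at $z_1,z_2$---show that the restriction of $\pi$ to $\{x\in(z_1,z_2]\}$ becomes a strict coupling of $\mu|_{(z_1,z_2]}$ with $\nu|_{(z_1,z_2]}$. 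Refining the choice of interval to a subregion where the two sub-probabilities actually coincide as distributions yields the contradiction, since identically distributed sub-probability measures cannot admit a coupling satisfying $X>Y$ almost surely (the equality $E[X-Y]=0$ would conflict with $X>Y$ a.s.\ on any subregion of positive mass).
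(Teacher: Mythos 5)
Your ``$\Leftarrow$'' direction and the first half of your ``$\Rightarrow$'' direction are correct: the comonotone coupling handles sufficiency, the estimate $F_\mu(t)\le F_\nu(t-)$ is valid, it does yield $F_\nu^{-1}\le F_\mu^{-1}$ everywhere via \eqref{eq:qt flip}--\eqref{eq:qt ineq flip}, and the equality analysis correctly shows that any exceptional $u$ satisfies $u=F_\mu(z)=F_\nu(z-)$ with $z=F_\mu^{-1}(u)$, so that the atomic contribution is countable and the identity $\pi(\{x>t,\,y<t\})=F_\nu(t-)-F_\mu(t)$ gives a block decomposition of $\pi$ at every exceptional level.

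The gap is the endgame. Write $E=\{u:F_\mu^{-1}(u)=F_\nu^{-1}(u)\}$ and $z(u)=F_\mu^{-1}(u)$. For $u_1<u_2$ in $E$, your block decomposition does produce a coupling of $\mu|_{(z(u_1),z(u_2)]}$ with $\nu|_{[z(u_1),z(u_2))}$ concentrated on $\{x>y\}$; but these two marginals, while of equal mass $u_2-u_1$, are in general \emph{different} measures, and the strictness of the coupling only yields $\int_{u_1}^{u_2}F_\mu^{-1}(v)\,dv>\int_{u_1}^{u_2}F_\nu^{-1}(v)\,dv$ --- no contradiction. Worse, this very inequality shows that $E$ contains no interval (between any two of its points, $F_\mu^{-1}-F_\nu^{-1}$ must have positive integral), so the ``refinement to a subregion where the two sub-probabilities coincide'' can never succeed: the marginals of a block coincide only if $F_\mu^{-1}=F_\nu^{-1}$ a.e.\ on $(u_1,u_2)$, which the blocks themselves rule out. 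Meanwhile nothing you have derived excludes $E$ being, say, a fat Cantor set of positive measure (the information ``$g:=F_\mu^{-1}-F_\nu^{-1}\ge 0$ vanishes on $E$ and has positive integral between any two points of $E$'' is satisfied by such sets). So the continuous/singular part of the exceptional set is genuinely not handled. Note also that the sharpened dominance $F_\mu(t)\le F_\nu(t-)$ cannot by itself carry the argument: it holds with equality for $\mu=\nu=$ Lebesgue on $[0,1]$, which admits no strict coupling.

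The paper closes this by a different device worth knowing: from $\P(X>Y)=1$ and density of $\mathbb{Q}$ it builds a \emph{discrete} rational-valued $Z$ with $\P(Y<Z\le X)=1$, so that $\nu\le_{st}\eta\le_{st}\mu$ with $\eta={\cal L}(Z)$ purely atomic. On each interval $(F_\eta(q_n-),F_\eta(q_n))$ one has $F_\eta^{-1}(u)=q_n$ while $F_\eta(q_n)\le F_\nu(q_n-)$ forces $F_\nu^{-1}(u)<q_n\le F_\mu^{-1}(u)$, and these intervals exhaust $(0,1)$ up to a null set. The intercalated discrete variable is exactly what converts the everywhere-nonstrict inequality into an a.e.\ strict one, replacing the measure-theoretic analysis of $E$ that your route leaves open.
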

\begin{proof}[Proof of Lemma \ref{lem:compmom}] Since the inequalities are obvious when $\hat\eta(\R)=0$, we suppose that $\hat\eta(\R)>0$ and denote $\tilde\eta=\frac{\hat\eta}{\hat\eta(\R)}$. 
  Since $\hat \eta\le \eta$
  , $\int_\R |f(x)|\hat \eta(dx)\le \int_\R |f(x)|\eta(dx)<+\infty$
  . Moreover, by the inverse transform sampling, $$\int_\R f(x)\hat \eta(dx)=\hat\eta(\R)\int_\R f(x)\tilde\eta(dx)=\hat\eta(\R)\int_0^1f(F_{\tilde\eta}^{-1}(u))du.$$
  For $u\in (0,1)$, using \eqref{eq:qt ineq flip} for the first inequality then $\hat\eta(\R)\tilde\eta=\hat\eta\le\eta$ for the second, we have
  \begin{align*}
   \hat\eta(\R)u\le \hat\eta(\R)F_{\tilde\eta}(F_{\tilde\eta}^{-1}(u))\le F_\eta(F_{\tilde\eta}^{-1}(u))
  \end{align*}
  so that $F_{\tilde\eta}^{-1}(u)\ge F_\eta^{-1}(\hat\eta(\R)u)$. With the monotonicity of $f$, we deduce that
  $$\int_\R f(x)\hat \eta(dx)\ge \hat\eta(\R)\int_0^1f(F_\eta^{-1}(\hat\eta(\R)u))du=\int_0^{\hat\eta(\R)}f(F_\eta^{-1}(v))dv.$$
  On the other hand, still for $u\in(0,1)$, using $\tilde\eta\le\frac\eta{\hat\eta(\R)}$ for the first inequality and \eqref{eq:qt ineq flip} for the second,
  \begin{align*}
   1-F_{\tilde\eta}(F_\eta^{-1}(1-\hat\eta(\R)u))&=\tilde\eta((F_\eta^{-1}(1-\hat\eta(\R)u),+\infty))\le \frac{\eta((F_\eta^{-1}(1-\hat\eta(\R)u),+\infty))}{\hat\eta(\R)}\\&=\frac{1-F_\eta(F_\eta^{-1}(1-\hat\eta(\R)u))}{\hat\eta(\R)}\le u
  \end{align*}
  so that $F_{\tilde\eta}^{-1}(1-u)\le F_\eta^{-1}(1-\hat\eta(\R)u)$. We conclude that
  $$\int_\R f(x)\hat \eta(dx)=\hat\eta(\R)\int_0^1f(F_{\tilde\eta}^{-1}(1-u))du\le \hat\eta(\R)\int_0^1f(F_\eta^{-1}(1-\hat\eta(\R)u))du=\int_{1-\hat\eta(\R)}^1f(F_\eta^{-1}(v))dv.$$
\end{proof}
\begin{proof}[Proof of Lemma \ref{lem:stricstoch}]
  Since the image of the Lebesgue measure on $(0,1)$ by $(F_\mu^{-1},F_\nu^{-1})$ belongs to $\Pi(\mu,\nu)$, the sufficient condition is obvious. To prove the necessary condition, we let $(X,Y)$ be distributed according to $\pi\in \Pi(\mu,\nu)$ such that $\pi\left(\{(x,y)\in\R^2:x>y\}\right)=1$. We have $\P(X>Y)=1$ and, since $\mathbb Q$ is dense in $\R$, $\{X>Y\}=\bigcup_{q\in {\mathbb Q}}\{X\ge q>Y\}$. Since ${\mathbb Q}$ is countable, there exists a bijection $\N\ni n\mapsto q_n\in{\mathbb Q}$. For $n\in\N$, let $A_n=\{X\ge q_n>Y\}\cap\left\{\bigcup_{k=0}^{n-1}\{X\ge q_k>Y\}\right\}^c$. The events $(A_n)_{n\in\N}$ are disjoint and such that $\sum_{n\in\N}\P(A_n)=\P(X>Y)=1$. The discrete random variable $Z=\sum_{n\in\N}q_n\mathds{1}_{A_n}$ is such that $\P(X\ge Z>Y)=1$ so that $\nu\le_{st}\eta\le_{st}\mu$, where $\eta$ denotes the distribution of $Z$. For $n\in\N$, we have $$F_{\eta}(q_n)=\P(Z\le q_n)=\P(Y<Z\le q_n)\le \P(Y<q_n)=F_{\nu}(q_n-).$$ We deduce that for all $u\in (F_{\eta}(q_n-),F_{\eta}(q_n))$, $F_\nu^{-1}(u)<q_n=F_{\eta}^{-1}(u)$. Since $$\int_0^1\mathds{1}_{\bigcup_{n\in\N}(F_{\eta}(q_n-),F_{\eta}(q_n))}(u)du=\sum_{n\in\N}\P(Z=q_n)=1,$$ and $F_{\eta}^{-1}\le F_\mu^{-1}$ as $\eta\le_{st}\mu$, the statement follows.
\end{proof}\begin{proof}[Proof of Theorem \ref{thm:inccoup}]
  Since $\mu\neq \nu$, the non vacuity of $\pi\in\Pi_M(\mu,\nu,\nu_l,\nu_r)$ implies that $\nu_l+\nu_r$ is not the zero measure. Since the equality of the means of $\mu$ and $\nu$ implies that $0=\int_{\R^2}(y-x)\mathds{1}_{\{y<x\}}\pi_x(dy)\mu(dx)+\int_{\R^2}(y-x)\mathds{1}_{\{y>x\}}\pi_x(dy)\mu(dx)$, then $\nu_l\neq 0$ and $\nu_r\neq 0$.

      Let $\pi\in\Pi_M(\mu,\nu,\nu_l,\nu_r)$. Since for $x<x'$,
\begin{align*}
   0\le \int_{\R^2}\mathds{1}_{\{z<y\le x'\}}\pi(dy,dz)-\int_{\R^2}\mathds{1}_{\{z<y\le x\}}\pi(dy,dz)\le \int_{\R^2}\mathds{1}_{\{x<y\le x'\}}\pi(dy,dz)=F_\mu(x')-F_\mu(x),
\end{align*}
there exists a function $\phi_\pi$ such that \begin{equation}
   \phi_\pi(F_\mu(x))=\int_{\R^2}\mathds{1}_{\{z<y\le x\}}\pi(dy,dz)\in[0,\nu_l(\R)]. \label{eq:defphipi}
\end{equation} One has, using $\nu^\pi_0(\R)=0$, $$\nu_r(\R)\ge \int_{\R^2}\mathds{1}_{\{y\le x,y<z\}}\pi(dy,dz)=\int_{\R^2}\mathds{1}_{\{y\le x\}}\pi(dy,dz)-\int_{\R^2}\mathds{1}_{\{z<y\le x\}}\pi(dy,dz)=F_\mu(x)-\phi_\pi(F_\mu(x)).$$
Let $\tilde\nu_l=\frac{\nu_l}{\nu_l(\R)}$, $\tilde\nu_r=\frac{\nu_r}{\nu_r(\R)}$ and for $u\in[0,1]$ and $v\in[(u-\nu_r(\R))^+,u\wedge\nu_l(\R)]$, $$G(u,v)=\int_0^{v}F_{\tilde \nu_l}^{-1}\left(\frac{w}{\nu_l(\R)}\right)dw+\int_0^{u-v}F_{\tilde \nu_r}^{-1}\left(\frac{w}{\nu_r(\R)}\right)dw.$$
We are going to prove the existence of a non-decreasing function $\phi_\uparrow:[0,1]\to[0,\nu_l(\R)]$ such that $u\mapsto u-\phi_\uparrow(u)$ is also non-decreasing and 
  \begin{equation}
    \label{eq:eqm}\forall u\in[0,1],\;\phi_\uparrow(u)\in [(u-\nu_r(\R))^+,u\wedge\nu_l(\R)]\mbox{ and }\int_0^uF_\mu^{-1}(w)dw=G(u,\phi_\uparrow(u)).\end{equation}
  The existence of $\phi_\uparrow(F_\mu(x))\in [(F_\mu(x)-\nu_r(\R))^+,\phi_\pi(F_\mu(x))]$ for $x\in\R$ is addressed in {\bf Step 1} below. {\bf Step 2} deals with the monotonicity of $x\mapsto \phi_\uparrow(F_\mu(x))$ and $x\mapsto F_\mu(x)-\phi_\uparrow(F_\mu(x))$. The existence of $\phi_\uparrow(u)$ for $u\in\bigcup_{x\in\R}[F_\mu(x-),F_\mu(x))$, and the monotonicity of $u\mapsto \phi_\uparrow(u)$ and $u\mapsto u-\phi_\uparrow (u)$ on $(0,1)$  are addressed in {\bf Step 3}, while \eqref{eq:maxintabs} is proved in {\bf Step 4}. When $F_\mu(x)>0$ for each $x\in\R$ (resp. $F_\mu(x)<1$ for each $x\in\R$), the function $\phi_\uparrow$ is continuously and monotonically extended by $\phi_\uparrow(0)=0$ since $\lim_{u\to 0+}(u-\nu_r(\R))^+=0=\lim_{u\to 0+}u\wedge\nu_l(\R)$ (resp. $\phi_\uparrow(1)=\nu_l(\R)$ since $\lim_{u\to 1-}(u-\nu_r(\R))^+=\nu_l(\R)=\lim_{u\to 1-}u\wedge\nu_l(\R)$) so that \eqref{eq:eqm} holds for $u=0$ (resp. $u=1$). We are now going to prove the existence of a non-decreasing coupling $\pi^\uparrow\in\Pi_M(\mu,\nu,\nu_l,\nu_r)$ such that $\phi_{\pi^\uparrow}=\phi_\uparrow$. 
  Because of the monotonicity of $[0,1]\ni u\mapsto \phi_\uparrow(u)$ and $[0,1]\ni u\mapsto u-\phi_\uparrow(u)$, these two functions are $1$-Lipschitz and therefore absolutely continuous. Up to modifying the derivative $\phi_\uparrow'$ of $\phi_\uparrow$ in the sense of distributions on a Lebesgue negligible subset of $(0,1)$, we may suppose that it is $[0,1]$-valued. The functions $[0,1]\ni u\mapsto\int_0^{\phi_\uparrow(u)}F_{\tilde \nu_l}^{-1}\left(\frac{w}{\nu_l(\R)}\right)dw$ and $[0,1]\ni u\mapsto\int_0^{u-\phi_\uparrow(u)}F_{\tilde \nu_r}^{-1}\left(\frac{w}{\nu_r(\R)}\right)dw$ are convex and continuous and therefore absolutely continuous. Using \cite[Lemma 1.2]{marmi} to differentiate \eqref{eq:eqm}, we obtain
  \begin{equation}
   F_\mu^{-1}(u)=\phi_\uparrow'(u)F_{\tilde \nu_l}^{-1}\left(\frac{\phi_\uparrow(u)}{\nu_l(\R)}\right)+(1-\phi_\uparrow'(u))F_{\tilde \nu_r}^{-1}\left(\frac{u-\phi_\uparrow(u)}{\nu_r(\R)}\right),\;du\mbox{ a.e. on }(0,1),\label{eq:martprop}
 \end{equation}
 with the convention that the first (resp. second) product in the right-hand side is $0$ when $\phi_\uparrow'(u)=0$ (resp. $\phi_\uparrow'(u)=1$). We set
\begin{align*}
  \pi^\uparrow(dx,dy)=\int_0^1\bigg(&\phi_\uparrow'(u)\delta_{\left(F_\mu^{-1}(u),F_{\tilde \nu_l}^{-1}\left(\frac{\phi_\uparrow(u)}{\nu_l(\R)}\right)\right)}(dx,dy)\\&+(1-\phi_\uparrow'(u))\delta_{\left(F_\mu^{-1}(u),F_{\tilde \nu_r}^{-1}\left(\frac{u-\phi_\uparrow(u)}{\nu_r(\R)}\right)\right)}(dx,dy)\bigg)du.
\end{align*}
We have $\int_{y\in\R}\pi^\uparrow(dx,dy)=\int_0^1\delta_{F_\mu^{-1}(u)}(dx)du$, so that, by the inverse transform sampling, the first marginal of $\pi^\uparrow$ is $\mu$. Using Lemma 2.6 in \cite{JoMa18} like in the proof of Proposition 2.3 in \cite{JoMa18}, one checks that $\pi^\uparrow$ is a martingale coupling. We are next going to check that\begin{align}
\phi_\uparrow'(u)du\mbox{ a.e. on }(0,1),\;&F_{\tilde\nu_l}^{-1}\left(\frac{\phi_\uparrow(u)}{\nu_l(\R)}\right)<F_{\mu}^{-1}\left(u\right)\label{eq:compg}\\
\mbox{and }                                                                     (1-\phi_\uparrow'(u))du\mbox{ a.e. on }(0,1),\;&F_{\mu}^{-1}\left(u\right)<F_{\tilde \nu_r}^{-1}\left(\frac{u-\phi_\uparrow(u)}{\nu_r(\R)}\right).\label{eq:compd}                       \end{align}
With the definition of $\pi^\uparrow$, we deduce that $\nu^{\pi^\uparrow}_l=\int_0^1\phi_\uparrow'(u)\delta_{F_{\tilde \nu_l}^{-1}\left(\frac{\phi_\uparrow(u)}{\nu_l(\R)}\right)}du$. For $g:[0,1]\to\R$ measurable and bounded, by Proposition 4.9 \cite{Revuz-Yor} and the remark just above applied with $f(u)=g\left(\frac{\phi_\uparrow(u)}{\nu_l(\R)}\right)$ and $A_u=\frac{\phi_\uparrow(u)}{\nu_l(\R)}$, \begin{equation}
   \int_0^1g\left(\frac{\phi_\uparrow(u)}{\nu_l(\R)}\right)\frac{\phi'_\uparrow(u)}{\nu_l(\R)}du=\int_0^1g(v)dv.\label{eq:chgtvar}
\end{equation}
Hence $\nu^{\pi^\uparrow}_l=\nu_l(\R)\int_0^{1}\delta_{F_{\tilde \nu_l}^{-1}\left(w\right)}dw
  =\nu_l(\R)\tilde\nu_l=\nu_l$. In the same way, $\nu^{\pi^\uparrow}_r=\int_0^1(1-\phi_\uparrow'(u))\delta_{F_{\tilde \nu_r}^{-1}\left(\frac{u-\phi_\uparrow(u)}{\nu_r(\R)}\right)}du=\nu_r$, 
so that, since $\nu=\nu_l+\nu_r$, $\pi^\uparrow\in\Pi_M(\mu,\nu,\nu_l,\nu_r)$. Let \begin{align*}
   \Gamma=&\left\{\left\{\left(F_\mu^{-1}(u),F_{\tilde\nu_l}^{-1}\left(\frac{\phi_\uparrow(u)}{\nu_l(\R)}\right)\right):u\in\R\right\}\cap\{(x,y)\in\R^2:y<x\}\right\}\\&\cup\left\{\left\{\left(F_\mu^{-1}(u),F_{\tilde\nu_r}^{-1}\left(\frac{u-\phi_\uparrow(u)}{\nu_r(\R)}\right)\right):u\in\R\right\}\cap\{(x,y)\in\R^2:x<y\}\right\}.
                                                                                   \end{align*}
The definition of $\pi^\uparrow$ combined with \eqref{eq:compg} and \eqref{eq:compd} ensures that $\pi^\uparrow(\Gamma)=1$. By monotonicity of $u\mapsto F_\mu^{-1}(u)$, $u\mapsto F_{\tilde\nu_l}^{-1}\left(\frac{\phi_\uparrow(u)}{\nu_l(\R)}\right)$ and $u\mapsto F_{\tilde\nu_r}^{-1}\left(\frac{u-\phi_\uparrow(u)}{\nu_r(\R)}\right)$, $\Gamma$ satisfies conditions $(a)$ and $(b)$ in Definition \ref{def:NdCoupling}. Hence $\pi^\uparrow$ is non-decreasing.                                                                             


To check \eqref{eq:compg}, we introduce $\tilde\mu_l(dy)=\frac{1}{\nu_l(\R)}\int_{z\in\R}\mathds{1}_{\{z<y\}}\pi(dy,dz)
$ 
. As $\frac{\mathds{1}_{\{z<y\}}}{\nu_l(\R)}\pi(dy,dz)\in\Pi(\tilde\mu_l,\tilde\nu_l)$, Lemma \ref{lem:stricstoch} implies that $du$ a.e., $F_{\tilde\nu_l}^{-1}(u)<F_{\tilde\mu_l}^{-1}(u)$.  With \eqref{eq:chgtvar}, we deduce that \begin{equation}
   \phi_\uparrow'(u)du\mbox{ a.e. on }(0,1),\;F_{\tilde\nu_l}^{-1}\left(\frac{\phi_\uparrow(u)}{\nu_l(\R)}\right)<F_{\tilde\mu_l}^{-1}\left(\frac{\phi_\uparrow(u)}{\nu_l(\R)}\right).\label{eq:fnumull}
\end{equation} For $x\in\R$, by \eqref{eq:defphipi}, $F_{\tilde\mu_l}(x)=\frac{\phi_\pi(F_\mu(x))}{\nu_l(\R)}$ and since $F_\mu(F_\mu^{-1}(F_\mu(x)))=F_\mu(x)$, $\phi_\pi(F_\mu(F_\mu^{-1}(F_\mu(x))))=\phi_\pi(F_\mu(x))$ and $F_{\tilde\mu_l}^{-1}\left(\frac{\phi_\pi(F_\mu(x))}{\nu_l(\R)}\right)\le F_\mu^{-1}(F_\mu(x))$. Since, by {\bf Step 1}, $\phi_\uparrow(F_\mu(x))\le \phi_\pi(F_\mu(x))$ and $F_{\tilde\mu_l}^{-1}$ is non-increasing, we deduce that $F_{\tilde\mu_l}^{-1}\left(\frac{\phi_\uparrow(F_\mu(x))}{\nu_l(\R)}\right)\le F_\mu^{-1}(F_\mu(x))$.   As $(0,1)\cap\left\{\bigcup_{x\in\R:\mu(\{x\})>0}[F_\mu(x-),F_\mu(x))\right\}^c$ is included in the range of $F_\mu$, with \eqref{eq:fnumull} we conclude that \begin{equation*}
   \phi_\uparrow'(u)du\mbox{ a.e. on }(0,1)\cap\left\{\bigcup_{x\in\R:\mu(\{x\})>0}[F_\mu(x-),F_\mu(x))\right\}^c,\;F_{\tilde\nu_l}^{-1}\left(\frac{\phi_\uparrow(u)}{\nu_l(\R)}\right)<F_{\mu}^{-1}\left(u\right).
\end{equation*}
If $\mu(\{x\})>0$, then by {\bf Step 3} below, $\phi_\uparrow'(u)du$ a.e. on $(F_\mu(x-),F_\mu(x))$, $F_{\tilde \nu_l}^{-1}\left(\frac{\phi_\uparrow(u)}{\nu_l(\R)}\right)<F_\mu^{-1}(u)$ and therefore \eqref{eq:compg} holds.

In a symmetric way, since $\frac{\mathds{1}_{\{y<z\}}}{\nu_r(\R)}\pi(dy,dz)\in \Pi(\tilde \mu_r,\tilde\nu_r)$ where $\tilde\mu_r(dy)=\frac 1{\nu_r(\R)}\int_{z\in\R}\mathds{1}_{\{y<z\}}\pi(dy,dz)$, Lemma \ref{lem:stricstoch} implies that $du$ a.e., $F_{\tilde\nu_r}^{-1}(u)>F_{\tilde\mu_r}^{-1}(u)$. For $x\in\R$, $F_{\tilde \mu_r}(x)=\frac{F_\mu(x)-\phi_\pi(F_\mu(x))}{\nu_r(\R)}$. Hence for $w>\frac{F_\mu(x-)-\lim_{y\to x-}\phi_\pi(F_\mu(y))}{\nu_r(\R)}$, $F^{-1}_{\tilde \mu_r}(w)\ge x$. Since $\phi_\uparrow(F_\mu(x-))\le \lim_{y\to x-}\phi_\pi(F_\mu(y))$, we deduce that when $\mu(\{x\})>0$, $F_{\tilde\nu_r}^{-1}(w)>x$ for $w>\frac{F_\mu(x-)-\phi_\uparrow(F_\mu(x-))}{\nu_r(\R)}$. With this argument replacing the above reference to {\bf Step 3} when $\mu(\{x\})>0$, we check that \eqref{eq:compd} holds.
                          
                        {\bf Uniqueness :}  Let  $\hat\pi^\uparrow \in\Pi_M(\mu,\nu,\nu_l,\nu_r)$ be non-decreasing. Let $\Gamma$ be a Borel subset of $\R^2$ such that $\hat\pi^\uparrow(\Gamma)=1$ and properties $(a)(b)$ in Definition \ref{def:NdCoupling} are satisfied. Let $x\in \R$ be such that $F_\mu(x)>0$. The set $\{z\in (-\infty,x):\exists y\in (z,x]\mbox{ s.t. }(y,z)\in \Gamma\}$ is not empty since $$\int_{\R^2}\mathds{1}_{\Gamma}(y,z)\mathds{1}_{\{z<y\le x\}}\hat\pi^\uparrow(dy,dz)=\int_{\R^2}\mathds{1}_{\{z<y\le x\}}\hat\pi^\uparrow(dy,dz)=\int_{(-\infty,x]} \hat\pi^\uparrow_y((-\infty,y))\mu(dy)>0,$$
                           as $\nu^{\hat\pi^\uparrow}_0=0$ 
                           implies that $\mu(dy)$ a.e. $\hat\pi^\uparrow_y((-\infty,y))>0$. Therefore $l(x):=\sup\{z\in (-\infty,x):\exists y\in (z,x]\mbox{ s.t. }(y,z)\in \Gamma\}$ belongs to $(-\infty,x]$. Since $\Gamma\cap\{(y,z)\in\R^2:z<y\le x\mbox{ and }z>l(x)\}=\emptyset$,
                           $$0=\int_{\R^2} \mathds{1}_{\Gamma}(y,z)\mathds{1}_{\{z<y\le x\}}\mathds{1}_{\{z>l(x)\}}\hat\pi^\uparrow(dy,dz)=\int_{\R^2} \mathds{1}_{\{z<y\le x\}}\mathds{1}_{\{z>l(x)\}}\hat\pi^\uparrow(dy,dz).$$On the other hand, by property $(a)$ in Definition \ref{def:NdCoupling}, $\Gamma\cap (x,+\infty)\times (-\infty,l(x))=\emptyset$ so that
$$0=\int_{\R^2} \mathds{1}_{\Gamma}(y,z)\mathds{1}_{\{y>x\}}\mathds{1}_{\{z<l(x)\}}\hat\pi^\uparrow(dy,dz)=\int_{\R^2} \mathds{1}_{\{y>x\}}\mathds{1}_{\{z<l(x)\}}\hat\pi^\uparrow(dy,dz).$$                        
                           With the equality, $\frac{1}{\nu_l(\R)}\int_{y\in \R}\mathds{1}_{\{z<y\}}\hat\pi^\uparrow(dy,dz)=\tilde \nu_l(dz)$, we deduce that $$\mathds{1}_{\{z<l(x)\}}\tilde \nu_l(dz)\le \frac{1}{\nu_l(\R)}\int_{y\in (-\infty,x]}\mathds{1}_{\{z<y\}}\hat\pi^\uparrow(dy,dz)\le \mathds{1}_{\{z\le l(x)\}}\tilde \nu_l(dz).$$ Therefore $\frac{\phi_{\hat\pi^\uparrow}(F_\mu(x))}{\nu_l(\R)}\in[F_{\tilde \nu_l}(l(x)-),F_{\tilde \nu_l}(l(x))]$ and
 \begin{align}
   &\frac{1}{\nu_l(\R)}\int_{y\in (-\infty,x]}\mathds{1}_{\{z<y\}}\hat\pi^\uparrow(dy,dz)=\mathds{1}_{\{z<l(x)\}}\tilde \nu_l(dz)+\left(\frac{\phi_{\hat\pi^\uparrow}(F_\mu(x))}{\nu_l(\R)}-F_{\tilde \nu_l}(l(x)-)\right)\delta_{l(x)}(dz)\notag\\&=\int^{\frac{\phi_{\hat\pi^\uparrow}(F_\mu(x))}{\nu_l(\R)}}_0\delta_{F_{\tilde \nu_l}^{-1}\left(u\right)}(dz)du=\frac{1}{\nu_l(\R)}\int_0^{\phi_{\hat\pi^\uparrow}(F_\mu(x))}\delta_{F_{\tilde \nu_l}^{-1}\left(\frac{w}{\nu_l(\R)}\right)}(dz)dw.\label{eq:mesgauche}
 \end{align}
                                                                                                                                                                                                   In a symmetric way, with property $(b)$ in Definition \ref{def:NdCoupling},  we check that \begin{equation}
                                                                                                       \int_{y\in (-\infty,x]}\mathds{1}_{\{z>y\}}\hat\pi^\uparrow(dy,dz)=\int_0^{F_\mu(x)-\phi_{\hat\pi^\uparrow}(F_\mu(x))}\delta_{F_{\tilde \nu_r}^{-1}\left(\frac{w}{\nu_r(\R)}\right)}(dz)dw.\label{eq:mesdroite}\end{equation}
                                                                                                                                                                                                   Using the martingale property of $\hat\pi^\uparrow$ for the second equality and $\nu^{\hat\pi^\uparrow}_0=0$ 
                                                                                                                                                                                                   for the fourth, we deduce that
 \begin{align*}
   \int_0^{F_\mu(x)}F_\mu^{-1}(u)du&=\int_{y\in(-\infty,x]}y\mu(dy)=\int_{y\in(-\infty,x]}\int_{z\in\R}z\hat\pi^\uparrow_y(dz)\mu(dy)=\int_{(-\infty,x]\times \R}z\hat\pi^\uparrow(dy,dz)\\&=\int_{(-\infty,x]\times \R}z\mathds{1}_{\{z<y\}}\hat\pi^\uparrow(dy,dz)+\int_{(-\infty,x]\times \R}z\mathds{1}_{\{z>y\}}\hat\pi^\uparrow(dy,dz)\\&=
  \int_0^{\phi_{\hat\pi^\uparrow}(F_\mu(x))}F_{\tilde \nu_l}^{-1}\left(\frac{w}{\nu_l(\R)}\right)dw+\int_0^{F_\mu(x)-\phi_{\hat\pi^\uparrow}(F_\mu(x))}F_{\tilde \nu_r}^{-1}\left(\frac{w}{\nu_r(\R)}\right)dw\\&=G(F_\mu(x),\phi_{\hat\pi^\uparrow}(F_\mu(x))).\end{align*}
Equations \eqref{eq:mesgauche} and \eqref{eq:mesdroite} also hold with $\hat\pi^\uparrow$ replaced by $\pi^\uparrow$ so that $G(F_\mu(x),\phi_{\pi^\uparrow}(F_\mu(x)))=\int_0^{F_\mu(x)}F_\mu^{-1}(u)du$. Note that the equalities  \begin{align*}&\int_{y\in (-\infty,x]}\mathds{1}_{\{z<y\}}\pi^\uparrow(dy,dz)=\int_0^{\phi_{\pi^\uparrow}(F_\mu(x))}\delta_{F_{\tilde \nu_l}^{-1}\left(\frac{w}{\nu_l(\R)}\right)}(dz)dw\\&\int_{y\in (-\infty,x]}\mathds{1}_{\{z>y\}}\pi^\uparrow(dy,dz)=\int_0^{F_\mu(x)-\phi_{\pi^\uparrow}(F_\mu(x))}\delta_{F_{\tilde \nu_r}^{-1}\left(\frac{w}{\nu_r(\R)}\right)}(dz)dw,
   \end{align*}together with $\phi_{\pi^\uparrow}(F_\mu(x))=\int_0^{F_\mu(x)}\phi'_{\uparrow}(u)du=\phi_{\uparrow}(F_\mu(x))$ can also be derived from the definition of $\pi^\uparrow$ combined with \eqref{eq:compg}, \eqref{eq:compd}, \eqref{eq:qt flip}, Proposition 4.9 \cite{Revuz-Yor} and the remark just above applied like in the above derivation of \eqref{eq:chgtvar}.

                                                                                                                                                                                                   By {\bf Step 1} below applied with $\pi=\hat\pi^\uparrow$ and $\pi=\pi^\uparrow$, there exists a unique $v\in[(F_\mu(x)-\nu_r(\R))^+,\phi_{\hat\pi^\uparrow}(F_\mu(x))\vee\phi_{\pi^\uparrow}(F_\mu(x))]$ such that $G(F_\mu(x),v)=\int_0^{F_\mu(x)}F_\mu^{-1}(w)dw$ so that $\phi_{\hat\pi^\uparrow}(F_\mu(x))=\phi_{\pi^\uparrow}(F_\mu(x))$ for each $x\in\R$ and
\begin{align*}
   \int_{y\in (-\infty,x]}\hat\pi^\uparrow(dy,dz)&=\int_0^{\phi_{\pi^\uparrow}(F_\mu(x))}\delta_{F_{\tilde \nu_l}^{-1}\left(\frac{w}{\nu_l(\R)}\right)}(dz)dw+\int_0^{F_\mu(x)-\phi_{\pi^\uparrow}(F_\mu(x))}\delta_{F_{\tilde \nu_r}^{-1}\left(\frac{w}{\nu_r(\R)}\right)}(dz)dw\\&=\int_{y\in (-\infty,x]}\pi^\uparrow(dy,dz).
\end{align*}
Therefore the set\[
\mathcal{M} := \left\{A\in\mathcal{B(\R)}\mid \int_{A} \pi^\uparrow_y\mu(dy)=\int_{A} \hat\pi^\uparrow_y\mu(dy)\right\}\] contains $\left\{(-\infty,x]: x\in\R\right\}$.
                           One easily checks that if $A,B\in\mathcal M$ are such that $B\subset A$, then $A\setminus B\in\mathcal M$. Now let $\left(A_n\right)_{n\in\N}$ be a sequence of elements in $\mathcal M$ such that $A_n\subseteq A_{n+1}$. For $B\in\mathcal{B}(\R)$, since for all $n\in\N$, 
    $\int_{A_n} \left(\hat\pi^\uparrow_y(B)-\pi^\uparrow_y(B)\right)\,\mu(dy) = 0$
    and $\vert\hat\pi^\uparrow_y(B)-\pi^\uparrow_y(B)\vert \leq 1$, 
    by Lebesgue's dominated convergence theorem, 
                           \[\int_{\bigcup_{n\in\N}A_n} \left(\hat\pi^\uparrow_y(B)-\pi^\uparrow_y(B)\right)\,\mu(dy) = 0.\]
                           We deduce that $\bigcup_{n\in\N}A_n\in\mathcal{M}$ and $\mathcal{M}$ is a monotone class. Since $\mathcal M$ contains the class $\mathcal C = \left\{(-\infty,x]: x\in\R\right\}$, which is stable by finite intersections, by the monotone class theorem, we have $\sigma(\mathcal C) \subset \mathcal M$. Since $\sigma(\mathcal C) = \mathcal B(\R)$ and $\mathcal M \subseteq \mathcal B(\R)$, we conclude that $\mathcal M =\mathcal B(\R)$.
In particular, for any $B\in\mathcal B(\R)$, \begin{align*}
   \mu&\left(\left\{y\in\R:\hat\pi^\uparrow_y(B)>\pi^\uparrow_y(B)\right\}\right)=0=\mu\left(\left\{y\in\R:\hat\pi^\uparrow_y(B)<\pi^\uparrow_y(B)\right\}\right).
\end{align*} We deduce that \begin{align*}
   \mu&\left(\left\{y\in\R:\forall x\in{\mathbb Q},\; \hat\pi^\uparrow_y((-\infty,x])=\pi^\uparrow_y((-\infty,x])\right\}\right)=1=\mu\left(\left\{y\in\R:\hat\pi^\uparrow_y=\pi^\uparrow_y\right\}\right).
\end{align*}

{\bf Step 1 :}  
For $x\in\R$, we are going to check that $G(F_\mu(x),\phi_\pi(F_\mu(x)))\le \int_0^{F_\mu(x)}F_\mu^{-1}(w)dw$ ({\bf Step 1.1} below), $G(F_\mu(x),(F_\mu(x)-\nu_r(\R))^+)\ge \int_0^{F_\mu(x)}F_\mu^{-1}(w)dw$ ({\bf Step 1.2} below) and that $v\mapsto G(F_\mu(x),v)$ is decreasing on $[(F_\mu(x)-\nu_r(\R))^+,\phi_\pi(F_\mu(x))]$ ({\bf Step 1.3} below). Since this function is also continuous, this implies the existence of a unique $\phi_\uparrow(F_\mu(x))\in [(F_\mu(x)-\nu_r(\R))^+,\phi_\pi(F_\mu(x))]$ such that $G(F_\mu(x),\phi_\uparrow(F_\mu(x)))=\int_0^{F_\mu(x)}F_\mu^{-1}(v)dv$, where the notation $\phi_\uparrow(F_\mu(x))$ is justified since this point clearly only depends on $x$ through $F_\mu(x)$.

{\bf Step 1.1 :} By Lemma \ref{lem:compmom},  applied with $\eta=\tilde\nu_l$ and $\hat\eta(dz)= \frac 1{\nu_l(\R)}\int_{y\in (-\infty,x]}\mathds{1}_{\{z<y\}}\pi(dy,dz)$ such that $\hat\eta(\R)=\frac{\phi_\pi(F_\mu(x))}{\nu_l(\R)}$, $$\frac 1{\nu_l(\R)}\int_{\R^2}z\mathds{1}_{\{z<y\le x\}}\pi(dy,dz)\ge \int_0^{\frac{\phi_\pi(F_\mu(x))}{\nu_l(\R)}}F_{\tilde \nu_l}^{-1}(w)dw=\frac 1{\nu_l(\R)}\int_0^{\phi_\pi(F_\mu(x))}F_{\tilde \nu_l}^{-1}\left(\frac{w}{\nu_l(\R)}\right)dw$$ and, with $\eta=\tilde\nu_r$, $\hat\eta(dz)=\frac 1{\nu_r(\R)}\int_{y\in (-\infty,x]}\mathds{1}_{\{z>y\}}\pi(dy,dz)\le \tilde\nu_r(dz)$ such that $\hat\eta(\R)=\frac{F_\mu(x)-\phi_\pi(F_\mu(x))}{\nu_r(\R)}$, $$\frac{1}{\nu_r(\R)}\int_{\R^2}z\mathds{1}_{\{y\le x,y<z\}}\pi(dy,dz)\ge 
\int_0^{\frac{F_\mu(x)-\phi_\pi(F_\mu(x))}{\nu_r(\R)}}F_{\tilde \nu_r}^{-1}(w)dw=\frac{1}{\nu_r(\R)}\int_0^{F_\mu(x)-\phi_\pi(F_\mu(x))}F_{\tilde \nu_r}^{-1}\left(\frac{w}{\nu_r(\R)}\right)dw.$$ Using \eqref{eq:qt flip} for the first equality, the inverse transform sampling for the second and the martingale property of $\pi$, and $\nu^\pi_l+\nu^\pi_r=\nu$ for the third, we deduce that
\begin{align*}
  \int_0^{F_\mu(x)}F_\mu^{-1}(w)dw&=\int_0^{1}F_\mu^{-1}(w)\mathds{1}_{\{F_\mu^{-1}(w)\le x\}}dw=\int_{\R}y\mathds{1}_{\{y\le x\}}\mu(dy)
  \\&=\int_{\R^2}z\mathds{1}_{\{z<y\le x\}}\pi(dy,dz)+\int_{\R^2}z\mathds{1}_{\{y\le x,y<z\}}\pi(dy,dz)\ge G(F_\mu(x),\phi_\pi(F_\mu(x))).
\end{align*}

{\bf Step 1.2 :} If $F_\mu(x)< \nu_r(\R)$, 
setting $$\psi(z)=\mathds{1}_{\{z<F_{\tilde \nu_r}^{-1}(\frac{F_\mu(x)}{\nu_r(\R)})\}}+\mathds{1}_{\{z=F_{\tilde \nu_r}^{-1}(\frac{F_\mu(x)}{\nu_r(\R)})\}}\mathds{1}_{\{\tilde \nu_r(\{F_{\tilde \nu_r}^{-1}(\frac{F_\mu(x)}{\nu_r(\R)})\})>0\}}\frac{\frac{F_\mu(x)}{\nu_r(\R)}-\tilde \nu_r((-\infty,F_{\tilde \nu_r}^{-1}(\frac{F_\mu(x)}{\nu_r(\R)})))}{\tilde \nu_r(\{F_{\tilde \nu_r}^{-1}(\frac{F_\mu(x)}{\nu_r(\R)})\})},$$ we have that
\begin{align*}
   \int_0^{F_\mu(x)}F_{\tilde \nu_r}^{-1}\left(\frac{w}{\nu_r(\R)}\right)dw&=\nu_r(\R)\int_\R z\psi(z)\tilde\nu_r(dz)
  =\int_{\R\times \R}z\psi(z)\mathds{1}_{\{y<z\}}\pi(dy,dz)\\&\ge \int_{\R\times \R}y\psi(z)\mathds{1}_{\{y<z\}}\pi(dy,dz)=\int_\R y\hat\mu(dy)\mbox{ where }
\end{align*}
$\hat\mu(dy):=\left(\int_{z\in \R}\psi(z)\mathds{1}_{\{y<z\}}\pi_y(dz)\right)\mu(dy)\le \mu(dy)$ since $\int_{z\in \R}\psi(z)\mathds{1}_{\{y<z\}}\pi_y(dz)\in[0,1]$ and satisfies $$\hat\mu(\R)=\int_{\R\times \R}\psi(z)\mathds{1}_{\{y<z\}}\pi(dy,dz)=\nu_r(\R)\int_\R \psi(z)\tilde\nu_r(dz)=F_\mu(x).$$
By Lemma \ref{lem:compmom} applied with $(\eta,\hat\eta)=(\mu,\hat\mu)$, this ensures that $\int_\R y\hat\mu(dy)\ge \int_0^{F_\mu(x)}F_\mu^{-1}(v)dv$ so that $$\int_0^{F_\mu(x)}F_\mu^{-1}(v)dv\le 
\int_0^{F_\mu(x)}F_{\tilde \nu_r}^{-1}\left(\frac{w}{\nu_r(\R)}\right)dw=G(F_\mu(x),0).$$

If $F_\mu(x)\ge \nu_r(\R)$, one has using $\int_0^{\nu_r(\R)}F_{\tilde \nu_r}^{-1}\left(\frac{w}{\nu_r(\R)}\right)dw=\int_{\R}y\nu_r(dy)=\int_{\R}y\mu(dy)-\int_{\R}y\nu_l(dy)$ and $\nu_r(\R)=1-\nu_l(\R)$ for the second equality,

\begin{align*}
   G(F_\mu(x)&,F_\mu(x)-\nu_r(\R))=\int_0^{F_\mu(x)-\nu_r(\R)}F_{\tilde \nu_l}^{-1}\left(\frac{w}{\nu_l(\R)}\right)dw+\int_0^{\nu_r(\R)}F_{\tilde \nu_r}^{-1}\left(\frac{w}{\nu_r(\R)}\right)dw\\&
=\int_0^{\nu_l(\R)-(1-F_\mu(x))}F_{\tilde \nu_l}^{-1}\left(\frac{w}{\nu_l(\R)}\right)dw+\int_\R y\mu(dy)-\int_\R y\nu_l(dy)\\&=\int_0^{F_\mu(x)}F_\mu^{-1}(w)dw+\int_{F_\mu(x)}^1F_\mu^{-1}(w)dw
-\int^{\nu_l(\R)}_{\nu_l(\R)-(1-F_\mu(x))}F_{\tilde \nu_l}^{-1}\left(\frac{w}{\nu_l(\R)}\right)dw.
\end{align*}
By a reasoning similar to the above derivation of $\int_0^{F_\mu(x)}F_\mu^{-1}(v)dv\le \int_0^{F_\mu(x)}F_{\tilde \nu_r}^{-1}\left(\frac{w}{\nu_r(\R)}\right)dw$, we check that the sum of the last two terms in the right-hand side is non-negative.
Combining the two cases, we obtain that  $G(F_\mu(x),(F_\mu(x)-\nu_r(\R))^+)\ge \int_0^{F_\mu(x)}F_\mu^{-1}(w)dw$. 

{\bf Step 1.3 :}
To prove that $v\mapsto G(F_\mu(x),v)$ is decreasing on $[(F_\mu(x)-\nu_r(\R))^+,\phi_\pi(F_\mu(x))]$, it is enough to check that \begin{equation}
   \forall w\in\left(0,\frac{\phi_\pi(F_\mu(x))}{\nu_l(\R)}\right),\;F_{\tilde \nu_l}^{-1}\left(w\right)<x\mbox{ and }\forall w\in \left(\frac{F_\mu(x)-\phi_\pi(F_\mu(x))}{\nu_r(\R)},1\right),\;x<F_{\tilde \nu_r}^{-1}\left(w\right).\label{eq:compquantx}
 \end{equation}
 Let $x$ be such that $\phi_\pi(F_\mu(x))>0$. Then $F_\mu(x)>0$ and $F_\mu(x)-\phi_\pi(F_\mu(x))>0$. Since, by definition of $\phi_\pi(F_\mu(x))$ and $\nu^\pi_l=\nu_l$,
\begin{align*}
   \phi_\pi(F_\mu(x))&=\int_{\R^2}\mathds{1}_{\{z<y\le x\}}\pi(dy,dz)=\int_{\R^2}\mathds{1}_{\{z<y\wedge x\}}\pi(dy,dz)-\int_{\R^2}\mathds{1}_{\{z<x<y\}}\pi(dy,dz)\\&=\nu_l(\R)\tilde\nu_l((-\infty,x))-\int_{\R^2}\mathds{1}_{\{z<x<y\}}\pi(dy,dz),
\end{align*}
we have $\tilde\nu_l((-\infty,x))\ge \frac{\phi_\pi(F_\mu(x))}{\nu_l(\R)}$, from which we deduce the first part in \eqref{eq:compquantx}.
In a symmetric way and since $\nu^\pi_0=\nu_0=0$,
\begin{align*}
    F_\mu(x)-\phi_\pi(F_\mu(x))&=\int_{\R^2}\mathds{1}_{\{y\le x\}}\pi(dy,dz)-\int_{\R^2}\mathds{1}_{\{z<y\le x\}}\pi(dy,dz)\\&=\int_{\R^2}\mathds{1}_{\{y<z\le x\}}\pi(dy,dz)+\int_{\R^2}\mathds{1}_{\{y\le x<z\}}\pi(dy,dz)\\&=\nu_r(\R)F_{\tilde\nu_r}(x)+\int_{\R^2}\mathds{1}_{\{y\le x<z\}}\pi(dy,dz),
\end{align*}
so that $F_{\tilde\nu_r}(x)\le \frac{F_\mu(x)-\phi_\pi(F_\mu(x))}{\nu_r(\R)}$. With the right-continuity of $F_{\tilde\nu_r}$, we deduce the other part in \eqref{eq:compquantx}.

{\bf Step 2 :} monotonicity of $x\mapsto \phi_\uparrow(F_\mu(x))$ and $x\mapsto F_\mu(x)-\phi_\uparrow(F_\mu(x))$.

{\bf Step 2.1 :} Let us first check that when $F_\mu(x)<F_\mu(x')$, $F_\mu(x)-\phi_\uparrow(F_\mu(x))\le F_\mu(x')-\phi_\uparrow(F_\mu(x'))$. If $\phi_\pi(F_\mu(x'))\le \phi_\uparrow(F_\mu(x))+F_\mu(x')-F_\mu(x)$, then we conclude with the inequality $\phi_\uparrow(F_\mu(x'))\le \phi_\pi(F_\mu(x'))$ established in {\bf step 1}. Let us now suppose that $\phi_\uparrow(F_\mu(x))+F_\mu(x')-F_\mu(x)\le \phi_\pi(F_\mu(x'))$. Then, by the monotonicity of $F_{\tilde \nu_l}^{-1}$,
\begin{align*}
   \int_{\phi_\uparrow(F_\mu(x))}^{\phi_\uparrow(F_\mu(x))+F_\mu(x')-F_\mu(x)}F_{\tilde \nu_l}^{-1}\left(\frac{w}{\nu_l(\R)}\right)dw&\le\int_{\phi_\pi(F_\mu(x'))+F_\mu(x)-F_\mu(x')}^{\phi_\pi(F_\mu(x'))}F_{\tilde \nu_l}^{-1}\left(\frac{w}{\nu_l(\R)}\right)dw.
\end{align*}
We are going to prove that \begin{equation}
   \int_{\phi_\pi(F_\mu(x'))+F_\mu(x)-F_\mu(x')}^{\phi_\pi(F_\mu(x'))}F_{\tilde \nu_l}^{-1}\left(\frac{w}{\nu_l(\R)}\right)dw\le \int_{F_\mu(x)}^{F_\mu(x')}F_{\mu}^{-1}\left(w\right)dw.\label{eq:compdur}
 \end{equation}Adding the two inequalities to $G(F_\mu(x),\phi_\uparrow(F_\mu(x)))=\int_0^{F_\mu(x)}F_{\mu}^{-1}\left(w\right)dw$, we obtain that $$G(F_\mu(x'),\phi_\uparrow(F_\mu(x))+F_\mu(x')-F_\mu(x))\le \int_{0}^{F_\mu(x')}F_{\mu}^{-1}\left(w\right)dw.$$ Since by {\bf Step 1.3}, $v\mapsto G(F_\mu(x'),v)$ is decreasing on $[(F_\mu(x')-\nu_r(\R))^+,\phi_\pi(F_\mu(x'))]$, we deduce that $\phi_\uparrow(F_\mu(x'))\le \phi_\uparrow(F_\mu(x))+F_\mu(x')-F_\mu(x)$.

 To prove \eqref{eq:compdur}, we first remark that since $$F_\mu(x')-F_\mu(x)\le \phi_\pi(F_\mu(x'))=\int_0^{F_\mu(x')}\pi_{F_{\mu}^{-1}(w)}((-\infty,F_{\mu}^{-1}(w)))dw,$$ where we used the inverse transform sampling and \eqref{eq:qt flip} for the equality, \begin{equation}
   \exists u\in[0,F_\mu(x')]\mbox{ s.t. }\int_u^{F_\mu(x')}\pi_{F_{\mu}^{-1}(w)}((-\infty,F_{\mu}^{-1}(w)))dw=F_\mu(x')-F_\mu(x).\label{eq:defu}
 \end{equation}By Lemma \ref{lem:compmom}, applied with $\eta=\frac{1}{F_\mu(x')}\int_0^{F_\mu(x')}\delta_{F_\mu^{-1}(w)}dw$ such that $F_\eta^{-1}(w)=F_\mu^{-1}(F_\mu(x')w)$, $\hat\eta=\frac{1}{F_\mu(x')}\int_u^{F_\mu(x')}\pi_{F_{\mu}^{-1}(w)}((-\infty,F_{\mu}^{-1}(w)))\delta_{F_\mu^{-1}(w)}dw$ with mass
 $\hat\eta(\R)=\frac{F_\mu(x')-F_\mu(x)}{F_\mu(x')}$, \begin{equation}
   \frac{1}{F_\mu(x')}\int_{F_\mu(x)}^{F_\mu(x')}F_\mu^{-1}(w)dw
 \ge\frac{1}{F_\mu(x')
 }\int_u^{F_\mu(x')}\pi_{F_{\mu}^{-1}(w)}((-\infty,F_{\mu}^{-1}(w))){F_\mu^{-1}(w)}dw.\label{eq:minoet1}
 \end{equation} 
 

For the desintegrations $\pi(dy,dz)=\mu(dy)\pi_y(dz)=\nu(dz)\overleftarrow{\pi}_z(dy)$, let \begin{align*}
   \sigma(dy)&=\frac{\mathds{1}_{\{y\le x'\}}}{\phi_\pi(F_\mu(x'))}\pi_y((-\infty,y))\mu(dy)\mbox{ and }\\\theta(dz)&=\mathds{1}_{\{z<x'\}}\overleftarrow{\pi}_z((z,x'])\frac{\nu(dz)}{\phi_\pi(F_\mu(x'))}=\mathds{1}_{\{z<x'\}}\frac{\overleftarrow{\pi}_z((z,x'])}{\overleftarrow{\pi}_z((z,+\infty))}\times\frac{\nu_l(dz)}{\phi_\pi(F_\mu(x'))}.
\end{align*} The coupling $\frac{\mathds{1}_{\{z<y\le x'\}}}{\phi_\pi(F_\mu(x'))}\pi(dy,dz)\in\Pi\left(\sigma,\theta\right)$ giving full weight to $\{(y,z)\in\R^2:y>z\}$, one has $\sigma\ge_{st}\theta$. On the other hand, Lemma \ref{lem:compmom} applied with $\eta=\tilde\nu_l$ and $\hat\eta=\frac{\phi_\pi(F_\mu(x'))}{\nu_l(\R)}\theta$ such that $\hat\eta(\R)=\frac{\phi_\pi(F_\mu(x'))}{\nu_l(\R)}$ implies that $$\theta\ge_{st}\vartheta:=\frac{\nu_l(\R)}{\phi_\pi(F_\mu(x'))}\int_0^{\frac{\phi_\pi(F_\mu(x'))}{\nu_l(\R)}}\delta_{F_{\tilde\nu_l}^{-1}(w)}dw
.$$ Hence $\sigma\ge_{st}\vartheta$ so that $F_\sigma^{-1}\ge F_\vartheta^{-1}$ where $F_\vartheta^{-1}(w)=F_{\tilde\nu_l}^{-1}\left(\frac{\phi_\pi(F_\mu(x'))}{\nu_l(\R)}w\right)$ and
\begin{align*}
   \int_{1-\frac{F_\mu(x')-F_\mu(x)}{\phi_\pi(F_\mu(x'))}}^1F_\sigma^{-1}(w)dw&\ge \int_{1-\frac{F_\mu(x')-F_\mu(x)}{\phi_\pi(F_\mu(x'))}}^1F_\vartheta^{-1}(w)dw\\&=\frac{1}{\phi_\pi(F_\mu(x'))}\int_{\phi_\pi(F_\mu(x'))+F_\mu(x)-F_\mu(x')}^{\phi_\pi(F_\mu(x'))}F_{\tilde \nu_l}^{-1}\left(\frac{w}{\nu_l(\R)}\right)dw.
\end{align*}
By the inverse transform sampling and \eqref{eq:qt flip}, $\sigma=\frac{1}{\phi_\pi(F_\mu(x'))}\int_0^{F_\mu(x')}\pi_{F_\mu^{-1}(w)}((-\infty,F_\mu^{-1}(w)))\delta_{F_\mu^{-1}(w)}dw$ so that, with \eqref{eq:defu},
$\int_{1-\frac{F_\mu(x')-F_\mu(x)}{\phi_\pi(F_\mu(x'))}}^1F_\sigma^{-1}(w)dw=\frac{1}{\phi_\pi(F_\mu(x'))}\int_u^{F_\mu(x')}\pi_{F_{\mu}^{-1}(w)}((-\infty,F_{\mu}^{-1}(w))){F_\mu^{-1}(w)}dw$. Hence
$$
\int_u^{F_\mu(x')}\pi_{F_{\mu}^{-1}(w)}((-\infty,F_{\mu}^{-1}(w))){F_\mu^{-1}(w)}dw\ge 
\int_{\phi_\pi(F_\mu(x'))+F_\mu(x)-F_\mu(x')}^{\phi_\pi(F_\mu(x'))}F_{\tilde \nu_l}^{-1}\left(\frac{w}{\nu_l(\R)}\right)dw.$$Combining this inequality with \eqref{eq:minoet1}, we obtain \eqref{eq:compdur}.

{\bf Step 2.2 :} Let us now check that when $F_\mu(x)<F_\mu(x')$, $\phi_\uparrow(F_\mu(x))\le \phi_\uparrow(F_\mu(x'))$. If $\phi_\uparrow(F_\mu(x))\le F_\mu(x')-\nu_r(\R)$, then we have $\phi_\uparrow(F_\mu(x))\le \phi_\uparrow(F_\mu(x'))$, since $(F_\mu(x')-\nu_r(\R))^+\le \phi_\uparrow(F_\mu(x'))$ by {\bf Step 1}. Let us now suppose that $F_\mu(x')-\phi_\uparrow(F_\mu(x))\le \nu_r(\R)$. Since, by {\bf Step 1}, $\phi_\uparrow(F_\mu(x))\le \phi_\pi(F_\mu(x))$ and $F_{\tilde \nu_r}^{-1}$ is non-decreasing, we have, using a reasoning analogous to the above derivation of \eqref{eq:compdur} for the second inequality,
\begin{align*}
   \int_{F_\mu(x)-\phi_\uparrow(F_\mu(x))}^{F_\mu(x')-\phi_\uparrow(F_\mu(x))}F_{\tilde \nu_r}^{-1}\left(\frac{w}{\nu_r(\R)}\right)dw\ge \int_{F_\mu(x)-\phi_\pi(F_\mu(x))}^{F_\mu(x')-\phi_\pi(F_\mu(x))}F_{\tilde \nu_r}^{-1}\left(\frac{w}{\nu_r(\R)}\right)dw\ge \int_{F_\mu(x)}^{F_\mu(x')}F_{\mu}^{-1}\left(w\right)dw.
\end{align*}
Adding this inequality to $G(F_\mu(x),\phi_\uparrow(F_\mu(x)))=\int_0^{F_\mu(x)}F_\mu^{-1}(w)dw$, we obtain that $$G(F_\mu(x'),\phi_\uparrow(F_\mu(x)))\ge \int_{0}^{F_\mu(x')}F_{\mu}^{-1}\left(w\right)dw.$$
Since $\phi_\uparrow(F_\mu(x))\le\phi_\pi(F_\mu(x))\le \phi_\pi(F_\mu(x'))$ and, by {\bf Step 1.3}, $v\mapsto G(F_\mu(x'),v)$ is decreasing on $[(F_\mu(x')-\nu_r(\R))^+,\phi_\pi(F_\mu(x'))]$, we deduce that $\phi_\uparrow(F_\mu(x'))\ge \phi_\uparrow(F_\mu(x))$.

{\bf Step 3 :} 
 Let $x\in\R$ be such that $\mu(\{x\})>0$. The monotonicity proved in {\bf Step 2} ensures the existence of the left-hand limit $\lim_{y\to x-}\phi_\uparrow(F_\mu(y))$ such that $F_\mu(x-)-\lim_{y\to x-}\phi_\uparrow(F_\mu(y))\le F_\mu(x)-\phi_\uparrow(F_\mu(x))$. Taking the limit $y\to x-$ in the inequality $(F_\mu(y)-\nu_r(\R))^+\le \phi_\uparrow (F_\mu(y))\le F_\mu(y)\wedge \nu_l(\R)$ implies that $(F_\mu(x-)-\nu_r(\R))^+\le \lim_{y\to x-}\phi_\uparrow(F_\mu(y))\le F_\mu(x-)\wedge \nu_l(\R)$. By continuity of $(u,v)\mapsto G(u,v)$ and $u\mapsto\int_0^uF_\mu^{-1}(w)dw$, taking the limit $y\to x-$ in the equality $G(F_\mu(y),\phi_\uparrow(F_\mu(y)))=
\int_0^{F_\mu(y)}F_\mu^{-1}(w)dw$, we obtain that \begin{equation}
   G\left(F_\mu(x-),\lim_{y\to x-}\phi_\uparrow(F_\mu(y))\right)=
\int_0^{F_\mu(x-)}F_\mu^{-1}(w)dw,\label{eq:egx-}
\end{equation} so that we can set $\phi_\uparrow(F_\mu(x-))=\lim_{y\to x-}\phi_\uparrow(F_\mu(y))$. Substracting \eqref{eq:egx-} from $G(F_\mu(x),\phi_\uparrow(F_\mu(x)))=\int_0^{F_\mu(x)}F_\mu^{-1}(w)dw$ and using that, according to \eqref{eq:qt ineq flip}, $F_\mu^{-1}(w)=x$ for $w\in (F_\mu(x-),F_\mu(x)]$, we obtain
\begin{equation}
   \int_{\phi_\uparrow(F_\mu(x-))}^{\phi_\uparrow(F_\mu(x))}\left(F_{\tilde \nu_l}^{-1}\left(\frac w{\nu_l(\R)}\right)-x\right)dw+\int_{F_\mu(x-)-\phi_\uparrow(F_\mu(x-))}^{F_\mu(x)-\phi_\uparrow(F_\mu(x))}\left(F_{\tilde \nu_r}^{-1}\left(\frac w{\nu_r(\R)}\right)-x\right)dw=0.\label{eq:somnul}
\end{equation}
By \eqref{eq:compquantx} and $\phi_\uparrow(F_\mu(x))\le\phi_\pi(F_\mu(x))$, $F_{\tilde \nu_l}^{-1}\left(\frac w{\nu_l(\R)}\right)-x<0$ for $w\in(\phi_\uparrow(F_\mu(x-)),\phi_\uparrow(F_\mu(x)))$. Since, for $y<x$, by  \eqref{eq:compquantx} and $\phi_\uparrow(F_\mu(y))\le\phi_\pi(F_\mu(y))$, $F_{\tilde \nu_r}^{-1}\left(\frac w{\nu_r(\R)}\right)-y>0$ for $w\in \left(F_\mu(y)-\phi_\uparrow(F_\mu(y)),\nu_r(\R)\right)$, one has $F_{\tilde \nu_r}^{-1}\left(\frac w{\nu_r(\R)}\right)-x\ge 0$ for  $w\in(F_\mu(x-)-\phi_\uparrow(F_\mu(x-)),F_\mu(x)-\phi_\uparrow(F_\mu(x)))$.
Now let $u\in (F_\mu(x-),F_\mu(x))$ and, for $v\in[\phi_\uparrow(F_\mu(x-))\vee(u+\phi_\uparrow(F_\mu(x))-F_\mu(x)) ,\phi_\uparrow(F_\mu(x))\wedge(u+\phi_\uparrow(F_\mu(x-))-F_\mu(x-))]$,
  $$H(u,v)=\int_{{\phi_\uparrow(F_\mu(x-))}}^{v}\left(F_{\tilde \nu_l}^{-1}\left(\frac w{\nu_l(\R)}\right)-x\right)dw+\int_{{F_\mu(x-)-\phi_\uparrow(F_\mu(x-))}}^{u-v}\left(F_{\tilde \nu_r}^{-1}\left(\frac w{\nu_r(\R)}\right)-x\right)dw.$$
 If $\phi_\uparrow(F_\mu(x-))\ge u+\phi_\uparrow(F_\mu(x))-F_\mu(x)$, $$H(u,\phi_\uparrow(F_\mu(x-)))=\int_{{F_\mu(x-)-\phi_\uparrow(F_\mu(x-))}}^{u-\phi_\uparrow(F_\mu(x-))}\left(F_{\tilde \nu_r}^{-1}\left(\frac w{\nu_r(\R)}\right)-x\right)dw\ge 0$$ while if $\phi_\uparrow(F_\mu(x-))\le u+\phi_\uparrow(F_\mu(x))-F_\mu(x)$, using \eqref{eq:somnul} for the equality,
  $$H(u,u+\phi_\uparrow(F_\mu(x))-F_\mu(x))=\int_{u+\phi_\uparrow(F_\mu(x))-F_\mu(x)}^{\phi_\uparrow(F_\mu(x))}\left(x-F_{\tilde \nu_l}^{-1}\left(\frac w{\nu_l(\R)}\right)\right)dw\ge 0,$$ so that $H(u,\phi_\uparrow(F_\mu(x-))\vee(u+\phi_\uparrow(F_\mu(x))-F_\mu(x)))\ge 0$. In a similar way, $H(u,\phi_\uparrow(F_\mu(x))\wedge(u+\phi_\uparrow(F_\mu(x-))-F_\mu(x-)))\le 0$ and since $v\mapsto H(u,v)$ is decreasing on the interval, 
  there exists a unique $\phi_\uparrow(u)$ such that $\phi_\uparrow(F_\mu(x-))\le \phi_\uparrow(u)\le \phi_\uparrow(F_\mu(x))$, $F_\mu(x-)-\phi_\uparrow(F_\mu(x-))\le u-\phi_\uparrow(u)\le F_\mu(x)-\phi_\uparrow(F_\mu(x))$ and $H(u,\phi_\uparrow(u))=0$. With \eqref{eq:egx-}, this implies that $G(u,\phi_\uparrow(u))=0$. Repeating the above reasoning with the equality $H(u,\phi_\uparrow(u))=0$ replacing \eqref{eq:somnul}, we obtain for $u'\in(F_\mu(x-),u)$ the existence of a unique $\phi_\uparrow(u')\in[\phi_\uparrow(F_\mu(x-))\vee(u'+\phi_\uparrow(u)-u) ,\phi_\uparrow(u)\wedge(u'+\phi_\uparrow(F_\mu(x-))-F_\mu(x-))]$ solving $H(u',\phi_\uparrow(u'))=0$, which implies that $u'+\phi_\uparrow(u)-u\le \phi_\uparrow(u')$ and $\phi_\uparrow(u')\le \phi_\uparrow (u)$. Hence $\phi_\uparrow$ and $u\mapsto u-\phi_\uparrow(u)$ are non-decreasing on $(0,1)$.
  Moreover, $\phi_\uparrow'(u)du$ a.e. on $(F_\mu(x-),F_\mu(x))$, $\phi_\uparrow(u)<\phi_\uparrow(F_\mu(x))$ and $F_{\tilde \nu_l}^{-1}\left(\frac{\phi_\uparrow(u)}{\nu_l(\R)}\right)<x<F_{\tilde \nu_r}^{-1}\left(\frac{u-\phi_\uparrow(u)}{\nu_r(\R)}\right)$. 
   
                                                                                                                                                                                                   {\bf Step 4 :} Let us now prove \eqref{eq:maxintabs}. Since $\pi^\uparrow$ is the only non-decreasing coupling in $\Pi_M(\mu,\nu,\nu_l,\nu_r)$, it is enough to check that any coupling maximizing $\int_{\R^2}\varphi(|x-y|)\pi(dx,dy)$ over $\pi\in\Pi_M(\mu,\nu,\nu_l,\nu_r)$ is non-decreasing. To do so, we are going to adapt the proof of Lemma 1.11 \cite{BeJu16}. We modify the definition of the set $M$ in this proof into $M=\bigcup_{\tau\in{\cal S}_3}\{M_{l,\tau}\cup M_{r,\tau}\}$, where for $\tau$ in the set ${\cal S}_3$ of permutations of $\{1,2,3\}$, $M_{l,\tau}$ and $M_{r,\tau}$ are the respective images of 
                                                                                                                                                                                                   \begin{align*}M_l&=\{((x_-,y_-),(x_+,y_+),(x_+,z))\in\{\R^2\}^3: y_+<y_-< x_-<x_+<z\}\\
                                                                                                                                                                                                     \mbox{ and }M_r&=  \{((x_-,y),(x_-,z_-),(x_+,z_+))\in\{\R^2\}^3: y<x_-<x_+< z_+<z_-\}.                                                                \end{align*}
                                                                                                                                                                                                                      by $\{\R^2\}^3\ni(\chi_1,\chi_2,\chi_3)\mapsto (\chi_{\tau(1)},\chi_{\tau(2)},\chi_{\tau(3)})\in\{\R^2\}^3$. 
                                                                                                                                                                                                                     Applying Theorem 3.1 \cite{BeJu16} like in the proof of Lemma 1.11 \cite{BeJu16}, we obtain in case $(1)$ of this theorem a Borel subset $\hat\Gamma$ of $\R^2$ such that $\pi(\hat\Gamma)=1$ and $M_l\cap \hat\Gamma^3=\emptyset=M_r\cap \hat\Gamma^3$. By Lemma \ref{lemma: x,z in gamma}, we may suppose that $$\forall x\in\R,\;\exists y<x\mbox{ s.t. }(x,y)\in\hat\Gamma\Leftrightarrow\exists z>x\mbox{ s.t. }(x,z)\in\hat\Gamma.$$We deduce that \begin{align*}
                                                                                    &\{((x_-,y_-),(x_+,y_+))\in\{\R^2\}^2: y_+<y_-< x_-<x_+\}\cap\hat\Gamma^2=\emptyset\\\mbox{and }&  \{((x_-,z_-),(x_+,z_+))\in\{\R^2\}^2: x_-<x_+< z_+<z_-\}\cap\hat\Gamma^2=\emptyset.\end{align*}Setting $\Gamma=\hat\Gamma\cap \{(x,y)\in\R^2:y\ne x\}$, we have \begin{align*}&\{((x_-,y_-),(x_+,y_+))\in\{\R^2\}^2: y_+<y_-\le x_-<x_+\}\cap\Gamma^2=\emptyset\\
                                                                \mbox{and }&\{((x_-,z_-),(x_+,z_+))\in\{\R^2\}^2: x_-<x_+\le z_+<z_-\}\cap\Gamma^2=\emptyset.\end{align*} Conditions $(a)(b)$ in Definition \ref{def:NdCoupling} hold and since \begin{align*}
                                                                  \pi(\{(x,y)\in\R^2:y\ne x\})&=\pi(\{(x,y)\in\R^2:y<x\})+\pi(\{(x,y)\in\R^2:y>x\})\\&=\nu_l(\R)+\nu_r(\R)=1, \end{align*}$\pi(\Gamma)=1$ so that $\pi$ is non-decreasing.  Like in the proof of Lemma 1.11 \cite{BeJu16}, we still need to check that case $(2)$ in Theorem 3.1 \cite{BeJu16} cannot occur. In the notation of the proof of the lemma, we need to construct a competitor $\omega'$ of $\omega$ such that $\pi-\omega+\omega'\in\Pi_M(\mu,\nu,\nu_l,\nu_r)$ and \begin{equation}
                                 \int_{\R^2}\varphi(|x-y|)\omega'(dx,dy)>\int_{\R^2}\varphi(|x-y|)\omega(dx,dy).\label{compcouomom}                                    
                               \end{equation} This is done by choosing $$\alpha'_p=\frac{1}{3}\left(\frac{z-y_-}{z-y_+}\left(\delta_{(x_-,y_+)}+\delta_{(x_+,z)}\right)+\frac{y_--y_+}{z-y_+}\left(\delta_{(x_-,z)}+\delta_{(x_+,y_+)}\right)+\delta_{(x_+,y_-)}\right)$$   when $\alpha_p=\frac{1}{3}\left(\delta_{(x_-,y_-)}+   \delta_{(x_+,y_+)} +\delta_{(x_+,z)}\right)$ for some $y_+<y_-< x_-<x_+<z$ and by choosing
$$\alpha'_p=\frac{1}{3}\left(\frac{z_--z_+}{z_--y}\left(\delta_{(x_-,z_-)}+\delta_{(x_+,y)}\right)+\frac{z_+-y}{z_--y}\left(\delta_{(x_-,y)}+\delta_{(x_+,z_-)}\right)+\delta_{(x_-,z_+)}\right)$$  when $\alpha_p=\frac{1}{3}\left(  \delta_{(x_-,y)}+\delta_{(x_-,z_-)} + \delta_{(x_+,z_+)}\right)$ for some $y<x_-<x_+<z_+<z_-$. Note that, in the first case, \begin{align*}&\int_{v\in\R}\mathds{1}_{\{w<v\}}\alpha_p(dv,dw)=\frac{1}{3}\left(\delta_{y_-}(dw)+ \delta_{y_+}(dw)\right)= \int_{v\in\R}\mathds{1}_{\{w<v\}}\alpha'_p(dv,dw)\\&\mbox{and } \int_{v\in\R}\mathds{1}_{\{w>v\}}\alpha_p(dv,dw)=\frac{1}{3}\delta_{z}(dw)= \int_{v\in\R}\mathds{1}_{\{w>v\}}\alpha'_p(dv,dw),\end{align*}which contributes to ensuring that $\pi-\omega+\omega'\in\Pi_M(\mu,\nu,\nu_l,\nu_r)$.  Moreover, by Lemma \ref{lemma: 5 point cost} applied with $(y,m,z)=(y_+,y_-,z)$, the function $f$ defined by \[f(x) = \frac{z-y_-}{z-y_+}\varphi(\vert x-y_+\vert ) + \frac{y_--y_+}{z-y_+}\varphi(\vert z-x\vert) -\varphi(\vert x-y_-\vert)\] is decreasing on $[y_-,z]$. We deduce that, still in the first case, 
\begin{align*}
  \int_{\R^2}\varphi(|v-w|)\alpha'_p(dv,dw)-\int_{\R^2}\varphi(|v-w|)\alpha_p(dv,dw)=\frac{1}{3}\left(f(x_-)-f(x_+)\right)>0,\end{align*}
which contributes to ensuring \eqref{compcouomom}. One checks in the same way that the two properties still hold in the second case.

                                                                                                                                                                                                   Let us now give an alternative simple argument in the particular case when $\varphi$ is the identity function. For $\pi\in\Pi_M(\mu,\nu,\nu_l,\nu_r)$, we have $\int_{\R^2}(y-z)\mathds{1}_{\{z<y\}}\pi(dy,dz)-\int_{\R^2}(z-y)\mathds{1}_{\{z>y\}}\pi(dy,dz)=0$. Since $\nu^\pi_l=\nu_l$, we deduce with Fubini's theorem and \eqref{eq:defphipi} that
\begin{align*}
  \frac 12&\int_{\R^2}|y-z|\pi(dy,dz)=\int_{\R^2}(y-z)\mathds{1}_{\{z<y\}}\pi(dy,dz)\\&=\int_{\R^2}\int_{\R_+}\mathds{1}_{\{y>x\}}dx\mathds{1}_{\{z<y\}}\pi(dy,dz)-\int_{\R^2}\int_{\R_-}\mathds{1}_{\{y\le x\}}dx\mathds{1}_{\{z<y\}}\pi(dy,dz)-\int_{\R}z\nu_l(dz)
 \\&=\int_{\R_+}(\nu_l(\R)-\phi_\pi(F_\mu(x)))dx-\int_{\R_-}\phi_\pi(F_\mu(x)))dx-\int_{\R}z\nu_l(dz).
\end{align*}
Since, according to {\bf Step 1}, $\forall x\in\R$, $\phi_\uparrow(F_\mu(x)))\le \phi_\pi(F_\mu(x))$, we conclude that $$\int_{\R^2}|y-z|\pi^\uparrow(dy,dz)=\sup_{\pi\in\Pi_M(\mu,\nu,\nu_l,\nu_r)}\int_{\R^2}|y-z|\pi(dy,dz).$$
                                                                                                                                                                                         \end{proof}

 \begin{proof}[Proof of Proposition \ref{prop:decompdecrois}]
   Let us suppose that there exists $\pi^\downarrow\in\Pi_M(\mu,\nu,\nu_l,\nu_r)$ such that the coupling $\pi(dx,dy):=\frac{\pi^\downarrow(dx,dy)+(\nu_l+\nu_r-\nu)(dx)\delta_x(dy)}{\nu_l(\R)+\nu_r(\R)}\in\Pi_M\left(\frac{\mu+\nu_l+\nu_r-\nu}{\nu_l(\R)+\nu_r(\R)},\frac{\nu_l+\nu_r}{\nu_l(\R)+\nu_r(\R)},\frac{\nu_l}{\nu_l(\R)+\nu_r(\R)},\frac{\nu_r}{\nu_l(\R)+\nu_r(\R)}\right)$ is non-increasing. Then, by Proposition \ref{prop:deccoupl}, there exist $-\infty<a\le b<+\infty$ such that $(\mu+\nu_l+\nu_r-\nu)([a,b])=(\nu_l+\nu_r)(\R)$, $(\nu_l+\nu_r)((a,b))=0$ and $\pi(\{(x,x)\})=\frac{(\mu+\nu_l+\nu_r-\nu)(\{x\})}{\nu_l(\R)+\nu_r(\R)}\wedge\frac{(\nu_l+\nu_r)(\{x\})}{\nu_l(\R)+\nu_r(\R)}$ for $x\in\{a,b\}$. Since $\pi(\{(x,x):x\in\R\})=0$, we deduce that we may choose a closed or semi-open or open interval $I$ with ends $a$ and $b$ such that $\frac{\mu+\nu_l+\nu_r-\nu}{\nu_l(\R)+\nu_r(\R)}(I)=1$ and $\frac{\nu_l+\nu_r}{\nu_l(\R)+\nu_r(\R)}(I)=0$. Since $\nu-(\nu_l+\nu_r)=\nu_0^{\pi^\downarrow}\le\mu\wedge\nu$, we have that $\mu+\nu_l+\nu_r-\nu-(\mu-\nu)^+=\nu_l+\nu_r-(\nu-\mu)^+$ is a non-negative measure and we deduce that $\nu_l+\nu_r=(\nu-\mu)^+$ and $\mu+\nu_l+\nu_r-\nu=(\mu-\nu)^+$. Hence the Dispersion Assumption is satisfied. Moreover, since $\pi(\{(-\infty,a)\cup(b,+\infty)\}\times\R)\le \frac{(\mu+\nu_l+\nu_r-\nu)(\R\setminus I)}{\nu_l(\R)+\nu_r(\R)}=0$, $\pi(\R\times (a,b))\le\frac{\nu_l+\nu_r}{\nu_l(\R)+\nu_r(\R)}(I)=0$ and $\pi(\{a,a\})=0$, we have
   \begin{align*}
      \nu_l(dy)&=\nu^\pi_l(dy)=\int_{x\in\R}\mathds{1}_{\{y<x\}}\pi(dx,dy)=\int_{x\in\R}\mathds{1}_{\{y<x\le b\}}\pi(dx,dy)\\&=\int_{x\in\R}\mathds{1}_{\{y<x,y\le a\}}\pi(dx,dy)=\int_{x\in\R}\mathds{1}_{\{y\le a\}}\pi(dx,dy)=\mathds{1}_{\{y\le a\}}(\nu-\mu)^+(dy).
   \end{align*}and, in a symmetric way, $\nu_r(dy)=\mathds{1}_{\{y\ge b\}}(\nu-\mu)^+(dy)$ (notice that, when $a=b$, then $\mu=\delta_a$ and $(\nu-\mu)^+(\{a\})=0$).
     
     In the other direction, under the Dispersion Assumption, denoting by $a\le b$ the ends of the interval $I$, we have $(\mu-\nu)^+([a,b])=(\mu-\nu)^+(\R)$ and $(\nu-\mu)^+((a,b))=0$ so that Proposition \ref{prop:deccoupl} ensures the existence of a unique non-increasing coupling $\pi\in\Pi_M\left(\frac{(\mu-\nu)^+}{(\mu-\nu)^+(\R)},\frac{(\nu-\mu)^+}{(\mu-\nu)^+(\R)}\right)$ such that $$\pi(\{(a,a)\})=\frac{(\mu-\nu)^+(\{a\})}{(\mu-\nu)^+(\R)}\wedge \frac{(\nu-\mu)^+(\{a\})}{(\mu-\nu)^+(\R)}=0=\pi(\{(b,b)\}).$$ Hence $\int_{x\in\R}\mathds{1}_{\{y<x\}}\pi(dx,dy)=\mathds{1}_{\{y\le a\}}\frac{(\nu-\mu)^+(dy)}{(\mu-\nu)^+(\R)}$ and $\int_{x\in\R}\mathds{1}_{\{y>x\}}\pi(dx,dy)=\mathds{1}_{\{y\ge b\}}\frac{(\nu-\mu)^+(dy)}{(\mu-\nu)^+(\R)}$. The coupling $(\mu-\nu)^+(\R)\pi(dx,dy)+\mu\wedge \nu(dx)\delta_x(dy)$ belongs to $\Pi_M(\mu,\nu,\mathds{1}_{\{y\le a\}}(\nu-\mu)^+(dy),\mathds{1}_{\{y\ge b\}}(\nu-\mu)^+(dy))$.\end{proof}
The proof of Proposition \ref{prop:nu0} relies on the following lemma.
\begin{lemma}\label{lem:decompmoy}
   Let $\eta\in{\cal P}_1(\R)$ be such that $\int_{\R}z\eta(dz)=y$. For $x>y$, if $\eta((x,+\infty))>0$ then there exists a non-negative measure $\hat\eta$ such that $\hat\eta(dz)\le \mathds{1}_{\{z<x\}}\eta(dz)$ and $\int_\R(z-x)\hat\eta(dz)+\int_{(x,+\infty)}(z-x)\eta(dz)=0$.
 \end{lemma}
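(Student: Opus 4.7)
The plan is to take $\hat\eta$ to be a constant multiple of the restriction of $\eta$ to $(-\infty,x)$, with the multiplier chosen to make the required equality hold. First I would observe that $\eta((-\infty,x))>0$: otherwise $\eta$ is supported on $[x,+\infty)$, which together with the mean condition would give $y=\int_\R z\,\eta(dz)\ge x$, contradicting $x>y$. Hence the quantities
\[A:=\int_{(-\infty,x)}(x-z)\,\eta(dz)\qquad\text{and}\qquad B:=\int_{(x,+\infty)}(z-x)\,\eta(dz)\]
are both strictly positive; they are also finite because $\eta\in\mathcal P_1(\R)$.

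The key identity is obtained by integrating $z-x$ against $\eta$: since the integrand vanishes on $\{z=x\}$ and $\int_\R z\,\eta(dz)=y$,
\[-A+B=\int_{(-\infty,x)}(z-x)\,\eta(dz)+\int_{(x,+\infty)}(z-x)\,\eta(dz)=\int_\R(z-x)\,\eta(dz)=y-x<0,\]
so that $0<B<A$. I would then define
\[\hat\eta(dz):=\frac{B}{A}\,\mathds{1}_{\{z<x\}}\,\eta(dz).\]
Since $B/A\in(0,1)$, this is a non-negative measure with $\hat\eta\le\mathds{1}_{\{z<x\}}\eta$, and by the very choice of the multiplier
\[\int_\R(z-x)\,\hat\eta(dz)=-\frac{B}{A}\cdot A=-B=-\int_{(x,+\infty)}(z-x)\,\eta(dz),\]
which is the desired equality.

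No serious obstacle is expected: the argument is essentially a one-parameter adjustment made possible by the strict inequality $B<A$, which itself follows directly from $y<x$. The only points requiring a line of justification are the strict positivity of $A$ (from $\eta((-\infty,x))>0$ and $z<x$ on the domain of integration) and the finiteness of $A$ and $B$ (from the first moment assumption on $\eta$).
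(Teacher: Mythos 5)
Your proof is correct, but it constructs $\hat\eta$ differently from the paper. The paper parametrizes candidate measures by a quantile threshold: it considers $G(u)=\int_0^u(F_\eta^{-1}(v)-x)\,dv+\int_{(x,+\infty)}(z-x)\,\eta(dz)$ on $[0,F_\eta(x-)]$, notes that $G$ is continuous and decreasing with $G(0)>0$ and $G(F_\eta(x-))=y-x<0$, and takes $\hat\eta(dz)=\int_0^{\hat u}\delta_{F_\eta^{-1}(v)}(dz)\,dv$ for the root $\hat u$; the resulting $\hat\eta$ is the ``leftmost slice'' of $\mathds{1}_{\{z<x\}}\eta$ of the right mass. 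You instead rescale the whole left restriction by the explicit constant $B/A$, where $A=\int_{(-\infty,x)}(x-z)\,\eta(dz)$ and $B=\int_{(x,+\infty)}(z-x)\,\eta(dz)$, and the identity $-A+B=y-x<0$ (together with $B>0$, which uses $\eta((x,+\infty))>0$) gives $0<B/A<1$ directly. Both are one-parameter interpolations between the zero measure and $\mathds{1}_{\{z<x\}}\eta$, but yours is more elementary: no quantile functions and no intermediate value argument, just an explicit multiplier. Since the lemma is only invoked in the proof of Proposition~\ref{prop:nu0} for the bare existence of some $\hat\eta$ with these two properties, the particular choice is immaterial downstream, and your construction would serve equally well there.
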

 \begin{proof}[Proof of Lemma \ref{lem:decompmoy}]
  The function $[0,F_\eta(x-)]\ni u\mapsto G(u)=\int_0^u(F_\eta^{-1}(v)-x)dv+\int_{(x,+\infty)}(z-x)\eta(dz)$ is continuous, decreasing and such that $G(0)=\int_{(x,+\infty)}(z-x)\eta(dz)>0$ and $G(F_\eta(x-))=\int_{\R}(z-x)\eta(dz)=y-x<0$. Hence there exists a unique $\hat u\in(0,F_\eta(x-))$ such that $G(\hat u)=0$. We set $\hat\eta(dz)=\int_0^{\hat u}\delta_{F_\eta^{-1}(v)}(dz)dv\le \int_0^{F_\eta(x-)}\delta_{F_\eta^{-1}(v)}(dz)dv=\mathds{1}_{\{z<x\}}\eta(dz)$.
 \end{proof}
 \begin{proof}[Proof of Proposition \ref{prop:nu0}]
 Since $\int_\R(z-x)^+\nu(dz)-\int_\R(x-z)^+\nu(dz)=\int_\R z\nu(dz)-x=\int_{\R}y\mu(dy)-x=\int_\R(y-x)^+\mu(dy)-\int_\R(x-y)^+\mu(dy)$, we have that
   \begin{equation}
      \int_\R(z-x)^+\nu(dz)-\int_\R(y-x)^+\mu(dy)=\int_\R(x-z)^+\nu(dz)-\int_\R(x-y)^+\mu(dy)=\frac 12\left(u_\nu(x)-u_\mu(x)\right).\label{eq:eqpot}
    \end{equation}
From now on, we suppose that $u_\nu(x)>u_\mu(x)$. Then $\nu((-\infty,x))=F_\nu(x-)>0$ and $\nu((x,+\infty))=1-F_\nu(x)>0$. Since $F_\nu^{-1}(v)-x>0$ for $v\in (F_\nu(x),1)$, the function $[0,1-F_\nu(x)]\ni u\mapsto G(u)=\int_{F_\nu(x)}^{F_\nu(x)+u}(F_\nu^{-1}(v)-x)dv$ is increasing. It also is continuous and such that $G(0)=0$ and $G(1-F_\nu(x))=\int_\R(z-x)^+\nu(dz)$. We deduce the existence of a unique $p_+(x)\in (0,1-F_\nu(x)]$ such that \eqref{eq:dd} holds. In a similar way, there exists a unique $p_-(x)\in (0,F_\nu(x-)]$ such that \eqref{eq:gg} holds.

The proof of the other statements relies on three steps. In the first step, we are going to check that \begin{equation}
   \inf_{\pi\in\Pi_M(\mu,\nu)}\pi(\{(x,x)\})\ge (\mu(\{x\})-p_-(x)-p_+(x))^+,\label{eq:miod}
 \end{equation}
 and that when $\mu(\{x\})>p_-(x)+p_+(x)$, if $\pi\in\Pi_M(\mu,\nu)$ is such that $\pi(\{(x,x)\})=\mu(\{x\})-p_-(x)-p_+(x)$ then $\pi_x=\eta_x$ with $\eta_x$ given by \eqref{eq:pix} and $\pi(\{(-\infty,x)\times(x,+\infty)\}\cup\{(x,+\infty)\times(-\infty,x)\})=0$.
In the second step, we will check that if $\inf_{\pi\in\Pi_M(\mu,\nu)}\pi(\{(x,x)\})>0$, then there exists $\pi\in\Pi_M(\mu,\nu)$ such that $\pi_x=\eta_x$ where $\eta_x$ is given by \eqref{eq:pix} so that $\pi(\{(x,x)\})=\mu(\{x\})-p_-(x)-p_+(x)$ and, by \eqref{eq:miod}, $\inf_{\pi\in\Pi_M(\mu,\nu)}\pi(\{(x,x)\})=\mu(\{x\})-p_-(x)-p_+(x)$. As a consequence, $\inf_{\pi\in\Pi_M(\mu,\nu)}\pi(\{(x,x)\})=0$ when $\mu(\{x\})-p_-(x)-p_+(x)\le 0$ and \eqref{eq:miod} is an equality. 
In the last step, we prove that when $\mu(\{x\})\in(0,p_-(x)+p_+(x)]$, 
 there exists $\pi\in\Pi_M(\mu,\nu)$ such that $\pi_x=\eta_x$ where $\eta_x$ is given by \eqref{eq:pixx}
.

{\bf Step 1 :} Let $\pi\in \Pi_M(\mu,\nu)$. Since, by the martingale property and Jensen's inequality, $\mu(dy)$ a.e., $\int_{\R}|z-x|\pi_y(dz)\ge |y-x|$, one has
\begin{align}
   \mu(\{x\})\int_\R|z-x|\pi_x(dz)&\le \int_{\R\times\R}\left(|z-x|-|y-x|\right)\pi_y(dz)\mu(dy)=u_\nu(x)-u_\mu(x)\notag\\&=\int_{F_\nu(x-)-p_-(x)}^{F_\nu(x-)}(x-F_\nu^{-1}(v))dv+\int_{F_\nu(x)}^{F_\nu(x)+p_+(x)}(F_\nu^{-1}(v)-x)dv.\label{eq:diffpotx}
\end{align}
Moreover, by the martingale property for the first equality and by \eqref{eq:dd}, \eqref{eq:gg} and \eqref{eq:eqpot} for the second equality, \begin{align*}
   \mu(\{x\})\int_\R(z-x)^+\pi_x(dz)&-\mu(\{x\})\int_\R(x-z)^+\pi_x(dz)=0\\&=\int_{F_\nu(x)}^{F_\nu(x)+p_+(x)}(F_\nu^{-1}(v)-x)dv-\int_{F_\nu(x-)-p_-(x)}^{F_\nu(x-)}(x-F_\nu^{-1}(v))dv.
\end{align*} Hence,
\begin{align}
  &\mu(\{x\})\int_\R(z-x)^+\pi_x(dz)\le \int_{F_\nu(x)}^{F_\nu(x)+p_+(x)}(F_\nu^{-1}(v)-x)dv,\label{eq:majoddd}\\
  &\mu(\{x\})\int_\R(x-z)^+\pi_x(dz)\le\int_{F_\nu(x-)-p_-(x)}^{F_\nu(x-)}(x-F_\nu^{-1}(v))dv.\notag
\end{align}
We have $\mu(\{x\})\mathds{1}_{\{x<z\}}\pi_x(dz)\le \mathds{1}_{\{x<z\}}\nu(dz)$ so that, by Lemma \ref{lem:compmom} applied with $f(z)=z-x$, $\hat\eta(dz)=\frac{\mu(\{x\})\mathds{1}_{\{x<z\}}\pi_x(dz)}{1-F_\nu(x)}$ and $\eta(dz)=\frac{\mathds{1}_{\{x<z\}}\nu(dz)}{1-F_\nu(x)}$ such that $\hat\eta(\R)=\frac{\mu(\{x\})\pi_x((x,+\infty))}{1-F_\nu(x)}$, $F_\eta(z)=\frac{(F_\nu(z)-F_\nu(x))^+}{1-F_\nu(x)}$ and $F_\eta^{-1}(u)=F_\nu^{-1}(F_\nu(x)+(1-F_\nu(x))u)$,
$$\int_{F_\nu(x)}^{F_\nu(x)+\mu(\{x\})\pi_x((x,+\infty))}\left(F_\nu^{-1}(v)-x\right)dv=(1-F_\nu(x))\int_0^{\hat\eta(\R)}\left(F_\eta^{-1}(v)-x\right)dv\le \mu(\{x\})\int_\R(z-x)^+\pi_x(dz),$$
and when $\mu(\{x\})\pi_x((x,+\infty))>0$,
\begin{align}
   \frac{\mathds{1}_{\{x<z\}}\pi_x(dz)}{\pi_x((x,+\infty))}&\ge_{st}
  \frac{1}{\mu(\{x\})\pi_x((x,+\infty))}\int_{F_\nu(x)}^{F_\nu(x)+\mu(\{x\})\pi_x((x,+\infty))}\delta_{F_\nu^{-1}(u)}du.\label{eq:minostogd}
\end{align}
With \eqref{eq:majoddd}, the first inequality implies that
$$\int_{F_\nu(x)}^{F_\nu(x)+\mu(\{x\})\pi_x((x,+\infty))}\left(F_\nu^{-1}(v)-x\right)dv\le \int_{F_\nu(x)}^{F_\nu(x)+p_+(x)}(F_\nu^{-1}(v)-x)dv.$$Since $F_\nu^{-1}(v)-x>0$ for $v\in(F_\nu(x),1)$, we deduce that $\mu(\{x\})\pi_x((x,+\infty))\le p_+(x)$. In a symmetric way, $\mu(\{x\})\pi_x((-\infty,x))\le p_-(x)$.  Since $\pi(\{(x,x)\})=\mu(\{x\})\left(1-\pi_x((-\infty,x))-\pi_x((x,+\infty))\right)$, we conclude that \eqref{eq:miod} holds. 

Let us now suppose that $\mu(\{x\})>p_-(x)+p_+(x)$ and that $\pi\in\Pi_M(\mu,\nu)$ is such that $\pi(\{(x,x)\})=\mu(\{x\})-p_-(x)-p_+(x)$. Then $\mu(\{x\})\pi_x((x,+\infty))= p_+(x)$ and $\mu(\{x\})\pi_x((-\infty,x))= p_-(x)$. Moreover, by \eqref{eq:minostogd} and \eqref{eq:majoddd}, $\mathds{1}_{\{x<z\}}\pi_x(dz)=\frac{1}{\mu(\{x\})}\int_{F_\nu(x)}^{F_\nu(x)+p_+(x)}\delta_{F_\nu^{-1}(u)}du$. In a symmetric way, $\mathds{1}_{\{z<x\}}\pi_x(dz)=\frac{1}{\mu(\{x\})}\int_{F_\nu(x-)-p_-(x)}^{F_\nu(x-)}\delta_{F_\nu^{-1}(u)}du$ so that $\pi_x=\eta_x$ with $\eta_x$ given by \eqref{eq:pix}. Moreover, the inequality in \eqref{eq:diffpotx} becomes an equality, so that, with the martingale constraint,
\begin{align*}
   0&=\int_{\R^2}\left(\mathds{1}_{\{y<x\}}\left(x-z+2(z-x)^+-x+y\right)+\mathds{1}_{\{y>x\}}\left(z-x+2(x-z)^+-y+x\right)\right)\pi_y(dz)\mu(dy)\\&=\int_{\R^2}2\left(\mathds{1}_{\{y<x\}}(z-x)^++\mathds{1}_{\{y>x\}}(x-z)^+\right)\pi_y(dz)\mu(dy).
\end{align*}
Hence $\pi(\{(-\infty,x)\times(x,+\infty)\}\cup\{(x,+\infty)\times(-\infty,x)\})=0$.

{\bf Step 2 : }Let us suppose that $\inf_{\pi\in\Pi_M(\mu,\nu)}\pi(\{(x,x)\})>0$, which implies that $\mu(\{x\})>0$. Let $\tilde \pi\in\Pi_M(\mu,\nu)$. We are now going to modify $\tilde\pi$ into $\pi\in\Pi_M(\mu,\nu)$ such that $\pi_x=\eta_x$ where $\eta_x$ is given by \eqref{eq:pix}. 
By lemma \ref{lem:decompmoy}, we have that $\mu(dy)$ a.e. on $(-\infty,x)$, $\tilde \pi_y((x,+\infty))>0$ implies the existence of $\hat\pi_y(dz)\le \mathds{1}_{\{z<x\}}\tilde \pi_y(dz)$ such that $\int_\R z\hat\pi_y(dz)+\int_{(x,+\infty)}z\tilde \pi_y(dz)=x\left(\hat\pi_y(\R)+\tilde\pi_y((x,+\infty))\right)$. In the same way $\mu(dy)$ a.e. on $(x,+\infty)$, $\tilde \pi_y((-\infty,x))>0$ implies the existence of $\hat\pi_y(dz)\le \mathds{1}_{\{z>x\}}\tilde \pi_y(dz)$ such that $\int_\R z\hat\pi_y(dz)+\int_{(-\infty,x)}z\tilde \pi_y(dz)=x\left(\hat\pi_y(\R)+\tilde\pi_y((-\infty,x))\right)$. We now set for $u\in[0,1]$
\begin{align*}
  \bar\pi(u,dy,dz)&=\tilde\pi(dy,dz)+u\mathds{1}_{\{y<x\}}\mu(dy)\left((\tilde \pi_y((x,+\infty))+\hat\pi_y(\R))\delta_x(dz)-\mathds{1}_{\{z>x\}}\tilde \pi_y(dz)-\hat\pi_y(dz)\right)\\
                &+u\delta_x(dy)\int_{w\in(-\infty,x)}\left(\mathds{1}_{\{z>x\}}\tilde \pi_w(dz)+\hat\pi_w(dz)-(\tilde \pi_w((x,+\infty))+\hat\pi_w(\R))\delta_x(dz)\right)\mu(dw)\\
  &+u\mathds{1}_{\{y>x\}}\mu(dy)\left((\tilde\pi_y((-\infty,x))+\hat\pi_y(\R))\delta_x(dz)-\mathds{1}_{\{z<x\}}\tilde \pi_y(dz)-\hat\pi_y(dz)\right)\\
                &+u\delta_x(dy)\int_{w\in(x,+\infty)}\left(\mathds{1}_{\{z<x\}}\tilde \pi_w(dz)+\hat\pi_w(dz)-(\tilde\pi_w((-\infty,x))+\hat\pi_w(\R))\delta_x(dz)\right)\mu(dw).
\end{align*}
Since $\mu(dy)$ a.e. on $(-\infty,x)$ (resp. $(x,+\infty)$), $\tilde \pi_y(dz)-u\mathds{1}_{\{z>x\}}\tilde \pi_y(dz)-u\hat\pi_y(dz)+u(\tilde \pi_y((x,+\infty))+\hat\pi_y(\R))\delta_x(dz)$ (resp. $\tilde \pi_y(dz)-u\mathds{1}_{\{z<x\}}\tilde \pi_y(dz)-u\hat\pi_y(dz)+u(\tilde \pi_y((-\infty,x))+\hat\pi_y(\R))\delta_x(dz)$) is a probability measure with expectation $y$, as long as
\begin{equation}
   u\left(\int_{(-\infty,x)}(\tilde \pi_w((x,+\infty))+\hat\pi_w(\R))\mu(dw)+\int_{(x,+\infty)}(\tilde\pi_w((-\infty,x))+\hat\pi_w(\R))\mu(dw)\right)\le \tilde\pi(\{(x,x)\}),\label{eq:majou}
\end{equation}
$\bar\pi(u,dy,dz)\in\Pi_M(\mu,\nu)$ and $\bar\pi(u,\{(x,x)\})>0$, so that the inequality in \eqref{eq:majou} is strict. Therefore \eqref{eq:majou} holds with strict inequality for $u\in[0,1]$, and $\bar\pi(1,dy,dz)\in\Pi_M(\mu,\nu)$. Denoting for simplicity $\bar\pi(dy,dz)=\bar\pi(1,dy,dz)$, we have that  $\bar\pi(\{(-\infty,x)\times(x,+\infty)\}\cup\{(x,+\infty)\times(-\infty,x)\})=0$. As a consequence,
$$\int_{\R^2}\mathds{1}_{\{y\neq x\}}\left(|z-x|-|y-x|\right)\bar\pi(dy,dz)=0$$
so that $\mu(\{x\})\int_\R|z-x|\bar\pi_x(dz)=u_\nu(x)-u_\mu(x)$. With the martingale property, this implies that 
\begin{equation}
   \label{eq:momdifx}\mu(\{x\})\int_\R(x-z)\mathds{1}_{\{z<x\}}\bar\pi_x(dz)=\frac 12(u_\nu(x)-u_\mu(x))=\int_{F_\nu(x-)-p_-(x)}^{F_\nu(x-)}(x-F_\nu^{-1}(v))dv.
\end{equation}
Let $w=F_\nu^{-1}(F_\nu(x-)-p_-(x))$ and $r=F_\nu(w)-F_\nu(x-)+p_-(x)\in[0,\nu(\{w\})]$. We set 
\begin{align*}
&\gamma(dz)=\int_{F_\nu(x-)-p_-(x)}^{F_\nu(x-)}\delta_{F_\nu^{-1}(v)}(dz)dv-\mu(\{x\})\mathds{1}_{\{w<z<x\}}\bar\pi_x(dz)-(\bar\pi(\{(x,w)\})\wedge r)\delta_w(dz)\\
 \mbox{and }&\sigma(dz)=\mu(\{x\})\mathds{1}_{\{z<w\}}\bar\pi_x(dz)+(\bar\pi(\{(x,w)\})-r)^+\delta_w(dz).
\end{align*}
Since $ \int_{F_\nu(x-)-p_-(x)}^{F_\nu(x-)}\delta_{F_\nu^{-1}(v)}(dz)dv=\mathds{1}_{\{w<z<x\}}\nu(dz)+r\delta_w(dz)$ and $\bar\pi((x,+\infty)\times(-\infty,x))=0$, $\int_{F_\nu(x-)-p_-(x)}^{F_\nu(x-)}\delta_{F_\nu^{-1}(v)}(dz)dv=\int_{(-\infty,x]}\mathds{1}_{\{w<z<x\}}\bar\pi_y(dz)\mu(dy)+r\delta_w(dz)$ and $\nu(\{w\})-\bar\pi(\{(x,w)\})=\int_{(-\infty,x)}\bar\pi_y(\{w\})\mu(dy)$ so that setting $\alpha=\mathds{1}_{\{\nu(\{w\})>\bar\pi(\{(x,w)\})\}}\frac{\left(r-\bar\pi(\{(x,w)\})\right)^+}{\nu(\{w\})-\bar\pi(\{(x,w)\})}\in[0,1]$ and $f(z)=\alpha\mathds{1}_{\{z=w\}}+\mathds{1}_{\{w<z<x\}}$,
\begin{align}
  \gamma(dz)&=\int_{(-\infty,x)}\mathds{1}_{\{w<z<x\}}\bar\pi_y(dz)\mu(dy)+\left(r-\bar\pi(\{(x,w)\})\right)^+\delta_w(dz)\notag\\
  &=\int_{(-\infty,x)}\mathds{1}_{\{w<z<x\}}\bar\pi_y(dz)\mu(dy)+\alpha\int_{(-\infty,x)}\bar\pi_y(\{w\})\delta_w(dz)\mu(dy)=\int_{(-\infty,x)}f(z)\bar\pi_y(dz)\mu(dy).\label{eq:etrans}
\end{align}
Since \begin{equation}\label{eq:et-sig}
   \gamma(dz)-\sigma(dz)=\int_{F_\nu(x-)-p_-(x)}^{F_\nu(x-)}\delta_{F_\nu^{-1}(v)}(dz)dv-\mu(\{x\})\mathds{1}_{\{z<x\}}\bar\pi_x(dz),
\end{equation} \eqref{eq:momdifx} implies that $\int_\R (x-z)\gamma(dz)=\int_\R(x-z)\sigma(dz)$. Since $\sigma$ (resp. $\gamma$) only weights $(-\infty,w]$ (resp. $[w,x)$), this implies that $\sigma(\R)\le\gamma(\R)$.  If $\gamma(\R)=0$, then $\sigma(\R)=0$ so that, by \eqref{eq:et-sig}, $\mu(\{x\})\mathds{1}_{\{z<x\}}\bar\pi_x(dz)=\int_{F_\nu(x-)-p_-(x)}^{F_\nu(x-)}\delta_{F_\nu^{-1}(v)}(dz)dv$ and we set $\check\pi=\bar\pi$. Otherwise, since $\int_\R z\gamma(dz)=(\gamma(\R)-\sigma(\R))x+\int_\R z\sigma(dz)$, the probability measures $\frac{\gamma}{\gamma(\R)}$ and $\frac{1}{\gamma(\R)}\left((\gamma(\R)-\sigma(\R))\delta_x+\sigma\right)$ share the same mean and, by comparison of their supports, satisfy $\frac{\gamma}{\gamma(\R)}\le_{cx}\frac{1}{\gamma(\R)}\left((\gamma(\R)-\sigma(\R))\delta_x+\sigma\right)$. By Strassen's theorem, there exists $\theta\in\Pi_M(\frac{\gamma}{\gamma(\R)},\frac{1}{\gamma(\R)}\left((\gamma(\R)-\sigma(\R))\delta_x+\sigma\right))$. We set 
\begin{align*}&\check\pi(dy,dz)=\mathds{1}_{\{y>x\}}\bar\pi(dy,dz)+\mathds{1}_{\{y<x\}}\mu(dy)\left((1-f(z))\bar\pi_y(dz)+\int f(v)\theta_v(dz)\bar\pi_y(dv)\right)\\&+\mu(\{x\})\delta_x(dy)\left(\bar\pi_x(dz)+\frac{1}{\mu(\{x\})}\int_{t\in(-\infty,x)}\left(f(z)\bar\pi_t(dz)-\int f(v)\theta_v(dz)\bar\pi_t(dv)\right)\mu(dt)\right).\end{align*}
For $\mu(dy)$ a.e. $y\in(-\infty,x)$, since $f(v)\bar\pi_y(dv)$ a.e. $\int_\R z\theta_v(dz)=v$ by \eqref{eq:etrans} and the martingale property of $\theta$, we have\begin{align*}
   \int_\R z\check\pi_y(dz)&=\int_\R z(1-f(z))\bar\pi_y(dz)+\int_\R f(v)\int_\R z\theta_v(dz)\bar\pi_y(dv)\\&=\int_\R z(1-f(z))\bar\pi_y(dz)+\int_\R v f(v)\bar\pi_y(dv)=\int_\R z\bar\pi_y(dz)=y.
\end{align*}
Using \eqref{eq:etrans}, the definition of $\theta$ and \eqref{eq:et-sig}, we obtain
\begin{align*}
  \mu(\{x\})\check\pi_x(dz)&=\mu(\{x\})\bar\pi_x(dz)+\gamma(dz)-\int_{v\in\R}\theta_v(dz)\gamma(dv)\\&=\mu(\{x\})\bar\pi_x(dz)+\gamma(dz)-\sigma(dz)-(\gamma(\R)-\sigma(\R))\delta_x(dz)\\&=\mu(\{x\})\mathds{1}_{\{z\ge x\}}\bar\pi_x(dz)+\int_{F_\nu(x-)-p_-(x)}^{F_\nu(x-)}\delta_{F_\nu^{-1}(v)}(dz)dv-(\gamma(\R)-\sigma(\R))\delta_x(dz).\end{align*}The positivity of $\check\pi(\{(x,x)\})=\bar\pi(\{(x,x)\})-(\gamma(\R)-\sigma(\R))$ can be checked by considering $(1-u)\bar\pi+u\check\pi$ for $u\in[0,1]$ as we did with $\bar\pi(u,dy,dz)=(1-u)\tilde\pi(dy,dz)+u\bar\pi(dy,dz)$in {\bf Step 1}. Hence $\check\pi\in\Pi_M(\mu,\nu)$. With a symmetric reasoning, we construct from $\check\pi$ a coupling $\pi\in\Pi_M(\mu,\nu)$ such that $\pi_x=\eta_x$.

{\bf Step 3 : }Let us now suppose that $\mu(\{x\})\in(0,p_-(x)+p_+(x)]$, so that, by {\bf Step 2}, $$\inf_{\pi\in \Pi_M(\mu,\nu)}\pi(\{(x,x)\})=0.$$ Let $(\pi^n)_{n\in\N}$ denote a $\Pi_M(\mu,\nu)$-valued sequence such that $\lim_{n\to\infty}\pi^n(\{(x,x)\})=0$. Since $\Pi_M(\mu,\nu)$ is compact for the weak convergence topology, we may extract a subsequence that we still index by $n$ for notational simplicity and which converges weakly to $\pi^\infty\in\Pi_M(\mu,\nu)$. For $\varepsilon\in (0,F_\nu(x-)\wedge(1-F_\nu(x)))$, we have
\begin{align*}
   \pi^n(\{x\}&\times\{(-\infty,F_\nu^{-1}(F_\nu(x-)-\varepsilon)]\cup[F_\nu^{-1}(F_\nu(x)+\varepsilon),+\infty)\})\\&\ge \pi^n(\{x\}\times\R)-\pi^n(\{(x,x)\}-\nu((F_\nu^{-1}(F_\nu(x-)-\varepsilon),x)\cup(x,F_\nu^{-1}(F_\nu(x)+\varepsilon)))\\&\ge \mu(\{x\})-\pi^n(\{(x,x)\}-2\varepsilon.
\end{align*}
Taking the limit $n\to\infty$ in this inequality and using the closedness of $\{x\}\times\{(-\infty,F_\nu^{-1}(F_\nu(x-)-\varepsilon)]\cup[F_\nu^{-1}(F_\nu(x)+\varepsilon),+\infty)\}$ together with the Portmanteau theorem for the left-hand side, we obtain
$$\pi^\infty(\{x\}\times\{(-\infty,F_\nu^{-1}(F_\nu(x-)-\varepsilon)]\cup[F_\nu^{-1}(F_\nu(x)+\varepsilon),+\infty)\})\ge \mu(\{x\})-2\varepsilon.$$
so that $\pi^\infty(\{(x,x)\})\le 2\varepsilon$. Letting $\varepsilon\to 0+$, we conclude that $\pi^\infty(\{(x,x)\})=0$.

The function $\left[\left(\mu(\{x\})-p_+(x)\right)^+,p_-(x)\wedge{\mu(\{x\})}\right]\ni q\mapsto G(q)=\int_{F_\nu(x-)-q}^{F_\nu(x-)}(x-F_\nu^{-1}(v))dv+\int_{F_\nu(x)}^{F_\nu(x)+\mu(\{x\})-q}(x-F_\nu^{-1}(v))dv$ is continuous and increasing. If $\mu(\{x\})\ge p_+(x)$ then, using \eqref{eq:dd}, \eqref{eq:gg} and \eqref{eq:eqpot} for the equality and $-p_-(x)\le p_+(x)-\mu(\{x\})$ for the inequality, we get
\begin{align*}
   G(\mu(\{x\})-p_+(x))=\int_{F_\nu(x-)-p_-(x)}^{F_\nu(x-)+p_+(x)-\mu(\{x\})}(F_\nu^{-1}(v)-x)dv\le 0
\end{align*}
while otherwise $G(0)=\int_{F_\nu(x)}^{F_\nu(x)+\mu(\{x\})}(x-F_\nu^{-1}(v))dv\le 0$ so that $G(\left(\mu(\{x\})-p_+(x)\right)^+)\le 0$. Since, in a symmetric way, $G(p_-(x)\wedge{\mu(\{x\})})\ge 0$, there exists a unique $q(x)$ in the interval such that $G(q(x))=0$. If $\mu(\{x\})=1$, then $p_-(x)+p_+(x)=1$ by \eqref{eq:miod}, $F_\nu(x-)=F_\nu(x)=q(x)$ and $\eta_x$ given by \eqref{eq:pixx} is such that $\eta_x=\nu$ so that the unique element of $\Pi_M(\mu,\nu)$ is $\delta_x(dy)\eta_x(dz)$ . Let us now suppose that $\mu(\{x\})\in(0,1)$ and check that the probability measure $\eta_x$ given by \eqref{eq:pixx} is such that $\eta_x\le_{cx}\pi^\infty_x$. Since $\pi^\infty(\{(x,x)\})=0$, one has $\pi^\infty_x\le \frac{1}{\mu(\{x\})}\left(\int_{0}^{F_\nu(x-)}\delta_{F_\nu^{-1}(v)}dv+\int_{F_\nu(x)}^{1}\delta_{F_\nu^{-1}(v)}dv\right)$ so that $\pi^\infty_x=\frac{1}{\mu(\{x\})}\left(\int_{0}^{F_\nu(x-)}p(v)\delta_{F_\nu^{-1}(v)}dv+\int_{F_\nu(x)}^{1}p(v)\delta_{F_\nu^{-1}(v)}dv\right)$ for some measurable function $p:[0,1]\to[0,1]$ such that $\int_{0}^{F_\nu(x-)}p(v)dv+\int_{F_\nu(x)}^{1}p(v)dv=\mu(\{x\})$. Let $C=\int_{F_\nu(x-)-q(x)}^{F_\nu(x-)}(1-p(v))dv+\int_{F_\nu(x)}^{F_\nu(x)+\mu(\{x\})-q(x)}(1-p(v))dv$. One has $C=\int_0^{F_\nu(x-)-q(x)}p(v)dv+\int_{F_\nu(x)+\mu(\{x\})-q(x)}^1p(v)dv$ and either $C=0$ and then $\pi^\infty_x=\eta_x$ or $C>0$, which we now suppose. Then \begin{align*}
   &\theta:=\frac{1}{C}\int_{F_\nu(x-)-q(x)}^{F_\nu(x-)}(1-p(v))\delta_{F_\nu^{-1}(v)}dv+\int_{F_\nu(x)}^{F_\nu(x)+\mu(\{x\})-q(x)}(1-p(v))\delta_{F_\nu^{-1}(v)}dv\\\mbox{and }&\vartheta:=\frac{1}{C}\int_0^{F_\nu(x-)-q(x)}p(v)\delta_{F_\nu^{-1}(v)}dv+\int_{F_\nu(x)+\mu(\{x\})-q(x)}^1p(v)\delta_{F_\nu^{-1}(v)}dv\end{align*} are two probability measures such that $\vartheta-\theta=\frac{\mu(\{x\})}{C}(\pi^\infty_x-\eta_x)$. Since $\pi^\infty_x$ and $\eta_x$ have common mean, so do $\vartheta$ and $\theta$. Moreover, \begin{align*}\theta([F_\nu^{-1}(F_\nu(x-)-q(x))&,F_\nu^{-1}(F_\nu(x)+\mu(\{x\})-q(x))])=1\\&=\vartheta((-\infty,F_\nu^{-1}(F_\nu(x-)-q(x))]\cup[
F_\nu^{-1}(F_\nu(x)+\mu(\{x\})-q(x)),+\infty)).\end{align*}
Either $\vartheta((-\infty,F_\nu^{-1}(F_\nu(x-)-q(x))])=0$ so that $\theta\le_{st}\vartheta$ and, with the equality of means, $\theta=\vartheta$ or $\vartheta((-\infty,F_\nu^{-1}(F_\nu(x-)-q(x))])=1$  so that $\vartheta\le_{st}\theta$ and, with the equality of means, $\theta=\vartheta$ or $\vartheta((-\infty,F_\nu^{-1}(F_\nu(x-)-q(x))])\vartheta([
F_\nu^{-1}(F_\nu(x)+\mu(\{x\})-q(x)),+\infty))>0$. In all cases, $\theta\le_{cx}\vartheta$ and $\eta_x\le_{cx}\pi^\infty_x$. 
As a consequence $\frac{\mu-\mu(\{x\})\delta_x}{1-\mu(\{x\})}\le_{cx}\frac{\nu-\mu(\{x\})\pi^\infty_x}{1-\mu(\{x\})}\le_{cx}\frac{\nu-\mu(\{x\})\eta_x}{1-\mu(\{x\})}$. Let $\tilde\pi\in\Pi_M\left(\frac{\mu-\mu(\{x\})\delta_x}{1-\mu(\{x\})},\frac{\nu-\mu(\{x\})\eta_x}{1-\mu(\{x\})}\right)$. Then, $\pi:=(1-\mu(\{x\}))\tilde\pi(dy,dz)+\mu(\{x\})\delta_x(dy)\eta_x(dz)$ belongs to $\Pi_M(\mu,\nu)$ and satisfies $\pi_x=\eta_x$.

\end{proof}
\begin{proof}[Proof of Corollary \ref{cor:ttdroit}]
      Let us first deal with the bound from above. Since for each $\pi\in\Pi(\mu,\nu)$, $\mu(dx)\pi_x(\{x\})\le \mu\wedge\nu(dx)$, we have $\nu^\pi_0(dx)\le \mu\wedge\nu(dx)$ for each $\pi\in\Pi_M(\mu,\nu)$. Moreover, if $\mu\wedge\nu(\R)=1$, then $\mu=\nu$ and $\mu(dx)\delta_x(dy)$ is the only element of $\Pi_M(\mu,\nu)$. If $\mu\wedge\nu(\R)<1$, then $u_\mu\le u_\nu$ implies that $u_{\frac{\mu-\mu\wedge\nu}{1-\mu\wedge\nu(\R)}}
     \le u_{\frac{\nu-\mu\wedge\nu}{1-\mu\wedge\nu(\R)}}$ so that there exists $\hat\pi\in\Pi_M(\frac{\mu-\mu\wedge\nu}{1-\mu\wedge\nu(\R)},\frac{\nu-\mu\wedge\nu}{1-\mu\wedge\nu(\R)})$ and $\pi(dx,dy)=\mu\wedge\nu(dx)\delta_x(dy)+(1-\mu\wedge\nu(\R))\hat\pi(dx,dy)$ is such that $\pi\in\Pi_M(\mu,\nu)$ and $\nu^\pi_0=\mu\wedge\nu$.

      From the decomposition in irreducible components stated in \cite[Theorem A.4]{BeJu16}, we have $\nu^\pi_0(dy)\ge \mathds{1}_{\{u_\mu(y)=u_\nu(y)\}}\mu(dy)$ for each $\pi\in\Pi_M(\mu,\nu)$. With Proposition \ref{prop:nu0}, we deduce the bound from below for $\nu^\pi_0$. In Example \ref{ex:minopidroit}, this bound from below is not attained.

      Let us now suppose that $\mu\neq\nu$ and that $(\nu_l,\nu_r)$ is a couple of non-negative measures such that $\nu-\nu_l-\nu_r=\nu_0:=\mathds{1}_{\{u_\mu=u_\nu\}}\mu+\sum_{x\in{\cal X}_0}\left(\mu(\{x\})-p_-(x)-p_+(x)\right)\delta_x$. By Corollary \ref{cor:inccoup} $(iii)$, there is at most one non-decreasing coupling in $\Pi_M(\mu,\nu,\nu_l,\nu_r)$. We now assume that $\Pi_M(\mu,\nu,\nu_l,\nu_r)\ne \emptyset$ and check that the coupling $\pi^\uparrow$ given by Corollary \ref{cor:inccoup} $(i)$ is non-decreasing. Let $\Gamma$ be a set associated, in the sense of Definition \ref{def:NdCoupling}, to $\tilde\pi(dx,dy)=\frac{\pi^\uparrow(dx,dy)-\nu_0(dx)\delta_x(dy)}{1-\nu_0(\R)}$ which is non-decreasing. We denote by $((a_n,b_n))_{n\in N}$ the irreducible components for $(\mu,\nu)$. Since $\pi^\uparrow \left(\bigcup_{n\in N}(a_n,b_n)\times [a_n,b_n]\right)=1-\mu\left(\{x\in\R:u_\mu(x)=u_\nu(x)\}\right)$ and $$\int_{\R^2}\mathds{1}_{\bigcup_{n\in N}(a_n,b_n)\times [a_n,b_n]}(x,y)\nu_0(dx)\delta_x(dy)=\sum_{x\in{\cal X}_0}\left(\mu(\{x\})-p_-(x)-p_+(x)\right),$$ we have $\tilde \pi\left(\bigcup_{n\in N}(a_n,b_n)\times [a_n,b_n]\right)=1$ and $\tilde \Gamma=\Gamma\cap \bigcup_{n\in N}(a_n,b_n)\times [a_n,b_n]$ is such that $\tilde \pi(\tilde \Gamma)=1$. Setting $\hat\Gamma=\tilde \Gamma\cup\left\{(x,x):x\in\R\mbox{ s.t. }u_\mu(x)=u_\nu(x)\mbox{ or }x\in{\cal X}_0\right\}$, we have $\pi^\uparrow (\hat\Gamma)=1$. For $x$ in the at most countable set ${\cal X}_0$, we have $\pi^\uparrow(\{(-\infty,x)\times(x,+\infty)\}\cup\{(x,+\infty)\times(-\infty,x)\})=0$ by Proposition \ref{prop:nu0}, so that $\pi^\uparrow\left(\bigcup_{x\in{\cal X}_0}\{(-\infty,x)\times(x,+\infty)\}\cup\{(x,+\infty)\times(-\infty,x)\}\right)=0$. Hence $\Gamma^\uparrow=\hat\Gamma\cap\bigcap_{x\in{\cal X}_0}\{\{(-\infty,x]\times(-\infty,x]\}\cup\{[x,+\infty)\times[x,+\infty)\}\}$ is such that $\pi^\uparrow(\Gamma^\uparrow)=1$.

      Let $(x_-, y_-), (x_+, y_+)\in \Gamma^\uparrow$ with $y_-\leq x_-$, $y_+\leq x_+$, and $x_-< x_+$. Either $y_+=x_+$ and then $y_-<y_+$  or there exists $n_{x+}\in N$ such that $x_+\in(a_{n_{x+}},b_{n_{x+}})$ and then $y_+\in[a_{n_{x+}},b_{n_{x+}})$. In the latter case, if $u_\mu(x_-)=u_\nu(x_-)$, then $y_-=x_-\le a_{n_{x+}}\le y_+$ and if there exists $n_{x-}\in N$ such that $x_-\in(a_{n_{x-}},b_{n_{x-}})$, then either $(x_-,y_-),(x_+,y_+)\in\Gamma$ so that $y_-\le y_+$ or $x_-\in{\cal X}_0$ and $y_-=x_-$ so that, since $\Gamma^\uparrow\cap\{(x_-,+\infty)\times (-\infty,x_-)\}=\emptyset$, $y_+\ge x_-=y_-$. A similar reasoning ensures that if $(x_-, z_-), (x_+, z_+) \in \Gamma^\uparrow$ with $x_-\leq z_-$, $x_+\leq z_+$, and $x_-<x_+$, then $z_-\leq z_+$. So Definition \ref{def:NdCoupling} is satisfied by $\pi^\uparrow$ with set $\Gamma^\uparrow$.
    \end{proof}
\begin{proof}[Proof of Proposition \ref{prop:deccoupl}]
  Since, clearly, $(iii)\Rightarrow (i)$, to prove that $(i)\Leftrightarrow (ii)\Leftrightarrow (iii)$, it is enough to check that $(i)\Rightarrow(ii)$ and $(ii)\Rightarrow (iii)$, which we do now.
  
  {\bf Proof of $(i)\Rightarrow (ii)$ :} Let $\pi$ be a non-increasing coupling in $\Pi_M(\mu,\nu)$ and $\Gamma$ be a Borel subset of $\R^2$ such that $\pi(\Gamma)=1$ and conditions $(c)(d)$ in Definition \ref{def:NdCoupling} hold. Let for $x\in\R$, $\Gamma_x = \left\{y: (x,y)\in\Gamma\right\}$. We have $\mu(dx)$ a.e. $\pi_x(\Gamma_x)=1$. With the martingale property, this implies \begin{equation}
   \mu(dx) \mbox{ a.e.},\;\int_{\Gamma_x} y \pi_x (dy) = x\mbox{ so that }\pi_x(\Gamma_x\cap(-\infty,x])\wedge \pi_x(\Gamma_x\cap[x,+\infty))>0.\label{eq:2cotes}
  \end{equation}Therefore $\left\{(x,y)\in\Gamma: y\leq x\right\} \neq \emptyset
$ and $\left\{(x,z)\in\Gamma: z\geq x\right\}\neq \emptyset$. Let 
\begin{align*}
    a = \inf \left\{x\in\R: \exists\, y\leq x,\; (x,y)\in\Gamma \right\}\;\;\mbox{ and }\;\;b  = \sup \left\{x\in\R: \exists\, z\geq x,\; (x,z)\in\Gamma \right\}.
\end{align*}
Since when $x<a$, $\Gamma_x \cap(-\infty,x] = \emptyset$ and when $x>b$, $\Gamma_x \cap[x,+\infty) = \emptyset$, \eqref{eq:2cotes} implies that $$\mu((-\infty,a)\cup(b,+\infty))=0.$$
Let $(x_0,y_0)\in \Gamma$ be such that $y_0\leq x_0$. By condition $(c)$ in Definition \ref{def:NdCoupling}, for any $(x,y)\in \Gamma$ with $y\leq x<x_0$, we have $y_0\le y$ so that $y_0\le x$. Hence $y_0\leq a$, $-\infty<a$ and $\Gamma\cap\left\{(x,y): y\leq x\right\} \subset [a,+\infty) \times (-\infty,a]$. In a symmetric way, condition $(d)$ in Definition \ref{def:NdCoupling} implies that $b<+\infty$ and $\Gamma\cap\left\{(x,z): z\geq x\right\} \subset (-\infty,b] \times [b,+\infty)$. Therefore $\Gamma\subset \R\times\{(-\infty,a] \cup[b,+\infty)\}$ and
$$\nu((-\infty,a] \cup[b,+\infty))=\pi(\R\times\{(-\infty,a] \cup[b,+\infty)\})\ge \pi(\Gamma)=1.$$

{\bf Proof of $(ii)\Rightarrow (iii)$ :} By Proposition \ref{prop:variational lemma to nd coupling}, there exists a coupling $\pi^\star\in\Pi_M(\mu,\nu)$ minimizing $\int_{\R^2}|x-y|\pi(dx,dy)$ over $\pi\in\Pi_M(\mu,\nu)$ and this coupling is non-increasing. 
It remains to check that $\pi^\star$ is the only non-increasing coupling in $\Pi_M(\mu,\nu)$ and that $\pi^\star(\{(a,a)\})=\mu(\{a\})\wedge\nu(\{a\})$ and $\pi^\star(\{(b,b)\})=\mu(\{b\})\wedge\nu(\{b\})$. This is clear when $a=b$ since then $\delta_a(dx)\nu(dy)$ is the only element of $\Pi(\mu,\nu)$. We thus suppose that $a<b$. Let ${\pi^\downarrow}\in\Pi_M(\mu,\nu)$ be non-increasing, $x\in\R$ and $\psi_{\pi^\downarrow}(x)={\pi^\downarrow}((-\infty,x]\times (-\infty,a])$. Since $\psi_{\pi^\downarrow}(x)\le {\pi^\downarrow}(\R\times (-\infty,a])=F_\nu(a)$ and \begin{align*}
 0\le F_\mu(x)-\psi_{\pi^\downarrow}(x)&={\pi^\downarrow}((-\infty,x]\times \R)-{\pi^\downarrow}((-\infty,x]\times (-\infty,a])\\&={\pi^\downarrow}((-\infty,x]\times [b,+\infty))\le \nu([b,+\infty))=1-F_\nu(a), 
\end{align*} we have $\psi_{\pi^\downarrow}(x)\in [(F_\nu(a)+F_\mu(x)-1)^+,F_\nu(a)\wedge F_\mu(x)]$. Using properties $(c)(d)$ in Definition \ref{def:NdCoupling}, we  check like in the derivation of \eqref{eq:mesgauche} and \eqref{eq:mesdroite} in the uniqueness part of the proof of Theorem \ref{thm:inccoup} that \begin{align*}
  &\int_{y\in(-\infty,x]}\mathds{1}_{\{z\le a\}}{\pi^\downarrow}(dy,dz)=\int_{F_\nu(a)-\psi_{\pi^\downarrow}(x)}^{F_\nu(a)}\delta_{F_\nu^{-1}(u)}(dz)du\\
 &\int_{y\in(-\infty,x]}\mathds{1}_{\{z\ge b\}}{\pi^\downarrow}(dy,dz)=\int^1_{1+\psi_{\pi^\downarrow}(x)-F_\mu(x)}\delta_{F_\nu^{-1}(u)}(dz)du.
  \end{align*}
  With the martingale property, this ensures that
  \begin{align*}
   \int_{(-\infty,x]}y\mu(dy)&=\int_{(-\infty,x]}\int_\R z{\pi^\downarrow}_y(dz)\mu(dy)\\&=\int_{(-\infty,x]\times \R}\mathds{1}_{\{z\le a\}}z{\pi^\downarrow}(dz,dy)+\int_{(-\infty,x]\times\R}\mathds{1}_{\{z\ge b\}}z{\pi^\downarrow}(dz,dy)\\&=\int_{F_\nu(a)-\psi_{\pi^\downarrow}(x)}^{F_\nu(a)}F_\nu^{-1}(v)dv+\int^1_{1+\psi_{\pi^\downarrow}(x)-F_\mu(x)}F_\nu^{-1}(v)dv=G(\psi_{\pi^\downarrow}(x)),
  \end{align*}
  where $[(F_\nu(a)+F_\mu(x)-1)^+,F_\nu(a)\wedge F_\mu(x)]\ni u\mapsto G_x(u)=\int_{F_\nu(a)-u}^{F_\nu(a)}F_\nu^{-1}(v)dv+\int^1_{1+u-F_\mu(x)}F_\nu^{-1}(v)dv$.
The three last equalities also hold with ${\pi^\downarrow}$ and $\psi_{\pi^\downarrow}(x)$ replaced by $\pi^\star$ (the coupling minimizing $\int_{\R^2}|x-y|\pi(dx,dy)$ over $\pi\in\Pi_M(\mu,\nu)$) and $\psi_{\pi^\star}(x)=\pi^\star((-\infty,x]\times (-\infty,a])$.
For $(F_\nu(a)+F_\mu(x)-1)^+\leq u'\leq u \leq F_\nu(a)\wedge F_\mu(x)$, we have
\[
G_x(u)- G_x(u')=\int_{F_\nu(a)-u}^{F_\nu(a)-u'} F_\nu^{-1}(v)\,dv - \int_{1+u'-F_\mu(x)}^{1+u-F_\mu(x)} F_\nu^{-1}(v)\,dv  \leq (u-u')a - (u-u')b.
\]
Therefore $G_x$ is a decreasing function and the equality $G_x(\psi_{\pi^\downarrow}(x))=\int_{(-\infty,x]}y\mu(dy)=G_x(\psi_{\pi^\star}(x))$ implies that $\psi_{\pi^\downarrow}(x)=\psi_{\pi^\star}(x)$. Hence \begin{align*}
   \forall x\in\R,\;&\int_{y\in(-\infty,x]}\mathds{1}_{\{z\le a\}}{\pi^\downarrow}(dy,dz)=\int_{y\in(-\infty,x]}\mathds{1}_{\{z\le a\}}\pi^\star(dy,dz)\\&\int_{y\in(-\infty,x]}\mathds{1}_{\{z\ge b\}}{\pi^\downarrow}(dy,dz)=\int_{y\in(-\infty,x]}\mathds{1}_{\{z\ge b\}}\pi^\star(dy,dz)\\\mbox{ and }&\int_{y\in(-\infty,x]}{\pi^\downarrow}_y\mu(dy)=\int_{y\in(-\infty,x]}\pi^\star_y\mu(dy).
                                                                                                                                                                         \end{align*}We conclude that ${\pi^\downarrow}=\pi^\star$ like in the end of the uniqueness part of the proof of Theorem  \ref{thm:inccoup}.
                                                                                                                                                                          Finally, let us check that $\pi^\star(\{(a,a)\})=\mu(\{a\})\wedge\nu(\{a\})$, the equality $\pi^\star(\{(b,b)\})=\mu(\{b\})\wedge\nu(\{b\})$ being derived in a symmetric way. We have $F_\mu(a)=\mu(\{a\})$. When $\mu(\{a\})\le\nu(\{a\})$, then $G_a(\mu(\{a\}))=a\mu(\{a\})$ so that $\psi_{\pi^\star}(a)=\mu(\{a\})$. When $\mu(\{a\})>\nu(\{a\})$, then $G_a(\nu(\{a\}))\ge a\nu(\{a\})+b(\mu(\{a\})-\nu(\{a\}))>a\mu(\{a\})$ so that $\psi_{\pi^\star}(a)>\nu(\{a\})$. Since $\mu(\{a\})\mathds{1}_{\{z\le a\}}\pi^\star_a(dz)=\int_{F_\nu(a)-\psi_{\pi^\star}(a)}^{F_\nu(a)}\delta_{F_\nu^{-1}(u)}(dz)du$, we deduce that $\pi^\star(\{(a,a)\})=\mu(\{a\})\wedge\nu(\{a\})$.

\end{proof}

\subsection{Proofs of the results in Section \ref{sec:MHN_MHK} concerning optimality of $\pi^\uparrow$ and $\pi^\downarrow$ for the cost function $\vert x-y\vert^\rho$ with $\rho>1$ under the nested supports condition}

\begin{proof}[Proof of Proposition \ref{prop:M^HN with alpha_rho}]Since, by Proposition \ref{prop:deccoupl} applied with $a=\underline x$ and $b=\overline x$, there exists a unique non-decreasing (resp. non-increasing) coupling in $\Pi_M(\mu,\nu)$, it is enough to check that any coupling $\pi\in\Pi_M(\mu,\nu)$ that attains $\overline{\mathcal M}_\rho (\mu,\nu)$ (resp. $\underline{\mathcal M}_\rho (\mu,\nu)$) when $\rho\in(1,2)$ or $\underline{\mathcal M}_\rho (\mu,\nu)$ (resp. $\overline{\mathcal M}_\rho (\mu,\nu)$) when $\rho>2$ is non-decreasing (resp. non-increasing). Let $\Gamma$ be a set associated with $\pi$ by Lemma \ref{lem:varbj}. Since $\pi([\underline x, \overline x]\times\{[\underline y, \overline y] \cup [\underline z, \overline z]\})=1$, we suppose without restriction that $\Gamma\subset [\underline x, \overline x]\times\{[\underline y, \overline y] \cup [\underline z, \overline z]\}$.
Let us suppose the existence of $(x_-,y_-),(x_+,y_+)\in\Gamma$ (resp. $(x_-,y_+),(x_+,y_-)\in\Gamma$) such that $x_-<x_+$, $y_-\le x_-$, $y_+\le x_+$, $y_+<y_-$ and obtain a contradiction. By \eqref{eq:l2c}, there exists $z>x_+$ such that $(x_+,z)\in\Gamma$ (resp. $z>x_-$ such that $(x_-,z)\in\Gamma$). We have $$\underline y\le y_+<y_-\le \overline y<\underline x\le x_-<x_+\le \overline x<\underline z\le z\le \overline z.$$
  Let us define the function $f_\rho: \R\to\R$ by
\[
f_\rho(x) = \frac{z-y_-}{z-y_+}\vert x-y_+\vert^\rho+\frac{y_--y_+}{z-y_+}\vert z-x\vert^\rho-\vert x-y_-\vert^\rho.
\]
Since $ (1-\alpha_\rho)y_-+\alpha_\rho z\le(1-\alpha_\rho)\overline y+\alpha_\rho\overline z\le \underline x$, by Lemma \ref{lemma: f_rho alpha_rho} just below applied with $(y,m,z)=(y_+,y_-,z)$, $f_\rho$ is decreasing on $[\underline x,\overline x]$ when $\rho\in(1,2)$ and increasing on $[\underline x,\overline x]$ when $\rho>2$. We deduce the desired contradiction like in the proof of Proposition \ref{prop:variational lemma to nd coupling}.

In a symmetric way, the existence of $(x_-,z_-),(x_+,z_+)\in\Gamma$ (resp. $(x_-,z_+),(x_+,z_-)\in\Gamma$) such that $x_-<x_+$, $x_-\le z_-$, $x_+\le z_+$, $z_+<z_-$ implies the existence of $y< x_-$ such that $(x_-,y)\in\Gamma$ (resp. $y< x_+$ such that $(x_+,y)\in\Gamma$) and
$$\underline y\le y\le \overline y<\underline x\le x_-<x_+\le \overline x<\underline z\le z_+<z_-\le \overline z.$$
Since  $ (1-\alpha_\rho)z_++\alpha_\rho y\ge(1-\alpha_\rho)\underline z+\alpha_\rho\underline y\ge \overline x$, by Lemma \ref{lemma: f_rho alpha_rho} just below applied with $(y,m,z)=(y,z_+,z_-)$, the function $\R\ni x\mapsto \frac{z_--z_+}{z_--y}\vert x-y\vert^\rho+\frac{z_+-y}{z_--y}\vert z_--x\vert^\rho-\vert z_+-x\vert^\rho$ is increasing on $[\underline x,\overline x]$ when $\rho\in(1,2)$ and decreasing on $[\underline x,\overline x]$ when $\rho>2$, yielding again a contradiction. 
\end{proof}

\begin{lemma}\label{lemma: f_rho alpha_rho}
Let $y<m<z$ and $f_\rho:\R\to\R$ be defined by
\[
f_\rho(x) = \frac{z-m}{z-y}\vert x-y\vert^\rho+\frac{m-y}{z-y}\vert z-x\vert^\rho-\vert x-m\vert^\rho.
\]
When $\rho\in(1,2)$ (resp. $\rho>2$), $f_\rho$ is increasing (resp. decreasing) on $[y,(1-\alpha_\rho) m+\alpha_\rho y)]$ and decreasing (resp. increasing) on $[(1-\alpha_\rho) m+\alpha_\rho z, z]$, where $\alpha_\rho\in(0,\frac12)$ is defined in Lemma \ref{lemma psi_rho} just below.

On the other hand, for any $\rho\in (1,2)\cup(2,+\infty)$ and any $\alpha\in(0,\alpha_\rho)$, we may find $y<m<z$ such that $(2-\rho)f'_\rho((1-\alpha) m+\alpha y) < 0$ and $y<m<z$ such that $(2-\rho)f'_\rho((1-\alpha) m+\alpha z) > 0$. \end{lemma}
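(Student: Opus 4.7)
I would prove the lemma by factoring out the scaling parameters and reducing both monotonicity claims to a one-dimensional sign problem.

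First, fix $\alpha\in(0,1)$ and evaluate $f'_\rho$ at $x = (1-\alpha)m + \alpha z\in(m,z)$. A direct differentiation of $f_\rho$ gives, after pulling out the positive prefactor $\rho(z-m)^{\rho-1}/(u+1)$,
\[
  f'_\rho\bigl((1-\alpha)m + \alpha z\bigr) = \frac{\rho(z-m)^{\rho-1}}{u+1}\, h(u), \qquad u=\frac{m-y}{z-m},
\]
where
\[
  h(u) = (u+\alpha)^{\rho-1} - u\bigl[(1-\alpha)^{\rho-1}+\alpha^{\rho-1}\bigr] - \alpha^{\rho-1}.
\]
Thus the sign of $f'_\rho$ at this point coincides with that of $h(u)$. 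One checks $h(0)=0$ and the short algebraic identity
\[
  h'(0) = (\rho-1)\alpha^{\rho-2} - (1-\alpha)^{\rho-1} - \alpha^{\rho-1} = -\alpha^{\rho-2}\psi_\rho(\alpha),
\]
which is precisely where the definition of $\alpha_\rho$ enters the proof.

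Next, note that $h''(u) = (\rho-1)(\rho-2)(u+\alpha)^{\rho-3}$ is negative when $\rho\in(1,2)$ and positive when $\rho>2$, so $h'$ is strictly monotone in $u$ on $(0,+\infty)$. For $\alpha\ge\alpha_\rho$, Lemma \ref{lemma psi_rho} gives $\psi_\rho(\alpha)\ge 0$ when $\rho\in(1,2)$ and $\psi_\rho(\alpha)\le 0$ when $\rho>2$, hence $h'(0)\le 0$ or $h'(0)\ge 0$ accordingly. Combined with the strict monotonicity of $h'$, this forces $h'<0$ on $(0,+\infty)$ in the first case (so $h<0$) and $h'>0$ on $(0,+\infty)$ in the second case (so $h>0$). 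Since $u>0$ can be prescribed freely by choosing $y<m<z$, this gives the claimed decreasing/increasing behaviour of $f_\rho$ on $[(1-\alpha_\rho)m+\alpha_\rho z,\,z]$ for every admissible triple.

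To handle the interval $[y,(1-\alpha_\rho)m+\alpha_\rho y]$, I would exploit the reflection $x\mapsto y+z-x$: setting $m'=y+z-m$, a direct computation yields $f_\rho(y+z-x) = \tilde f_\rho(x)$, where $\tilde f_\rho$ is the function attached to the data $(y,m',z)$. Applying the right-side analysis to $\tilde f_\rho$ sends the interval $[(1-\alpha_\rho)m'+\alpha_\rho z,z]$ to $[y,\,(1-\alpha_\rho)m+\alpha_\rho y]$ (since $y+z-[(1-\alpha_\rho)m'+\alpha_\rho z] = (1-\alpha_\rho)m+\alpha_\rho y$) and reverses the sense of monotonicity — exactly the required alternation between the two sub-intervals.

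Finally, for the sharpness statement I would observe that when $\alpha\in(0,\alpha_\rho)$ the sign of $\psi_\rho(\alpha)$, and therefore of $h'(0)$, is the opposite of the $\alpha\ge\alpha_\rho$ case. By continuity $h(u)$ takes the opposite sign for $u>0$ sufficiently small, so choosing $y<m<z$ with $u=(m-y)/(z-m)$ small yields a triple for which $(2-\rho)f'_\rho((1-\alpha)m+\alpha z)>0$; the left-hand counter-example then follows via the same reflection (equivalently, take $(z-m)/(m-y)$ small). I do not anticipate a serious obstacle: the only fiddly step is book-keeping the interplay between the signs of $\rho-2$, $\psi_\rho(\alpha)$ and $h''$ across the two regimes $\rho\in(1,2)$ and $\rho>2$ so that the $(2-\rho)$ factor in the sharpness assertion comes out correctly.
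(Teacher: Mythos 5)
Your argument is correct, and it reaches the conclusion by a genuinely different route from the paper. The paper reduces both monotonicity claims to the sign of $\partial_\alpha\varphi_\rho(\alpha,w)$ for the auxiliary function $\varphi_\rho(\alpha,w)=(\alpha w)^\rho-\frac{w}{1+w}(1+\alpha w)^\rho-\frac{w^\rho}{1+w}(1-\alpha)^\rho$ (with $w$ the relevant aspect ratio), and controls that sign by rewriting $\frac1\rho\partial_\alpha\varphi_\rho$ as $\frac{\alpha^{\rho-2}w^\rho}{1+w}\bigl(\psi_\rho(\alpha)+(\rho-1)\int_{\alpha w}^{1+\alpha w}(1-(\alpha w/v)^{2-\rho})\,dv\bigr)$, bounding the integral remainder (which has a definite sign for all $w$, and vanishes as $w\to\infty$ for the sharpness part). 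You instead fix $\alpha$ and study the sign of $h(u)$ in the reciprocal variable $u=1/w$, using $h(0)=0$, the identity $h'(0)=-\alpha^{\rho-2}\psi_\rho(\alpha)$ (which I verified), and the strict concavity/convexity of $h$ coming from $h''(u)=(\rho-1)(\rho-2)(u+\alpha)^{\rho-3}$; this forces a global sign for $h$ on $(0,\infty)$ when $\alpha\ge\alpha_\rho$ and a local opposite sign near $u=0$ when $\alpha<\alpha_\rho$, which is exactly the dichotomy needed. Your treatment of the left interval via the reflection $x\mapsto y+z-x$, $m'=y+z-m$ is also sound and replaces the paper's two separate substitutions into $\varphi_\rho$. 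The same threshold quantity $\psi_\rho(\alpha)$ pivots both proofs; what your version buys is the elimination of the integral estimate in favour of a one-line second-derivative argument, at the cost of a slightly less symmetric bookkeeping. One cosmetic point: for the monotonicity claims the ratio $u$ is \emph{fixed} by the triple $(y,m,z)$ and it is $\alpha$ that ranges over $[\alpha_\rho,1]$ as $x$ sweeps the interval, so the operative statement is that $h=h_\alpha$ has the asserted sign at the given $u>0$ for \emph{every} $\alpha\in[\alpha_\rho,1]$ — which your argument does establish, but the phrase ``$u$ can be prescribed freely'' slightly obscures this quantifier structure.
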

 \begin{lemma}\label{lemma psi_rho}
For any $\rho>1$, let $\psi_\rho: (0,1] \to\R$ be the function defined by
\begin{equation}\label{eq psi_rho}
\psi_\rho(\alpha)= \alpha +  \alpha^{2-\rho}(1-\alpha)^{\rho-1} +1-\rho,
\end{equation}
the following statements hold:
\begin{itemize}
    \item for all $\rho\in(1,2)\cup(2,+\infty)$, there exists a unique $\alpha_\rho\in(0,1)$ such that $\psi_\rho(\alpha_\rho)=0$,
  \item if $\alpha\in (\alpha_\rho,1]$, then $\psi_\rho(\alpha) >0$ when $\rho\in(1,2)$ and $\psi_\rho(\alpha) < 0$ when $\rho>2$,  \item if $\alpha\in (0,\alpha_\rho)$, then $\psi_\rho(\alpha) <0$ when $\rho\in(1,2)$ and $\psi_\rho(\alpha) > 0$ when $\rho>2$, 
  \item $\rho\mapsto\alpha_\rho$ (represented by Figure \ref{fig:alpha2} below) is continuously differentiable on $(1,2)\cup(2,\infty)$ and can be extended by continuity at $\rho=2$ with the value $\alpha_2$ which is the unique solution to $h(\alpha)=0$ where $(0,1)\ni\alpha\mapsto h(\alpha)= 1+(1-\alpha)\ln{\frac{\alpha}{1-\alpha}}$,
 \item the extended function $\rho\in(1,\infty)\mapsto\alpha_\rho\in(0,1]$ is increasing, 
    \item $\alpha_\rho<\frac{\rho-1}{2}$ when $\rho\in(1,2)$,
    \item $\lim_{\rho\to\infty}\alpha_\rho=\frac12$.
\end{itemize}
\end{lemma}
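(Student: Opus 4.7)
The plan is to proceed in four steps. First, a second-derivative computation gives
\[
\psi_\rho''(\alpha)=(\rho-1)(\rho-2)\,\alpha^{-\rho}(1-\alpha)^{\rho-3},
\]
so $\psi_\rho$ is strictly concave on $(0,1)$ for $\rho\in(1,2)$ and strictly convex for $\rho>2$. Combined with the boundary values $\psi_\rho(0^+)=1-\rho$ when $\rho\in(1,2)$, $\psi_\rho(0^+)=+\infty$ when $\rho>2$, $\psi_\rho(1)=2-\rho$ and $\psi_\rho(1/2)=2-\rho$ (all obtained by direct substitution), this will yield the existence, uniqueness and sign assertions (items one, two and three) together with the bound $\alpha_\rho<1/2$: a strictly concave function with two zeros $\alpha_1<\alpha_2$ in $(0,1)$ would be nonpositive on $[\alpha_2,1]$ by the chord inequality, contradicting $\psi_\rho(1)=2-\rho>0$, and symmetrically in the convex case.

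For the regularity statement (item four) I will first show $\psi_\rho'(\alpha_\rho)\neq 0$ on each of $(1,2)$ and $(2,\infty)$: otherwise $\alpha_\rho$ would be the unique global extremum of the strictly concave or convex $\psi_\rho$, which combined with $\psi_\rho(\alpha_\rho)=0$ would contradict the established sign changes at the endpoints. The implicit function theorem then yields $C^1$ regularity of $\rho\mapsto\alpha_\rho$ on $(1,2)\cup(2,\infty)$. To handle the passage through $\rho=2$, I will use that $\psi_2\equiv 0$ identically while a Taylor expansion gives $\partial_\rho\psi_\rho(\alpha)\bigr|_{\rho=2}=-h(\alpha)$, so $\tilde\psi(\alpha,\rho):=\psi_\rho(\alpha)/(2-\rho)$ extends smoothly to $\tilde\psi(\alpha,2)=h(\alpha)$ and shares its zeros in $\alpha$ with $\psi_\rho$ whenever $\rho\neq 2$. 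An elementary study using $h''<0$, $h(0^+)=-\infty$, $h(1/2)=1$ and $h(1^-)=1$ shows that $h$ has a unique zero $\alpha_2\in(0,1/2)$ with $h'(\alpha_2)>0$; applying the implicit function theorem to $\tilde\psi$ at $(\alpha_2,2)$ then furnishes the continuous extension of $\rho\mapsto\alpha_\rho$ at $\rho=2$.

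The monotonicity (item five) is the most delicate point, since it must be established across the degeneracy at $\rho=2$. I will use the substitution $u=\alpha/(1-\alpha)$ and $\tau=2-\rho$, which turns $\psi_\rho(\alpha)=0$ into
\[
\Phi(u,\tau):=\frac{u^\tau-1}{\tau}+1+u=0,
\]
with $\Phi(u,0):=\ln u+1+u$ by continuous extension. Setting $w=\tau\ln u$ one computes $\partial_\tau[(u^\tau-1)/\tau]=((w-1)e^w+1)/\tau^2$, and since the function $w\mapsto(w-1)e^w+1$ has a global minimum of $0$ at $w=0$ (check $g'(w)=we^w$), we obtain $\partial_\tau\Phi>0$ for every $u\in(0,1)$, a regime we are in because $\alpha_\rho<1/2$. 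Combined with $\partial_u\Phi=u^{\tau-1}+1>0$, implicit differentiation yields $du_\tau/d\tau<0$, i.e.\ $\alpha_\rho=u_\tau/(1+u_\tau)$ is strictly increasing in $\rho$ on $(1,\infty)$; the argument is uniform and covers $\rho=2$.

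Finally, for the two quantitative items: plugging $\alpha=(\rho-1)/2$ into $\psi_\rho$ and simplifying reduces the inequality $\psi_\rho((\rho-1)/2)>0$ to $\bigl((3-\rho)/(\rho-1)\bigr)^{\rho-1}>1$, immediate for $\rho\in(1,2)$, which gives $\alpha_\rho<(\rho-1)/2$ through the sign assertion of item two. For the limit, since $\alpha_\rho$ is increasing and bounded above by $1/2$ it converges to some $L\in[\alpha_2,1/2]$; if $L<1/2$, pick $L'\in(L,1/2)$, then $\psi_\rho(L')<0$ for all large $\rho>2$ because $\alpha_\rho<L'$, but the term $L'\cdot\bigl((1-L')/L'\bigr)^{\rho-1}$ in $\psi_\rho(L')=L'+L'\bigl((1-L')/L'\bigr)^{\rho-1}+1-\rho$ grows exponentially in $\rho$ whereas $-\rho$ grows only linearly, so $\psi_\rho(L')\to+\infty$, a contradiction. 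The main obstacle throughout will be the monotonicity step: fixed-$\rho$ implicit differentiation produces an expression whose sign is not visibly determined near $\rho=2$, and only the uniform change of variables $(u,\tau)$ reveals a manifestly signed derivative that is continuous across the degenerate parameter.
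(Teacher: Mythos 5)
Your proof is correct, and on the two delicate points (regularity across $\rho=2$ and monotonicity) it takes a genuinely different route from the paper's. The paper works throughout with $\psi_\rho$ itself: it computes $\psi_\rho'(\alpha_\rho)=\frac{(\rho-1)(2-\rho)}{\alpha_\rho(1-\alpha_\rho)}$ in closed form, obtains $\partial_\rho\alpha_\rho$ from the implicit function theorem, and determines its sign by rewriting $\ln\frac{1-\alpha_\rho}{\alpha_\rho}$ as $\int_0^1\frac{du}{1-\alpha_\rho+(\rho-2)u}$, which forces it to first prove $\alpha_\rho<\frac{\rho-1}{2}$ for $\rho\in(1,2)$ and to treat $\rho<2$ and $\rho>2$ separately; continuity at $\rho=2$ is then handled by the inequality $\psi_\rho(\alpha)\ge(2-\rho)h(\alpha)$ (from $e^x\ge1+x$) plus a Taylor/squeeze argument. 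Your substitution $u=\alpha/(1-\alpha)$, $\tau=2-\rho$, turning the equation into $\Phi(u,\tau)=\frac{u^\tau-1}{\tau}+1+u=0$ with $\partial_u\Phi=u^{\tau-1}+1>0$ and $\partial_\tau\Phi=\frac{(w-1)e^w+1}{\tau^2}>0$ (where $w=\tau\ln u$), proves monotonicity in one stroke, uniformly through the degenerate value $\rho=2$, and makes the $\rho=2$ extension a byproduct of the same implicit-function setup applied to $\tilde\psi=\psi_\rho/(2-\rho)$; it buys a shorter, more structural argument at the cost of verifying that $\Phi$ is $C^1$ across $\tau=0$. On that last point, note that your displayed formula for $\partial_\tau\Phi$ is $0/0$ at $\tau=0$; you should either record the limiting value $\partial_\tau\Phi(u,0)=\tfrac12(\ln u)^2>0$, or use the representation $\frac{u^\tau-1}{\tau}=\ln u\int_0^1u^{s\tau}\,ds$, which gives $\partial_\tau\Phi(u,\tau)=(\ln u)^2\int_0^1 s\,u^{s\tau}\,ds>0$ for $u\ne1$ and settles smoothness and positivity simultaneously. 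The remaining items (existence, uniqueness, signs via concavity/convexity and chords; $\alpha_\rho<\frac{\rho-1}{2}$ by evaluating $\psi_\rho(\frac{\rho-1}{2})$; the limit $\frac12$ by the exponential growth of $\psi_\rho(L')$) coincide with or are routine variants of the paper's arguments, and all your computations ($\psi_\rho''$, the boundary values, $\partial_\rho\psi_\rho|_{\rho=2}=-h$) check out.
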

\begin{figure}[H]
    \scalebox{0.55}{\input{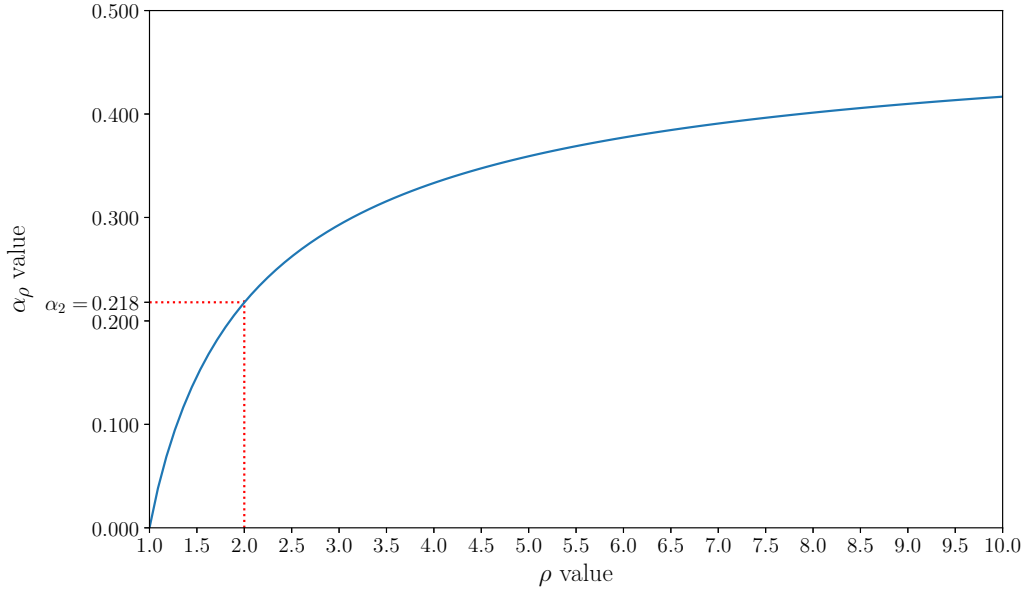}}
    \caption{The function $\rho\mapsto \alpha_\rho$ (with $\alpha_\rho$ computed by a root finding algorithm).}
    \label{fig:alpha2}
\end{figure}
The proof of Lemma \ref{lemma psi_rho} is postponed after the one of Lemma \ref{lemma: f_rho alpha_rho} which relies on the next lemma.
 \begin{lemma}\label{lemma varphi_rho}
For $\rho \in (1,+\infty)\setminus\{2\}$, let us define the function $\varphi_\rho:[0,1] \times\R_+\to\R$ by
\[
\varphi_\rho(\alpha,w) = (\alpha w)^\rho - \frac{w}{1+w}\left(1+\alpha w\right)^\rho - \frac{w^{\rho}}{1+w}\left(1-\alpha\right)^\rho.
\]
Then, we have
\begin{align*}
 &\forall \alpha\in[\alpha_\rho,1],\; \forall w>0,\quad (2-\rho)\partial_\alpha\varphi_\rho(\alpha,w)>0, \\ 
&\forall \alpha\in[0,\alpha_\rho),\mbox{ for $w$ large enough},\quad(2-\rho)\partial_\alpha\varphi_\rho(\alpha,w)<0.
\end{align*}
where $\alpha_\rho$ is defined in Lemma \ref{lemma psi_rho}.
\end{lemma}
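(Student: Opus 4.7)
\textbf{Proof plan for Lemma \ref{lemma varphi_rho}.}

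\smallskip

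The plan is to compute the partial derivative explicitly and expose a clean algebraic structure connecting it to the function $\psi_\rho$ of Lemma \ref{lemma psi_rho}. A direct differentiation gives
\[
  \partial_\alpha \varphi_\rho(\alpha,w)=\rho\alpha^{\rho-1}w^\rho-\frac{\rho w^2}{1+w}(1+\alpha w)^{\rho-1}+\frac{\rho w^\rho}{1+w}(1-\alpha)^{\rho-1}.
\]
For $\alpha>0$ I would introduce the variable $v=\alpha w$, so $w=v/\alpha$ and $1+w=(\alpha+v)/\alpha$, and manipulate the three terms to a common denominator $\alpha(\alpha+v)$. A short computation yields the key identity
\[
  \partial_\alpha\varphi_\rho(\alpha,w)=\frac{\rho v^\rho}{\alpha(\alpha+v)}\bigl[\psi_\rho(\alpha)+h(v)\bigr],\qquad h(v):=v+\rho-1-v^{2-\rho}(1+v)^{\rho-1},
\]
where I used that $\alpha+\alpha^{2-\rho}(1-\alpha)^{\rho-1}=\psi_\rho(\alpha)+\rho-1$. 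The prefactor being strictly positive, only the bracket controls signs.

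\smallskip

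The main technical step is to prove $(2-\rho)h(v)>0$ for every $v>0$ and $\rho\in(1,2)\cup(2,+\infty)$. I would introduce, for fixed $v>0$, the auxiliary function
\[
  f(\theta)=\theta\log v+(1-\theta)\log(1+v)-\log(v+1-\theta),\qquad \theta\in(-\infty,v+1),
\]
whose second derivative is $f''(\theta)=(v+1-\theta)^{-2}>0$. Since $f(0)=f(1)=0$ and $f$ is strictly convex, $f<0$ on $(0,1)$ and $f>0$ on $(-\infty,0)\cup(1,v+1)$. Applying this at $\theta=2-\rho$ and exponentiating gives $v^{2-\rho}(1+v)^{\rho-1}<v+\rho-1$ when $\rho\in(1,2)$ and the reverse strict inequality when $\rho>2$, which is exactly $(2-\rho)h(v)>0$. (This amounts to strict Young for $\rho<2$ and a reverse Young for $\rho>2$.)

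\smallskip

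With both signs in hand, the two conclusions follow. For $\alpha\in[\alpha_\rho,1]$, since $\alpha_\rho>0$ one has $v>0$ whenever $w>0$, and Lemma \ref{lemma psi_rho} gives $(2-\rho)\psi_\rho(\alpha)\ge 0$; combined with $(2-\rho)h(v)>0$ this makes the bracket strictly positive, proving the first inequality. For $\alpha\in(0,\alpha_\rho)$ one has $(2-\rho)\psi_\rho(\alpha)<0$ strictly, and one checks that $v^{2-\rho}(1+v)^{\rho-1}=v(1+1/v)^{\rho-1}=v+(\rho-1)+O(1/v)$ as $v\to\infty$, so $h(v)\to 0$; thus for $w$ large enough (depending on $\alpha$), $|h(\alpha w)|<|\psi_\rho(\alpha)|$ and the bracket has the sign of $\psi_\rho(\alpha)$, yielding the conclusion. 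The case $\alpha=0$ is not covered by the $v=\alpha w$ change of variable and must be treated directly: at $\alpha=0$, $\rho\alpha^{\rho-1}=0$ and one reads off $\partial_\alpha\varphi_\rho(0,w)=\rho(w^\rho-w^2)/(1+w)$, whose sign is that of $\rho-2$ for $w>1$, so $(2-\rho)\partial_\alpha\varphi_\rho(0,w)<0$ for $w>1$.

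\smallskip

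The only real obstacle is the sign statement $(2-\rho)h(v)>0$, which is essentially a strict Young inequality in both directions; the strict-convexity argument for $f$ handles both cases simultaneously without separating $\rho\in(1,2)$ and $\rho>2$. Everything else is algebraic bookkeeping (the derivation of the identity for $\partial_\alpha\varphi_\rho$) together with a straightforward asymptotic expansion of $h$ at infinity to conclude the second item.
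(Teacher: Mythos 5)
Your proof is correct and is essentially the paper's argument: after factoring out the same positive prefactor $\frac{\alpha^{\rho-2}w^\rho}{1+w}=\frac{v^\rho}{\alpha(\alpha+v)}$, your residual $h(v)=v+\rho-1-v^{2-\rho}(1+v)^{\rho-1}$ is exactly the paper's term $(\rho-1)\int_{\alpha w}^{1+\alpha w}\bigl(1-(\tfrac{\alpha w}{v})^{2-\rho}\bigr)dv$, and the treatment of $\alpha=0$, the use of the sign of $\psi_\rho$ on $[\alpha_\rho,1]$ versus $(0,\alpha_\rho)$, and the limit $h(v)\to0$ as $v\to\infty$ all coincide. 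The only cosmetic difference is that you establish $(2-\rho)h(v)>0$ via a strict log-convexity (Young-type) argument, whereas the paper reads the sign off the pointwise sign of the integrand; both are valid.
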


The proof of Lemma \ref{lemma varphi_rho} is postponed after the one of Lemma \ref{lemma: f_rho alpha_rho}.

\begin{proof}[Proof of Lemma \ref{lemma: f_rho alpha_rho}] $ $\newline
When $x\in[y,m]$, we have 
\begin{align*}
    f_\rho(x) & = \frac{z-m}{z-y}(x-y)^\rho + \frac{m-y}{z-y}(z-x)^\rho - (m-x)^\rho \\
    &= (z-m)^\rho\left[\frac{z-m}{z-y}\times\left(\frac{m-y}{z-m}\right)^\rho \times\left(1-\frac{m-x}{m-y}\right)^\rho +\frac{m-y}{z-y}\times\left(1+\frac{m-x}{z-m}\right)^\rho-\left(\frac{m-x}{z-m}\right)^\rho\,\right] \\
    &=-(z-m)^\rho\,\varphi_\rho\left(\frac{m-x}{m-y},\frac{m-y}{z-m}\right),
\end{align*}
By Lemma \ref{lemma varphi_rho}, since $x\in [y,(1-\alpha_\rho) m+\alpha_\rho y)]\Leftrightarrow \frac{m-x}{m-y}\in[\alpha_\rho,1]$, we deduce that for $\rho\in(1,2)$ (resp. $\rho>2$), $f_\rho$ is increasing (resp. decreasing) on $[y,(1-\alpha_\rho) m+\alpha_\rho y)]$. Moreover, for any $\alpha\in (0,\alpha_\rho)$, when $\frac{m-y}{z-m}$ is large enough, $(2-\rho)f'_\rho((1-\alpha)m+\alpha y)<0$.

$ $\newline
When $x\in[m,z]$, we have 
\begin{align*}
    f_\rho(x) & =  \frac{z-m}{z-y}(x-y)^\rho + \frac{m-y}{z-y}(z-x)^\rho -(x-m)^\rho \\
    &= (m-y)^\rho\left[ \frac{z-m}{z-y}\times\left(1+\frac{x-m}{m-y}\right)^\rho+\frac{m-y}{z-y}\times\left(\frac{z-m}{m-y}\right)^\rho\times\left(1-\frac{x-m}{z-m} \right)^\rho-\left(\frac{x-m}{m-y}\right)^\rho \,\right]. \\
     &=- (m-y)^\rho \,\varphi_\rho\left(\frac{x-m}{z-m},\frac{z-m}{m-y}\right) .
\end{align*}
By Lemma \ref{lemma varphi_rho} again, since $x\in[(1-\alpha_\rho) m+\alpha_\rho z, z]\Leftrightarrow \frac{x-m}{z-m}\in[\alpha_\rho,1]$, we deduce that for $\rho\in(1,2)$ (resp. $\rho>2$), $f_\rho$ is decreasing (resp. increasing) on $[(1-\alpha_\rho) m+\alpha_\rho z, z]$. Moreover, for any $\alpha\in (0,\alpha_\rho)$, when $\frac{z-m}{m-y}$ is large enough, $(2-\rho)f'_\rho((1-\alpha)m+\alpha z)>0$.

\end{proof}

\begin{proof}[Proof of Lemma \ref{lemma varphi_rho}]
Taking the derivative of $\varphi_\rho(\alpha,w)$ with respect to $\alpha$, we have
$$\frac1\rho\partial_\alpha\varphi_\rho(\alpha,w) = \alpha^{\rho-1} w^\rho - \frac{w^2}{1+w}\left(1+\alpha w\right)^{\rho-1} + \frac{w^\rho}{1+w}\left(1-\alpha\right)^{\rho-1}$$
so that $\frac1\rho\partial_\alpha\varphi_\rho(0,w)=\frac{w^\rho-w^2}{1+w}$ is negative (resp. positive) when $w>1$ and $\rho\in (1,2)$ (resp. $\rho>2$). Let us now suppose that $\alpha\in (0,1)$. Then
\begin{align*}
\frac1\rho\partial_\alpha\varphi_\rho(\alpha,w) 
&= \frac{\alpha^{\rho-2}w^{\rho}}{1+w}\left(\alpha + \alpha^{2-\rho}(1-\alpha)^{\rho-1} - (\rho-1)\right) \\
&\quad + \frac{w^2}{1+w}\left((\rho-1)\left(\alpha w\right)^{\rho-2} + \left(\alpha w\right)^{\rho-1} - \left(1+\alpha w\right)^{\rho-1}\right)   \\
&=\frac{\alpha^{\rho-2}w^{\rho}}{1+w}\psi_\rho(\alpha) +\frac{w^2(\rho-1)}{1+w}\int_{\alpha w}^{1+\alpha w}\left((\alpha w)^{\rho-2} - v^{\rho-2}\right)dv
\\
&= \frac{\alpha^{\rho-2}w^{\rho}}{1+w}\left(\psi_\rho(\alpha) + (\rho-1)\int_{\alpha w}^{1+\alpha w}\left(1-\left(\frac{\alpha w}{v}\right)^{2-\rho}\right)dv\right). 
\end{align*} 
Since
\begin{align*}
0<\int_{\alpha w}^{1+\alpha w}\left(1-\left(\frac{\alpha w}{v}\right)^{2-\rho}\right)dv\le 1-\left(\frac{\alpha w}{1+\alpha w}\right)^{2-\rho}  &\quad\mbox{ when } \rho\in(1,2) ,\\
0>\int_{\alpha w}^{1+\alpha w}\left(1-\left(\frac{\alpha w}{v}\right)^{2-\rho}\right)dv\ge 1-\left(\frac{1+\alpha w}{\alpha w}\right)^{\rho-2}  &\quad\mbox{ when } \rho>2 ,
\end{align*}
by the first and second items in Lemma \ref{lemma psi_rho}, we deduce that for $\alpha\in[\alpha_\rho,1]$, we have 
$\partial_\alpha\varphi_\rho(\alpha,w)>0$ when $\rho\in(1,2)$ and $\partial_\alpha\varphi_\rho(\alpha,w)< 0$ when $\rho>2$.\\
For $\alpha\in(0,\alpha_\rho)$, since
$\lim_{w\to\infty} \int_{\alpha w}^{1+\alpha w}\left(1-\left(\frac{\alpha w}{v}\right)^{2-\rho}\right)dv=0$, by the third item in Lemma \ref{lemma psi_rho}, we may find $w$ large enough so that $\partial_\alpha\varphi_\rho(\alpha,w)<0$ when $\rho\in(1,2)$ and $\partial_\alpha\varphi_\rho(\alpha,w)>0$ when $\rho>2$.
\end{proof}

\begin{proof}[Proof of Lemma \ref{lemma psi_rho}]
$ $\newline
Taking the derivative of $\psi_\rho$, we get
\[
\psi_\rho'(\alpha) = 1+(2-\rho)\left(\frac{1-\alpha}{\alpha}\right)^{\rho-1} - (\rho-1)\left(\frac{\alpha}{1-\alpha}\right)^{2-\rho}.
\]
It appears that
\begin{itemize}
    \item when $1<\rho<2$,
    \begin{itemize}
        \item[(1)] $\psi_\rho'$ is a decreasing function, so $\psi_\rho$ is strictly concave,
        \item[(2)] $\lim_{\alpha\to0^+}\psi_\rho(\alpha) =1-\rho <0$ and $\psi_\rho(1) =2-\rho >0$.
    \end{itemize}
    \item when $\rho>2$,
    \begin{itemize}
        \item[(1)] $\psi_\rho'$ is an increasing function, and $\psi_\rho$ is strictly convex,
        \item[(2)] $\lim_{\alpha\to0^+}\psi_\rho(\alpha) =+\infty$ and $\psi_\rho(1) =2-\rho <0$.
    \end{itemize}
\end{itemize}
With the continuity of $\psi_\rho$, we deduce that there exists a unique $\alpha_\rho\in(0,1)$ such that $\psi_\rho(\alpha_\rho)=0$. Moreover, for $\alpha\in(\alpha_\rho,1]$, $\psi_\rho(\alpha)>0$ when $\rho\in (1,2)$ and $\psi_\rho(\alpha) <0$ when $\rho>2$. And for $\alpha\in (0,\alpha_\rho)$, $\psi_\rho(\alpha)<0$ when $\rho\in (1,2)$ and $\psi_\rho(\alpha) >0$ when $\rho>2$.

Since $\psi_\rho(\alpha_\rho) = 0$, we have $\left(\frac{1-\alpha_\rho}{\alpha_\rho}\right)^{\rho-2} = \frac{\rho-1-\alpha_\rho}{1-\alpha_\rho}$,  
\begin{align*}
\psi_\rho'(\alpha_\rho) &= 1+(2-\rho)\frac{\rho-1-\alpha_\rho}{\alpha_\rho} - (\rho-1)\frac{\rho-1-\alpha_\rho}{1-\alpha_\rho}
= \frac{(\rho-1)(2-\rho)}{\alpha_\rho(1-\alpha_\rho)},
\end{align*}
so that $\psi_\rho'(\alpha_\rho) > 0$ for $\rho\in(1,2)$ and $\psi_\rho'(\alpha_\rho) < 0$ for $\rho>2$. 
Furthermore, by the implicit function theorem, as the mapping $(\rho,\alpha)\in \left\{(1,2)\cup(2,\infty)\right\}\times(0,1]\mapsto\psi_\rho(\alpha)\in\R$ is $C^1$, it follows that the function $\rho\mapsto\alpha_\rho$ is also $C^1$ on $(1,2)\cup(2,\infty)$ and its derivative writes
\[\partial_\rho\alpha_\rho = \frac{1-\alpha_\rho^{2-\rho}(1-\alpha_\rho)^{\rho-1}\ln{\frac{1-\alpha_\rho}{\alpha_\rho}}}{\psi_\rho'(\alpha_\rho)} . \]
Since $\psi_\rho(\alpha_\rho) = 0$, we have 
\[
(\rho-2)\ln{\frac{1-\alpha_\rho}{\alpha_\rho}} = \ln{\frac{\rho-1-\alpha_\rho}{1-\alpha_\rho}}= (\rho-2)\int_0^1\frac{du}{1-\alpha_\rho+(\rho-2)u}.
\]
Then, since $\alpha_\rho^{2-\rho}(1-\alpha_\rho)^{\rho-1} = \rho-1-\alpha_\rho$, the numerator of the formula of $\partial_\rho\alpha_\rho$ can be written as 
\begin{align*}
1-\alpha_\rho^{2-\rho}(1-\alpha_\rho)^{\rho-1}\ln{\frac{1-\alpha_\rho}{\alpha_\rho}} &= 1-\int_0^1\frac{\rho-1-\alpha_\rho}{1-\alpha_\rho+(\rho-2)u}du.
\end{align*}
When $\rho\in(1,2)$, 
\begin{equation*}
\psi_\rho\left(\frac{\rho-1}{2} \right)=\frac{\rho-1}{2} + \frac{\rho-1}{2} \left(\frac{3-\rho}{\rho-1}\right)^{\rho-1}
+1-\rho= \frac{\rho-1}{2}\left(\left(\frac{3-\rho}{\rho-1}\right)^{\rho-1} -1\right) > 0,
\end{equation*}
so that $\alpha_\rho <\frac{\rho-1}{2}$, and
\[\int_0^1\frac{\rho-1-\alpha_\rho}{1-\alpha_\rho+(\rho-2)u}\,du < \int_0^1\frac{\rho-1-\alpha_\rho}{1-\alpha_\rho+(\rho-2)}\,du = 1.\]
When $\rho>2$,
\[\int_0^1\frac{\rho-1-\alpha_\rho}{1-\alpha_\rho+(\rho-2)u}\,du > \int_0^1\frac{\rho-1-\alpha_\rho}{1-\alpha_\rho+(\rho-2)}\,du = 1.\]
Hence, we can deduce that
\begin{itemize}
    \item $1-\alpha_\rho^{2-\rho}(1-\alpha_\rho)^{\rho-1}\ln{\frac{1-\alpha_\rho}{\alpha_\rho}} > 0$ when $\rho\in(1,2)$,
    \item $1-\alpha_\rho^{2-\rho}(1-\alpha_\rho)^{\rho-1}\ln{\frac{1-\alpha_\rho}{\alpha_\rho}} < 0$ when $\rho>2$.
\end{itemize}
We conclude that $\partial_\rho\alpha_\rho>0$ on $(1,2)\cup(2,+\infty)$.

$ $\newline
Since $h'(\alpha)=\frac1\alpha-\ln{\frac{\alpha}{1-\alpha}}$ is a decreasing function, $h$ is strictly concave. We have $\lim_{\alpha\to 0+}h(\alpha) = -\infty$ and $\lim_{\alpha\to1-}h(\alpha) = 1$. With the continuity of $h$ and $h(\frac 12)=1$, we deduce that there exists a unique $\alpha_2\in (0,\frac 12)$ such that $h(\alpha_2) = 0$. Furthermore, by the fact that $e^x \geq 1+x$ with strict inequality when $x\ne 0$, we have
\begin{align*}
\psi_\rho(\alpha) &=\alpha+(1-\alpha)e^{(2-\rho)\ln{\frac{\alpha}{1-\alpha}}}+1-\rho\\
&\geq\alpha+(1-\alpha)\left(1+(2-\rho)\ln{\frac{\alpha}{1-\alpha}}\right)+1-\rho\\
&=(2-\rho)\left(1+(1-\alpha)\ln{\frac{\alpha}{1-\alpha}}\right)=(2-\rho)h(\alpha),
\end{align*}
 with strict inequality when $\ln{\frac{\alpha}{1-\alpha}}\ne 0$. Hence $\psi_\rho(\alpha_2)> (2-\rho)h(\alpha_2)=0$. We deduce that
\begin{itemize}
    \item for $\rho \in(1,2)$, we have $\alpha_\rho < \alpha_2$ and $h(\alpha_\rho)<0$, 
    \item for $\rho \in(2,+\infty)$, we have $\alpha_\rho > \alpha_2$ and $h(\alpha_\rho)>0$.
\end{itemize}
On the other hand, applying Taylor's theorem to the term $e^{(2-\rho)\ln{\frac{\alpha}{1-\alpha}}}$, we have 
\begin{align*}
\psi_\rho(\alpha) 
&=\alpha+(1-\alpha)\left(1+(2-\rho)\ln{\frac{\alpha}{1-\alpha}}\right)+1-\rho + {\mathcal O}\left((2-\rho)^2\right)=(2-\rho)\left(h(\alpha)+{\mathcal O}(2-\rho)\right).
\end{align*}
For $\alpha\in(0,\alpha_2)$, we have $h(\alpha) < 0$ and therefore for $\rho<2$ close enough to $2$, $\psi_\rho(\alpha)<0$, which implies that $\alpha <\alpha_\rho< \alpha_2$. Hence  $\lim_{\rho\to2^-}\alpha_\rho = \alpha_2$. In the same way, we can prove that $\lim_{\rho\to2^+}\alpha_\rho = \alpha_2$.
$ $\newline

We have $\psi_\rho(\frac 12)=2-\rho$, so that, when $\rho>2$, $\psi_\rho(\frac 12)<0$ and $\alpha_\rho<\frac 12$.
On the other hand, for $\alpha<\frac12$, $\frac{1-\alpha}{\alpha}>1$ and $\lim_{\rho \to +\infty}\psi_\rho(\alpha) = +\infty$
so that for large values of $\rho$, we have $\psi_\rho(\alpha)>0$ and $\alpha<\alpha_\rho$. We deduce that
\[\lim_{\rho\to+\infty} \alpha_\rho =\frac12.\]
\end{proof}

\begin{proof}[Proof of Proposition \ref{prop:M^HK leq_cx}] 
For $\pi \in \Pi_M(\mu,\underline\nu)$, we have 
\begin{align*}
\pi_{x_-}(\{y_+\})\geq 0 = \pi_{x_-}^{\uparrow}(\{y_+\}),&\qquad \pi_{x_-}(\{z_+\})\geq 0 = \pi_{x_-}^{\uparrow}(\{z_+\}),\\
\pi_{x_-}^{\uparrow}(\{y_-\}) = \frac{\nu(\{y_-\})}{\mu(\{x_-\})}\geq \pi_{x_-}(\{y_-\}), &\qquad \pi_{x_-}^{\uparrow}(\{z_-\}) = \frac{\nu(\{z_-\})}{\mu(\{x_-\})}\geq \pi_{x_-}(\{z_-\}).
\end{align*}
Now suppose that $\pi \neq \pi^{\uparrow}$ and define
\begin{equation*}
    \hat\tau 
    = \frac{\pi_{x_-}(\{y_+\})\delta_{y_+} + \pi_{x_-}(\{z_+\})\delta_{z_+}}{\pi_{x_-}(\{y_+\}) + \pi_{x_-}(\{z_+\})},
\end{equation*}
\begin{equation*}
    \check\tau = \frac{\left(\pi_{x_-}^{\uparrow}(\{y_-\}) -\pi_{x_-}(\{y_-\})\right)\delta_{y_-} +\left(\pi_{x_-}^{\uparrow}(\{z_-\}) -\pi_{x_-}(\{z_-\})\right)\delta_{z_-}}{\pi_{x_-}^{\uparrow}(\{y_-\}) - \pi_{x_-}(\{y_-\}) + \pi_{x_-}^{\uparrow}(\{z_-\}) - \pi_{x_-}(\{z_-\})}.
\end{equation*}
Since 
\[\pi_{x_-}(\{y_-\}) +\pi_{x_-}(\{y_+\}) + \pi_{x_-}(\{z_-\}) +\pi_{x_-}(\{z_+\}) = 1 = \pi_{x_-}^{\uparrow}(\{y_-\})  + \pi_{x_-}^{\uparrow}(\{z_-\}), \]
the denominators of the formulas defining $\hat\tau$ and $\check \tau$ are equal. Let us denote the common value by $s$. Clearly $s > 0$ and $\pi_{x_-} - \pi_{x_-}^{\uparrow} = s(\hat\tau - \check\tau)$. Moreover, since $p\pi_{x_-}+(1-p)\pi_{x_+}=\nu=p\pi^{\uparrow}_{x_-}+(1-p)\pi^{\uparrow}_{x_+}$, $\pi_{x_+} - \pi_{x_+}^{\uparrow} = \frac{ps}{1-p}(\check\tau - \hat\tau)$.

Since  \[
\int y \,\pi_{x_-}(dy) = x_- = \int y \,\pi_{x_-}^{\uparrow}(dy),
\] $\hat \tau$ and $\check \tau$ have a common expectation $m$ and $m\in [y_+,z_+]\cap [y_-,z_-] = [y_+, z_-]$ and we have
\begin{align}
    \hat\tau = \frac{z_+-m}{z_+-y_+}\delta_{y_+} + \frac{m - y_+}{z_+-y_+}\delta_{z_+} , \qquad
    \check\tau = \frac{z_--m}{z_--y_-}\delta_{y_-} + \frac{m - y_-}{z_--y_-}\delta_{z_-} .\label{eq:defhattaucheck}
\end{align}
The difference between $\pi$ and $\pi^{\uparrow}$ can be expressed as $\pi - \pi^{\uparrow} = ps\, \delta_{x_-}\otimes(\hat\tau-\check\tau)+ps\delta_{x_+}\otimes(\check\tau-\hat\tau)$.  Let us denote
\begin{align*}
\underline \gamma_m &=
\frac12\left(\frac{z_--m}{z_--y_-}\delta_{(x_--y_-)^2} + \frac{m-y_-}{z_--y_-}\delta_{(z_--x_-)^2} + \frac{z_+-m}{z_+-y_+}\delta_{(x_+-y_+)^2} + \frac{m-y_+}{z_+-y_+}\delta_{(z_+-x_+)^2}\right), \\
\overline \gamma_m &=
\frac12\left(\frac{z_+-m}{z_+-y_+}\delta_{(x_--y_+)^2} + \frac{m-y_+}{z_+-y_+}\delta_{(z_+-x_-)^2} 
+ \frac{z_--m}{z_--y_-}\delta_{(x_+-y_-)^2} + \frac{m-y_-}{z_--y_-}\delta_{(z_--x_+)^2}\right).
\end{align*}
We have
\begin{equation}
   {\rm sq}\#\pi -{\rm sq}\#\pi^{\uparrow} = 2ps(\overline\gamma_m - \underline\gamma_m).\label{eq:diffimMMarr}
\end{equation}
Let us check that for any $m\in[y_+,z_-]$, $\underline\gamma_m\leq_{cx}\overline \gamma_m$. Since $\underline \gamma_m$ and $\overline \gamma_m$ linearly depend on $m$, it is equivalent to check that $\underline \gamma_{y_+}\leq_{cx}\overline \gamma_{y_+}$ and $\underline \gamma_{z_-}\leq_{cx}\overline \gamma_{z_-}$. 

Since \begin{align*}(x_--y_+)^2\vee(z_--x_+)^2&\leq(x_+-y_+)^2\wedge(x_--y_-)^2\wedge(z_--x_-)^2\\&\le(x_+-y_+)^2\vee(x_--y_-)^2\vee(z_--x_-)^2\le (x_+-y_-)^2,\end{align*}
we have
\begin{align*}
   \underline \gamma_{y_+} &= \frac12\left(\delta_{(x_+-y_+)^2} + \frac{z_--y_+}{z_--y_-}\delta_{(x_--y_-)^2} + \frac{y_+-y_-}{z_--y_-}\delta_{(z_--x_-)^2}\right)\\&\le_{cx}\frac12\left(\delta_{(x_--y_+)^2} + \frac{z_--y_+}{z_--y_-}\delta_{(x_+-y_-)^2} + \frac{y_+-y_-}{z_--y_-}\delta_{(z_--x_+)^2} \right)=\overline \gamma_{y_+}.
\end{align*}
In a symmetric way, 
since \begin{align*}(x_--y_+)^2\vee(z_--x_+)^2&\leq(x_+-y_+)^2\wedge (z_+-x_+)^2\wedge(z_--x_-)^2\\&\le(x_+-y_+)^2\vee(z_+-x_+)^2\vee(z_--x_-)^2\le (z_+-x_-)^2,\end{align*} we have $\underline \gamma_{z_-} \leq_{cx} \overline \gamma_{z_-}$. Combining the two cases together, we conclude that for any $m\in[y_+,z_-]$, $\underline\gamma_m\leq_{cx}\overline \gamma_m$, so that, by \eqref{eq:diffimMMarr}, $\vert y-x\vert^2\# \pi^{\uparrow}(dx,dy) \leq_{cx} \vert y-x\vert^2\# \pi(dx,dy)$.

For $\pi\in\Pi_M(\mu,\overline\nu)\setminus\{\pi^{\downarrow}\}$,
we check in the same way that $$\pi-\pi^{\downarrow}=ps\, \delta_{x_-}\otimes(\check\tau-\hat\tau)+ps\delta_{x_+}\otimes(\hat\tau-\check\tau)$$ with $\hat\tau$ and $\check\tau$ satisfying \eqref{eq:defhattaucheck} for some $m\in[y_+,z_-]$ and
$$s=\pi_{x_-}^{\downarrow}(\{y_+\}) - \pi_{x_-}(\{y_+\}) + \pi_{x_-}^{\downarrow}(\{z_+\}) - \pi_{x_-}(\{z_+\})=\pi_{x_-}(\{y_-\})+\pi_{x_-}(\{z_-\})>0.$$
As a consequence, ${\rm sq}\#\pi -{\rm sq}\#\pi^{\downarrow} = 2ps(\underline\gamma_m- \overline\gamma_m)$ so that ${\rm sq}\#\pi\le_{cx}{\rm sq}\#\pi^{\downarrow}$.
\end{proof}

\begin{proof}[Proof of Proposition \ref{prop:M*HN}]
  Reasoning like in the beginning of the proof of Proposition \ref{prop:M^HK leq_cx}, we check that for $\pi \in \Pi_M(\mu,\overline\nu)\setminus\{\overline \pi^\star\}$,
$\pi = \overline \pi^\star + p s \delta_{x_-}\otimes\left(\hat\tau - \check\tau\right) + p s \delta_{x_+}\otimes\left(\check\tau - \hat\tau\right)$ where 
  \begin{align}\label{eq:tau}
    \hat\tau &= \frac{z_--m}{z_--y_+}\delta_{y_+} + \frac{m - y_+}{z_--y_+}\delta_{z_-},\;\;\;\;\;\; 
               \check\tau = \frac{z_+-m}{z_+-y_-}\delta_{y_-} + \frac{m - y_-}{z_+-y_-}\delta_{z_+}\\
    s&=\pi_{x_-}(\{y_+\}) + \pi_{x_-}(\{z_-\})=\overline \pi_{x_-}^\star(\{y_-\}) - \pi_{x_-}(\{y_-\}) + \overline \pi_{x_-}^\star(\{z_+\}) - \pi_{x_-}(\{z_+\})>0.\notag\end{align}

Therefore the difference between the costs of $\pi$ and $\overline \pi^\star$ can be calculated as  
\begin{equation}\label{productM}
\begin{split}
    &\int \vert x-y\vert^\rho \pi(dx,dy) - \int \vert x-y\vert^\rho \overline \pi^\star(dx,dy) 
     \quad = ps\left(f(x_-,m) - f(x_+,m)\right) ,
\end{split}
\end{equation}
where for $x\in[y_+,z_-]$,
\begin{align*}
f(x,m) = \frac{z_--m}{z_--y_+}(x-y_+)^\rho + \frac{m-y_+}{z_--y_+}(z_--x)^\rho - \frac{z_+-m}{z_+-y_-}(x-y_-)^\rho - \frac{m-y_-}{z_+-y_-}(z_+-x)^\rho .
\end{align*}
In the same way, we can check that any $\pi \in \Pi_M(\mu,\underline \nu)$ distinct from $\underline \pi^\star$ can be expressed as 
\begin{align*}
    \pi &= \underline \pi^\star + p s \delta_{x_-}\otimes\left(\check\tau- \hat\tau \right) + p s \delta_{x_+}\otimes\left(\hat\tau-\check\tau  \right),
\end{align*}
for $\hat\tau$ and $\check\tau$ satisfying \eqref{eq:tau} for some $m\in[y_+,z_-]$ and
\[s = \pi_{x_-}(\{y_-\})+\pi_{x_-}(\{z_+\})=\underline \pi_{x_-}^\star(\{y_+\}) - \pi_{x_-}(\{y_+\}) + \underline \pi_{x_-}^\star(\{z_-\}) - \pi_{x_-}(\{z_-\})>0.\] 
Hence the difference between the costs of $\pi$ and $\underline \pi^\star$ can be calculated as  
\begin{equation*}
\int \vert x-y\vert^\rho \pi(dx,dy) - \int \vert x-y\vert^\rho \underline \pi^\star(dx,dy) 
    = ps(f(x_+,m) - f(x_-,m)).\end{equation*}
 Checking the optimality of $\overline \pi^\star$ and $\underline \pi^\star$ is equivalent to check when $1<\rho<2$ (resp. $\rho > 2$ ) that for every $m\in [y_+,z_-]$, $f(x_-,m)<f(x_+,m)$ (resp. $f(x_-,m)>f(x_+,m)$).

The function $m\mapsto f(x,m)$ being affine, it is also equivalent to show $f(x_-, y_+) < f(x_+, y_+)$ and $f(x_-, z_-) < f(x_+, z_-)$ for $1<\rho<2$ (resp.  $f(x_-, y_+) > f(x_+, y_+)$ and $f(x_-, z_-) > f(x_+, z_-)$ for $\rho > 2$), which we are now going to demonstrate. We have 
\begin{align*}
f(x,y_+) &= \left( x-y_+\right)^\rho - \frac{z_+-y_+}{z_+-y_-}\left( x-y_-\right)^\rho - \frac{y_+-y_-}{z_+-y_-}\left(z_+-x\right)^\rho , \\ 
\frac1\rho\partial_x f(x,y_+) &= \left( x-y_+\right)^{\rho-1} - \frac{z_+-y_+}{z_+-y_-}\left( x-y_-\right)^{\rho-1} + \frac{y_+-y_-}{z_+-y_-}\left(z_+-x\right)^{\rho-1},\\f(x,z_-) &= \left(z_--x \right)^\rho - \frac{z_+-z_-}{z_+-y_-}\left( x-y_-\right)^\rho - \frac{z_--y_-}{z_+-y_-}\left( z_+-x\right)^\rho,\\ 
\frac1\rho\partial_x f(x,z_-) &= -\left(z_--x \right)^{\rho-1} - \frac{z_+-z_-}{z_+-y_-}\left( x-y_-\right)^{\rho-1} + \frac{z_--y_-}{z_+-y_-}\left( z_+-x\right)^{\rho-1} .
\end{align*}

We are going to check that $(2-\rho)\partial_x f(z_-,z_-)>0$ and $(2-\rho)\partial_x f(z_-,y_+)>0$, so that by continuity of $x\mapsto (\partial_x f(x,z_-),\partial_x f(x,y_+))$, there exists $x_\rho \in (y_+,z_-)$ such that for all $x \in [x_\rho,z_-]$, $(2-\rho)\partial_x f(x,z_-)>0$, $(2-\rho)\partial_x f(x,y_+)>0$ and for all $x_-,x_+$ such that $x_\rho<x_-<x_+<z_-$, $(2-\rho)\left(f(x_+,z_-)-f(x_-,z_-)\right)>0$ and $(2-\rho)\left(f(x_+,y_+)-f(x_-,y_+)\right)>0$.

On the one hand, we have 
\begin{align*}
\frac1\rho\partial_x f(z_-,z_-) &= - \frac{z_+-z_-}{z_+-y_-}\left( z_--y_-\right)^{\rho-1} + \frac{z_--y_-}{z_+-y_-}\left( z_+-z_-\right)^{\rho-1} \\
&=\frac{z_+-z_-}{z_+-y_-}\left( z_--y_-\right)^{\rho-1}\left(\left(\frac{z_--y_-}{z_+-z_-}\right)^{2-\rho} - 1\right).
\end{align*}
Since, by assumption $z_--y_->z_+-z_-$, we deduce that $\partial_x f(z_-,z_-) > 0$ when $1<\rho<2$ and $\partial_x f(z_-,z_-) < 0$ when $\rho>2$. On the other hand, 
\begin{align*}
\frac{(z_+-y_-)}{\rho}\partial_x f(z_-,y_+) &= (z_+-y_-)(z_--y_+)^{\rho-1} - (z_+-y_+)(z_--y_-)^{\rho-1} + (y_+-y_-)(z_+-z_-)^{\rho-1} \\ 
                                            &= g\left(y_+-y_-, z_--y_+, z_+-z_-\right),\end{align*}
                                          where, for $a,b,c\ge 0$, 
$$g(a,b,c)= (a+b+c)\,b^{\,\rho-1}-(b+c)(a+b)^{\,\rho-1}+ac^{\,\rho-1}.$$
When $\rho\in (1,2)$ and $b>0$, by concavity of $\R_+\ni x\mapsto x^{\rho-1}$, we have
\begin{align*}
   g(a,b,c)&\ge (a+b+c)\,b^{\,\rho-1}-(b+c)(b^{\,\rho-1}+(\rho-1)b^{\rho-2}a)+ac^{\,\rho-1}\\&=(2-\rho) a b^{\rho-1}+ac\left(c^{\rho-2}-((\rho -1)^{\frac{1}{\rho-2}}b)^{\rho-2}\right).
\end{align*}
and the inequality is reversed when $\rho>2$ by convexity of $\R_+\ni x\mapsto x^{\rho-1}$. We deduce that
\begin{equation}
   \label{signg}\forall a,b>0,\;\forall c\in[0,(\rho-1)^{\frac{1}{\rho-2}}b],\;g(a,b,c)>0\mbox{ when $\rho\in(1,2)$ and }g(a,b,c)<0\mbox{ when }\rho>2. 
\end{equation}
Since $y_+-y_->0$, $z_--y_+>0$ and $z_--y_+\ge (\rho-1)^{\frac{1}{2-\rho}}(z_+-z_-)$, we have $\partial_x f(z_-, y_+) > 0$ when $\rho\in (1,2)$ and  $\partial_x f(z_-, y_+)< 0$ when $\rho>2$.
\end{proof}


\bibliographystyle{plain}
\bibliography{ndmcoupling.bib}
\end{document}